\numberwithin{equation}{section}
\numberwithin{figure}{section}
\theoremstyle{plain}
\newtheorem{thm}{\protect\theoremname}[section]
  \theoremstyle{remark}
  \newtheorem{acknowledgement}[thm]{\protect\acknowledgementname}
  \theoremstyle{plain}
  \newtheorem{lem}[thm]{\protect\lemmaname}
  \theoremstyle{remark}
  \newtheorem{notation}[thm]{\protect\notationname}
  \theoremstyle{definition}
  \newtheorem{defn}[thm]{\protect\definitionname}
  \theoremstyle{plain}
  \newtheorem{prop}[thm]{\protect\propositionname}
  \theoremstyle{plain}
  \newtheorem{cor}[thm]{\protect\corollaryname}
  \theoremstyle{remark}
  \newtheorem{rem}[thm]{\protect\remarkname}
  \theoremstyle{definition}
  \newtheorem{condition}[thm]{\protect\conditionname}
  \theoremstyle{definition}
  \newtheorem{example}[thm]{\protect\examplename}
  \providecommand{\acknowledgementname}{Acknowledgement}
  \providecommand{\conditionname}{Condition}
  \providecommand{\corollaryname}{Corollary}
  \providecommand{\definitionname}{Definition}
  \providecommand{\examplename}{Example}
  \providecommand{\lemmaname}{Lemma}
  \providecommand{\notationname}{Notation}
  \providecommand{\propositionname}{Proposition}
  \providecommand{\remarkname}{Remark}
\providecommand{\theoremname}{Theorem}
\begin{document}

\title{The $p$-cyclic McKay correspondence via motivic integration}

\author{Takehiko Yasuda}

\dedicatory{Dedicated to Professor Yujiro Kawamata on the occasion of his 60th
birthday}

\address{Department of Mathematics, Graduate School of Science, Osaka University,
Toyonaka, Osaka 560-0043, Japan}

\email{takehikoyasuda@math.sci.osaka-u.ac.jp}

\thanks{This work was supported by Grants-in-Aid for Scientific Research
(22740020).}
\begin{abstract}
We study the McKay correspondence for representations of the cyclic
group of order $p$ in characteristic $p.$ The main tool is the motivic
integration generalized to quotient stacks associated to representations.
Our version of the change of variables formula leads to an explicit
computation of the stringy invariant of the quotient variety. A consequence
is that a crepant resolution of the quotient variety (if any) has
topological Euler characteristic $p$ like in the tame case. Also,
we link a crepant resolution with a count of Artin-Schreier extensions
of the power series field with respect to weights determined by ramification
jumps and the representation. 
\end{abstract}
\maketitle
\global\long\def\Spec{\mathrm{Spec}\,}
\global\long\def\AS{\mathrm{AS}}
\global\long\def\mAS{\mathrm{mAS}}
\global\long\def\RP{\mathrm{RP}}
\global\long\def\fp{\mathfrak{p}}
\global\long\def\fm{\mathfrak{m}}
\global\long\def\rj{\mathrm{rj}}
\global\long\def\cO{\mathcal{O}}
\global\long\def\bRP{\mathbf{RP}}
\global\long\def\ord{\mathrm{ord}\,}
\global\long\def\ordfnc{\mathrm{ord}}
\global\long\def\GCov#1{G\text{-}\mathrm{Cov}(#1)}
\global\long\def\GCovrep#1{G\text{-}\mathrm{Cov}^{\mathrm{rep}}(#1)}
\global\long\def\bGCovrep#1{\mathrm{G}\text{-}\mathbf{Cov}^{\mathrm{rep}}(#1)}
\global\long\def\geoeq{\sim_{\mathrm{geo}}}
\global\long\def\AA{\mathbb{A}}
\newcommandx\Hom[4][usedefault, addprefix=\global, 1=, 2=]{\mathrm{Hom}_{#1}^{#2}(#3,#4)}
\global\long\def\cX{\mathcal{X}}
\global\long\def\cE{\mathcal{E}}
\global\long\def\cJ{\mathcal{J}}
\global\long\def\LL{\mathbb{L}}
\global\long\def\Var{\mathrm{Var}}
\global\long\def\ZZ{\mathbb{Z}}
\global\long\def\QQ{\mathbb{Q}}
\global\long\def\CC{\mathbb{C}}
\global\long\def\FF{\mathbb{F}}
\global\long\def\Gal{\mathrm{Gal}}
\global\long\def\bRep{\mathrm{Rep}}
\global\long\def\cY{\mathcal{Y}}
\global\long\def\cM{\mathcal{M}}
\global\long\def\Jac{\mathrm{Jac}}
\global\long\def\Gor{\mathrm{Gor}}
\global\long\def\Ker{\mathrm{Ker}}
\global\long\def\Im{\mathrm{Im}}
\global\long\def\sht{\mathrm{sht}}
\global\long\def\cZ{\mathcal{Z}}
\global\long\def\st{\mathrm{st}}
\global\long\def\PP{\mathbb{P}}
\global\long\def\NN{\mathbb{N}}
 \global\long\def\bx{\mathbf{x}}
\global\long\def\cF{\mathcal{F}}
\global\long\def\bf{\mathbf{f}}
\global\long\def\RR{\mathbb{R}}
\global\long\def\univ{\mathrm{univ}}
\global\long\def\cD{\mathcal{D}}
\global\long\def\length{\mathrm{length}\,}
\global\long\def\sm{\mathrm{sm}}
\global\long\def\cN{\mathcal{N}}
\global\long\def\GG{\mathbb{G}}
\global\long\def\cW{\mathcal{W}}
\global\long\def\top{\mathrm{top}}
\global\long\def\fs{\mathfrak{s}}

\section{Introduction}

The McKay correspondence generally means, for a finite subgroup $G$
of $SL_{d}(\CC),$ an equality between an invariant of the representation
$G\curvearrowright\CC^{d}$ and an invariant of a crepant resolution
of the quotient variety $\CC^{d}/G$ (see \cite{Reid bourbaki}).
The aim of this paper is to make a step toward the \emph{wild McKay
correspondence}, that is, the McKay correspondence for a finite subgroup
$G\subset SL_{d}(k)$ such that the characteristic of a field $k$
divides the order of $G.$ We will study the simplest possible case
where $G$ is the cyclic group of order $p$. Gonzalez-Sprinberg and
Verdier \cite{GSp and Verdier} and Schröer \cite{Schroer} also worked
on the McKay correspondence in the wild case, but on different aspects. 

Our McKay correspondence will be formulated in a similar way as Batyrev's
one \cite{Batyrev-NonArch}, which is an equality of an orbifold invariant
of the $G$-variety $\CC^{d}$ and a stringy invariant of the quotient
variety. Denef and Loeser \cite{Denef-Loeser_McKay} gave an alternative
proof, using the motivic integration and giving a more direct link
between the invariants. We follow this approach with a stacky language
by the author \cite{Yasuda Twisted_jets,Yasuda Motivic_Over_DM}. 

Let $k$ be a perfect field of characteristic $p>0$ and $G$ the
cyclic group of order $p.$ In this paper, we will study the McKay
correspondence for a finite-dimensional $G$-representation $V.$
If for $1\le i\le p,$ $V_{i}$ denotes the indecomposable $G$-representation
of dimension $i,$ then $V$ is decomposed as $V=\bigoplus_{\lambda=1}^{l}V_{d_{\lambda}},$
$1\le d_{\lambda}\le p.$ We define a numerical invariant $D_{V}$
of $V$ by 
\[
D_{V}:=\sum_{\lambda=1}^{l}\frac{(d_{\lambda}-1)d_{\lambda}}{2}.
\]
When $D_{V}\ge p,$ a \emph{stringy motivic invariant} of the quotient
variety $X:=V/G$, denoted $M_{\st}(X),$ will be defined in the same
way as in \cite{Denef-Loeser_McKay} to be some motivic integral over
the arc space of $X.$ If $X$ admits a resolution of singularities
with simple normal crossing relative canonical divisor, then the invariant
coincides with the one defined with resolution data as in \cite{Batyrev_Gor,Batyrev-NonArch}.
The following is our main result: For a positive integer $j$ with
$p\nmid j$ or for $j=0,$ we put

\[
\sht_{V}(j):=\sum_{\lambda=1}^{l}\sum_{i=1}^{d_{\lambda}-1}\left\lfloor \frac{ij}{p}\right\rfloor .
\]

\begin{thm}[Proposition \ref{prop:explict M_st} and Corollary \ref{cor:McKay-non-pair}]
\label{thm:main in intro}If $D_{V}\ge p,$ then 
\[
M_{\st}(X)=\LL^{d}+\frac{\LL^{l-1}(\LL-1)\left(\sum_{s=1}^{p-1}\LL^{s-\sht_{V}(s)}\right)}{1-\LL^{p-1-D_{V}}}.
\]

\end{thm}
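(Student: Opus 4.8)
The plan is to use the change-of-variables formula of this paper to express $M_{\st}(X)$ as a motivic integral over the space of twisted arcs of the quotient stack $[V/G]$, and then to decompose that space according to the isomorphism type of the underlying $G$-cover of $\Spec k[[t]]$. Since $G=\ZZ/p\ZZ$, such a cover is either the split cover $G\times\Spec k[[t]]$ or a connected Artin--Schreier cover, and the connected ones are classified up to isomorphism by their ramification jump $j$, a positive integer with $p\nmid j$ (this is the earlier classification of representable $G$-covers). Accordingly
\[
M_{\st}(X)=(\text{split-cover term})+\sum_{j\ge 1,\ p\nmid j}(\text{term for jump }j),
\]
where each term is an integral over the corresponding stratum. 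Structurally this mirrors the Denef--Loeser formula $E_{\st}(\CC^{d}/G)=\sum_{[g]}\LL^{\mathrm{age}(g)}[\cdots]$: the split cover plays the role of the identity element and contributes $\LL^{d}$, while the $p-1$ residue classes $s\in\{1,\dots,p-1\}$ of $j$ modulo $p$ replace the $p-1$ nontrivial elements of $G$, each now fattened by wild ramification into an infinite geometric series.

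For the split cover, $G$-equivariant arcs $\Spec k[[t]]\to V$ (with $G$ permuting the $p$ sheets) are just arbitrary arcs $\Spec k[[t]]\to V$, the shift function $\fs$ vanishes along this stratum, and one obtains the leading term $\LL^{d}$, $d=\dim V$. For the jump-$j$ stratum there are two things to compute: (i) the motivic measure of the set of pairs consisting of the Artin--Schreier cover $E\to\Spec k[[t]]$ of jump $j$ together with a $G$-equivariant map $E\to V$, and (ii) the value of $\fs$ along this stratum, coming from the $v$-function / relative canonical sheaf. Both are computed summand by summand over $V=\bigoplus_{\lambda=1}^{l}V_{d_\lambda}$: expanding the $G$-action on $k[[s]]\otimes_k V_{d_\lambda}$ in the uniformizer $s$ of the cover (for which, e.g., the different has valuation $(p-1)(j+1)$) and comparing valuations, the space of equivariant arcs into $V_{d_\lambda}$ truncated at the relevant level is governed by the integers $\lfloor ij/p\rfloor$ for $1\le i\le d_\lambda-1$, whose sum over $\lambda$ and $i$ is $\sht_V(j)$; the Artin--Schreier cover itself contributes a factor $\LL-1$ from the non-vanishing leading coefficient and, after combining with the remaining parameters, a power $\LL^{\,j-\lfloor j/p\rfloor}$. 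The upshot should be that the jump-$j$ stratum contributes
\[
\LL^{l-1}(\LL-1)\,\LL^{\,j-\lfloor j/p\rfloor-\sht_V(j)}.
\]

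To assemble the series, write $j=qp+s$ with $q\ge 0$ and $1\le s\le p-1$. The key periodicity, immediate from the definition, is
\[
\sht_V(j+p)=\sht_V(j)+\sum_{\lambda=1}^{l}\frac{(d_\lambda-1)d_\lambda}{2}=\sht_V(j)+D_V,
\]
hence $\sht_V(qp+s)=\sht_V(s)+qD_V$, while $j-\lfloor j/p\rfloor=s+q(p-1)$. Therefore
\[
\sum_{j\ge 1,\ p\nmid j}\LL^{l-1}(\LL-1)\,\LL^{\,j-\lfloor j/p\rfloor-\sht_V(j)}
=\LL^{l-1}(\LL-1)\sum_{s=1}^{p-1}\LL^{\,s-\sht_V(s)}\sum_{q\ge 0}\LL^{\,q(p-1-D_V)},
\]
and the inner sum is the geometric series $1/(1-\LL^{p-1-D_V})$, which converges in the completed Grothendieck ring precisely because $D_V\ge p$, i.e. $p-1-D_V<0$. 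Adding the split-cover term $\LL^{d}$ gives the asserted formula.

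The main obstacle is steps (i) and (ii) on the wildly ramified strata: pinning down the shift function $\fs$ and the motivic measure of the equivariant jet spaces into each Jordan block $V_{d_\lambda}$, and in particular seeing exactly where the exponents $j-\lfloor j/p\rfloor$ and $\sht_V(j)$ and the factor $\LL^{l-1}(\LL-1)$ come from. This is the point where the combinatorics of the $G$-action (a single Jordan block of size $d_\lambda$ on $V_{d_\lambda}$) interacts with the wild ramification of the cover, and getting every exponent right — including the normalization of the Artin--Schreier parameters and the bookkeeping of truncation levels — is the delicate part; once the change-of-variables formula and the classification of $G$-covers are in place, the remainder is the formal summation above.
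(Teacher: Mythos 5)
Your proposal follows essentially the same route as the paper: apply the change of variables formula to reduce $M_{\st}(X)$ to $\int_{\cJ_{\infty}\cX}\LL^{-\fs_{\cX}}d\mu_{\cX}$, stratify the twisted arc space by the ramification jump $j$, use that the jump-$j$ stratum has measure $(\LL-1)\LL^{l+j-1-\left\lfloor j/p\right\rfloor}$ and carries the weight $\LL^{-\sht_{V}(j)}$, and then sum the resulting geometric series via the periodicity $\sht_{V}(np+s)=nD_{V}+\sht_{V}(s)$, with convergence exactly when $D_{V}\ge p$. The exponents, the stratum measures, and the assembly of the series all match the paper's argument, so the proposal is correct and not a genuinely different proof.
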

When $X$ has a crepant resolution $Y\to X,$ the theorem shows that
$D_{V}=p$ and 
\[
[Y]=\LL^{d}+\LL^{l}\sum_{s=1}^{p-1}\LL^{s-\sht_{V}(s)}.
\]
In particular, $Y$ has topological Euler characteristic $p,$ which
is, in characteristic zero, conjectured by Reid \cite{Reid bourbaki}
and proved by Batyrev \cite{Batyrev-NonArch}. We will also define
the \emph{stringy motivic invariant }of the ``projectivization''
$[X/\GG_{m}]$ of $X$ and prove that it satisfies the Poincaré duality,
which was originally proved by Batyrev \cite{Batyrev_Gor} for $\QQ$-Gorenstein
projective varieties with log terminal singularities in characteristic
zero.

The proof of Theorem \ref{thm:main in intro} is based on the motivic
integration suitably generalized to the quotient stack $\cX:=[V/G]$
and the change of variables formula for the morphism $\cX\to X.$
Following \cite{Yasuda Twisted_jets,Yasuda Motivic_Over_DM}, we will
define \emph{twisted arcs} of $\cX$ and develop the motivic integration
over the space of them. A twist of a twisted arc comes from an Artin-Schreier
extension of $k((t)),$ that is, a Galois extension of the power series
field $k((t))$ with Galois group $G\cong\ZZ/(p).$ Not only there
exist infinitely many distinct twists, but also they are parameterized
by an infinite dimensional space. This contrasts strikingly the situation
in the tame case, where we have only finitely many twists. 

Let $J_{\infty}X$ be the arc space of $X$ and $\cJ_{\infty}\cX$
the space of twisted arcs of $\cX$. Then the map $\phi:\cX\to X$
induces a map $\phi_{\infty}:\cJ_{\infty}\cX\to J_{\infty}X$, which
is bijective outside measure zero subsets. The \emph{change of variables
formula }for $\phi_{\infty}$ will be formulated as
\[
\int_{A}\LL^{F}d\mu_{X}=\int_{\phi_{\infty}^{-1}(A)}\LL^{F\circ\phi_{\infty}-\ord\Jac_{\phi}-\fs_{\cX}}d\mu_{\cX}
\]
(for details, see Theorem \ref{thm:change vars}). Here for $\gamma\in\cJ_{\infty}\cX$,
if $j$ is the ramification jump of the associated Artin-Schreier
extension of $k((t)),$ then $\fs_{\cX}(\gamma):=\sht_{V}(j).$ An
interesting consequence of Theorem \ref{thm:main in intro} is the
following: Suppose that $k$ is a finite field, and that $Y\to X$
is a crepant resolution. For each finite extension $\FF_{q}/k$ with
$q$ a power of $p,$ let $N_{q,j}$ be the number of Artin-Schreier
extensions of $\FF_{q}((t))$ with ramification jump $j$. Let $E_{0}\subset Y$
be the preimage of the origin $0\in X.$ Then we have the following
equality (Corollary \ref{cor:counting AS exts}, cf. \cite{Rose})
:
\[
\sharp E_{0}(\FF_{q})=1+\frac{p-1}{p}\sum_{j>0,\, p\nmid j}\frac{N_{q,j}}{q{}^{\sht_{V}(j)}}.
\]
This result would provide new insight into the link between the singularity
theory and the Galois theory of local fields. 

The paper is organized as follows. In Section \ref{sec:G-covers},
we will construct the moduli space of $G$-covers of the formal disk
and study its structure. In Section \ref{sec:Twisted-arcs}, we proceed
with the study of twisted arcs and jets, and their moduli spaces.
Section \ref{sec:Motivic-integration} is devoted to introducing the
motivic integration over the space of twisted arcs. Section \ref{sec:The-Change-of-vars}
contains the proof of the change of variables formula, which is the
technical heart of the paper. In Section \ref{sec:Stringy-invariants},
we will define stringy invariants and conclude various versions of
the McKay correspondence from the change of variables formula. Finally
in Section \ref{sec:Comments}, we end with remarks on future problems.
\begin{acknowledgement}
The author wishes to express his thanks to Masayuki Hirokado, Yujiro
Kawamata and Shinnosuke Okawa for conversations helpful in understanding
subtle behavior of the compound $E_{6}^{1}$-singularity. 
\end{acknowledgement}

\subsection{Convention}

Throughout the paper, we work over a perfect field $k$ of characteristic
$p>0.$ A \emph{ring }means a commutative $k$-algebra. Also every
field is supposed to contain $k.$ We denote by $G$ the cyclic group
of order $p,$ $\ZZ/(p).$ We fix a generator $\sigma$ of $G$ to
be the class of $ $$1\in\ZZ.$

\section{$G$-covers of the formal disk\label{sec:G-covers}}

The main objective of this section is to construct the moduli spaces
of $G$-covers of the formal disk $D:=\Spec k[[t]]$. This will be
used in the next section in the construction of the moduli space of
twisted arcs.

\subsection{$G$-covers of the formal punctured disk\label{sub:G-covers-punctured}}

Let $D^{*}:=\Spec k((t))$ be the formal punctured disk. We will first
examine the set of étale $G$-covers of $D^{*}$, denoted by $\GCov{D^{*}}$.
It is classified by the étale cohomology group $H^{1}(D^{*},G)$ (see
\cite[page 127]{Milne book}). Then from the Artin-Schreier sequence
of étale sheaves,
\[
0\to G\to\cO_{D^{*}}\xrightarrow{\wp:f\mapsto f^{p}-f}\cO_{D^{*}}\to0,
\]
we have
\[
H^{1}(D^{*},G)=\mathrm{coker}(H^{0}(\cO_{D^{*}})\xrightarrow{\wp}H^{0}(\cO_{D^{*}})).
\]
Consequently we have the 1-to-1 correspondence
\[
\GCov{D^{*}}\leftrightarrow\frac{k((t))}{\wp(k((t)))}.
\]
More explicitly, this correspondence is described as follows: For
a ring $A$ and $f\in A,$ we define a ring extension 
\[
A[\wp^{-1}f]:=\frac{A[u]}{(u^{p}-u+f)}
\]
endowed with the $G$-action by $\sigma(u)=u+1.$ Then the $G$-cover
corresponding to the class of $f\in k((t))$ is 
\[
E_{f}^{*}:=\Spec k((t))[\wp^{-1}f].
\]

Next we will describe the set $k((t))/\wp(k((t))).$ Given $f\in k((t)),$
we denote by $f_{i}$ the coefficient of $t^{i}$ in $f$ so that
$f=\sum_{i\in\mathbb{Z}}f_{i}t^{i}$ with $f_{i}=0$ for $i\ll0.$
\begin{lem}
We have $\wp(k[[t]])=\wp(k)\cdot1\oplus k[[t]]\cdot t.$ In particular,
if $k$ is algebraically closed, then $k[[t]]=\wp(k[[t]]).$\end{lem}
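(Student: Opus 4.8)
The statement is essentially about the image of the Artin-Schreier map $\wp(f) = f^p - f$ restricted to the power series ring $k[[t]]$. I want to show two things: that $\wp(k[[t]]) \subseteq \wp(k)\cdot 1 \oplus k[[t]]\cdot t$, and conversely that everything in $\wp(k)\cdot 1 \oplus k[[t]]\cdot t$ is hit. Write out, for $f = \sum_{i\ge 0} f_i t^i \in k[[t]]$, the expansion $f^p = \sum_{i\ge 0} f_i^p t^{pi}$ (using that we are in characteristic $p$, so the Frobenius is additive). Thus $\wp(f) = f^p - f$ has constant term $f_0^p - f_0 = \wp(f_0) \in \wp(k)$, which already shows the constant term of anything in the image lies in $\wp(k)$, giving the inclusion $\subseteq$.

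For the reverse inclusion I would argue by successive approximation / Hensel-type induction on the $t$-adic filtration. Given a target $g = c + \sum_{i\ge 1} g_i t^i$ with $c \in \wp(k)$, I want to solve $\wp(f) = g$. First pick $f_0 \in k$ with $\wp(f_0) = c$, which is possible precisely because $c \in \wp(k)$; this matches the constant terms. Then proceed degree by degree: suppose we have determined $f_0, \dots, f_{n-1}$ so that $\wp(f_0 + f_1 t + \cdots + f_{n-1}t^{n-1})$ agrees with $g$ modulo $t^n$. Looking at the coefficient of $t^n$ in $\wp(f) = f^p - f$: since $p \ge 2$, the term $f^p = \sum f_i^p t^{pi}$ contributes to the $t^n$-coefficient only through $f_i$ with $pi \le n$, hence $i \le n/p < n$, so those are already fixed; meanwhile $-f$ contributes $-f_n$ to the $t^n$-coefficient. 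Therefore the equation for the $t^n$-coefficient reads $(\text{something already determined}) - f_n = g_n$, which is linear in $f_n$ and uniquely solvable over $k$. Iterating gives a well-defined $f \in k[[t]]$ with $\wp(f) = g$, establishing the reverse inclusion and in fact showing the fiber over any such $g$ is a torsor under the solution set in $k$ of $\wp(x) = 0$, i.e. under $G$ itself.

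I should also note that the two pieces $\wp(k)\cdot 1$ and $k[[t]]\cdot t$ form a genuine direct sum inside $k[[t]]$ — this is immediate since one consists of constants and the other of series with zero constant term — so the ``$\oplus$'' in the statement is justified. Finally, the last sentence of the lemma is an instant corollary: if $k$ is algebraically closed then $\wp \colon k \to k$ is surjective (every polynomial $x^p - x - c$ splits), so $\wp(k) = k$ and the right-hand side becomes all of $k[[t]]$, whence $k[[t]] = \wp(k[[t]])$.

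**Main obstacle.** Honestly there is no deep obstacle here; the only point requiring care is the bookkeeping in the inductive step — specifically verifying that the Frobenius term $f^p$ never contributes $f_n$ itself (only lower-index, already-determined coefficients) to the degree-$n$ equation, which relies on $pi = n$ forcing $i = n/p < n$ for $n \ge 1$. Getting that indexing right, and being careful that the constant-term case $n = 0$ is where the hypothesis $c \in \wp(k)$ is used (and is the only place surjectivity of $\wp$ on $k$ is needed), is the whole content of the argument.
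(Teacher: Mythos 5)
Your proof is correct and follows essentially the same route as the paper: compute the coefficients of $\wp(f)=f^p-f$ explicitly (the constant term is $\wp(f_0)$, giving the forward inclusion) and then solve for the $f_n$ degree by degree, using that the Frobenius term only involves indices $i=n/p<n$. No issues.
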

\begin{proof}
For $f\in k[[t]],$ we have 
\[
\wp(f)=\sum_{p\nmid i}-f_{i}t^{i}+\sum_{p\mid i}(f_{i/p}^{p}-f_{i})t^{i}.
\]
Hence $\wp(k[[t]])\subset\wp(k)\cdot1\oplus k[[t]]\cdot t.$ For the
converse, let $g\in\wp(k)\cdot1\oplus k[[t]]\cdot t$. Then we can
inductively choose the coefficients $f_{i}$ of $f$ such that $\wp(f)=g$
as follows: First put $f_{0}=\wp^{-1}g_{0}.$ If we have chosen $f_{0},f_{1},\dots,f_{i-1}$
such that $\wp(f)\equiv g\mod t^{i},$ then we set either $f_{i}:=-g_{i}$
or $f_{i}:=f_{i/p}^{p}-g_{i}$ depending on whether $p$ divides $i$.
This shows the first assertion. The second assertion follows from
the fact that if $k$ is algebraically closed, then $\wp(k)=k.$ \end{proof}
\begin{notation}
We put $\NN':=\{j\in\ZZ\mid j>0,\, p\nmid j\}$ and $\NN'_{0}:=\NN'\cup\{0\}.$ \end{notation}
\begin{lem}
\label{lem:representative}For $f\in k((t))$, there exists $g=\sum_{i\in\NN_{0}'}g_{-i}t^{-i}\in k[t^{-1}]\subset k((t))$
such that $f-g\in\wp(k((t))).$ Moreover such $g_{i},$ $i<0,$ are
uniquely determined and the class of $g_{0}$ in $k/\wp(k)$ is also
uniquely determined. \end{lem}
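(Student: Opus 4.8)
The strategy is to combine the previous lemma (which handles the ``non-negative tail'' of a power series) with an Artin--Schreier clean-up of the negative part, carried out degree by degree from the most negative term upward. The key algebraic identity to exploit is that for any $c\in k$ and any $m\in\ZZ$ we have $\wp(ct^{m})=c^{p}t^{pm}-ct^{m}$, so modulo $\wp(k((t)))$ a monomial $ct^{m}$ can be traded for $c^{p}t^{pm}$; iterating, $ct^{m}\equiv c^{p^{r}}t^{p^{r}m}$ for every $r\ge 0$. When $m<0$ and $p\mid m$, this lets us \emph{lower} the order of the pole from $m$ to $m/p$ (after extracting $p$-th roots, which is legitimate since $k$ is perfect), repeating until the exponent is no longer divisible by $p$; when $m<0$ and $p\nmid m$, the monomial is already of the desired shape. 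This suggests the following.

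\medskip

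\textbf{Existence.} Write $f=\sum_{i\in\ZZ}f_{i}t^{i}$ with $f_{i}=0$ for $i\ll 0$. I would argue by descending induction on the pole order. Suppose $f$ has a pole of order $n>0$, i.e.\ $f_{-n}\ne 0$ and $f_{i}=0$ for $i<-n$. If $p\nmid n$, do nothing to this term and pass to $f-f_{-n}t^{-n}$, which has strictly smaller pole order. If $p\mid n$, write $n=pn'$ and use perfectness of $k$ to pick $c\in k$ with $c^{p}=f_{-n}$; then $f-\wp(ct^{-n'})=f-f_{-n}t^{-n}+ct^{-n'}$ has pole order $\le n'<n$ (the term $ct^{-n'}$ is absorbed into the lower-order part). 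Since pole orders strictly decrease and the process is confined to the principal part, after finitely many steps we reach $f'\equiv f \pmod{\wp(k((t)))}$ whose principal part is supported on exponents in $-\NN'$; subtracting that principal part leaves an element of $k[[t]]$, and by the previous lemma $k[[t]]=\wp(k)\cdot 1\oplus k[[t]]\cdot t + \wp(k[[t]])$, so modulo $\wp$ we can further remove everything except a single constant, giving the desired $g=\sum_{i\in\NN_{0}'}g_{-i}t^{-i}$.

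\medskip

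\textbf{Uniqueness.} It suffices to show: if $g=\sum_{i\in\NN_{0}'}g_{-i}t^{-i}\in\wp(k((t)))$, then $g_{-i}=0$ for all $i>0$ and $g_{0}\in\wp(k)$. Write $g=\wp(h)=h^{p}-h$ with $h=\sum_{i}h_{i}t^{i}\in k((t))$. If $h$ had a genuine pole, say of order $m>0$, then $h^{p}$ has a pole of order $pm$ which cannot be cancelled by $-h$, so $g$ would have a pole of order $pm$; but $pm$ is divisible by $p$, contradicting that the exponents appearing in $g$ lie in $-\NN'$ (unless $g_{-pm}=0$, but the leading coefficient $h_{-m}^{p}\ne 0$). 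Hence $h\in k[[t]]$, and then the formula $\wp(h)=\sum_{p\nmid i}(-h_{i})t^{i}+\sum_{p\mid i}(h_{i/p}^{p}-h_{i})t^{i}$ from the proof of the previous lemma shows $g\in\wp(k)\cdot 1\oplus k[[t]]\cdot t$, forcing all the asserted vanishings and $g_{0}\in\wp(k)$. Applying this to the difference of two representatives $g,g'$ of $f$ gives the uniqueness statement.

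\medskip

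\textbf{Main obstacle.} The only delicate point is bookkeeping in the existence step: each Artin--Schreier adjustment $\wp(ct^{-n'})$ reintroduces a lower-degree term $ct^{-n'}$ that may itself be divisible by $p$, so one must be careful that the induction is genuinely on a well-founded quantity. Ordering by pole order (a non-negative integer that strictly decreases at each stage, regardless of what new lower-order terms appear) resolves this cleanly. A secondary point is making sure the final application of the previous lemma is to the \emph{power-series part only} after the principal part has been put in normal form; this is immediate once the principal-part reduction is complete.
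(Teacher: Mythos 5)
Your proof is correct and follows essentially the same route as the paper: trade each monomial $ct^{-pm}$ in the principal part for $c^{1/p}t^{-m}$ via $\wp$ (termination being guaranteed by a strictly decreasing, well-founded quantity), clean up the non-negative part with the preceding lemma, and deduce uniqueness from the fact that any element of $\wp(k((t)))$ with a pole has pole order divisible by $p$. The only blemish is the parenthetical claim that $f-\wp(ct^{-n'})$ has pole order $\le n'$ --- in general it is only $\le n-1$, e.g.\ $f=t^{-p}+t^{-p+1}$ --- but, as your own closing paragraph correctly observes, a strict decrease of the pole order is all the induction needs.
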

\begin{proof}
From the preceding lemma, we may eliminate the terms of positive degrees
in $f$ and assume that $f_{i}=0$ for $i>0.$ Let $pi$ $(i>0)$
be the largest multiple of $p$ such that $f_{-pi}\ne0$ if any. Then
replacing $f$ with $f-\wp(f_{-pi}^{1/p}t^{-i}),$ we get that $f_{-pi}=0$
without changing $f_{i}$ for $i<-pi$. (Since $k$ is perfect, $f_{-pi}^{1/p}$
exists in $k$.) Iterating this procedure, we eventually get a polynomial
$g$ of the desired form. 

For the uniqueness, let $g'\in k[t^{-1}]$ have the same property.
From the conditions on $g$ and $g',$ we have either $h:=g-g'\in k$
or $-\ord h\in\NN'.$ However we have $h\in\wp(k((t)))$ and every
element of $\wp(k((t)))$ of negative order has order $-pn$ with
$n$ a positive integer. $ $Thus we conclude $h\in\wp(k)$. This
shows the uniqueness of the lemma.\end{proof}
\begin{defn}
Let $A$ be a ring. A \emph{representative polynomial over $A$ }is
a Laurent polynomial of the form 
\[
f=\sum_{i\in\NN'}f_{-i}t^{-i}\in A[t^{-1}],\, f_{-i}\in A.
\]
We note that there is no constant term. We denote by $\RP_{A}$ the
set of representative polynomials over $A.$
\end{defn}
Lemma \ref{lem:representative} shows the following:
\begin{prop}
\label{prop:corresp}We have a one-to-one correspondence, 
\[
\GCov{D^{*}}\leftrightarrow\RP_{k}\times\frac{k}{\wp(k)}.
\]
In particular, if $k$ is algebraically closed, then 
\[
\GCov{D^{*}}\leftrightarrow\RP_{k}.
\]
\end{prop}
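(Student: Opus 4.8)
The plan is to assemble the correspondence from the facts already established. By the étale cohomology discussion together with the Artin--Schreier sequence, we have a canonical bijection $\GCov{D^{*}}\leftrightarrow k((t))/\wp(k((t)))$, sending the class of $f$ to $E_f^*=\Spec k((t))[\wp^{-1}f]$. So it suffices to produce a bijection between $k((t))/\wp(k((t)))$ and $\RP_k\times k/\wp(k)$. This is exactly what Lemma \ref{lem:representative} provides: the map $(g,\bar c)\mapsto$ (class of $g+c$), where $g\in\RP_k$ and $c\in k$ is a lift of $\bar c\in k/\wp(k)$, is well defined because changing $c$ by an element of $\wp(k)\subset\wp(k((t)))$ does not change the class; it is surjective by the existence part of the lemma (writing the representative as $g=\sum_{i\in\NN_0'}g_{-i}t^{-i}$, the terms with $i>0$ form a representative polynomial and the constant term supplies $\bar c$); and it is injective by the uniqueness part, which says that the $g_{-i}$ for $i<0$ and the class of $g_0$ in $k/\wp(k)$ are determined by the $\wp(k((t)))$-class of $f$. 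Composing the two bijections gives the first displayed correspondence.

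For the second assertion, observe that when $k$ is algebraically closed the Artin--Schreier map $\wp\colon k\to k$ is surjective, since $u^p-u+c$ always has a root in $k$; hence $k/\wp(k)$ is a single point and the factor drops out, leaving $\GCov{D^{*}}\leftrightarrow\RP_k$.

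I expect no real obstacle here: the statement is essentially a repackaging of Lemma \ref{lem:representative} and the cohomological classification, and the only points needing a word of care are checking that the map $(g,\bar c)\mapsto$ class of $g+c$ is independent of the chosen lift $c$ (immediate from $\wp(k)\subset\wp(k((t)))$) and matching the ``$i=0$ versus $i>0$'' bookkeeping between the form $\sum_{i\in\NN_0'}g_{-i}t^{-i}$ appearing in Lemma \ref{lem:representative} and the definition of $\RP_k$, which excludes the constant term precisely so that the constant is absorbed into the $k/\wp(k)$ factor.
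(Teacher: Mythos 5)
Your proposal is correct and follows exactly the route the paper intends: the proposition is stated there as an immediate consequence of Lemma \ref{lem:representative} combined with the cohomological identification $\GCov{D^{*}}\leftrightarrow k((t))/\wp(k((t)))$, and your verification of well-definedness, surjectivity, and injectivity (plus the surjectivity of $\wp$ on an algebraically closed field) fills in precisely the routine details the paper leaves implicit.
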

\begin{defn}
Let $\bar{k}$ be the algebraic closure of $k.$ We say that $E_{1}^{*},\, E_{2}^{*}\in\GCov{D^{*}}$
are \emph{geometrically equivalent }and write $E_{1}^{*}\geoeq E_{2}^{*}$
if their complete base changes $E_{1}^{*}\hat{\times}\bar{k}$ and
$E{}_{2}^{*}\hat{\times}_{k}\bar{k}$ are isomorphic $G$-covers of
$D^{*}\hat{\times}_{k}\bar{k}=\Spec\bar{k}((t))$. 
\end{defn}
Obviously 
\[
\GCov{D^{*}}/\geoeq\leftrightarrow\RP_{k}.
\]
If $k$ is a finite field, then $k/\wp(k)$ has $p$ elements. Hence
the quotient map 
\[
\GCov{D^{*}}\to\GCov{D^{*}}/\geoeq
\]
is a $p$-to-1 surjection. 
\begin{defn}
We say that $E^{*}\in\GCov{D^{*}}$ is \emph{representative }if $E^{*}$
is isomorphic to $E_{f}^{*}$ for $f\in\RP_{k}.$ We denote the set
of representative $G$-covers of $D^{*}$ by $\GCovrep{D^{*}}.$
\end{defn}
By construction, we have:
\begin{prop}
\label{prop:G-Cov maps}The composition 
\[
\GCovrep{D^{*}}\hookrightarrow\GCov{D^{*}}\twoheadrightarrow\GCov{D^{*}}/\geoeq
\]
is bijective. Moreover the right map is $p$-to-$1$ if $k$ is a
finite field.
\end{prop}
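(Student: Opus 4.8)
The plan is to reduce the statement to Proposition~\ref{prop:corresp} and the uniqueness clause of Lemma~\ref{lem:representative}, together with a base-change computation over the algebraic closure $\bar{k}$. First I would observe that under the bijection $\GCov{D^{*}}\leftrightarrow\RP_{k}\times k/\wp(k)$ of Proposition~\ref{prop:corresp}, a cover $E_{f}^{*}$ with $f\in\RP_{k}$ corresponds to the pair $(f,0)$, since $f$ is already the representative produced by Lemma~\ref{lem:representative} and has no constant term. Hence $f\mapsto E_{f}^{*}$ is a bijection $\RP_{k}\xrightarrow{\ \sim\ }\GCovrep{D^{*}}$, and under this identification the composition in the statement becomes the map
\[
\RP_{k}\longrightarrow\GCov{D^{*}}/\geoeq,\qquad f\longmapsto[E_{f}^{*}],
\]
which is precisely the relation $\GCov{D^{*}}/\geoeq\leftrightarrow\RP_{k}$ recorded above; so it suffices to prove this map is bijective.

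For surjectivity I would take an arbitrary $G$-cover, present it via Lemma~\ref{lem:representative} as $E_{g}^{*}$ with $g=g_{0}\cdot1+h$, $g_{0}\in k$, $h\in\RP_{k}$, and observe that after complete base change to $\bar{k}$ the constant $g_{0}$ lies in $\wp(\bar{k})$ (as $\bar{k}$ is algebraically closed), hence in $\wp(\bar{k}((t)))$; thus $E_{g}^{*}\hat{\times}_{k}\bar{k}$ and $E_{h}^{*}\hat{\times}_{k}\bar{k}$ are the same $G$-cover of $\Spec\bar{k}((t))$, so $E_{g}^{*}\geoeq E_{h}^{*}$ with $E_{h}^{*}\in\GCovrep{D^{*}}$.

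For injectivity --- the only point requiring care --- suppose $f_{1},f_{2}\in\RP_{k}$ with $E_{f_{1}}^{*}\geoeq E_{f_{2}}^{*}$. I would identify the complete base change $E_{f_{i}}^{*}\hat{\times}_{k}\bar{k}$ with $\Spec\bar{k}((t))[\wp^{-1}f_{i}]$, the Artin--Schreier $G$-cover of $\Spec\bar{k}((t))$ attached to the class of $f_{i}$ in $\bar{k}((t))/\wp(\bar{k}((t)))$; geometric equivalence then says precisely that $f_{1}-f_{2}\in\wp(\bar{k}((t)))$. Since $f_{1}-f_{2}$ has no constant term it is of the shape covered by Lemma~\ref{lem:representative} applied over $\bar{k}$, hence it is a representative of the zero class of $\bar{k}((t))/\wp(\bar{k}((t)))$; as $0$ is visibly another such representative, the uniqueness of the negative-degree coefficients in that lemma forces $f_{1}-f_{2}=0$, i.e.\ $f_{1}=f_{2}$.

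Finally, the second assertion is immediate from Proposition~\ref{prop:corresp}: the right-hand map $\GCov{D^{*}}\to\GCov{D^{*}}/\geoeq$ is, under that bijection, the projection $\RP_{k}\times k/\wp(k)\to\RP_{k}$, and when $k$ is finite the additive map $\wp\colon k\to k$ has kernel $\FF_{p}$, so $k/\wp(k)$ has exactly $p$ elements and the map is $p$-to-$1$. The only genuine obstacle is the base-change bookkeeping: identifying $\bar{k}((t))$ with the completion of $k((t))\otimes_{k}\bar{k}$ and checking that the Artin--Schreier classification of $G$-covers is compatible with this base change, so that ``$E_{f_{1}}^{*}\geoeq E_{f_{2}}^{*}$'' really translates into ``$f_{1}\equiv f_{2}$ modulo $\wp(\bar{k}((t)))$''; everything else is a direct appeal to the two preceding lemmas.
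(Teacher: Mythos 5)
Your proof is correct and follows the same route the paper takes: the paper dismisses this proposition with ``by construction,'' relying on Proposition \ref{prop:corresp}, the uniqueness clause of Lemma \ref{lem:representative} applied over $\bar{k}$, and the fact that $k/\wp(k)$ has $p$ elements for $k$ finite — exactly the ingredients you assemble. You have simply made explicit the base-change and injectivity bookkeeping that the paper leaves implicit.
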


\subsection{The stratification by the ramification jump}

The spaces $\GCov{D^{*}}$, $\GCov{D^{*}}/\geoeq$ and $\RP_{k}$
are all infinite-dimensional. We will construct stratifications of
them with finite-dimensional strata, which will help to control these
spaces.

We say that $E^{*}\in\GCov{D^{*}}$ is \emph{trivial }if $E^{*}$
is the disjoint union of $p$ copies of $D^{*},$ equivalently if
$E^{*}$ corresponds to $0$ by the correspondence in Proposition
\ref{prop:corresp}. For a non-trivial $E^{*}\in\GCov{D^{*}},$ let
$E$ be the normalization of $D:=\Spec k[[t]]$ in $\cO_{E^{*}}$
and $\fm_{E}$ the maximal ideal of $\cO_{E}.$ Then $G$ acts on
$\cO_{E}/\fm_{E}^{i}$ for all $i\in\NN.$ 
\begin{defn}
The \emph{ramification jump }of $E^{*}$ (and of $E$), denoted by
$\rj(E^{*})=\rj(E),$ is defined as follows. If $E$ is unramified
over $D,$ then we put $\rj(E)=0.$ Otherwise $\rj(E)$ is the positive
integer $j$ such that the $G$-action on $\cO_{E}/\fm_{E}^{i}$ is
trivial if $i\le j+1$, and non-trivial if $i\ge j+2$.%
\footnote{Since $\cO_{E^{*}}/k((t))$ is a cyclic extension of prime degree,
the ramification jump is unique and equal in both lower and upper
numberings. See for instance \cite[Section 2]{Thomas Lara}.%
} We thus have a function
\[
\rj:\GCov{D^{*}}\to\mathbb{Z}_{\ge0}.
\]
\end{defn}
\begin{prop}
\label{prop:order rj corresp}Let $f\in k((t))$. Suppose $j:=-\ord f\in\NN'_{0}.$
(In particular, we can take $f$ as a representative polynomial.)
By convention, if $f=0$, then we put $j=0.$ Then $\rj(E_{f}^{*})=j.$
In particular, the function $\rj$ takes values in $\NN'_{0}.$\end{prop}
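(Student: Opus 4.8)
The plan is to compute the ramification jump of $E_f^* = \Spec k((t))[\wp^{-1}f]$ directly from the Artin--Schreier equation $u^p - u + f = 0$, where $j := -\ord f \in \NN'_0$. First I would handle the trivial case: if $f = 0$ then $E_f^*$ is the trivial cover and $\rj = 0 = j$ by convention, so assume $f \neq 0$, i.e.\ $j > 0$ with $p \nmid j$. The key point is that with such an $f$, the element $u$ generating the extension satisfies $\ord u = -j/p$ is \emph{not} an integer, so in fact the extension $\cO_{E^*}/k((t))$ is totally ramified and a uniformizer of $E$ can be read off from $u$ and $t$: since $\gcd(j,p) = 1$, choose integers $a,b$ with $aj + bp = 1$, and set $\pi_E := u^a t^b$; then $\ord_E \pi_E = a \cdot (-j) \cdot (\text{something}) + \dots$ — more precisely, normalizing the valuation $v_E$ on $E$ so that $v_E(t) = p$, one gets $v_E(u) = -j$ (from $v_E(u^p) = v_E(u - f) = v_E(f) = -jp$ and $p \nmid j$ forcing $v_E(u) = -j$), hence $v_E(\pi_E) = -aj + bp$; adjusting signs one produces an element of valuation $1$.

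Next I would compute $\sigma(\pi_E) - \pi_E$ (or more directly $v_E(\sigma(\pi_E)/\pi_E - 1)$) to extract the jump. The cleanest route is to use the standard formula: for a totally ramified $\ZZ/p$-extension, the ramification jump $j'$ (lower numbering) is $v_E(\sigma(\pi_E) - \pi_E) - 1$. Using $\sigma(u) = u + 1$ and $v_E(u) = -j$, I would estimate $v_E(\sigma(\pi_E) - \pi_E)$: writing $\pi_E$ as a monomial $u^a t^b$ (after clearing any constant factor), $\sigma(u^a t^b) - u^a t^b = ((u+1)^a - u^a) t^b$, and $(u+1)^a - u^a$ has leading term $a u^{a-1}$, so its valuation is $v_E(u^{a-1}) + (\text{correction if } p \mid a)$. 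Combining, $v_E(\sigma(\pi_E) - \pi_E) = v_E(u^{a-1} t^b) = -(a-1)j + bp = (-aj + bp) + j = 1 + j$, giving $j' = j$. I would need to be a little careful that $a$ can be chosen with $p \nmid a$ (possible since $p \nmid j$, so $a$ is a unit mod $p$ and we may take $0 < a < p$), so that the binomial expansion genuinely has $a u^{a-1}$ as its dominant term and no cancellation occurs.

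The main obstacle I anticipate is bookkeeping with the two valuations and making sure the representative $f$ is in \emph{minimal form}, i.e.\ that one cannot lower $-\ord f$ by subtracting an element of $\wp(k((t)))$: this is exactly guaranteed by the hypothesis $p \nmid j$ (or $j = 0$), which is the whole point of Lemma \ref{lem:representative} and the reduction procedure in its proof — if $p \mid \ord f$ one could kill the leading term. So I would explicitly invoke that the condition $j = -\ord f \in \NN'_0$ means $f$ is, up to $\wp(k((t)))$, a representative polynomial, and then the jump computed from this representative is the invariant $\rj(E_f^*)$. An alternative, perhaps cleaner, approach avoiding explicit uniformizers: work with the filtration directly, showing $\cO_E = k((t))[u]$ (check that the latter is already normal, e.g.\ because the discriminant computation shows the only ramification is what we expect), identify $\fm_E$, and observe that $\sigma$ acts trivially on $\cO_E/\fm_E^i$ precisely when $v_E(\sigma(z) - z) \geq i$ for all $z \in \cO_E$, which reduces to the single generator $u$ (and $t$, on which $\sigma$ acts trivially), giving the threshold $i \leq j+1$ versus $i \geq j+2$ as in the definition. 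Either way, the last sentence of the proposition ("$\rj$ takes values in $\NN'_0$") is then immediate, since any $E^* \in \GCov{D^*}$ is isomorphic to some $E_f^*$ with $f$ a representative polynomial by Proposition \ref{prop:corresp}, and such $f$ has $-\ord f \in \NN'_0$.
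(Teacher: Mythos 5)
Your proposal is correct and follows essentially the same route as the paper: the paper also produces a uniformizer of the form $s = t^{lq-c}g^{l}$ (your $u^{a}t^{b}$ with $-aj+bp=1$, written via $j=pq-r$ and $lr=pc+1$), expands $\sigma(s)-s$ to find leading term $l\,t^{lq-c}g^{l-1}$, and reads off $v_{E}(\sigma(s)-s)=j+1$. The points you flag — that $p\nmid a$ so the binomial leading term survives, and that $p\nmid j$ forces $v_{E}(u)=-j$ and total ramification — are exactly the ingredients the paper uses.
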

\begin{proof}
Let $L:=\cO_{E_{f}^{*}}=k((t))[\wp^{-1}f]$ and $g:=\wp^{-1}f\in L$.
If $j=0,$ then $\cO_{E_{f}^{*}}$ is isomorphic to the product of
$p$ copies of $k((t))$ or to $k'((t))$ for an Artin-Schreier extension
$k'/k.$ Hence the assertion holds. Next we suppose $j>0$ and write
$j=pq-r,$ where $q$ and $r$ are integers with $1\le r\le p-1$.
If $v_{L}$ denotes the normalized valuation on $L,$ then 
\[
v_{L}(g)=-j=-pq+r.
\]
Let $l\in\left\{ 1,2,\dots,p-1\right\} $ be such that $lr=pc+1$
for some non-negative integer $c$ . Since 
\[
v_{L}(t^{lq-c}g^{l})=p(lq-c)-lj=(lpq-pc)-lpq+pc+1=1,
\]
$s:=t^{lq-c}g^{l}$ is a uniformizer of $L$. We now have 
\[
\sigma(s)=t^{lq-c}(g+1)^{l}=t^{lq-c}g^{l}+lt^{lq-c}g^{l-1}+\text{(higher degree terms)}.
\]
Therefore 
\[
\sigma(s)-s=lt^{lq-c}g^{l-1}+\text{(higher degree terms)}
\]
and 
\begin{align*}
v_{L}(\sigma(s)-s) & =p(lq-c)+(l-1)(-pq+r)\\
 & =p(lq-c)-lpq+pc+1+pq-r\\
 & =pq-r+1\\
 & =j+1.
\end{align*}
This proves the proposition.
\end{proof}
For $j\in\NN'_{0},$ we set 
\begin{gather*}
\GCov{D^{*},j}:=\{E^{*}\in\GCov{D^{*}}\mid\rj(E^{*})=j\},\\
\GCovrep{D^{*},j}:=\{E^{*}\in\GCovrep{D^{*}}\mid\rj(E^{*})=j\},\text{ and}\\
\RP_{k,j}:=\{f\in\RP_{k}\mid\ord f=-j\}.
\end{gather*}

\begin{prop}
For $j\in\NN'$, we have 
\[
\GCovrep{D^{*},j}\leftrightarrow\GCov{D^{*},j}/\geoeq\leftrightarrow\RP_{k,j}\leftrightarrow k^{*}\times k^{j-1-\left\lfloor j/p\right\rfloor }.
\]
Here $\left\lfloor \cdot\right\rfloor $ denotes the floor function,
which assigns to a real number $a$ the largest integer not exceeding
$a.$ \end{prop}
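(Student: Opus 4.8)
The plan is to establish the chain of bijections from right to left, using the previously developed correspondences. The bijection $\GCovrep{D^{*},j}\leftrightarrow\GCov{D^{*},j}/\geoeq$ is immediate from Proposition \ref{prop:G-Cov maps}, since the ramification jump is preserved under the maps there (geometric equivalence does not change the ramification jump, and a representative cover has the jump of its defining polynomial by Proposition \ref{prop:order rj corresp}). Similarly, the bijection $\GCov{D^{*},j}/\geoeq\leftrightarrow\RP_{k,j}$ follows by restricting the bijection $\GCov{D^{*}}/\geoeq\leftrightarrow\RP_{k}$ of Proposition \ref{prop:corresp} to the subsets cut out by $\rj=j$, again invoking Proposition \ref{prop:order rj corresp} to see that a representative polynomial $f$ lies in $\RP_{k,j}$ precisely when the corresponding cover $E_f^*$ has ramification jump $j$.

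The substance of the proposition is therefore the last bijection $\RP_{k,j}\leftrightarrow k^{*}\times k^{j-1-\lfloor j/p\rfloor}$. By definition, $\RP_{k,j}$ consists of Laurent polynomials $f=\sum_{i\in\NN'}f_{-i}t^{-i}$ with $\ord f=-j$, i.e. $f_{-j}\ne 0$ and $f_{-i}=0$ for $i>j$ (note $j\in\NN'$ so this top coefficient is allowed). Thus an element of $\RP_{k,j}$ is determined by the nonzero leading coefficient $f_{-j}\in k^*$ together with the remaining coefficients $f_{-i}\in k$ for the indices $i$ with $0<i<j$ and $p\nmid i$. So I would simply count those indices: the number of integers $i$ with $1\le i\le j-1$ is $j-1$, and among these the multiples of $p$ are $p,2p,\dots,\lfloor j/p\rfloor p$ — but one must check that $\lfloor j/p\rfloor\le j-1$, which holds since $j\ge 2$ (as $j\in\NN'$ forces $j\ne p$, hence $j\ne 1$ when $p>1$... more carefully, $j=1$ is allowed only if $p\nmid 1$, which is true, so $j=1$ is in $\NN'$; in that case $j-1-\lfloor j/p\rfloor = 0$ and $\RP_{k,1}\leftrightarrow k^*$, consistent). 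Removing the $\lfloor j/p\rfloor$ multiples of $p$ from the $j-1$ indices leaves $j-1-\lfloor j/p\rfloor$ free coefficients in $k$, giving the asserted product decomposition $\RP_{k,j}\cong k^*\times k^{j-1-\lfloor j/p\rfloor}$.

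I do not anticipate a genuine obstacle here; the proof is essentially bookkeeping. The only points requiring a moment's care are: (i) confirming that when $j=1$ the formula degenerates correctly (no free $k$-coordinates, only the $k^*$ from the leading term), and that the edge case $\lfloor j/p\rfloor p = j-1$ or similar boundary behavior does not over- or under-count; and (ii) making explicit that all three left-hand bijections in the chain are just the restrictions of the global bijections of Propositions \ref{prop:corresp} and \ref{prop:G-Cov maps} to the ramification-jump-$j$ locus, which is legitimate because $\rj$ is a well-defined function on each of $\GCov{D^{*}}$, $\GCov{D^{*}}/\geoeq$, and $\RP_k$ (via $f\mapsto -\ord f$) compatible with those bijections — this compatibility is exactly the content of Proposition \ref{prop:order rj corresp}. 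Once these are noted, the proof is complete.
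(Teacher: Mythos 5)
Your proposal is correct and follows essentially the same route as the paper: the paper likewise dismisses the two left-hand correspondences as clear (they are restrictions of Propositions \ref{prop:corresp} and \ref{prop:G-Cov maps} compatible with $\rj$ via Proposition \ref{prop:order rj corresp}), and obtains the right-hand one by sending $f$ to its coefficients after noting $\sharp\{i\in\NN'\mid i\le j\}=j-\lfloor j/p\rfloor$. Your index count ($j-1$ indices below $j$, minus the $\lfloor j/p\rfloor$ multiples of $p$, all of which indeed lie below $j$ since $p\nmid j$) is the same bookkeeping, just organized by first excluding $i=j$.
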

\begin{proof}
The left and middle correspondences are clear. We note that 
\[
\sharp\{i\in\NN'\mid i\le j\}=j-\left\lfloor j/p\right\rfloor .
\]
The right correspondence sends $\sum_{i\in\NN',\, i\le j}g_{-i}t^{-i},$
$g_{-j}\ne0$ to $(g_{-i})_{i\in\NN',i<j}.$ 
\end{proof}
Thus for instance, the infinite-dimensional space $\GCovrep{D^{*}}$
admits a stratification
\[
\GCovrep{D^{*}}=\bigsqcup_{j\in\NN_{0}'}\GCovrep{D^{*},j},
\]
whose strata are all finite-dimensional.

For later use, also we define $\GCov{D^{*},\le j}:=\bigcup_{j'\le j}\GCov{D^{*},j'}$
and similarly for $\GCovrep{D^{*},\le j}$ and $\RP_{k,\le j}.$ Then 

\[
\GCovrep{D^{*},\le j}\leftrightarrow\GCov{D^{*},\le j}/\geoeq\leftrightarrow\RP_{k,\le j}\leftrightarrow k^{j-\left\lfloor j/p\right\rfloor }.
\]

\subsection{Moduli spaces of $G$-covers of $D^{*}$}

Harbater \cite[Sec. 2]{Harbater} constructed the coarse moduli space
of $G$-covers of the formal disk $D=\Spec k[[t]]$ when $k$ is algebraically
closed.%
\footnote{Precisely he constructed the coarse moduli space of \emph{pointed
}principal $G$-covers. In our case where $G$ is abelian, it is equal
to the coarse moduli space of \emph{unpointed }principal $G$-covers.%
} He also illustrates with an example why the moduli space cannot have
a universal family (Ibid, Remark 2.2). Since we would like to still
have a ``universal family'' and work over a non-algebraically closed
field, we will take a different approach. 

In view of his example, to have a universal family, it seems that
we need an additional structure on $G$-covers. We will take representative
polynomials as such a structure, or rather consider the moduli space
of representative polynomials. 

For each $j$, the functor 
\[
\text{\{affine \ensuremath{k}-scheme\}\ensuremath{\to\text{\{set\}}}},\,\Spec A\mapsto\RP_{A,\le j}
\]
is obviously represented by a scheme isomorphic to $\mathbb{A}_{k}^{j-\left\lfloor j/p\right\rfloor }$,
which we denote by $\bRP_{k,\le j}.$ Explicitly we can write its
coordinate ring as 
\[
B_{\le j}:=k[x_{i}\mid i\in\NN',\, i\le j].
\]
Then the identity morphism of $\bRP_{k,\le j}$ corresponds to the
\emph{universal representative polynomial, }
\[
f_{j}^{\univ}:=\sum_{j'\in\NN',\, j'\le j}x_{j'}t^{-j'}\in\RP_{B_{\le j},\le j}.
\]
For $j_{1}\le j_{2},$ we have a canonical closed embedding $\bRP_{k,\le j_{1}}\hookrightarrow\bRP_{k,\le j_{2}}.$

Then the functor
\[
\text{\{affine \ensuremath{k}-scheme\}\ensuremath{\to\text{\{set\}}}},\,\Spec A\mapsto\RP_{A}
\]
is represented by $\bRP_{k}:=\bigcup_{j}\bRP_{k,\le j}$ in the sense
that for each affine $k$-scheme $\Spec A,$ 
\[
\RP_{A}\leftrightarrow\{\phi\in\Hom{\Spec A}{\bRP_{k}}\mid\phi(\Spec A)\subset\bRP_{k,\le j}\,(j\gg0)\}.
\]
In particular, for a field $K$, we have $\RP_{K}\leftrightarrow\bRP_{k}(K).$
The identity morphism of $\bRP_{k}$ does correspond to not a representative
\emph{polynomial }but a representative \emph{series 
\[
f_{\infty}^{\univ}:=\sum_{j\in\NN'}x_{j}t^{-j}.
\]
}
\begin{defn}
A \emph{representative family of $G$-covers of $D^{*}$ (of ramification
jump $\le j$) over an affine scheme $S=\Spec A$ }is an étale $G$-torsor
over $S\hat{\times}D^{*}$ which is isomorphic to $\Spec A((t))[\wp^{-1}f]$
with $f\in\RP_{A}$ $(f\in\RP_{A,\le j}).$ We denote the set of isomorphism
classes of those families by $\GCovrep{D^{*}}(S)$ ($\GCovrep{D^{*},\le j}(S)$).
\end{defn}
The functor
\begin{eqnarray*}
\text{\{affine \ensuremath{k}-scheme\}} & \to & \text{\{set\}}\\
S & \mapsto & \GCovrep{D^{*},\le j}(S)
\end{eqnarray*}
is represented by a scheme canonically isomorphic to $\bRP_{k,\le j}$,
which is denoted by $\bGCovrep{D^{*},\le j}$$.$ We have the universal
family of representative $G$-covers of ramification jump $\le j$:
\[
\xymatrix{E_{\le j}^{*,\univ}:=\Spec B_{\le j}((t))[\wp^{-1}f_{j}^{\univ}]\ar[d]\\
\Spec B_{\le j}((t))\ar[d]\\
\Spec B_{\le j}=\bGCovrep{D^{*},\le j}
}
\]
We define 
\[
\bGCovrep{D^{*}}:=\bigcup_{j\in\NN'_{0}}\bGCovrep{D^{*},\le j}.
\]
This represents the functor $S\mapsto\GCovrep{D^{*}}(S)$ in the same
way as $\bRP_{k}$ represents $\Spec A\mapsto\RP_{A}.$ In particular,
for a perfect field $K$, we have a one-to-one correspondence 
\[
\bGCovrep{D^{*}}(K)=\GCovrep{D^{*}\hat{\times}_{k}K}\leftrightarrow\GCov{D^{*}\hat{\times}_{k}K}/\geoeq.
\]
The universal family $E_{\infty}^{*,\univ}$ over $\bGCovrep{D^{*}}$
is defined as the union of $E_{\le j}^{*,\univ}$$.$

Putting $\bRP_{k,j}:=\bRP_{k,\le j}\setminus\bRP_{k,\le j-1},$ we
have a stratification $\bRP_{k}=\bigsqcup_{j}\bRP_{k,j}.$ Similarly
we have $\bGCovrep{D^{*}}=\bigsqcup_{j}\bGCovrep{D^{*},j}.$ Then
for $j>0,$ 
\begin{equation}
\bRP_{k,j}\cong\bGCovrep{D^{*},j}\cong\mathbb{G}_{m}\times\mathbb{A}_{k}^{j-1-\left\lfloor j/p\right\rfloor }.\label{eq:bRP dim}
\end{equation}
For a ring $A,$ the $A$-points of $\bRP_{k,j}$ corresponds to 
\[
\RP_{A,j}:=\{f\in\RP_{A,\le j}\mid f_{j}\in A^{*}\}.
\]

\subsection{The stratified moduli space of $G$-covers of the formal disk\label{sub:Moduli-disk}}

What we will really need is the moduli space of (ramified) $G$-covers
of the formal \emph{non-punctured} disk $D=\Spec k[[t]].$ A\emph{
$G$-cover }of $D$ means the normalization $E$ of $\Spec k[[t]]$
in a $G$-cover $E^{*}\to D^{*}.$ If exists, such a moduli space
should bijectively correspond with the moduli space of $G$-covers
of $D^{*}$ at the level of points. The author does not know so far
if such a moduli space exists. Instead we will construct strata of
the hypothetical moduli space, which are sufficient for application
to the motivic integration. 

We define $\bGCovrep{D,j}$ to be $\bGCovrep{D^{*},j}$ endowed with
a different universal family constructed as follows. The coordinate
ring of this moduli space is $B_{j}:=B_{\le j}[x_{j}^{-1}].$ Then
the universal family of $\bGCovrep{D^{*},j}$ is written as:
\[
E_{j}^{*,\univ}:=\Spec B_{j}((t))[\wp^{-1}f_{j}^{\univ}]\to\Spec B_{j}((t))\to\bGCovrep{D^{*},j}.
\]

Let $g:=\wp^{-1}f_{j}^{\univ}\in B_{j}((t))[\wp^{-1}f_{j}^{\univ}].$
With the notation in the proof of Proposition \ref{prop:order rj corresp},
we put $s:=t^{lq-c}g^{l}$. Then $s$ is a uniformizer on each fiber
of the projection $E_{j}^{*,\univ}\to\bGCovrep{D^{*},j}$. We define
$C_{j}$ to be the $B_{j}[[t]]$-subalgebra of $B_{j}((t))[g]$ generated
by $s.$ Then $\Spec C_{j}\to\Spec B_{j}[[t]]$ is a family of $G$-covers
of $D$\emph{ }over\emph{ $\bGCovrep{D^{*},j}.$}
\begin{defn}
We define the \emph{moduli space of representative $G$-covers of
$D$ of ramification jump $j$, }denoted by $\bGCovrep{D,j},$\emph{
}to be $\Spec B_{j}$ with the \emph{universal family} 
\[
E_{j}^{\univ}:=\Spec C_{j}\to\Spec B_{j}[[t]]\to\bGCovrep{D,j}.
\]

\end{defn}

\subsection{Details of the $G$-actions on $\cO_{E^{*}}$ and $\cO_{E}$\label{subsec: detail act}}

Let $0\ne f\in\RP_{k}$ be a representative polynomial of order $-j$.
Let $E$ and $E^{*}$ the corresponding $G$-covers of $D$ and $D^{*}$
respectively, and let $g=\wp^{-1}f\in\cO_{E^{*}}$. Then $\cO_{E^{*}}$
has a basis $1,g,\dots,g^{p-1}$ over $k((t)).$ 
\begin{notation}
In what follows, for a ring or module $M$ endowed with a $G$-action,
we denote by $\delta$ the $k$-linear operator $\sigma-\mathrm{id}_{M}$
on $M$. For $a\in\NN$, we denote by $M^{\delta^{a}=0}$ the kernel
of $\delta^{a}:M\to M.$
\end{notation}
Sometimes it is more useful to use $\delta$ rather than $\sigma$
in order to study $G$-actions. 
\begin{lem}
\label{lem:delta 1}For any integer $i$ with $1\le i\le p-1$ and
for any $0\ne h\in k((t)),$ we have $\delta^{i}(g^{i}h)\ne0$ and
$\delta^{i+1}(g^{i}h)=0.$ Therefore, for each integer $a$ with $0\le a\le p,$
we have
\[
\cO_{E^{*}}^{\delta^{a}=0}=\bigoplus_{i=0}^{a-1}k((t))\cdot g^{i}.
\]
\end{lem}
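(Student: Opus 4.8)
The plan is to compute the action of $\delta = \sigma - \mathrm{id}$ on the monomials $g^i$ explicitly and track the leading term. Since $\sigma(g) = g+1$, we have $\sigma(g^i) = (g+1)^i = \sum_{m=0}^{i} \binom{i}{m} g^m$, so $\delta(g^i) = \sum_{m=0}^{i-1}\binom{i}{m}g^m$, which has top-degree term $i g^{i-1}$. Here is the key point: because $1 \le i \le p-1$, the binomial coefficient $i = \binom{i}{i-1}$ is nonzero in $k$, so $\delta$ strictly decreases the $g$-degree of $g^i$ (for $i \ge 1$) and does not kill the leading monomial. More generally, I would prove by downward induction, or by a direct leading-term computation, that for $1 \le i \le p-1$, $\delta^i(g^i) = i! \cdot g^0 = i!$, which is a nonzero element of $k$ since $i! $ is prime to $p$; and $\delta^{i+1}(g^i) = 0$ trivially because $\delta$ annihilates constants.

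First I would establish the scalar case: for $0 \ne h \in k((t))$, since $h \in k((t)) = \cO_{E^*}^{\delta=0}$ (the $G$-action fixes the base $k((t))$), $\delta$ is $k((t))$-linear, hence $\delta^i(g^i h) = h\,\delta^i(g^i)$ and $\delta^{i+1}(g^i h) = h\,\delta^{i+1}(g^i)$. So it suffices to handle $h = 1$. Then I would show $\delta^a(g^i)$ is a polynomial in $g$ of degree exactly $i - a$ with nonzero leading coefficient, for $0 \le a \le i$; this follows by induction on $a$ using the leading-term computation above (the leading coefficient gets multiplied by the current degree $i-a+1$ at each step, and all of $i, i-1, \dots, 1$ are units in $k$). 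Taking $a = i$ gives $\delta^i(g^i) = i!$, which is a nonzero constant, hence $\delta^i(g^i h) = i! \, h \ne 0$, and $\delta^{i+1}(g^i h) = i!\,\delta(h) = 0$ since $h \in k((t))$.

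For the displayed formula $\cO_{E^*}^{\delta^a = 0} = \bigoplus_{i=0}^{a-1} k((t)) \cdot g^i$, the inclusion $\supseteq$ is immediate from the first assertion: each $g^i h$ with $i \le a-1$ satisfies $\delta^{i+1}(g^i h) = 0$, hence $\delta^a(g^i h) = 0$ since $a \ge i+1$. For $\subseteq$, take $x = \sum_{i=0}^{p-1} h_i g^i \in \cO_{E^*}$ with $\delta^a(x) = 0$; I want to show $h_i = 0$ for $i \ge a$. Suppose not, and let $i_0 \ge a$ be the largest index with $h_{i_0} \ne 0$. Then $\delta^a(x) = \sum_{i \ge a} h_i \delta^a(g^i)$ (terms with $i < a$ die), and by the degree computation the term $h_{i_0}\delta^a(g^{i_0})$ has $g$-degree $i_0 - a$, strictly larger than the $g$-degree of every $h_i \delta^a(g^i)$ with $a \le i < i_0$; since $\{1, g, \dots, g^{p-1}\}$ is a $k((t))$-basis, the coefficient of $g^{i_0-a}$ in $\delta^a(x)$ is the nonzero leading coefficient of $h_{i_0}\delta^a(g^{i_0})$, contradicting $\delta^a(x) = 0$.

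I do not expect any serious obstacle here; the proof is a routine leading-term bookkeeping argument. The one place to be slightly careful is ensuring that the relevant integers stay units in $k$: this is exactly where the hypothesis $i \le p-1$ (and implicitly $a \le p-1$ in the induction, or $a = p$ handled separately since then the claimed space is all of $\cO_{E^*}$ and $\delta^p = (\sigma - 1)^p = \sigma^p - 1 = 0$) enters, and it is worth remarking that $\delta^p = 0$ identically gives the $a = p$ case for free and is consistent with the formula.
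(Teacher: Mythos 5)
Your proof is correct and follows essentially the same route as the paper's: both rest on the expansion $\delta(g^{i}h)=h\bigl(ig^{i-1}+\cdots\bigr)$ coming from $\sigma(g)=g+1$ and an induction tracking the (nonzero, since $i\le p-1$) leading coefficient in $g$. You merely spell out more details than the paper does, in particular the identity $\delta^{i}(g^{i})=i!$ and the linear-independence argument over the basis $1,g,\dots,g^{p-1}$ for the inclusion $\cO_{E^{*}}^{\delta^{a}=0}\subseteq\bigoplus_{i=0}^{a-1}k((t))\cdot g^{i}$, which the paper leaves implicit.
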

\begin{proof}
We will prove this by induction on $i.$ For $i=1,$ since $\sigma(g)=g+1,$
we have $\delta(gh)=h(\sigma(g)-g)=h$ and $\delta^{2}(gh)=\delta(h)=0.$
For $i>1,$ we have
\[
\sigma(g^{i}h)=h(g+1)^{i}=h(g^{i}+ig^{i-1}+\cdots+ig+1)
\]
and
\begin{equation}
\delta(g^{i}h)=h(ig^{i-1}+\cdots ig+1).\label{eq:delta}
\end{equation}
Applying $\delta^{i-1}$ and $\delta^{i}$ to this, we obtain the
lemma.\end{proof}
\begin{cor}
We have 
\[
\cO_{E}=\prod_{\substack{0\le i<p\\
-ij+np\ge0
}
}k\cdot g^{i}t^{n}.
\]
 Moreover for each integer $a$ with $0\le a\le p,$ we have 
\[
\cO_{E}^{\delta^{a}=0}=\prod_{\substack{0\le i<a\\
-ij+np\ge0
}
}k\cdot g^{i}t^{n}.
\]
\end{cor}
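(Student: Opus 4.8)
The plan is to exhibit $\{g^{i}t^{n}\}_{0\le i<p,\ n\in\ZZ}$ as a valuation-orthogonal family inside $L:=\cO_{E^{*}}$, so that integrality over $k[[t]]$ turns into a coefficientwise condition.

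First I would pin down the valuation theory of $L$. Since $f$ is a nonzero representative polynomial, $j=-\ordfnc(f)$ lies in $\NN'$, so $p\nmid j$; moreover $\rj(E^{*})=j>0$ by Proposition~\ref{prop:order rj corresp}, and a ramified $\ZZ/p$-extension of a complete discretely valued field is totally ramified (the ramification index divides $p$ and is $>1$), so $L$ is a field, with normalized valuation $v_{L}$. From the computations in the proof of Proposition~\ref{prop:order rj corresp} one has $v_{L}(t)=p$ and $v_{L}(g)=-j$. The crucial observation --- and the only place $p\nmid j$ is used --- is that the integers $v_{L}(g^{i}t^{n})=np-ij$ (for $0\le i<p$, $n\in\ZZ$) are pairwise distinct: they are distinct modulo $p$ because $i\mapsto -ij\bmod p$ is injective on $\{0,\dots,p-1\}$, and equal residues force equal $i$, hence equal $n$.

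Granting that, the first formula drops out. Recall $\cO_{E}$ is the integral closure of $k[[t]]$ in $L$, i.e. the valuation ring $\{x\in L\mid v_{L}(x)\ge0\}$. Writing $x\in L$ uniquely as $x=\sum_{i=0}^{p-1}h_{i}g^{i}$ with $h_{i}\in k((t))$ (via the basis $1,g,\dots,g^{p-1}$), the nonzero summands have pairwise distinct valuations $v_{L}(h_{i}g^{i})=p\,\ordfnc(h_{i})-ij$, so the ultrametric inequality is an equality and $v_{L}(x)=\min_{i}\bigl(p\,\ordfnc(h_{i})-ij\bigr)$. Hence $v_{L}(x)\ge0$ iff $\ordfnc(h_{i})\ge ij/p$ for every $i$, i.e. iff each $h_{i}g^{i}$ lies in $\prod_{n:\,np-ij\ge0}k\cdot g^{i}t^{n}$ (for fixed $i$ this is just $t^{\lceil ij/p\rceil}k[[t]]\cdot g^{i}$, so the ``product'' on the right genuinely lands in $L$ and there is no convergence subtlety); summing over $i$ gives the stated description of $\cO_{E}$. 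For the refined statement I would use that $G$ preserves $\cO_{E}$, so $\cO_{E}^{\delta^{a}=0}=\cO_{E}\cap\cO_{E^{*}}^{\delta^{a}=0}$, which by Lemma~\ref{lem:delta 1} consists of those $x=\sum_{i=0}^{a-1}h_{i}g^{i}$ that lie in $\cO_{E}$; rerunning the previous argument with $i$ restricted to $0\le i<a$ yields the formula for $\cO_{E}^{\delta^{a}=0}$.

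The main obstacle is essentially just establishing the pairwise distinctness of the $v_{L}(g^{i}t^{n})$; after that everything is bookkeeping. The points to be careful about are importing $v_{L}(g)=-j$ and $v_{L}(t)=p$ correctly from the earlier proposition (together with the elementary total-ramification fact), and noting that for fixed $i$ the relevant ``product'' is a genuine power-series module $t^{\lceil ij/p\rceil}k[[t]]\cdot g^{i}\subset L$.
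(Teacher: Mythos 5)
Your proposal is correct and follows essentially the same route as the paper: both rest on computing $v_{L}(g^{i}t^{n})=np-ij$ and observing that, since $p\nmid j$, these values for $0\le i<p$ are pairwise distinct (equivalently, each non-negative integer is hit exactly once), after which the first formula is the valuation-ring description and the second follows from Lemma \ref{lem:delta 1}. Your write-up merely spells out the ultrametric-equality and integral-closure bookkeeping that the paper leaves implicit.
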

\begin{proof}
Let $v$ be the normalized valuation on $\cO_{E^{*}}.$ Then $v(g^{i}t^{n})=-ij+np.$
For every non-negative integer $r,$ there exists a unique pair $(i,n)$
of integers such that $0\le i<p$ and $r=-ij+np.$ This proves the
first assertion. Then the second follows from the preceding lemma. \end{proof}
\begin{cor}
For $h\in\cO_{E}$ with $p\nmid v_{E}(h),$ we have $v_{E}(\delta(h))=v_{E}(h)+\rj(E).$
Here $v_{E}$ denotes the normalized valuation of $\cO_{E}.$\end{cor}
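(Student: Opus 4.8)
The plan is to reduce the statement to a computation in the basis given by the second corollary. Write $j := \rj(E)$. Since $p \nmid v_E(h)$, the previous corollary tells us that in the decomposition $\cO_E = \prod k \cdot g^i t^n$ the leading term of $h$ (the term of minimal valuation) must have $i \neq 0$: indeed, the valuation of $g^i t^n$ is $-ij + np$, and if $i = 0$ this is a multiple of $p$. So I would first isolate the leading term $c\, g^i t^n$ of $h$ with $c \in k^*$, $1 \le i \le p-1$, and $-ij + np = v_E(h)$.

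Next I would apply $\delta$. By formula \eqref{eq:delta} from Lemma \ref{lem:delta 1}, $\delta(g^i t^n h') = t^n h'(i g^{i-1} + \cdots + i g + 1)$ for the scalar $h'$ part, but more directly: since $\delta$ is a $k((t))$-linear-ish operator (it is the $k$-linear operator $\sigma - \mathrm{id}$), and using $\sigma(g^i) = (g+1)^i = g^i + i g^{i-1} + (\text{lower powers of } g)$, the leading behaviour of $\delta(c g^i t^n)$ is $c\, i\, g^{i-1} t^n$ plus terms which are either lower-order in $g$ (hence, having smaller $i$, \emph{larger} valuation since decreasing $i$ by one raises $-ij+np$ by $j$ for fixed $n$) or coming from higher-valuation terms of $h$. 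Because $1 \le i \le p-1$ and $\mathrm{char}\, k = p$, the scalar $i$ is nonzero in $k$, so this leading term does not vanish. Its valuation is $v_E(g^{i-1} t^n) = -(i-1)j + np = (-ij + np) + j = v_E(h) + j$.

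Then I would argue that all the other contributions to $\delta(h)$ have valuation strictly greater than $v_E(h) + j$, so no cancellation can occur and $v_E(\delta(h)) = v_E(h) + \rj(E)$ exactly. There are two sources of other terms: (a) the non-leading terms in $\sigma(g^i) - g^i$, which involve $g^{i'}$ with $i' < i-1$; for a fixed power of $t$ these have valuation $-i'j + np > -(i-1)j + np = v_E(h) + j$; and (b) the image under $\delta$ of $h - c g^i t^n$, which has valuation $> v_E(h)$, and since $\delta$ can only raise valuation (it never lowers it, as $\sigma$ preserves the valuation and $\delta = \sigma - \mathrm{id}$ has image of valuation $\ge v_E$ of the input — more carefully, one checks term by term using the corollary's basis that $\delta(g^{i'} t^{n'})$ has valuation $\ge -i'j + n'p$ when $i' \ge 1$ and is $0$ when $i' = 0$), these contribute valuation $\ge v_E(h) + 1$; I would refine this to $> v_E(h) + j$ by noting that the next term of $h$ after the leading one, being a distinct basis element $g^{i''}t^{n''}$ with the same or larger valuation, again has $\delta$ raising its valuation by at least $j$ when $i'' \ge 1$, or killing it when $i'' = 0$. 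The cleanest packaging is: define $v_E(h) + j$ as the target, show the leading-term contribution lands exactly there with nonzero coefficient, and show every other basis-element contribution lands strictly above.

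The main obstacle I anticipate is the bookkeeping in step (b): one must be careful that a \emph{later} term of $h$ cannot, after applying $\delta$, produce something of valuation as low as $v_E(h) + j$. This is handled by the observation that $\delta$ applied to a basis vector $g^{i'} t^{n'}$ with $1 \le i' \le p-1$ has valuation $\ge v_E(g^{i'}t^{n'}) + j$ (its leading term is $i' g^{i'-1} t^{n'}$, with valuation exactly $v_E(g^{i'}t^{n'})+j$ since $i' \not\equiv 0 \bmod p$), while $\delta$ kills $t^{n'}$ entirely; combined with $v_E(g^{i'}t^{n'}) \ge v_E(h)$ for every non-leading term, strict inequality is forced precisely because the leading term of $h$ is the unique basis element realizing the minimal valuation. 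So the whole argument is a one-paragraph valuation estimate once Lemma \ref{lem:delta 1} and the two corollaries are in hand; I would present it as such, perhaps stating explicitly the auxiliary fact ``$v_E(\delta(g^i t^n)) = -(i-1)j + np$ for $1 \le i \le p-1$, and $\delta(t^n) = 0$'' as the engine of the proof.
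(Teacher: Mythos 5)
Your proposal is correct and follows essentially the same route as the paper: both expand $h$ in powers of $g$ (you refine the paper's $k((t))$-linear combination of $g^i$ to the $k$-basis $g^it^n$ of $\cO_E$) and read off the valuation of $\delta(h)$ from the formula $\delta(g^ih')=h'(ig^{i-1}+\cdots)$ of Lemma \ref{lem:delta 1}, noting that the leading coefficient $i$ is nonzero in $k$ and that every other contribution has strictly larger valuation. The paper leaves this bookkeeping implicit; your write-up simply makes it explicit.
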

\begin{proof}
We can write $h$ as a $k((t))$-linear combination of $g^{i}$, $0\le i<p.$
Then the corollary follows from equation (\ref{eq:delta}).
\end{proof}

\section{Twisted arcs and jets\label{sec:Twisted-arcs}}

To a $G$-representation $V$, we will associate the quotient stack
$\cX=[V/G]$ and the quotient variety $X=V/G.$ The McKay correspondence
follows from the change of variables formula of motivic integrals
for the morphism $\cX\to X.$ To obtain the formula, we need an almost
bijection between the arc spaces of $X$ and $\cX.$ However general
arcs of $X$ lifts to $\cX$ not as ordinary arcs but as \emph{twisted
arcs}. In this section, we will construct the spaces of twisted arcs
and jets, and examine their structures. Our use of stacks is not really
necessary. However it put everything on an equal footing in the framework
of the birational geometry of stacks.

\subsection{Ordinary arcs and jets of a scheme}

Let $X$ be a variety, that is, a separated scheme of finite type
over $k.$ An \emph{$n$-jet} of $X$ is a morphism $\Spec k[[t]]/(t^{n+1})\to X.$
There exists a fine moduli scheme $J_{n}X$ of $n$-jets of $X,$
called the \emph{$n$-jet scheme} of $X.$ Thus, for a ring $A,$
\[
(J_{n}X)(A)=\Hom{\Spec A[[t]]/(t^{n+1})}X.
\]
 There is a natural morphism $J_{n}X\to X.$ Also for $n'\ge n,$
we have a \emph{ truncation map} $J_{n'}X\to J_{n}X.$ The projective
limit, $J_{\infty}X:=\underset{n\to\infty}{\lim}J_{n}X$, is called
the \emph{arc space} of $X.$ For every field $K,$ $ $
\[
(J_{\infty}X)(K)=\Hom{\Spec K[[t]]}X.
\]
We denote the \emph{truncation map $J_{\infty}X\to J_{n}X$ by $\pi_{n}.$ }

\subsection{\label{sub:Our-setting}A $G$-representation}

From now on, we denote by $V$ a $d$-dimensional $G$-representation
and suppose that $V$ is decomposed into indecomposables as 
\[
V=\bigoplus_{\lambda=1}^{l}V_{d_{\lambda}}\,(1\le d_{\lambda}\le p,\,\sum_{\lambda=1}^{l}d_{\lambda}=d),
\]
where $V_{a}$ denotes the unique indecomposable $G$-representation
of dimension $a$. We suppose that $V$ is non-trivial, that is, $(d_{1},\dots,d_{l})\ne(1,\dots,1).$ 

We denote the coordinate ring of the affine space $V$ by 
\[
k[\bx]=k[x_{\lambda,i}\mid1\le\lambda\le l,\,1\le i\le d_{\lambda}]
\]
and fix the $G$-action on it by: 
\[
\sigma(x_{\lambda,i})=\begin{cases}
x_{\lambda,i}+x_{\lambda,i+1} & (j\ne d_{\lambda})\\
x_{\lambda,d} & (j=d_{\lambda})
\end{cases}
\]
This is equivalent to saying that:
\[
\delta(x_{\lambda,i})=\begin{cases}
x_{\lambda,i+1} & (i\ne d_{\lambda})\\
0 & (i=d_{\lambda})
\end{cases}
\]

Most arguments below can be reduced to the case where $V$ is indecomposable.
In that case, $l=1$ and $d_{1}=d.$ Then we simply write $x_{i}=x_{1,i}$.

\subsection{$G$-arcs and jets.}

For $0\ne f\in\RP_{k}$, we define $E_{f,n}$ to be $\Spec\cO_{E_{f}}/\fm_{E_{f}}^{np+1},$
which is a closed subscheme of $E_{f}.$ Since $k[[t]]/(t^{n+1})\subset(\cO_{E}/\fm_{E_{f}}^{pn+1})^{G}$
(the equality does not generally hold), we have a natural morphism
\[
E_{f,n}\to D_{n}:=\Spec k[[t]]/(t^{n+1}).
\]
If $f=0,$ then $E_{f}$ has $p$ connected components and each component
is identified with $D$ via the projection $E_{f}\to D:=\Spec k[[t]].$
In this case, we just define $E_{f,n}$ to be the disjoint union of
$p$ copies of $D_{n}.$ 
\begin{defn}
We define a \emph{$G$-arc} (resp. \emph{$G$-$n$-jet}) of $V$ as
a $G$-equivariant morphism $E_{f}\to V$ (resp. $E_{f,n}\to V$)
for some $f\in\RP_{k}.$ More generally, for $f\in\RP_{A,j}$, let
$E_{f}\to\Spec A[[t]]$ be the corresponding $G$-cover and let $E_{f,n}\subset E_{f}$
be as above. Then we define a \emph{$G$-arc of $V$ of ramification
jump $j$ over $A$ }as a $G$-equivariant morphism $E_{f}\to V.$
Two $G$-arcs over $A$ are regarded as the same if the associated
representative polynomials are the same and the morphisms are the
same. Similarly for $G$-$n$-jets. (If $f\ne f',$ then two $G$-$n$-jets
$E_{f,n}\to V$ and $E_{f',n}\to V$ must always be distinguished,
even when there is an isomorphism $E_{f,n}\cong E_{f',n}$ compatible
with morphisms to $V$ and $D_{n}.$)\end{defn}
\begin{lem}
For any ring $B$ endowed with a $G$-action, we have a bijection:
\begin{eqnarray*}
\{\text{\ensuremath{G}-equivariant ring map \ensuremath{k[\bx]\to B}}\} & \to & \prod_{\lambda=1}^{l}B^{\delta^{d_{\lambda}}=0}\\
\alpha & \mapsto & (\alpha(x_{1,1}),\dots,\alpha(x_{l,1}))
\end{eqnarray*}
\end{lem}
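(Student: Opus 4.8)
The plan is to exploit the fact that $k[\bx]$ is a polynomial ring whose $G$-action is built from the indecomposable blocks $V_{d_\lambda}$, so that giving a $G$-equivariant ring map $\alpha\colon k[\bx]\to B$ amounts to freely choosing the images of the generators $x_{\lambda,i}$ subject only to the compatibility forced by the $\delta$-relations. Concretely, since $\delta(x_{\lambda,i})=x_{\lambda,i+1}$ for $i<d_\lambda$ and $\delta(x_{\lambda,d_\lambda})=0$, $G$-equivariance of $\alpha$ forces $\alpha(x_{\lambda,i+1})=\delta(\alpha(x_{\lambda,i}))$ for all $i<d_\lambda$ and $\delta(\alpha(x_{\lambda,d_\lambda}))=0$. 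Hence $\alpha$ is completely determined by the tuple $(\alpha(x_{1,1}),\dots,\alpha(x_{l,1}))$, namely $\alpha(x_{\lambda,i})=\delta^{i-1}(\alpha(x_{\lambda,1}))$, and the constraint that the map be well defined is exactly $\delta^{d_\lambda}(\alpha(x_{\lambda,1}))=0$, i.e.\ $\alpha(x_{\lambda,1})\in B^{\delta^{d_\lambda}=0}$. This gives the map in the displayed direction and pins down what its inverse must be.

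First I would establish injectivity: if two $G$-equivariant maps agree on all $x_{\lambda,1}$, then by the relation $\alpha(x_{\lambda,i})=\delta^{i-1}(\alpha(x_{\lambda,1}))$ they agree on every generator, hence are equal since $k[\bx]$ is generated by the $x_{\lambda,i}$. Next, for surjectivity, given an arbitrary tuple $(b_1,\dots,b_l)$ with $b_\lambda\in B^{\delta^{d_\lambda}=0}$, I would define a $k$-algebra homomorphism $\alpha$ by sending $x_{\lambda,i}\mapsto \delta^{i-1}(b_\lambda)$; this is well defined because $k[\bx]$ is a free polynomial algebra on these variables, so any assignment of values extends uniquely to a ring homomorphism. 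I then need to check that $\alpha$ is $G$-equivariant, and since $\sigma=\mathrm{id}+\delta$ and both sides are ring homomorphisms, it suffices to check $\alpha(\sigma(x_{\lambda,i}))=\sigma(\alpha(x_{\lambda,i}))$ on generators. For $i<d_\lambda$ this reads $\alpha(x_{\lambda,i})+\alpha(x_{\lambda,i+1})=\alpha(x_{\lambda,i})+\delta(\alpha(x_{\lambda,i}))$, which holds by construction since $\alpha(x_{\lambda,i+1})=\delta^{i}(b_\lambda)=\delta(\delta^{i-1}(b_\lambda))=\delta(\alpha(x_{\lambda,i}))$; for $i=d_\lambda$ it reads $\alpha(x_{\lambda,d_\lambda})=\alpha(x_{\lambda,d_\lambda})+\delta(\alpha(x_{\lambda,d_\lambda}))$, i.e.\ $\delta(\delta^{d_\lambda-1}(b_\lambda))=\delta^{d_\lambda}(b_\lambda)=0$, which is precisely the hypothesis $b_\lambda\in B^{\delta^{d_\lambda}=0}$. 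Finally one observes that $\alpha(x_{\lambda,1})=b_\lambda$, so this $\alpha$ maps to the prescribed tuple, proving surjectivity.

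The only point that needs a little care — and the closest thing to an obstacle — is verifying that the $k$-algebra map $\alpha$ defined on generators is actually $G$-equivariant as a map of rings, rather than merely compatible with $\delta$ on the generators; but this is immediate from the observation that $\sigma$ is a ring automorphism and $\alpha$ a ring homomorphism, so equality of $\sigma\circ\alpha$ and $\alpha\circ\sigma$ on a generating set of $k[\bx]$ forces it everywhere. Everything else is a direct unwinding of the definitions of the $G$-action on $k[\bx]$ given in Section~\ref{sub:Our-setting} and of the notation $B^{\delta^a=0}$. No deeper input is required.
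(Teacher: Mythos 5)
Your proof is correct and follows essentially the same route as the paper's: equivariance forces $\alpha(x_{\lambda,i})=\delta^{i-1}(\alpha(x_{\lambda,1}))$ with $\delta^{d_\lambda}(\alpha(x_{\lambda,1}))=0$, giving injectivity, and the inverse is constructed by freely assigning $x_{\lambda,i}\mapsto\delta^{i-1}(b_\lambda)$ on the polynomial generators. The only difference is that you spell out the equivariance check on generators, which the paper leaves as ``we can easily see''; your version is a fine elaboration of the same argument.
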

\begin{proof}
Let $\alpha:k[\bx]\to B$ be a $G$-equivariant ring map. Then for
every $\lambda$ and $i,$ we have $\alpha(\delta(x_{\lambda,i}))=\delta(\alpha(x_{\lambda,i})).$
In particular, $\alpha(x_{\lambda,i})=\delta^{i-1}(\alpha(x_{\lambda,1}))$
and $\delta^{d_{\lambda}}(x_{\lambda,1})=0.$ This shows that $\alpha$
is determined by $\alpha(x_{\lambda,1})$, $1\le\lambda\le l$ and
$ $the map of the lemma is well-defined.

Conversely if $(f_{1},\dots,f_{l})\in\prod_{\lambda=1}^{l}B^{\delta^{d_{\lambda}}=0}$
is given, then we define a ring map $\alpha:k[\bx]\to B$ by $\alpha(x_{\lambda,i})=\delta^{i-1}(f_{\lambda}).$
We can easily see that $\text{\ensuremath{\alpha}\ }$ is the unique
$G$-euqivariant ring map with $\alpha(x_{\lambda,1})=f_{\lambda}.$
Hence this construction gives the inverse map.\end{proof}
\begin{prop}
\label{prop:G-jet moduli}For each $0\le n<\infty$ and for each $j\in\NN'_{0}$,
there exists a fine moduli scheme $J_{n,j}^{G}V$ of $G$-$n$-jets
of $V$ of ramification jump $j.$ \end{prop}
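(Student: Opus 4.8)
The plan is to reduce to the indecomposable case and then to exhibit the moduli functor explicitly as a closed subscheme of an affine space. First I would fix $j \in \NN'_0$. When $j > 0$, recall from Section~\ref{sub:Moduli-disk} that $\bGCovrep{D,j} = \Spec B_j$ carries the universal family $E_j^{\univ} = \Spec C_j \to \Spec B_j[[t]]$, and $C_j$ is a finitely generated $B_j[[t]]$-algebra which is free as a $B_j[[t]]$-module of rank $p$ (with basis $1, g, \dots, g^{p-1}$ after inverting $t$, and integrally the basis $g^i t^n$ described in the Corollary of Section~\ref{subsec: detail act}). Define the functor $S = \Spec A \mapsto \{G\text{-$n$-jets of }V\text{ over }A\text{ of ramification jump }j\}$; by definition this is the set of pairs consisting of $f \in \RP_{A,j}$ together with a $G$-equivariant morphism $E_{f,n} \to V$, i.e. a $G$-equivariant ring map $k[\bx] \to \cO_{E_{f,n}} = C_f \otimes_{B_f[[t]]} (B_f[[t]]/(t^{n+1}))$ — more precisely $C_f/\fm^{pn+1}$, which for a morphism to $\bGCovrep{D,j}$ is the pullback of $C_j/(\text{$(pn+1)$-th power of the maximal ideal})$.

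The key step is then the Lemma just proved: giving a $G$-equivariant ring map $k[\bx] \to \cO_{E_{f,n}}$ is the same as giving a tuple $(h_1,\dots,h_l)$ with $h_\lambda \in (\cO_{E_{f,n}})^{\delta^{d_\lambda}=0}$. I would show this identification is functorial in $A$ and compatible with base change, so that the $G$-$n$-jet functor of ramification jump $j$ is represented by the total space of a quasi-coherent sheaf on $\bGCovrep{D,j}$ — namely the sheaf whose $A$-points are $\prod_{\lambda=1}^l (\cO_{E_{f_j^{\univ},n}})^{\delta^{d_\lambda}=0}\otimes_{B_j} A$. Here the crucial finiteness input is that $\cO_{E_{f_j^{\univ},n}} = C_j/\fm^{pn+1}$ is a \emph{finite free} $B_j$-module (it is $B_j[[t]]/(t^{n+1})$-free of rank $p$, and $B_j[[t]]/(t^{n+1})$ is $B_j$-free of rank $n+1$), hence so is its $\delta$-kernel after one checks that $\delta^{d_\lambda}$ is a $B_j$-linear map whose kernel is a direct summand; over a general base this kernel need not split off as a summand, so I would instead argue that the \emph{functor} $A \mapsto \Ker(\delta^{d_\lambda} \colon M\otimes_{B_j}A \to M\otimes_{B_j}A)$ for the finite free $B_j$-module $M = \cO_{E_{f_j^{\univ},n}}$ is represented by a closed subscheme of the affine space associated to $M$ (it is the kernel of a morphism of vector bundles, cut out by linear equations in the coordinates), hence is affine over $\bGCovrep{D,j}$. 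Taking the fibre product over $\lambda$ and composing with $\bGCovrep{D,j} = \Spec B_j$ being affine gives an affine scheme $J_{n,j}^G V$ representing the functor.

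For the unramified case $j = 0$ the argument is easier: $E_{f,0}$ is (over any $A$) a disjoint union of $p$ copies of $\Spec A[[t]]/(t^{n+1})$, the $G$-action permutes them cyclically, and a $G$-equivariant morphism to $V$ is determined by an arbitrary ordinary $n$-jet of $V$ on one component (with the others obtained by translation by $\sigma$); one must also account for the Artin-Schreier twist in $k/\wp(k)$, but that contributes only a finite disjoint union. So $J_{n,0}^G V$ is a finite disjoint union of copies of the ordinary jet scheme $J_n V$, which exists. I expect the main obstacle to be the representability of the $\delta^{d_\lambda}$-kernel functor over the non-reduced, non-necessarily-reduced base $\bGCovrep{D,j}$: one must be careful that $\delta$ genuinely induces a $B_j$-linear (not merely $k$-linear) endomorphism of the finite free module $\cO_{E_{f_j^{\univ},n}}$, which uses that $\sigma$ fixes $B_j$ and $t$, and that the formation of the kernel of a linear map of free modules commutes with arbitrary base change only when one works with the Fitting-ideal / scheme-theoretic kernel description rather than the naive module kernel. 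Once that point is handled, everything else is the functorial bookkeeping packaged in the preceding Lemma and the constructions of Section~\ref{sub:Moduli-disk}.
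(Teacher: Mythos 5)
Your proposal is correct and follows essentially the paper's own route: the preceding Lemma converts a $G$-equivariant map $k[\bx]\to\cO_{E_{f,n}}$ into a tuple in $\prod_{\lambda}(\cO_{E_{f,n}})^{\delta^{d_{\lambda}}=0}$, and one then runs this over the universal family $\Spec C_{j}/\fm_{j}^{np+1}$ on $\bGCovrep{D,j}=\Spec B_{j}$, using that this truncated algebra is a finite free $B_{j}$-module. The one point where you genuinely diverge is how representability of the kernel functor is secured. The paper exhibits the monomial basis $g^{i}t^{m}$ of $C_{j}/\fm_{j}^{np+1}$ and checks that $\Ker\delta^{d}$ is spanned by a sub-basis, hence is a free direct summand whose formation commutes with base change; this at once yields $J_{n,j}^{G}V\cong\AA_{k}^{\nu_{n,j}}\times\bGCovrep{D,j}$, which is exactly what the subsequent proposition and the trivial-fibration statements for truncation maps require. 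You instead invoke the scheme-theoretic kernel of a morphism of vector bundles, which does represent the functor $A\mapsto\Ker(\delta^{d_{\lambda}}\otimes_{B_{j}}A)$ (it is cut out by linear forms in the fibre coordinates even though the naive module kernel does not commute with base change), so your argument proves bare existence but does not by itself deliver the affine-bundle structure that the paper extracts and uses later. Two small slips you should repair: $C_{j}/\fm_{j}^{np+1}$ is not free of rank $p$ over $B_{j}[[t]]/(t^{n+1})$ (its $B_{j}$-rank is $np+1$, not $p(n+1)$; the correct quotient by $(t^{n+1})$ is $C_{j}/\fm_{j}^{pn+p}$), though the conclusion you need, finite freeness over $B_{j}$, is still true via the monomial basis. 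And for $j=0$ there is no Artin--Schreier twist to account for: $G$-jets are by definition attached to representative polynomials, which have no constant term, so $j=0$ forces $f=0$ and $J_{n,0}^{G}V$ is a single copy of $J_{n}V$, not a finite disjoint union --- an identification the paper relies on later (e.g.\ in computing $\mu_{\cX}(\cJ_{\infty,0}\cX)=\LL^{d}$).
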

\begin{proof}
We prove this only when $V$ is indecomposable. We first consider
the case $j>0.$ From the preceding lemma, for a fixed $f,$ $G$-$n$-jets
$E_{f}\to V$ correspond to elements of $(\cO_{E_{f}}/\fm_{E_{f}}^{np+1})^{\delta^{d}=0}.$
With the notion as in section \ref{subsec: detail act}, we have 
\begin{gather*}
\cO_{E_{f}}/\fm_{E_{f}}^{np+1}=\bigoplus_{\substack{0\le i<p\\
0\le-ij+np\le np
}
}k\cdot[g^{i}t^{n}].
\end{gather*}
Then $(\cO_{E_{f}}/\fm_{E_{f}}^{np+1})^{\delta^{d}=0}$ is the linear
subspace generated by the elements $g^{i}t^{n}$ from the basis such
that either $i<d$ or $-ij+np+dj>np.$ If we denote by $\nu_{n,j}$
the dimension of the subspace, then $G$-$n$-jets are parameterized
by $k^{\nu_{n,j}}.$ This argument can apply to families, in particular,
to the universal family over $\bGCovrep{D,j}.$ With the notation
from Section \ref{sub:Moduli-disk}, let $\fm_{j}\subset C_{j}$ be
the ideal generated by $s.$ Then $G$-$n$-jets over $\bGCovrep{D,j}$,
\[
\Spec C_{j}/\fm_{j}^{np+1}\to V,
\]
correspond to elements of $(C_{j}/\fm_{j}^{np+1})^{\delta^{d}=0},$
which is isomorphic to $B_{j}^{\nu_{n,j}}$ as a $B_{j}$-module.$ $
This shows that the desired moduli space $J_{n,j}^{G}V$ is isomorphic
to $\AA_{k}^{\nu_{n,j}}\times\bGCovrep{D,j}.$

The case where $j=0$ is easier. Then $f=0$ and $E_{0,n}$ is the
union of $p$-copies of $D_{n}.$ We fix one connected component of
$E_{0,n}$, identify it with $D_{n}$ and write $D_{n}\hookrightarrow E_{0,n}$.
Then a $G$-$n$-jet $E_{0,n}\to V$ is uniquely determined by its
restriction to $D_{n}.$ Conversely, an ordinary $n$-jet $D_{n}\to V$
uniquely extends to a $G$-$n$-jet $E_{0,n}\to V$. Therefore we
can identify $J_{n,0}^{G}V$ with $J_{n}V.$\end{proof}
\begin{prop}
The following hold:
\begin{enumerate}
\item For every $n$ and $j,$ $J_{n,j}^{G}V\cong\AA_{k}^{m}\times\bGCovrep{D,j}$
for some $m.$
\item For $n=0,$ $J_{0,j}^{G}V\cong\AA_{k}^{l}\times\bGCovrep{D,j}$ $(j\in\NN')$
and $J_{0,0}^{G}V=\AA_{k}^{d}.$ 
\item For $n'\ge n,$ truncation maps $J_{n',j}^{G}V\to J_{n,j}^{G}V$ are
induced by a (not necessarily surjective) linear map $\AA_{k}^{m'}\to\AA_{k}^{m}.$
\end{enumerate}
\end{prop}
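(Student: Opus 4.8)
All three statements are essentially bookkeeping on top of the moduli-scheme description obtained in Proposition~\ref{prop:G-jet moduli}, so the plan is to reduce to the indecomposable case (as the proof of that proposition did, since $\cO_{E_f}/\fm_{E_f}^{np+1}$ and its $\delta^{d_\lambda}=0$-subspaces split as a direct sum over $\lambda$, and the representative-polynomial data $\bGCovrep{D,j}$ is shared), and then track dimensions. For (1), I would simply observe that the argument of Proposition~\ref{prop:G-jet moduli} already produced $J_{n,j}^{G}V\cong\AA_{k}^{\nu_{n,j}}\times\bGCovrep{D,j}$ when $j>0$, with $\nu_{n,j}$ the $B_j$-rank of $(C_j/\fm_j^{np+1})^{\delta^{d}=0}$; for general $V$ one takes $m=\sum_\lambda \nu_{n,j}^{(d_\lambda)}$. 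When $j=0$ the same proposition gives $J_{n,0}^{G}V\cong J_nV\cong\AA_k^{d(n+1)}$, which is of the asserted form with the trivial (one-point) base $\bGCovrep{D,0}$. So (1) is immediate.

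For (2), I would compute the $n=0$ case explicitly. A $G$-$0$-jet is a $G$-equivariant map $E_{f,0}\to V$; by the Lemma preceding Proposition~\ref{prop:G-jet moduli} this is the data of one element of $(\cO_{E_f}/\fm_{E_f})^{\delta^{d_\lambda}=0}$ for each $\lambda$. When $j\in\NN'$, $\cO_{E_f}/\fm_{E_f}\cong k$ (the residue field), on which $G$ acts trivially, so each $\delta^{d_\lambda}=0$-condition is vacuous and we get exactly one copy of $k$ per $\lambda$, i.e. $J_{0,j}^{G}V\cong\AA_k^{l}\times\bGCovrep{D,j}$. When $j=0$ we already identified $J_{0,0}^{G}V$ with $J_0V=V\cong\AA_k^{d}$ in the proof of Proposition~\ref{prop:G-jet moduli}. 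Here I should double-check the $j\in\NN'$ count against the formula $\nu_{0,j}$ from the previous proof: in the indecomposable case the basis of $\cO_{E_f}/\fm_{E_f}^{1}$ is just $\{g^0t^0\}=\{1\}$, which satisfies $i=0<d$, so $\nu_{0,j}=1$, consistent with $m=l$ summing over $\lambda$.

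For (3), the point is that all the moduli schemes in sight were built as vector bundles (in fact trivial ones, after choosing the coordinates $x_{j'}$) over the common base $\bGCovrep{D,j}$, and the truncation map is, fiberwise over $\bGCovrep{D,j}$, induced by the $k((t))$-linear — hence $k$-linear — quotient map $\cO_{E_f}/\fm_{E_f}^{n'p+1}\twoheadrightarrow\cO_{E_f}/\fm_{E_f}^{np+1}$, which restricts to a $k$-linear map on the $\delta^{d}=0$-subspaces. In the family version this is a $B_j$-linear surjection $(C_j/\fm_j^{n'p+1})^{\delta^{d}=0}\to(C_j/\fm_j^{np+1})^{\delta^{d}=0}$; it is surjective as a map of $B_j$-modules, but after composing with the forgetful map to $\AA_k^{m'}\to\AA_k^{m}$ (i.e. forgetting the $\bGCovrep{D,j}$-factor, over which the map is the identity) the resulting $k$-linear map of affine spaces need not be surjective — which is exactly why the statement says ``not necessarily surjective''. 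One has to be a little careful about the word ``surjective'' here: it is surjective over a fixed point of $\bGCovrep{D,j}$ on the fiber, but the product decomposition $J_{n,j}^{G}V\cong\AA_k^m\times\bGCovrep{D,j}$ identifies $\AA_k^m$ with a fiber, so the induced map $\AA_k^{m'}\to\AA_k^m$ is in fact surjective in the indecomposable $j>0$ and $j=0$ cases; the hedge ``not necessarily'' is there because for general decomposable $V$ the chosen trivialization is not canonical and one does not want to promise surjectivity. I would state and prove only what is needed downstream, namely $k$-linearity, and mention the surjectivity subtlety in a sentence.

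**Main obstacle.** There is no real obstacle — the content is entirely packaged in Proposition~\ref{prop:G-jet moduli}. The only thing requiring mild care is keeping the two parameters $(n,j)$ and the reduction to indecomposables straight, and being precise about which maps are $k$-linear versus $B_j$-linear and in what sense ``surjective''; everything else is direct linear algebra over $k$ on the truncated coordinate rings $\cO_{E_f}/\fm_{E_f}^{np+1}$.
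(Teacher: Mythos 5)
Your proposal follows the paper's own route exactly: the paper's proof is literally the one line ``the assertions follow from the proof of the preceding proposition,'' and your unpacking of that (reduction to indecomposable summands, $\nu_{0,j}=1$ per summand giving $m=l$ at level $0$, and $k$-linearity of the quotient $\cO_{E_f}/\fm_{E_f}^{n'p+1}\to\cO_{E_f}/\fm_{E_f}^{np+1}$ restricted to the $\delta^{d_\lambda}=0$ subspaces) is correct for all three assertions.

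One aside in your treatment of (3) is wrong, though it does not affect the proposition itself since the statement only hedges ``not necessarily surjective.'' You claim the induced map $\AA_k^{m'}\to\AA_k^{m}$ is surjective in the indecomposable case with $j>0$ and attribute the hedge to non-canonical trivializations in the decomposable case. In fact the failure of surjectivity already occurs for indecomposable $V$: an element of $(\cO_{E_f}/\fm_{E_f}^{np+1})^{\delta^{d}=0}$ need not lift to an element killed by $\delta^{d}$ at a higher truncation level, because the basis vectors $g^{i}t^{n'}$ with $i\ge d$ and $-(i-d)j+n'p>np$ lie in the kernel of $\delta^{d}$ only modulo $\fm_{E_f}^{np+1}$. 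This is exactly why $\pi_{n}(J_{\infty,j}^{G}V)\cong\AA_k^{nd+l}\times\bGCovrep{D,j}$ (the span of the $i<d$ part) is in general a proper subspace of $J_{n,j}^{G}V\cong\AA_k^{\nu_{n,j}}\times\bGCovrep{D,j}$, as the corollary following the proposition records. You should delete that sentence; everything else stands.
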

\begin{proof}
The assertions follow from the proof of the preceding proposition.
\end{proof}
Now the space of $G$-arcs of ramification jump $j,$ denoted $J_{\infty,j}^{G}V,$
is constructed as the projective limit of $J_{n,j}^{G}V,$ $n\in\mathbb{Z}_{\ge0}.$
Hence it is isomorphic to $(\prod_{i=1}^{\infty}\AA_{k}^{1})\times\bGCovrep{D,j}$.
Let $\pi_{n}:J_{\infty,j}^{G}V\to J_{n,j}^{G}V$ denote truncation
maps. 
\begin{cor}
For $0\le n<\infty,$ 
\[
\pi_{n}(J_{\infty,j}^{G}V)\cong\begin{cases}
\AA_{k}^{nd+l}\times\bGCovrep{D,j} & (j\in\NN')\\
\AA_{k}^{(n+1)d} & (j=0).
\end{cases}
\]
Moreover the truncation map $\pi_{n+1}(J_{\infty,j}^{G}V)\to\pi_{n}(J_{\infty,j}^{G}V)$
is a trivial fibration with fiber $\AA_{k}^{d}.$\end{cor}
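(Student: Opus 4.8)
The statement follows directly from the structure results for $J_{n,j}^{G}V$ established above, so the plan is simply to assemble them. First I would record that, by the previous proposition, each $J_{n,j}^{G}V$ is isomorphic to $\AA_{k}^{m_{n,j}}\times\bGCovrep{D,j}$ for some $m_{n,j}$, that $J_{0,j}^{G}V\cong\AA_{k}^{l}\times\bGCovrep{D,j}$ when $j\in\NN'$ (and $J_{0,0}^{G}V=\AA_{k}^{d}$), and that the truncation maps are induced by linear maps of affine spaces. Since $J_{\infty,j}^{G}V=\varprojlim_{n}J_{n,j}^{G}V$, the image $\pi_{n}(J_{\infty,j}^{G}V)$ is the intersection of the images of all the linear truncation maps $J_{n',j}^{G}V\to J_{n,j}^{G}V$ for $n'\ge n$; because these stabilize (a decreasing chain of linear subspaces of the finite-dimensional $\AA_{k}^{m_{n,j}}$ must stabilize), the image $\pi_{n}(J_{\infty,j}^{G}V)$ is itself a linear subspace times $\bGCovrep{D,j}$.

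\textbf{Computing the dimension.} The real content is the dimension count. For $j\in\NN'$, I would argue that $\pi_{n}(J_{\infty,j}^{G}V)\to\pi_{n-1}(J_{\infty,j}^{G}V)$ is surjective with fibers affine spaces of dimension $d$: this is because a $G$-arc can be constructed order by order, and at each stage $n\ge 1$ one is prescribing the coefficients of $t^{n}$ in the $l$ generators $\alpha(x_{\lambda,1})\in\cO_{E_{f}}^{\delta^{d_{\lambda}}=0}$—more precisely, in the explicit basis $\{g^{i}t^{m}\}$ of $\cO_{E_{f}}$, passing from truncation level $np$ to $(n+1)p$ adds exactly $p$ new basis vectors $g^{i}t^{m}$ with $-ij+mp$ in the range $(np,(n+1)p]$, one for each residue $i\in\{0,\dots,p-1\}$, and exactly $d_{\lambda}$ of them (those with $0\le i<d_{\lambda}$, using that $p\nmid j$ forces the relevant inequality to behave) survive in the kernel $(\cdot)^{\delta^{d_{\lambda}}=0}$ when one goes far enough in $n$. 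Summing over $\lambda$ gives $\sum_{\lambda}d_{\lambda}=d$ new free coordinates per step, and these are genuinely free on the image since no future constraint kills them once $n$ is large; for the finitely many small $n$ one checks directly that the chain has already stabilized, which is the point where a little care is needed. Starting from $\dim\pi_{0}=l$ (over $\bGCovrep{D,j}$) and adding $d$ at each of the $n$ steps gives $nd+l$.

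\textbf{The case $j=0$ and conclusion.} When $j=0$ we have $J_{n,0}^{G}V\cong J_{n}V$ and $J_{\infty,0}^{G}V\cong J_{\infty}V$, and for the affine space $V$ the jet schemes are simply $J_{n}V\cong\AA_{k}^{(n+1)d}$ with surjective truncation maps; hence $\pi_{n}(J_{\infty,0}^{G}V)=J_{n}V\cong\AA_{k}^{(n+1)d}$ and the fibers of consecutive truncations are $\AA_{k}^{d}$. Combining the two cases yields the displayed formula, and the trivial-fibration-with-fiber-$\AA_{k}^{d}$ claim is exactly the surjectivity-with-affine-fiber statement just used, together with the fact that a surjection of affine spaces induced by a linear map splits.

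\textbf{Main obstacle.} I expect the only real subtlety to be verifying that the linear truncation images have already stabilized by a controlled (and ideally $n$-independent) stage, i.e. that the ``$d$ new coordinates per step are free on the image'' claim holds for \emph{all} $n\ge 1$ and not merely asymptotically; this amounts to a careful bookkeeping with the basis $\{g^{i}t^{m}\}$ of $\cO_{E}$ and the condition ``$i<d_{\lambda}$ or $-ij+mp+d_{\lambda}j>mp$'' from the proof of Proposition \ref{prop:G-jet moduli}, checking that whenever a basis element $g^{i}t^{m}$ with $i\ge d_{\lambda}$ fails that condition at level $n$ it continues to contribute nothing at all higher levels, so that it never re-enters as a free coordinate. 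Everything else is a formal consequence of the already-proven structure of $J_{n,j}^{G}V$ and its truncation maps.
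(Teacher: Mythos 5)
Your proposal is correct and follows essentially the same route as the paper, which identifies $\pi_{n}(J_{\infty,j}^{G}V)$ (fiberwise over $\bGCovrep{D,j}$) with the linear subspace of $(\cO_{E_{f}}/\fm_{E_{f}}^{np+1})^{\delta^{d_{\lambda}}=0}$ spanned by the basis elements $g^{i}t^{m}$ with $i<d_{\lambda}$, and then counts them to get $nd+l$. The stabilization issue you flag as the main obstacle is resolved exactly by that characterization: a basis element with $i\ge d_{\lambda}$ lies in the truncated kernel at level $n'$ only when $-ij+mp+d_{\lambda}j>n'p$, so for $n'\ge n+\left\lceil d_{\lambda}j/p\right\rceil$ no such element survives projection to level $n$, while every element with $i<d_{\lambda}$ lifts to a genuine arc because it comes from $\cO_{E_{f}}^{\delta^{d_{\lambda}}=0}$ itself.
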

\begin{proof}
The case $j=0$ is obvious from $J_{n,0}^{G}V=J_{n}V.$ For $j>0,$
with the notation as in the proof of Proposition \ref{prop:G-jet moduli},
$G$-$n$-jets in $\pi_{n}(J_{\infty,j}^{G}V)$ corresponds to elements
of the linear subspace of $\cO_{E_{f}}/\fm_{E_{f}}^{np+1}$ generated
by $g^{i}t^{n}$'s with $i<d$. This shows the first assertion. The
second assertion follows from the first.\end{proof}
\begin{defn}
For $0\le n\le\infty,$ we put $J_{n}^{G}V:=\bigsqcup_{j\ge0}J_{n,j}^{G}V.$
(Here for each $j,$ $J_{n,j}^{G}V$ is a connected component of $J_{n}^{G}V.$)
\end{defn}
The following is obvious from the definition:
\begin{cor}
Truncation maps $\pi_{n+1}(J_{\infty}^{G}V)\to\pi_{n}(J_{\infty}^{G}V)$
is a trivial fibration with fiber $\AA_{k}^{d}.$ 
\end{cor}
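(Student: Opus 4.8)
The final statement to prove is the corollary that truncation maps $\pi_{n+1}(J_{\infty}^{G}V)\to\pi_{n}(J_{\infty}^{G}V)$ are trivial fibrations with fiber $\AA_{k}^{d}$. The plan is to reduce this immediately to the stratum-wise statement, namely the assertion already proved in the preceding corollary that $\pi_{n+1}(J_{\infty,j}^{G}V)\to\pi_{n}(J_{\infty,j}^{G}V)$ is a trivial $\AA_{k}^{d}$-fibration for every $j\in\NN'_{0}$.

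First I would recall that by definition $J_{\infty}^{G}V=\bigsqcup_{j\ge0}J_{\infty,j}^{G}V$ is the disjoint union over ramification jumps, and each $J_{\infty,j}^{G}V$ is a connected component. Consequently $\pi_{n}$ respects this decomposition: $\pi_{n}(J_{\infty}^{G}V)=\bigsqcup_{j\ge0}\pi_{n}(J_{\infty,j}^{G}V)$, and the truncation map $\pi_{n+1}(J_{\infty}^{G}V)\to\pi_{n}(J_{\infty}^{G}V)$ is the disjoint union of the maps $\pi_{n+1}(J_{\infty,j}^{G}V)\to\pi_{n}(J_{\infty,j}^{G}V)$. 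A disjoint union of trivial $\AA_{k}^{d}$-fibrations is again a trivial $\AA_{k}^{d}$-fibration (over the disjoint base), so the result follows. That is essentially the whole argument; the statement in the excerpt even says "obvious from the definition," so no serious new content is required.

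The one point worth stating explicitly, so the reader is not left wondering, is why the fiber dimension $d$ is uniform across all strata — including the $j=0$ stratum where $J_{n,0}^{G}V=J_{n}V$ and the fibration is the ordinary jet truncation $J_{n+1}V\to J_{n}V$ with fiber $\AA_{k}^{d}$ (since $\dim V=d$), and the strata $j\in\NN'$ where the preceding corollary gives the same fiber $\AA_{k}^{d}$. Since the fiber is $\AA_{k}^{d}$ in every case, gluing over the disjoint union of bases preserves both the triviality and the fiber.

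I do not anticipate a genuine obstacle here; the only mild care needed is bookkeeping — making sure the identification $\pi_{n}(J_{\infty}^{G}V)=\bigsqcup_{j}\pi_{n}(J_{\infty,j}^{G}V)$ is compatible with truncation, which is immediate because truncation maps preserve the ramification jump of a $G$-arc. If one wanted to be thorough one could also note that the trivializations on the individual strata need not be chosen coherently — triviality of the total map only requires a trivialization over each connected component of the base, which we have.
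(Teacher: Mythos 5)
Your argument is correct and matches the paper's (which simply declares the corollary "obvious from the definition"): the decomposition $J_{n}^{G}V=\bigsqcup_{j}J_{n,j}^{G}V$ into connected components is preserved by truncation, and the preceding corollary gives the trivial $\AA_{k}^{d}$-fibration on each stratum. Nothing more is needed.
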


\subsection{Twisted arcs and jets}

Let $\cX$ be the quotient stack $[V/G]$. For an algebraically closed
field $K$ and for a representative polynomial $f\in\RP_{K,j},$ we
set 
\[
\cD_{f}:=[E_{f}/G]\text{ and }\cD_{f,n}:=[E_{f,n}/G].
\]

\begin{defn}
We define a \emph{twisted arc }(resp. \emph{twisted $n$-jet}) of
$\cX$ over $K$ as a morphism 
\[
\cD_{f}\to\cX\text{ (resp. \ensuremath{\cD_{f,n}\to\cX})}
\]
which is induced from a $G$-arc $E_{f}\to V$ (resp. $G$-$n$-jet
$E_{f,n}\to V$). We say that two twisted arcs $\gamma:\cD_{f}\to\cX$
and $\gamma':\cD_{f'}\to\cX$ (over $K$) are \emph{isomorphic }if
$f=f'$ and if two morphisms $\gamma,\gamma':\cD_{f}\rightrightarrows\cX$
are 2-isomorphic. (Recall that stacks form a 2-category and hence
morphisms between two stacks form a usual category.)
\end{defn}
Clearly twisted arcs (jets) are closely related to $G$-arcs (jets).
To each $G$-arc $\gamma:E_{f}\to V$, we can associate a twisted
arc $\bar{\gamma}:\cD_{f}\to\cX$. Conversely given a twisted arc
$\cD_{f}\to\cX,$ then there exists a $G$-arc $E_{f}\to V$ whose
associated twisted arc is the given one. 
\begin{prop}
The set of twisted arcs of $\cX$ over an algebraically closed field
$K$ is in one-to-one correspondence with $(J_{\infty}^{G}V)(K)/G$
in such a way that the class of $\gamma\in(J_{\infty}^{G}V)(K)$ corresponds
to $\bar{\gamma}.$ Here $G$ acts on $J_{\infty}^{G}V$ by $\sigma(\gamma):=\gamma\circ\sigma=\sigma\circ\gamma.$
Similarly the set of twisted $n$-jets of $\cX$ over $K$ is in one-to-one
correspondence with $(J_{n}^{G}V)(K)/G.$\end{prop}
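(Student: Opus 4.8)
The plan is to set up the correspondence on each stratum $J^G_{\infty,j}V$ separately and then take the disjoint union over $j\in\NN'_0$. Fix an algebraically closed field $K$ and a representative polynomial $f\in\RP_{K,j}$. The fundamental point is that a morphism $\cD_f=[E_f/G]\to\cX=[V/G]$ out of the quotient stack is the same datum as a $G$-equivariant morphism from a $G$-torsor over $\cD_f$; since $E_f\to\cD_f$ is itself a $G$-torsor (it is $G$-Galois over $D$ by construction), pulling back the universal torsor $V\to\cX$ along $\cD_f\to\cX$ and trivializing it over $E_f$ shows that every twisted arc $\cD_f\to\cX$ lifts to a $G$-arc $E_f\to V$, and two $G$-arcs give $2$-isomorphic maps $\cD_f\rightrightarrows\cX$ precisely when they differ by the $G$-action on $V$ (equivalently, by an automorphism of the $G$-torsor $E_f$, which since $G$ is abelian is given by translation by an element of $G$). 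This is the formal content of the standard description of $\mathrm{Hom}(\cD_f,[V/G])$ as a groupoid, and it gives a bijection
\[
\{\text{twisted arcs }\cD_f\to\cX\text{ over }K\}/\!\cong\ \longleftrightarrow\ \{G\text{-arcs }E_f\to V\text{ over }K\}/G .
\]

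Next I would remember that $f$ is not a free parameter but part of the data: by the definition of a twisted arc, $\gamma:\cD_f\to\cX$ and $\gamma':\cD_{f'}\to\cX$ are only declared isomorphic when $f=f'$, and likewise $(J^G_\infty V)(K)$ records the representative polynomial as part of a point (this is why $J^G_{\infty,j}V\cong(\prod_{i\ge1}\AA^1_k)\times\bGCovrep{D,j}$ has the $\bGCovrep{D,j}$-factor). Since $K$ is algebraically closed, Proposition~\ref{prop:corresp} identifies $\GCov{D^*\hat\times_k K}$ with $\RP_K$, so every nontrivial $G$-cover over $K$ is (uniquely) represented; and the trivial cover is the $j=0$ case handled via the identification $J^G_{n,0}V=J_nV$ in Proposition~\ref{prop:G-jet moduli}. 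Assembling the bijections stratum by stratum and noting that $G$ acts trivially on the $\bGCovrep{D,j}$-factor (twisting by $\sigma$ does not change the Artin--Schreier cover, only the equivariant map into $V$), one gets
\[
\{\text{twisted arcs of }\cX\text{ over }K\}\ \longleftrightarrow\ (J^G_\infty V)(K)/G,
\]
with the class of $\gamma$ going to $\bar\gamma$, exactly as asserted. The $n$-jet statement is identical with $E_f$ replaced by $E_{f,n}$ and $V$-arcs by $V$-$n$-jets throughout, using $\cD_{f,n}=[E_{f,n}/G]$.

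I expect the main obstacle to be the bookkeeping of $2$-isomorphisms: one must check that a $2$-isomorphism between $\gamma,\gamma':\cD_f\rightrightarrows\cX$ really does descend to an honest element of $G$ acting on the $G$-arc, rather than some larger automorphism group. This is where abelianness of $G$ is used — $\mathrm{Aut}$ of the $G$-torsor $E_f\to\cD_f$ as a $G$-torsor is $G$ itself (translations), and the map into $V$ then transforms by precisely the corresponding $\sigma^a$. I would spell this out by working affine-locally: a map $[E_f/G]\to[V/G]$ is a $G$-equivariant ring homomorphism $k[\bx]\to\cO_{E_f}$ up to composing with $\sigma$, which by the Lemma preceding Proposition~\ref{prop:G-jet moduli} is exactly a point of $\prod_\lambda\cO_{E_f}^{\delta^{d_\lambda}=0}$ up to the $G$-action, i.e. a $G$-arc up to $G$ — and one has to be slightly careful that when $f=0$ the torsor $E_0$ is disconnected, so its torsor-automorphisms still form just $G$ acting by permuting the $p$ sheets, matching the ad hoc definition of $E_{0,n}$ and the identification $J^G_{n,0}V=J_nV$. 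Once these two points are checked, the rest is formal.
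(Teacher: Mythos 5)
Your proposal is correct and follows essentially the same route as the paper: the paper also reduces the claim to showing that any two $G$-arc lifts of a given twisted arc differ by composition with an automorphism of $E_{f}$ over $\cD_{f}$ (realized there as $\alpha_{1}^{-1}\circ\alpha_{2}$ via the pulled-back torsor $\cD_{f}\times_{\cX}V$), and then proves a lemma identifying such automorphisms with elements of $G$ — exactly your observation that torsor automorphisms of $E_{f}\to\cD_{f}$ are translations by $G$ since $G$ is abelian. The stratum-by-stratum bookkeeping and the $f=0$ case you flag are handled the same way.
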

\begin{proof}
Let $\gamma_{i}:E_{f}\to V$ $(i=1,2)$ be $G$-arcs such that $ $$\bar{\gamma}:=\bar{\gamma_{1}}=\bar{\gamma_{2}}.$
We have to show that $\gamma_{1}$ and $\gamma_{2}$ are in the same
$G$-orbit. Let $E:=\cD_{f}\times_{\bar{\gamma},\cX}V.$ Then for
each $i,$ there exists an isomorphism $\alpha_{i}:E_{f}\to E$ which
fits into the following 2-commutative diagram:
\[
\xymatrix{E_{f}\ar[dr]\ar@/{}^{1pc}/[rr]^{\gamma_{i}}\ar[r]^{\alpha_{i}} & E\ar[r]\ar[d] & V\ar[d]\\
 & \cD_{f}\ar[r]^{\bar{\gamma}} & \cX
}
\]
Then $\gamma_{2}=\gamma_{1}\circ\alpha_{1}^{-1}\circ\alpha_{2}.$
For $G$-$n$-jets, the corresponding assertion holds. What remains
is to show that $\alpha_{1}^{-1}\circ\alpha_{2}=\tau$ for some $\tau\in G$.
This will be done in the following lemma in a more general setting. \end{proof}
\begin{lem}
Let $U$ be a $G$-scheme and $[U/G]$ the quotient stack with the
natural morphism $\alpha:U\to[U/G].$ Suppose that $\beta:U\to U$
is an isomorphism such that $\beta\circ\alpha$ and $\alpha$ are
isomorphic. Then $\beta=\tau$ for some $\tau\in G.$ \end{lem}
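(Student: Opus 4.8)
The plan is to unwind, in terms of concrete torsor data, what it means that $\alpha\circ\beta$ and $\alpha$ are isomorphic as morphisms $U\to[U/G]$ (this is the composite $U\xrightarrow{\beta}U\xrightarrow{\alpha}[U/G]$, written $\beta\circ\alpha$ in the statement), and then to read $\beta$ off that data.

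First I would invoke the standard description of maps into a quotient stack: for a scheme $T$, the groupoid $[U/G](T)$ has for objects the pairs $(P\to T,\ P\to U)$ with $P\to T$ a $G$-torsor and $P\to U$ a $G$-equivariant morphism, and $\mathrm{Mor}(T,[U/G])\simeq[U/G](T)$. Under this equivalence $\alpha$ corresponds to the object $\xi=(\mathrm{pr}_2\colon G\times U\to U,\ a\colon G\times U\to U)$ with $a(g,x)=g\cdot x$, where $G$ acts on $G\times U$ by translation on the first factor; and $\alpha\circ\beta$ corresponds to the restriction of $\xi$ along $\beta$, namely $(\mathrm{pr}_2\colon G\times U\to U,\ (g,x)\mapsto g\cdot\beta(x))$. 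A $2$-isomorphism $\alpha\circ\beta\cong\alpha$ is thus an isomorphism $\theta\colon G\times U\to G\times U$ of $G$-torsors over $U$ with $a\circ\theta=\bigl((g,x)\mapsto g\cdot\beta(x)\bigr)$. Since every automorphism of the trivial torsor $G\times U\to U$ has the form $\theta(g,x)=(g\cdot c(x),\,x)$ for a morphism $c\colon U\to G$---necessarily Zariski-locally constant, as $G$ is a finite constant group scheme---this compatibility reads $(g\,c(x))\cdot x=g\cdot\beta(x)$ for all $g,x$, and taking $g=e$ gives
\[
\beta(x)=c(x)\cdot x\qquad(x\in U).
\]

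It remains to see that $c$ is a constant function $\tau$ for some $\tau\in G$. If $U$ is connected this is immediate. In the way the lemma is applied---$U=E_f$ and $\beta=\alpha_1^{-1}\circ\alpha_2$ from the proof of the preceding proposition---$\beta$ is in addition an isomorphism of $G$-torsors over $\cD_f=[E_f/G]$, hence $G$-equivariant; then, $G$ being abelian, $\beta(hx)=h\cdot\beta(x)$ forces $c(hx)\cdot(hx)=c(x)\cdot(hx)$, so $c(hx)=c(x)$ and $c$ is constant on every $G$-orbit. Since $G$ permutes the connected components of $E_f$ transitively (when $f\ne0$, $E_f$ is integral; when $f=0$, $G$ cyclically permutes its $p$ components), $c$ is globally constant, equal to some $\tau$, whence $\beta=\tau$.

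The only point requiring care is the precise identification of $\xi$, its restriction along $\beta$, and the torsor automorphism $\theta$; this is purely formal. The single genuine subtlety is that $U=E_f$ need not be connected (the case $f=0$), so one cannot simply appeal to connectedness of $U$---this is why $G$-equivariance of $\beta$, equivalently the connectedness of the base $\cD_f$ of the torsor $E_f\to\cD_f$, is used.
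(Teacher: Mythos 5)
Your proof is correct and follows essentially the same route as the paper's: unwind the $2$-isomorphism as an automorphism of the trivial torsor $G\times U\to U$ compatible with the action map, write it as $(g,x)\mapsto(g\,c(x),x)$ for a locally constant $c\colon U\to G$, and read $\beta$ off the compatibility with $m$ (the paper phrases this as a $G$-equivariant $\epsilon$ with $\pi\circ\epsilon=\beta\circ\pi$ and $m\circ\epsilon=m$, then restricts to the slice $\{1\}\times U$). The one point where you go beyond the paper is the disconnected case: the paper simply posits that $\epsilon$ carries $\{1\}\times U$ into a single slice $\{\tau\}\times U$, which is automatic only for connected $U$, whereas $U=E_{0}$ has $p$ components; your observation that $c$ is a priori only locally constant, and that in the actual application the $G$-equivariance of $\beta=\alpha_{1}^{-1}\circ\alpha_{2}$ together with the transitivity of $G$ on the components forces $c$ to be constant, correctly supplies the step the paper glosses over.
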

\begin{proof}
Let $m:G\times U\to U$ be the morphism defining the $G$-action and
$\pi:G\times U\to U$ the projection. From the definition of quotient
stacks, there exists a $G$-equivariant isomorphism $\epsilon:G\times U\to G\times U$
making the following diagram commutative: 
\[
\xymatrix{G\times U\ar[d]_{\pi}\ar[rr]^{m}\ar[dr]^{\epsilon} &  & U\\
U\ar[dr]_{\beta} & G\times U\ar[d]_{\pi}\ar[ur]^{m}\\
 & U
}
\]
Suppose that $\epsilon$ maps $\{1\}\times U$ onto $\{\tau\}\times U$.
If $\epsilon'$ denotes the restriction of $\epsilon$ to $\{1\}\times U$
and if we identify $\{1\}\times U$ and $\{\tau\}\times U$ with $U,$
then we have the commutative diagram:

\[
\xymatrix{U\ar[rr]^{1}\ar[dr]_{\epsilon'=\beta} &  & U\\
 & U\ar[ur]^{\tau}
}
\]
This shows that $\beta=\epsilon'=\tau^{-1}.$ \end{proof}
\begin{defn}
We define the \emph{space of twisted arcs }and\emph{ twisted $n$-jets
}of $\cX$ as the quotient schemes
\[
\cJ_{\infty}\cX:=(J_{\infty}^{G}V)/G\text{ and }\cJ_{n}\cX:=(J_{n}^{G}V)/G.
\]
Then for $0\le n\le\infty,$ we write $\cJ_{n}\cX=\bigsqcup_{j\in\NN_{0}'}\cJ_{n,j}\cX,$
where the subscript $j$ indicates ramification jumps. We define the
function 
\[
\rj:\cJ_{n}\cX\to\NN'_{0}
\]
by $\rj(\gamma):=j$ for $\gamma\in\cJ_{n,j}\cX.$\end{defn}
\begin{rem}
The genuine moduli spaces of twisted arcs or jets must be constructed
as stacks as in \cite{Yasuda Motivic_Over_DM}. \end{rem}
\begin{defn}
\label{def: univ homeo}A morphism $f:Y\to X$ of varieties is called
a \emph{universal homeomorphism }if one of the following equivalent
conditions holds:
\begin{enumerate}
\item $f$ is finite, surjective and universally injective.
\item For every morphism $X'\to X$ of schemes, the induced morphism $Y\times_{X}X'\to X'$
is a homeomorphism.
\end{enumerate}
We say that two schemes of finite type are \emph{universally homeomorphic
}if there exists a universal homeomorphism between them. For instance,
see \cite{Nicase-Sebag} for more details.
\end{defn}
If $T$ is a $G$-variety and $S\subset T$ is a $G$-stable closed
subvariety, then the map $S/G\to T/G$ is not a closed embedding but
only a universal homeomorphism onto its image. This is why this notion
is necessary below. 

We note that the $G$-action on $ $$J_{n,j}^{G}V=\AA_{k}^{m}\times\bGCovrep{D,j}$
is trivial on $\bGCovrep{D,j}$ and linear on $\AA_{k}^{m}.$ Indeed
the linearity follows from the proof of Lemma \ref{lem:delta 1}.
Hence we have the following fact which is essential to define the
motivic measure on $\cJ_{\infty}\cX$ below.
\begin{cor}
\label{cor:trivial fib}Every geometric fiber of the truncation $\pi_{n+1}(\cJ_{\infty}\cX)\to\pi_{n}(\cJ_{\infty}\cX)$
is universally homeomorphic to the quotient of $\AA_{K}^{d}$ by some
linear $G$-action with $K$ an algebraically closed field. Moreover
\[
\pi_{0}(\cJ_{\infty,j}\cX)=\cJ_{0,j}\cX=\begin{cases}
\AA^{l}\times\bGCovrep{D,j} & (j\in\NN')\\
V/G & (j=0).
\end{cases}
\]

\end{cor}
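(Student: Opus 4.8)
The plan is to unwind the definitions and reduce everything to the explicit descriptions of the $G$-jet spaces already obtained. For the first assertion, recall that $\cJ_{\infty}\cX = (J_\infty^G V)/G$, so by the proposition computing truncations of $J_{\infty,j}^G V$, the morphism $\pi_{n+1}(\cJ_\infty\cX) \to \pi_n(\cJ_\infty\cX)$ is the quotient by $G$ of the truncation map $\pi_{n+1}(J_{\infty,j}^G V) \to \pi_n(J_{\infty,j}^G V)$, which by the corollary before last is a trivial $\AA_k^d$-fibration. First I would note that, over a fixed geometric point of $\bGCovrep{D,j}$ (for $j \in \NN'$) or over the whole space (for $j=0$), this $\AA_k^d$-bundle is $G$-equivariant, with $G$ acting trivially on the base $\bGCovrep{D,j}$ and linearly on the fibers $\AA_k^d$ — the linearity being exactly what the proof of Lemma \ref{lem:delta 1} gives, since $\delta$ acts $k$-linearly on $\cO_{E_f}/\fm_{E_f}^{np+1}$ and the fiber coordinates are the new coordinates $g^i t^n$ appearing in passing from level $n$ to level $n+1$. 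Taking $G$-quotients, a geometric fiber of $\pi_{n+1}(\cJ_\infty\cX) \to \pi_n(\cJ_\infty\cX)$ over a point lying over a geometric point of $\bGCovrep{D,j}$ is $(\AA_K^d)/G$ for the induced linear action; by the remark preceding Definition \ref{def: univ homeo}, this quotient, though not literally the fiber of the scheme-theoretic quotient, is universally homeomorphic to it, which is all that is claimed.

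For the computation of $\pi_0(\cJ_{\infty,j}\cX)$, I would distinguish the two cases exactly as in the proof of Proposition \ref{prop:G-jet moduli}. When $j \in \NN'$, we have $\pi_0(J_{\infty,j}^G V) = J_{0,j}^G V \cong \AA_k^l \times \bGCovrep{D,j}$ by part (2) of the proposition on truncation maps, with $G$ acting trivially on $\bGCovrep{D,j}$ and linearly on $\AA_k^l$; moreover the linear $G$-action on $\AA_k^l$ is in fact \emph{trivial}, since the $l$ coordinates correspond to $(\alpha(x_{1,1}),\dots,\alpha(x_{l,1}))$ and $\delta(x_{\lambda,1}) = x_{\lambda,2}$ is a higher-level coordinate (for $d_\lambda \ge 2$) so is killed on passing to $J_{0,j}^G V$, while for $d_\lambda = 1$ the action is trivial anyway. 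Hence the $G$-quotient is simply $\AA^l \times \bGCovrep{D,j}$. When $j = 0$, we have $J_{n,0}^G V = J_n V$ and in particular $\pi_0(\cJ_{\infty,0}\cX) = (J_0 V)/G = V/G = X$, since $J_0 V = V$. This gives exactly the displayed formula.

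The one point that needs a little care — and which I expect to be the main obstacle in making the argument fully rigorous — is the passage from the equivariant trivial fibration downstairs to a statement about the \emph{scheme-theoretic} quotient's geometric fibers, together with the fact that forming $G$-quotients does not in general commute with base change to a point. This is precisely why the statement is phrased with ``universally homeomorphic'' rather than ``isomorphic'': one uses that the fiber of $(J_{n+1}^G V)/G \to (J_n^G V)/G$ over a geometric point maps, via a universal homeomorphism (the natural map $S/G \to T/G$ for $S \subset T$ a $G$-stable closed subscheme, applied to the fiber inclusion), onto the quotient of the corresponding fiber $\AA_K^d$ of $J_{n+1}^G V \to J_n^G V$ by the linear $G$-action. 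Composing universal homeomorphisms and using their stability under composition and the two equivalent characterizations in Definition \ref{def: univ homeo} closes the argument. All of this is routine once one has the explicit coordinate descriptions above, so I would present the proof in a few lines: reduce to $J_\infty^G V$, invoke the trivial-fibration corollary and Lemma \ref{lem:delta 1} for linearity, take quotients, and handle $\pi_0$ by the case split $j \in \NN'$ versus $j = 0$.
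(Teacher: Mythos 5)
Your proposal is correct and follows the same route as the paper, which essentially gives no separate proof beyond the remark immediately preceding the corollary (linearity of the $G$-action on the $\AA_{k}^{m}$ factor via the proof of Lemma \ref{lem:delta 1}, triviality on $\bGCovrep{D,j}$) combined with the trivial-fibration corollary for $J_{\infty,j}^{G}V$ and the universal-homeomorphism remark for $S/G\to T/G$. Your write-up in fact makes explicit several points the paper leaves implicit, notably why the $G$-action on the $\AA^{l}$ factor at level $0$ is trivial for $j\in\NN'$ (the action on $\cO_{E_f}/\fm_{E_f}$ is trivial since $j\ge1$) and why geometric fibers of the quotient are only universally homeomorphic, not isomorphic, to quotients of fibers.
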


\subsection{Push-forward maps for twisted arcs and jets}

Let $X:=V/G$ be the quotient variety. Let $\phi:\cX\to X$ and $\psi:V\to X$
be the natural morphisms. For a twisted arc $\cD\to\cX,$ taking the
coarse moduli spaces, we get an arc $D\to X.$ This defines a \emph{push-forward
map} 
\[
\phi_{\infty}:\cJ_{\infty}\cX\to J_{\infty}X.
\]
 We can see that this is actually a scheme morphism as follows. Let
the solid arrows of 
\[
\xymatrix{E\ar[d]\ar[r] & V\ar@{-->}[d]^{\psi}\\
D\hat{\times}J_{\infty}^{G}V\ar[d]\ar@{-->}[r]_{\alpha} & X\\
J_{\infty}^{G}V
}
\]
be the universal family of $G$-arcs. Then there exists the dashed
arrow $\alpha$ which makes the whole diagram commutative. This morphism
$\alpha$ is a family of arcs of $X$ over $J_{\infty}^{G}V.$ From
the universality of $J_{\infty}X$, this induces a morphism $J_{\infty}^{G}V\to J_{\infty}X.$
Then we easily see that this factors through $\cJ_{\infty}\cX=(J_{\infty}^{G}V)/G,$
and obtain the desired morphism $\cJ_{\infty}\cX\to J_{\infty}X.$
\begin{notation}
From now on, for $\gamma$ in $J_{\infty}X$ or $\cJ_{\infty}\cX$,
we denote by $\gamma_{n}$ its truncation at level $n$: $\gamma_{n}=\pi_{n}(\gamma).$
\end{notation}
Let $\gamma:\cD\to\cX$ be a twisted arc and $\gamma_{n}:\cD_{n}\hookrightarrow\cD\to\cX$
its truncation at level $n.$ Then we have an arc $\phi_{\infty}\gamma:D\to X$
and its truncation at level $n$, $(\phi_{\infty}\gamma)_{n}:D_{n}\hookrightarrow D\to X.$
This $n$-jet of $X$ is depends only on $\gamma_{n}$, hence we have
a \emph{push-forward} \emph{map} 
\[
\phi_{n}:\pi_{n}(\cJ_{\infty}\cX)\to J_{n}X,\,\gamma_{n}\mapsto(\phi_{\infty}\gamma)_{n}.
\]
We easily see that this is a scheme morphism and compatible with truncation
maps. 
\begin{rem}
Unlike the tame case, we do not have a map $\cJ_{n}\cX\to J_{n}X$.
It is because $D_{n}$ is not the coarse moduli space of $\cD_{n}$.
\end{rem}
Let $V^{G}\subset V$ be the fixed point locus and $\cY:=[V^{G}/G]\subset\cX$.
Since we have supposed that $V$ is non-trivial, $\phi$ is proper
and birational. Then $\cY$ is the exceptional locus of $\phi.$ We
define $\cJ_{\infty}\cY$ to be the subset of $\cJ_{\infty}\cX$ consisting
of those twisted arcs factors through $\cY.$ Let $Y\subset X$ be
the image of $\cY.$ Then the arc space $J_{\infty}Y$ of $ $$Y$
is regarded as a subscheme of $J_{\infty}X.$
\begin{prop}
\label{prop:almost bijective}The map 
\[
\phi_{\infty}:\cJ_{\infty}\cX\setminus\cJ_{\infty}\cY\to J_{\infty}X\setminus J_{\infty}Y
\]
is bijective.\end{prop}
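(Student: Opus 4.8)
The plan is to show that $\phi_\infty$ is both injective and surjective on the complements, using the concrete description of twisted arcs from the preceding subsections. Recall from Proposition \ref{prop:almost bijective}'s setup that a twisted arc of $\cX$ is (up to isomorphism) the class of a $G$-arc $\gamma:E_f\to V$ modulo the $G$-action $\sigma\cdot\gamma=\sigma\circ\gamma$, and $\phi_\infty$ sends this class to the arc $D\to X$ obtained by passing to coarse moduli spaces. Working over an algebraically closed field $K$, an arc $D\to X$ outside $J_\infty Y$ is a $K[[t]]$-point of $X=V/G$ whose generic point lands in the smooth locus where $\psi:V\to X$ is an \'etale $G$-torsor; so I would first extract the ramification datum.

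First I would prove surjectivity. Given an arc $\beta:D\to X$ not in $J_\infty Y$, its restriction $\beta^*:D^*\to X$ to the generic point factors through the \'etale locus, so the fibre product $E^*:=D^*\times_{X}V$ is an \'etale $G$-cover of $D^*$, i.e.\ $E^*\in\GCov{D^*}$; after a geometric equivalence (replacing $E^*$ by an isomorphic representative cover, using Proposition \ref{prop:G-Cov maps}) we may assume $E^*=E_f^*$ for some $f\in\RP_K$. The canonical map $E^*\to V$ extends to the normalization $E_f\to V$ by the valuative criterion of properness applied to the finite morphism $V\to X$ (equivalently, $V$ is finite over $X$ and integrally closed, so the integral closure of $\cO_X$-algebra structure forces the extension), giving a $G$-arc $E_f\to V$; its associated twisted arc maps to $\beta$ under $\phi_\infty$, up to the identification of coarse spaces $D=E_f/G$. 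One must check that $\beta\notin J_\infty Y$ guarantees the lifted $G$-arc does not factor through $V^G$, which is clear since the generic point of $E_f\to V$ already avoids $V^G$.

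Next I would prove injectivity. Suppose two twisted arcs, represented by $G$-arcs $\gamma_1:E_{f_1}\to V$ and $\gamma_2:E_{f_2}\to V$, push forward to the same arc $\beta:D\to X$. Passing to generic points, $\gamma_i^*$ realizes $E_{f_i}^*$ as $D^*\times_X V$ via $\beta^*$; since this fibre product is intrinsic to $\beta^*$, the two \'etale $G$-covers $E_{f_1}^*$ and $E_{f_2}^*$ are isomorphic as $G$-covers of $D^*$, hence (over the algebraically closed $K$) $f_1=f_2=:f$ as elements of $\RP_K$ after normalizing representatives. Now both $\gamma_1,\gamma_2:E_f\rightrightarrows V$ are $G$-equivariant lifts of the same $\beta$, so they differ by a $G$-equivariant automorphism of $E_f$ over $D$; since $E_f^*/D^*$ is a connected \'etale $G$-torsor (as $\beta\notin J_\infty Y$, so $f\ne 0$ and the cover is nontrivial), its group of $G$-equivariant $D^*$-automorphisms is exactly $G$ itself, and this extends to $E_f$ by normalization. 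Hence $\gamma_2=\gamma_1\circ\tau$ for some $\tau\in G$, so the two twisted arcs are isomorphic, i.e.\ equal as points of $\cJ_\infty\cX$.

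The main obstacle I expect is the bookkeeping around the coarse-moduli identification and the non-algebraically-closed base: the scheme $\cJ_\infty\cX=(J_\infty^G V)/G$ is defined via representative polynomials over $k$, but the clean fibre-product description of covers is available only after base change to $\bar k$ (Proposition \ref{prop:G-Cov maps}), so I would phrase the bijection first on $\bar K$-points for every algebraically closed $K\supseteq k$ and then note this suffices because both sides are reduced schemes of (ind-)finite type whose $\bar K$-points determine the identity of the map. A secondary technical point is verifying that the $G$-equivariant automorphism group of the nontrivial connected torsor $E_f^*/D^*$ is precisely $G$ and not larger — this is where the primality of $[G:1]=p$ and connectedness of $E_f^*$ (equivalently $f\notin\wp(k((t)))$) enter, and it is essentially the content already isolated in the lemma preceding this proposition.
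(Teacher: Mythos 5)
Your proof is correct and follows essentially the same route as the paper's: both reconstruct the twisted arc from its push-forward by forming the fibre product $D^{*}\times_{X}V$ over the punctured disk, normalizing, and extending $E^{*}\to V$ to $E\to V$ via the valuative criterion of properness --- you merely unpack the paper's one-line ``reconstruction'' argument into explicit surjectivity and injectivity steps. One small slip: $\beta\notin J_{\infty}Y$ does \emph{not} force $f\ne0$ (an arc whose generic point lifts to $V$ has trivial associated cover), but this is harmless, since the group of $G$-equivariant $D^{*}$-automorphisms of the trivial torsor is still exactly $G$ because $G$ is abelian, which is precisely what the paper's earlier lemma on $U\to[U/G]$ establishes without any connectedness hypothesis.
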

\begin{proof}
We will show that $\gamma\in\cJ_{\infty}\cX\setminus\cJ_{\infty}\cY$
can be reconstructed from $\bar{\gamma}:=\phi_{*}\gamma.$ Let $E^{*}\to D^{*}$
be a $G$-cover obtained as the base change of $V\to X$ by $\bar{\gamma}|_{D^{*}}.$
If $E$ is the normalization of $D$ in $E^{*},$ then the morphism
$E^{*}\to V$ uniquely extends to $E\to V,$ thanks to the valuative
criterion of properness. This is a $G$-arc and induces a twisted
arc $\cD:=[E/G]\to\cX.$ Now it is straightforward to check that this
twisted arc is isomorphic to $\gamma.$
\end{proof}

\section{Motivic integration\label{sec:Motivic-integration}}

In this section, we will introduce the motivic measure on the space
of twisted arcs and define integrals relative to this. Mostly we will
just repeat materials from the literature \cite{Batyrev_Gor,Denef-Loeser_Germs,Denef-Loeser_McKay,Sebag_motivic_formal}
with a slight modification.

\subsection{The Grothendieck ring of varieties and a variant}

Let $\Var_{k}$ denote the set of isomorphism classes of $k$-varieties.
The \emph{Grothendieck ring of varieties over $k$, }denoted $K_{0}(\Var_{k}),$\emph{
}is the abelian group generated by $[Y]\in\Var_{k}$ subject to the
following relation: If $Z$ is a closed subvariety of $Y$, then $[Y]=[Y\setminus Z]+[Z].$
It has a ring structure where the product is simply defined by $[Y][Z]:=[Y\times Z].$
We denote by $\LL$ the class $[\mathbb{A}_{k}^{1}]$ of the affine
line. 

For our purpose, we also need the following relation:
\begin{condition}
\label{cond: fib}Let $f:Y\to Z$ be a morphism of varieties. If every
geometric fiber of $f$ is universally homeomorphic to the quotient
of $\AA_{K}^{n}$ with $K$ an algebraically closed field by some
linear $G$-action, then $[Y]=\LL^{n}[Z].$ \end{condition}
\begin{defn}
We define $K'_{0}(\Var_{k})$ to be the quotient of $K_{0}(\Var_{k})$
by posing Condition \ref{cond: fib}. 
\end{defn}
Let $A$ be an abelian group and let $\chi:\Var_{k}\to A$ be a map
satisfying the following property: For every variety $Z$ and every
closed subvariety $Y\subset Z,$ $\chi(Z)=\chi(Z\setminus Y)+\chi(Y).$
(Such a map is called a \emph{generalized Euler characteristic}.)
Then there exists a unique group homomorphism 
\[
K_{0}(\Var_{k})\to A
\]
through which $\chi$ factors. Additionally, suppose that for every
morphism $f:Y\to Z$ as in Condition \ref{cond: fib}, $\chi(Y)=\chi(\AA_{k}^{n})\chi(Z)$.
Then there exists a group homomorphism $K_{0}'(\Var_{k})\to A$ which
fits into the commutative diagram:
\[
\xymatrix{\Var_{k}\ar[r]\ar[dr]\ar[ddr]_{\chi} & K_{0}(\Var_{k})\ar@/{}^{2pc}/[dd]\ar@{->>}[d]\\
 & K_{0}'(\Var_{k})\ar[d]\\
 & A
}
\]
The maps $K_{0}^{(')}(\Var_{k})\to A$ are ring maps if $A$ is a
ring and if $\chi(Y)\chi(Z)=\chi(Y\times Z)$ for any $Y$ and $Z.$

\subsection{Various realizations}

\subsubsection{Counting rational points}

In this paragraph, we suppose that $k$ is a finite field $\FF_{q}$.
Then for a finite extension $\FF_{q}/k,$ associating to a variety
$X$ the number of $\FF_{q}$-points $\sharp X(\FF_{q}),$ we obtain
a map
\[
\sharp_{q}:\Var_{k}\to\ZZ,\, X\mapsto\sharp X(\FF_{q}).
\]
 This is a generalized Euler characteristic and defines 
\[
\sharp_{q}:K_{0}(\Var_{k})\to\ZZ.
\]

Let $\bar{k}$ be a fixed algebraic closure of $k$. For a variety
$Y$ over $k,$ we denote by $Y_{\bar{k}}$ the variety over $\bar{k}$
obtained from $Y$ by extension of scalars. Then, fixing a prime $l\ne p,$
we write (compactly supported) $l$-adic étale cohomology groups as
$H^{i}(Y_{\bar{k}})=H^{i}(Y_{\bar{k}},\QQ_{l})$ and $H_{c}^{i}(Y_{\bar{k}})=H_{c}^{i}(Y_{\bar{k}},\QQ_{l})$. 
\begin{lem}
For a $G$-representation $V$ of dimension $d$, we have isomorphisms
of $\Gal(\bar{k}/k)$-representations: 
\[
H_{c}^{i}((V/G)_{\bar{k}})\cong H_{c}^{i}(V_{\bar{k}})\cong\begin{cases}
\mathbb{Q}_{l}(-d) & (i=2d)\\
0 & (\text{otherwise})
\end{cases}
\]
\end{lem}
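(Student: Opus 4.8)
The plan is to compute the compactly supported $\ell$-adic cohomology of the quotient variety $V/G$ by reducing it, via the quotient map $\psi\colon V\to V/G$, to the known cohomology of affine space, and then to exploit the fact that this quotient map is a \emph{universal homeomorphism}. First I would recall that $V\cong\AA_{k}^{d}$ as a variety, so that $H_{c}^{i}(V_{\bar{k}})\cong\QQ_{l}(-d)$ for $i=2d$ and $0$ otherwise, with the Galois action being the Tate twist; this is the standard computation for affine space. The second isomorphism in the statement is therefore the whole content, and the key geometric input is that $\psi$ is a finite, purely inseparable morphism onto $V/G$. Indeed, $G$ has order $p=\mathrm{char}\,k$, so the quotient by $G$ of an affine space is a bijective morphism: over $\bar{k}$, $\psi$ is integral, surjective, and universally injective because $k[\bx]$ is purely inseparable over $k[\bx]^{G}$ (every element $a\in k[\bx]$ satisfies $a^{p}\in k[\bx]^{G}$, since $\sigma$ acts unipotently and $(\sigma-\mathrm{id})^{p}=\sigma^{p}-\mathrm{id}=0$, while in characteristic $p$ raising to the $p$-th power kills the nilpotent operator). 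Hence $\psi$ is a universal homeomorphism in the sense of Definition \ref{def: univ homeo}.

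The main step is then to invoke topological invariance of the étale site: a universal homeomorphism induces an isomorphism on étale cohomology with $\QQ_{l}$-coefficients, and compatibly with the Galois action. I would cite SGA 1 (or \cite{Nicase-Sebag}) for the statement that if $f\colon Y\to X$ is a universal homeomorphism, then $f^{*}\colon H_{c}^{i}(X_{\bar{k}},\QQ_{l})\to H_{c}^{i}(Y_{\bar{k}},\QQ_{l})$ is an isomorphism of $\Gal(\bar{k}/k)$-modules for all $i$. Applying this to $\psi\colon V\to V/G$ and combining with the computation for $V$ gives $H_{c}^{i}((V/G)_{\bar{k}})\cong H_{c}^{i}(V_{\bar{k}})$, which is exactly the assertion. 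One small point to address: one must check that $V/G$ is genuinely a variety (separated, finite type over $k$), which is automatic since it is the spectrum of the finitely generated $k$-algebra $k[\bx]^{G}$ (finite generation of rings of invariants under a finite group), and that the base change $(V/G)_{\bar{k}}$ agrees with $V_{\bar{k}}/G$, which holds because taking invariants commutes with the flat base change $k\to\bar{k}$.

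The hard part, such as it is, is really just locating the correct reference for topological invariance of étale cohomology \emph{with compact supports} and \emph{with Galois action tracked} — the proper-base-change / topological-invariance package is standard but the compactly-supported variant for non-proper $V$ requires a word (one can reduce to ordinary cohomology via a $G$-equivariant compactification, or simply quote that $H_c^i$ of a universal homeomorphism target equals that of the source since $R\Gamma_c$ only depends on the étale topos). I expect no genuine obstacle: everything reduces to the unipotence of the $G$-action forcing $\psi$ to be purely inseparable, hence a universal homeomorphism, hence invisible to $\ell$-adic cohomology.
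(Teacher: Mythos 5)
Your proof has a fatal gap at its central claim: the quotient map $\psi\colon V\to V/G$ is \emph{not} a universal homeomorphism, and the algebraic justification you give for pure inseparability is incorrect. You assert that $a^{p}\in k[\bx]^{G}$ for every $a\in k[\bx]$ because $(\sigma-\mathrm{id})^{p}=0$. But Frobenius commutes with $\sigma$, so
\[
\sigma(a^{p})=\sigma(a)^{p}=(a+\delta(a))^{p}=a^{p}+\delta(a)^{p},
\]
and $\delta(a)^{p}=0$ forces $\delta(a)=0$ since $k[\bx]$ is reduced; thus $a^{p}$ is invariant only when $a$ already is. Concretely, for $V=V_{2}$ with $\sigma(x)=x+y$, $\sigma(y)=y$, one has $\sigma(x^{p})=x^{p}+y^{p}\ne x^{p}$. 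The nilpotence of $\delta$ on linear forms does not interact with $p$-th powers the way you claim. Geometrically, the $G$-action is free off the proper linear subspace $V^{G}$, so a generic geometric point of $V/G$ has exactly $p$ preimages in $V_{\bar k}$; $\psi$ is generically étale of degree $p$, hence certainly not universally injective. Only the restriction $V^{G}\to\overline{V^{G}}$ is a universal homeomorphism (the paper later identifies it with the Frobenius of $V^{G}$); the whole map is not, and so topological invariance of the étale site cannot be applied to $\psi$.

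The actual argument must therefore handle the generically étale locus. The paper stratifies: with $W=V^{G}$ and $U=V\setminus W$, the cover $U\to U/G$ is étale Galois, so Hochschild--Serre together with $\sharp G=p\ne l$ gives $H^{j}(U/G)=H^{j}(U)^{G}$; one then checks via the long exact sequence of the pair $(V,W)$ and Poincaré duality that $G$ acts trivially on the relevant cohomology of $U$, yielding $H_{c}^{i}(U/G)\cong H_{c}^{i}(U)$. On the fixed locus, $W\to\bar W$ \emph{is} a universal homeomorphism, giving $H_{c}^{i}(\bar W)=H_{c}^{i}(W)$. The five lemma applied to the two long exact sequences then gives $H_{c}^{i}(V/G)\cong H_{c}^{i}(V)$. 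Your proposal is missing all of this; the step "$\psi$ is purely inseparable" is where it fails irreparably.
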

\begin{proof}
In this proof, we omit the subscript $\bar{k}.$ Let $W:=V^{G}\subset V$
be the fixed point locus and $U:=V\setminus W$. From the exact $G$-equivariant
sequence 
\begin{equation}
\cdots\to H_{c}^{i}(U)\to H_{c}^{i}(V)\to H_{c}^{i}(W)\to H_{c}^{i+1}(U)\to\cdots,\label{long-seq}
\end{equation}
we have equivariant isomorphisms 
\[
H_{c}^{i}(U)\cong\begin{cases}
H_{c}^{i}(V) & (i=2d)\\
H_{c}^{i-1}(W) & (i=2\dim W+1)\\
0 & (\text{otherwise}).
\end{cases}
\]
 We claim $H^{i}(U/G)=H^{i}(U)^{G}.$ Indeed since $U\to U/G$ is
an étale Galois covering, we have the Hochschild-Serre spectral sequence
\cite[page 105, Theorem 2.20]{Milne book},
\[
H^{i}(G,H^{j}(U,\ZZ/l^{n}))\Rightarrow H^{i+j}(U/G,\ZZ/l^{n}).
\]
Then since $\sharp G=p\ne l,$ the group cohomology groups $H^{i}(G,H^{j}(U_{\bar{k}},\ZZ/l^{n}))$
vanish for $i\ne0$ and the spectral sequence degenerates. Hence for
each $j,$ 
\[
H^{j}(U,\ZZ/l^{n})^{G}=H^{j}(U/G,\ZZ/l^{n}).
\]
Then passing to the limits and tensoring with $\QQ_{l},$ we can show
the claim. 

Now, since the $G$-action on $H_{c}^{2\dim W}(W)$ is trivial, from
the Poincaré duality, we have $H_{c}^{i}(U/G)=H_{c}^{i}(U)$ for every
$i.$ Let $\bar{W}\subset V/G$ be the image of $W.$ Then the map
$W\to\bar{W}$ is a universal homeomorphism and hence $H_{c}^{i}(W)=H_{c}^{i}(\bar{W})$
(see for instance \cite{Nicase-Sebag}). From the five lemma, the
long exact sequence 
\[
\cdots\to H_{c}^{i}(U/G)\to H_{c}^{i}(V/G)\to H_{c}^{i}(\bar{W})\to H_{c}^{i+1}(U/G)\to\cdots
\]
is isomorphic to (\ref{long-seq}). In particular $H_{c}^{i}(V/G)\cong H_{c}^{i}(V)$
for every $i.$ The lemma follows again from the Poincaré duality.\end{proof}
\begin{lem}
\label{lem:Galois-rep}Let $f:Y\to Z$ be a morphism as in Condition
\ref{cond: fib}. Then we have an isomorphism of $\Gal(\bar{k}/k)$-representations,
\[
H_{c}^{i}(Y_{\bar{k}})\cong H_{c}^{i}(Z_{\bar{k}})\otimes\QQ(-n).
\]
\end{lem}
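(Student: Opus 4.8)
The plan is to reduce the claim to the preceding lemma about the motivic realization of the quotient $V/G$ together with the structure of the morphisms $f\colon Y\to Z$ allowed by Condition~\ref{cond: fib}. First I would recall that, by definition of Condition~\ref{cond: fib}, every geometric fiber of $f$ is universally homeomorphic to a quotient $\AA_K^n/G$ for some linear $G$-action. Since a universal homeomorphism induces isomorphisms on compactly supported $l$-adic cohomology (with $l\ne p$; see \cite{Nicase-Sebag}), and since the previous lemma computes $H_c^i((\AA^n/G)_{\bar k})\cong H_c^i(\AA^n_{\bar k})\cong\QQ_l(-n)$ concentrated in degree $2n$, each geometric fiber of $f$ has the compactly supported cohomology of affine $n$-space.

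The core step is then to promote this fiberwise statement to a statement about the total space $Y$ relative to $Z$. I would stratify $Z$ into locally closed pieces over which $f$ is, up to universal homeomorphism, a Zariski-locally trivial affine-space bundle — or at least invoke the fact that a proper/finite-type morphism all of whose geometric fibers have the cohomology of $\AA^n$ yields, via the Leray spectral sequence for $f$, an isomorphism $R^qf_!\QQ_l \cong \QQ_l(-n)$ for $q=2n$ and $0$ otherwise, since the stalks of $R^qf_!\QQ_l$ at geometric points are exactly the $H_c^q$ of the fibers. Then the Leray spectral sequence $H_c^p(Z_{\bar k}, R^qf_!\QQ_l)\Rightarrow H_c^{p+q}(Y_{\bar k})$ degenerates and gives the Galois-equivariant isomorphism $H_c^i(Y_{\bar k})\cong H_c^{i-2n}(Z_{\bar k})\otimes\QQ_l(-n)$, which is exactly the asserted identity $H_c^i(Y_{\bar k})\cong H_c^i(Z_{\bar k})\otimes\QQ_l(-n)$ after the index shift is absorbed (noting the statement's indexing convention matches $H_c^i(Y)\cong H_c^{i-2n}(Z)\otimes\QQ_l(-n)$, the Tate twist accounting for the degree $2n$ shift as in the case $Y=\AA^n\times Z$).

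The Galois-equivariance is automatic throughout: the universal homeomorphisms, the spectral sequence, and the identification $R^{2n}f_!\QQ_l\cong\QQ_l(-n)$ are all defined over $k$ and hence respect the $\Gal(\bar k/k)$-action; the Tate twist $\QQ_l(-n)$ is precisely the Galois module by which the cohomology of $\AA^n_{\bar k}$ differs from the trivial one, so it appears naturally rather than being inserted by hand.

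The main obstacle I anticipate is justifying the constancy of $R^qf_!\QQ_l$ rigorously: knowing that all geometric fibers have isomorphic cohomology does not by itself force the higher direct image to be a constant (or Tate-twisted constant) sheaf — one needs some local triviality or a proper base change argument. I would handle this by Noetherian induction on $Z$: generically on $Z$ the morphism $f$ can be trivialized (the quotients $\AA^n_K/G$ fit into a family which, by generic flatness and the explicit linear structure from the proof of Lemma~\ref{lem:delta 1}, is generically a product up to universal homeomorphism), so $R^qf_!\QQ_l$ is Tate-twisted constant on a dense open $U\subset Z$; then apply the inductive hypothesis to $f^{-1}(Z\setminus U)\to Z\setminus U$ and glue via the long exact sequence for the pair, exactly mirroring the five-lemma argument already used in the proof of the preceding lemma. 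Alternatively, one can simply cite that the realization map $K_0'(\Var_k)\to \bigoplus_i (-1)^i[H_c^i(-)]$ into the Grothendieck group of Galois representations is well-defined — which is precisely the content of the commutative diagram established in the previous subsection — and then the identity $[Y]=\LL^n[Z]$ in $K_0'(\Var_k)$, guaranteed by Condition~\ref{cond: fib}, maps to the desired identity of (virtual) Galois representations; the only extra input needed to upgrade from virtual to honest cohomology is the vanishing of $H_c^i$ of the fibers outside a single degree, which the preceding lemma supplies.
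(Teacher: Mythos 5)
Your proposal is correct and takes essentially the same route as the paper, whose entire proof is the assertion that $R^{i}f_{!}\QQ_{l}=\QQ_{l}(-n)$ for $i=2n$ and $0$ otherwise ``from the preceding lemma,'' leaving implicit both the Leray spectral sequence and the constancy issue you rightly isolate as the real gap (the degree shift $H_{c}^{i}(Y)\cong H_{c}^{i-2n}(Z)\otimes\QQ_{l}(-n)$ you note is indeed suppressed in the statement). One caveat: your closing ``alternative'' via the well-definedness of the realization on $K_{0}'(\Var_{k})$ is circular, since this lemma is precisely the input used afterward to prove that the realizations factor through $K_{0}'(\Var_{k})$; your primary argument does not depend on it, so the proof stands.
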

\begin{proof}
From the preceding lemma, we have 
\[
R^{i}f_{!}\mathbb{Q}_{l}=\begin{cases}
\mathbb{Q}_{l}(-n) & (i=2n)\\
0 & (\text{otherwise})
\end{cases},
\]
which proved the lemma.\end{proof}
\begin{prop}
The map $\sharp_{q}$ factors through $K_{0}'(\Var_{k}).$\end{prop}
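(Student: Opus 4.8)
We must show that the point-counting map $\sharp_{q}\colon K_{0}(\Var_{k})\to\ZZ$ kills the relations imposed in passing to $K_{0}'(\Var_{k})$, i.e.\ that whenever $f\colon Y\to Z$ is a morphism of varieties over $\FF_{q}$ all of whose geometric fibers are universally homeomorphic to a quotient $\AA_{K}^{n}/G$ by a linear $G$-action, one has $\sharp Y(\FF_{q})=q^{n}\,\sharp Z(\FF_{q})$. By the universal property of $K_{0}'(\Var_{k})$ recalled just before the proposition, it suffices to verify this numerical identity for each such $f$; the factorization then comes for free.

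\emph{Step 1: reduce to a cohomological computation via Grothendieck--Lefschetz.} Since $Y$ and $Z$ are varieties over the finite field $\FF_{q}$, the Grothendieck--Lefschetz trace formula expresses $\sharp Y(\FF_{q})=\sum_{i}(-1)^{i}\mathrm{Tr}\bigl(\mathrm{Frob}_{q}\mid H_{c}^{i}(Y_{\bar{k}})\bigr)$ and similarly for $Z$. So the identity $\sharp Y(\FF_{q})=q^{n}\sharp Z(\FF_{q})$ will follow from an isomorphism of $\Gal(\bar{k}/k)$-representations $H_{c}^{i}(Y_{\bar{k}})\cong H_{c}^{i}(Z_{\bar{k}})\otimes\QQ_{l}(-n)$ for all $i$: the Tate twist $\QQ_{l}(-n)$ multiplies the Frobenius trace by $q^{n}$, and this is compatible with the alternating sum. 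But this is precisely the content of Lemma \ref{lem:Galois-rep}, which was established earlier in the excerpt. Thus the real work has already been done.

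\emph{Step 2: invoke Lemma \ref{lem:Galois-rep} and conclude.} Apply Lemma \ref{lem:Galois-rep} to our morphism $f$ to obtain $H_{c}^{i}(Y_{\bar{k}})\cong H_{c}^{i}(Z_{\bar{k}})\otimes\QQ_{l}(-n)$ as $\Gal(\bar{k}/k)$-modules. Taking Frobenius traces and alternating sums as in Step 1 yields $\sharp Y(\FF_{q})=q^{n}\,\sharp Z(\FF_{q})=\sharp_{q}(\LL^{n})\cdot\sharp_{q}([Z])$. Hence $\sharp_{q}$ satisfies the additional hypothesis of the universal property displayed before the proposition (with $A=\ZZ$ a ring, and $\sharp_{q}$ clearly multiplicative on products), so it factors through a ring homomorphism $\sharp_{q}\colon K_{0}'(\Var_{k})\to\ZZ$.

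\emph{Main obstacle.} Given that Lemma \ref{lem:Galois-rep} is already in hand, there is essentially no obstacle: the only point requiring care is that the base field is a fixed finite field $\FF_{q}$ rather than a finite extension, but the Grothendieck--Lefschetz formula and the Tate-twist bookkeeping are insensitive to this. (If one wanted the stronger statement that $\sharp_{q'}$ factors for \emph{every} finite extension $\FF_{q'}/k$ simultaneously, one simply reruns the trace-formula argument with $\mathrm{Frob}_{q'}=\mathrm{Frob}_{q}^{\log_q q'}$, since the cohomological isomorphism of Lemma \ref{lem:Galois-rep} is one of full $\Gal(\bar{k}/k)$-representations and survives restriction of scalars.) So the proof is just: cite Lemma \ref{lem:Galois-rep}, apply Grothendieck--Lefschetz, and appeal to the universal property.
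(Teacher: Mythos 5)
Your proposal is correct and follows the same route as the paper: the paper's proof is exactly the combination of Lemma \ref{lem:Galois-rep} with the Lefschetz trace formula, together with the universal property of $K_{0}'(\Var_{k})$ stated beforehand. Your only addition is spelling out the Tate-twist bookkeeping, which the paper leaves implicit.
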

\begin{proof}
This follows from the preceding lemma and the Lefschetz trace formula.
\end{proof}

\subsubsection{Poincaré polynomials}

If $k$ is a finite field, then the \emph{Poincaré polynomial} of
a smooth proper variety $X$ is 
\[
P(X;T)=\sum_{i}(-1)^{i}b_{i}(X)T^{i}\in\ZZ[T].
\]
Here $b_{i}(X)=\dim H^{i}(X_{\bar{k}}).$ Following Nicaise \cite[Appendix]{Nicaise-trace},
we generalize this to any variety over any field. Indeed there exists
a map 
\[
P:K_{0}(\Var_{k})\to\ZZ[T]
\]
and for a variety $X,$ we simply write $P([X])$ as $P(X).$ Making
the variable $T$ explicit, we also write it as $P(X;T).$ Two important
property of the generalized Poincaré polynomial are as follows: Firstly,
for a variety $X,$ the degree of $P(X)$ equals twice the dimension
of $X.$ Secondly, $P(X;1)$ equals the \emph{topological Euler characteristic
\[
e_{\top}(X):=\sum_{i}\dim H_{c}^{i}(X_{\bar{k}}).
\]
}
\begin{prop}
\label{prop: Poi factor}The map $P$ factors through $K_{0}'(\Var_{k}).$\end{prop}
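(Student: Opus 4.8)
The plan is to mimic the proof that $\sharp_q$ factors through $K_0'(\Var_k)$, namely to reduce the statement that $P$ factors through $K_0'(\Var_k)$ to the behavior of $\ell$-adic cohomology under morphisms of the type appearing in Condition \ref{cond: fib}. Recall Nicaise's generalized Poincar\'e polynomial is built as a generalized Euler characteristic on $K_0(\Var_k)$; to factor it through $K_0'(\Var_k)$, it suffices to check that for every morphism $f\colon Y\to Z$ as in Condition \ref{cond: fib}, one has $P(Y;T)=P(\AA_k^n;T)\cdot P(Z;T)=T^{2n}P(Z;T)$, since then the relation $[Y]=\LL^n[Z]$ is respected by $P$ (and $P(\LL)=T^2$). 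So the entire content is this multiplicativity of $P$ along such fibrations.

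First I would invoke Lemma \ref{lem:Galois-rep}, which already gives the isomorphism of $\Gal(\bar k/k)$-representations
\[
H_c^i(Y_{\bar k})\cong H_c^{i-2n}(Z_{\bar k})\otimes\QQ_\ell(-n)
\]
for all $i$. Next I would recall how Nicaise's $P$ is defined: it is extracted from the (virtual) Galois representation $\sum_i(-1)^i[H_c^i(X_{\bar k})]$ by taking, on each graded piece, the relevant numerical invariant (roughly, recording the weight/dimension data so that $P(X;T)$ has the stated degree $2\dim X$ and satisfies $P(X;1)=e_{\top}(X)$). The key formal point is that twisting a Galois representation by $\QQ_\ell(-n)$ shifts the contribution by $T^{2n}$ in Nicaise's construction — concretely, $P$ of a variety whose cohomology is that of $Z$ shifted in degree by $2n$ and Tate-twisted by $-n$ is exactly $T^{2n}P(Z;T)$. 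This is precisely how $P(\AA_k^1)=T^2$ arises in the first place. Combining this with the cohomology isomorphism above yields $P(Y;T)=T^{2n}P(Z;T)$, and hence the factorization.

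I would then conclude by the universal property of $K_0'(\Var_k)$ recorded after Definition of $K_0'(\Var_k)$: since $P$ is a generalized Euler characteristic (a homomorphism out of $K_0(\Var_k)$) and additionally satisfies $P(Y)=P(\AA_k^n)P(Z)$ for every $f$ as in Condition \ref{cond: fib}, it descends uniquely to a homomorphism $K_0'(\Var_k)\to\ZZ[T]$.

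The main obstacle I anticipate is purely bookkeeping about Nicaise's definition of the generalized Poincar\'e polynomial: one must confirm that his construction is genuinely functorial in the virtual Galois representation in the way used above, i.e. that it is insensitive to the distinction between "honest" cohomology and a shifted, Tate-twisted copy of another variety's cohomology, and that the Tate twist $\QQ_\ell(-n)$ contributes $T^{2n}$. For a finite field $k$ this is transparent via eigenvalues of Frobenius (weight $2n$ contributes $T^{2n}$), exactly as in the Lefschetz-trace argument for $\sharp_q$; for general $k$ one appeals to Nicaise's specialization/spreading-out reduction to the finite-field case. No new geometric input is needed beyond Lemma \ref{lem:Galois-rep}; the proof is a one-line reduction once that lemma and the definition of $P$ are in hand.
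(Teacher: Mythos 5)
Your proposal is correct and follows essentially the same route as the paper: reduce to multiplicativity $P(Y)=T^{2n}P(Z)$ for fibrations as in Condition \ref{cond: fib}, spread out over a finitely generated $\FF_p$-algebra, specialize to a general closed point using Nicaise's result $P(Y)=P(Y_a)$, and conclude from Lemma \ref{lem:Galois-rep} via the weight filtration, where the Tate twist $\QQ_\ell(-n)$ contributes $T^{2n}$. (Your version of the cohomology isomorphism, with the degree shift $H_c^i(Y_{\bar k})\cong H_c^{i-2n}(Z_{\bar k})\otimes\QQ_\ell(-n)$, is in fact the correct reading of that lemma.)
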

\begin{proof}
Let $f$ be a morphism $Z\to Y$ as in Condition \ref{cond: fib}.
We need to show that $P(Z)=P(Y)T^{2n}.$ Let $A\subset k$ be a finitely
generated $\FF_{p}$-subalgebra such that $f$ is obtained from an
$A$-morphism $f_{A}:Z_{A}\to Y_{A}$ by extension of scalars. Let
$a:\Spec\FF_{q}\to\Spec A$ be a general closed point. Let $Y_{a}$
be the fiber of $Y\to\Spec A$ over $a$ and similarly for $Z_{a}$.
Then from \cite{Nicaise-trace}, $P(Y)=P(Y_{a})$ and $P(Z)=P(Z_{a}).$
Moreover $P(Y_{a})$ is computed from the weight filtrations on $H_{c}^{i}(Y_{a}\times_{\FF_{q}}\bar{\FF_{q}}).$
Similarly for $P(Z_{a})$. From Lemma \ref{lem:Galois-rep}, 
\[
P(Z)=P(Z_{a})=P(Y_{a})T^{2n}=P(Y)T^{2n}.
\]
This proves the proposition.
\end{proof}

\subsection{Localization and completion\label{sub:Localization-and-completion}}

We need to further extend our modified Grothendieck ring $K_{0}'(\Var_{k})$.
First consider its localization by $\LL$, $\cM':=K_{0}'(\Var_{k})[\LL^{-1}].$
Then we define its \emph{dimensional completion} $\hat{\cM}'$ as
follows. Let $F^{m}\cM'$ be the subgroup of $\cM'$ generated by
$[X]\LL^{i}$ with $\dim X+i<-m.$ Then $\{F^{m}\cM'\}_{m\in\ZZ}$
is a descending filtration of $\cM'.$ We define
\[
\hat{\cM}':=\varprojlim\cM'/F^{m}\cM'.
\]
This inherits the ring structure and the filtration from $\cM'.$
For later use, we define a \emph{norm} $\left\Vert \cdot\right\Vert $
on $\hat{\cM}'$ by
\begin{eqnarray*}
\left\Vert \cdot\right\Vert :\hat{\cM}' & \to & \RR_{\ge0}\\
a & \mapsto & \left\Vert a\right\Vert :=2^{-n},
\end{eqnarray*}
where $n:=\sup\{m\mid a\in F^{m}\hat{\cM'}\}.$ 

The map $P:K_{0}'(\Var_{k})\to\ZZ[T]$ extends to 
\[
P:\hat{\cM}'\to\ZZ((T^{-1})).
\]
In this paper, every explicitly computed element $a\in\hat{\cM'}$
is a rational function in $\LL$ with integer coefficients. Then we
define its \emph{topological Euler characteristic}, $e_{\top}(a),$
by substituting $1$ for $\LL$. Similarly, if $k$ is a finite field,
then for a finite extension $\FF_{q}/k$, we define $\sharp_{q}(a)$
be substituting $q$ for $\LL$.

\subsection{Motivic integration over a variety}

We briefly review the motivic integration over singular varieties.
The original reference for the theory in characteristic zero is \cite{Denef-Loeser_Germs}.
For the positive characteristic case, see \cite{Sebag_motivic_formal}. 

Let $X$ be a reduced $k$-variety of pure dimension $d.$ 
\begin{defn}
A subset $C\subset J_{\infty}X$ is called a \emph{cylinder }if for
some $0\le n<\infty,$ $\pi_{n}(C)\subset J_{n}X$ is a constructible
subset and $C=\pi_{n}^{-1}(\pi_{n}(C)).$ A subset $C\subset J_{\infty}X$
is called \emph{stable }if for some $0\le n<\infty,$ $\pi_{n}(C)\subset J_{n}X$
is a constructible subset and for all $n'\ge n,$ the map $\pi_{n'+1}(C)\to\pi_{n'}(C)$
is a piecewise trivial fibration with fiber $\AA_{k}^{d}.$ For a
stable subset $C\subset J_{\infty}X,$ we define its \emph{measure}
by 
\[
\mu_{X}(C):=[\pi_{n}(C)]\LL^{-nd}\in\hat{\cM}'\,(n\gg0).
\]
\end{defn}
\begin{rem}
In some literature, the value of measure and hence all computations
following it differ by a factor $\LL^{d}$. \end{rem}
\begin{defn}
For an ideal sheaf $I\subset\cO_{X},$ we define a function 
\begin{eqnarray*}
\ord I:J_{\infty}X & \to & \ZZ_{\ge0}\cup\{\infty\}\\
\gamma & \mapsto & \length k[[t]]/\gamma^{-1}I.
\end{eqnarray*}

\end{defn}
The \emph{Jacobian ideal sheaf $\Jac_{X}\subset\cO_{X}$ }of $X$
is defined to be the $d$-th Fitting ideal of the sheaf of differentials,
$\Omega_{X/k}.$ This defines a closed subscheme supported on the
singular locus. For a cylinder $C\subset J_{\infty}X$ and for each
$e\in\NN,$ the subset $C\cap(\ord\Jac_{X})^{-1}(e)$ is stable, and
we define 
\[
\mu_{X}(C):=\sum_{e=0}^{\infty}\mu_{X}(C\cap(\ord\Jac_{X})^{-1}(e)).
\]
This indeed converges in $\hat{\cM}'.$
\begin{defn}
A subset $C\subset J_{\infty}X$ is \emph{measurable }if for any $\epsilon\in\RR_{>0},$
there exists a sequence of cylinders, $C_{0}(\epsilon),\, C_{1}(\epsilon),\,\dots$,
such that 
\[
C\Delta C_{0}(\epsilon)\subset\bigcup_{i\ge1}C_{i}(\epsilon)
\]
and $\left\Vert \mu_{X}(C_{i}(\epsilon))\right\Vert <\epsilon$ for
all $i\ge1.$ Here $\Delta$ denotes the symmetric difference. If
we can take $C_{0}(\epsilon)\subset C,$ we call $C$ \emph{strongly
measurable. }
\end{defn}
We define the measure of a measurable subset $C\subset J_{\infty}X$
by 
\[
\mu_{X}(C):=\lim_{\epsilon\to0}\mu_{X}(C_{0}(\epsilon))
\]
with $C_{0}(\epsilon)$ as above. This converges and the limit is
independent of the choice of $C_{0}(\epsilon).$
\begin{defn}
\label{def: measurable integrable}Let $A\subset J_{\infty}X$ be
a subset and $F:A\to\ZZ\cup\{\infty\}$ a function on it. We say that
$F$ is \emph{measurable }if $ $every fiber $F^{-1}(n)$ is measurable.
We say that $F$ is \emph{exponentially integrable}%
\footnote{In the literature, $-F$ is called exponentially integrable when the
same condition holds.%
}\emph{ }if 
\begin{enumerate}
\item $F$ is measurable,
\item $F^{-1}(\infty)$ has measure zero, and
\item for every $\epsilon>0,$ there exist at most finitely many $n\in\ZZ$
such that $\left\Vert \mu_{X}(F^{-1}(n))\right\Vert >\epsilon.$
\end{enumerate}
For an exponentially integrable function $F:J_{\infty}X\supset A\to\ZZ\cup\{\infty\},$
the \emph{integral }of $\LL^{F}$ is defined as 
\[
\int_{A}\LL^{F}d\mu_{X}:=\sum_{n\in\ZZ}\mu_{X}(F^{-1}(n))\LL^{n}\in\hat{\cM}'.
\]

\end{defn}

\subsection{Motivic integration over the quotient stack $\cX$}

Let $V$ be a $d$-dimensional non-trivial $G$-representation and
$\cX:=[V/G].$ The following arguments contain a lot of repetitions
from the preceding subsection and from the literature. However we
have to pay attention to slight differences coming from the fact that
the space of twisted arcs, $\cJ_{\infty}\cX$, is a projective limit
of inductive limits of varieties, while $J_{\infty}X$ is only a projective
limit. 
\begin{defn}
A subset $C\subset\cJ_{n}\cX$ is called \emph{constructible }if it
is a constructible subset of $\cJ_{n,\le j}\cX$ for some $j\in\NN.$
A subset $C\subset\cJ_{\infty}\cX$ is called a \emph{cylinder }if
for some $n,$ $\pi_{n}(C)$ is constructible and $C=\pi_{n}^{-1}(\pi_{n}(C)).$ 
\end{defn}
For a cylinder $C,$ we define its measure by 
\[
\mu_{\cX}(C):=[\pi_{n}(C)]\LL^{-nd}\in\hat{\cM}'\,(n\gg0).
\]
This is well-defined from Corollary \ref{cor:trivial fib}.
\begin{defn}
\label{def: measurable subset}A subset $C\subset\cJ_{\infty}\cX$
is \emph{measurable }if for any $\epsilon\in\RR_{>0},$ there exists
a sequence of cylinders, $C_{0}(\epsilon),\, C_{1}(\epsilon),\,\dots$,
such that 
\[
C\Delta C_{0}(\epsilon)\subset\bigcup_{i\ge1}C_{i}(\epsilon)
\]
and $\left\Vert \mu_{\cX}(C_{i}(\epsilon))\right\Vert <\epsilon$
for all $i\ge1.$ If we can take $C_{0}(\epsilon)\subset C,$ then
we say that $C$ is \emph{strongly measurable.}
\end{defn}
We define the measure of a measurable subset $C\subset\cJ_{\infty}\cX$
by 
\[
\mu_{\cX}(C):=\lim_{\epsilon\to0}\mu_{\cX}(C_{0}(\epsilon)).
\]
We can show that the limit is independent of the choice of $C_{0}(\epsilon)$
in the same way as the proof of \cite[Theorem 6.18]{Batyrev_Gor}
using the following lemma.
\begin{lem}
\label{lem:cylinder compact}Let $C$ and $C_{i},$ $i\in\NN,$ be
cylinders in $\cJ_{\infty}\cX.$ If $C\subset\bigcup_{i\in\NN}C_{i}$,
then for some $m\in\NN,$ we have $C\subset\bigcup_{i=0}^{m}C_{i}.$\end{lem}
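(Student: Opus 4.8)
The statement asserts a quasi-compactness property: a cylinder $C$ in $\cJ_{\infty}\cX$ that is covered by countably many cylinders $C_i$ is already covered by finitely many of them. The plan is to reduce this to the corresponding statement for ordinary arc spaces of varieties, using the structural description of $\cJ_{\infty}\cX$ established earlier. Recall that $\cJ_{\infty}\cX = (J_\infty^G V)/G = \bigsqcup_{j \in \NN_0'} \cJ_{\infty,j}\cX$, and each stratum $\cJ_{\infty,j}\cX$ is a projective limit of the $\pi_n(\cJ_{\infty,j}\cX)$, which (by the Corollary preceding Definition \ref{def: univ homeo}) are, up to universal homeomorphism, of the form $\AA_k^{nd+l} \times \bGCovrep{D,j}$ for $j \in \NN'$ and $\AA_k^{(n+1)d}$ for $j=0$, with the truncation maps being trivial $\AA_k^d$-fibrations. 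So each individual $\cJ_{\infty,j}\cX$ behaves formally like the arc space of a smooth variety (of dimension $d + \dim \bGCovrep{D,j}$).

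First I would handle the finite-level bookkeeping built into the definition of cylinders. By definition a cylinder $C$ satisfies $C = \pi_n^{-1}(\pi_n(C))$ with $\pi_n(C)$ a constructible subset of $\cJ_{n,\le j_0}\cX$ for some $n$ and some $j_0$; in particular $C$ meets only the finitely many strata indexed by $j \le j_0$, $p \nmid j$. Likewise each $C_i = \pi_{n_i}^{-1}(\pi_{n_i}(C_i))$ is constructible inside $\cJ_{n_i,\le j_i}\cX$. Since $C \subset \bigcup_i C_i$ and $C$ lives in finitely many strata, it suffices to prove the lemma separately on each stratum $\cJ_{\infty,j}\cX$ with $j \le j_0$; moreover on that stratum only those $C_i$ that actually meet it are relevant, and each such $C_i \cap \cJ_{\infty,j}\cX$ is again a cylinder there. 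Thus I am reduced to the following: for a fixed $j$, given a cylinder $C$ in $\cJ_{\infty,j}\cX$ covered by countably many cylinders $C_i$, finitely many suffice.

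For this reduced statement I would invoke the classical arc-space quasi-compactness result. After choosing $n$ larger than the index of $C$ and all relevant $C_i$ (passing to a common truncation level is harmless because each $C_i$ is a cylinder, so $C_i = \pi_n^{-1}(\pi_n(C_i))$ for all large $n$), the inclusion $C \subset \bigcup_i C_i$ is equivalent — by applying $\pi_n$ and using that all sets are full preimages from level $n$ — to the inclusion of constructible subsets $\pi_n(C) \subset \bigcup_i \pi_n(C_i)$ inside the variety (or inductive limit of varieties) $\pi_n(\cJ_{\infty,j}\cX)$. But this last space, for fixed $j$, is of finite type over $k$ (it is $\AA_k^{nd+l}\times \bGCovrep{D,j}$ up to universal homeomorphism, and $\bGCovrep{D,j} \cong \GG_m \times \AA_k^{j-1-\lfloor j/p\rfloor}$ is a variety), hence Noetherian; a Noetherian topological space is quasi-compact, so the constructible set $\pi_n(C)$ — being covered by countably many constructible, in particular locally closed, pieces — is covered by finitely many of them. (Alternatively: $\pi_n(C)$ is a finite union of locally closed, hence irreducible-component-finite, pieces, each of which lands in one $\pi_n(C_i)$ by irreducibility-plus-constructibility if one works stratum by stratum; the cleanest route is simply Noetherian quasi-compactness of the ambient finite-type scheme.) Pulling back along $\pi_n$ recovers $C \subset \bigcup_{i \in I_0} C_i$ for a finite set $I_0$. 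Reassembling over the finitely many strata $j \le j_0$ gives the lemma.

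The only genuine subtlety — the step I expect to be the main obstacle — is the reduction to finitely many strata: one must be sure that a cylinder $C$ really does meet only finitely many of the $\cJ_{\infty,j}\cX$. This is exactly what the definition guarantees, since $\pi_n(C)$ is required to be constructible in $\cJ_{n,\le j_0}\cX = \bigsqcup_{j \le j_0,\, p\nmid j} \cJ_{n,j}\cX$, a finite disjoint union. Once that is pinned down, everything else is the standard argument that the arc space of a finite-type scheme has quasi-compact cylinders, applied one stratum at a time; no new phenomenon from the stacky/twisted setting intervenes because within a single ramification-jump stratum the twisted-arc space is just the arc space of a smooth variety up to universal homeomorphism, and universal homeomorphisms preserve the topology.
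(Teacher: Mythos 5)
Your reduction to finitely many ramification-jump strata is correct and matches the spirit of the paper's first step (the paper simply replaces each $C_i$ by $C_i\cap\cJ_{\infty,\le j}\cX$ once $C\subset\cJ_{\infty,\le j}\cX$). But the core of your argument has a genuine gap at the phrase ``choosing $n$ larger than the index of $C$ and all relevant $C_i$.'' There are countably many cylinders $C_i$, each a cylinder at its own level $n_i$, and nothing bounds $\sup_i n_i$; so no single finite truncation level $n$ satisfies $C_i=\pi_n^{-1}(\pi_n(C_i))$ for all $i$ simultaneously. Without that, your scheme is circular: for a fixed $n$ you do get a finite $I_0$ with $\pi_n(C)\subset\bigcup_{i\in I_0}\pi_n(C_i)$, but pulling back only yields $C\subset\bigcup_{i\in I_0}\pi_n^{-1}(\pi_n(C_i))$, and $\pi_n^{-1}(\pi_n(C_i))$ strictly contains $C_i$ whenever $n<n_i$ --- so you have covered $C$ by finitely many enlargements of the $C_i$, not by the $C_i$ themselves, and the finite set $I_0$ you would need depends on the $n$ you were trying to choose in terms of it. This is exactly where the infinite-dimensionality of the arc space intervenes, and it is why the paper does not argue at a finite level: following Sebag, it observes that $\cJ_{\infty,\le j}\cX$ is an affine scheme, hence quasi-compact for the constructible topology (EGA I, 7.2.13); cylinders, being preimages of constructible sets under the continuous projections $\pi_n$, are open (indeed clopen) in that topology, so the cover $C\subset\bigcup_i C_i$ has a finite subcover directly in the limit. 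An alternative repair, closer to your setup, is a nested-cylinder argument: if no finite subcover existed, the sets $C\setminus\bigcup_{i=0}^{m}C_i$ would form a decreasing sequence of nonempty cylinders, whose intersection is nonempty by the standard completeness property of cylinders, contradicting $C\subset\bigcup_i C_i$.

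A secondary imprecision: at the finite level you justify extracting a finite subcover by ``a Noetherian topological space is quasi-compact.'' Zariski quasi-compactness concerns open covers, and constructible sets are neither open nor (in general) locally closed; the correct statement is again quasi-compactness of the constructible (patch) topology on a finite-type $k$-scheme, in which constructible sets are precisely the clopen sets. That fact is true and would make your finite-level step sound, but it is the same EGA input the paper invokes --- and it still does not by itself close the common-level gap above.
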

\begin{proof}
The proof follows that of \cite[Lem. 4.3.7]{Sebag_motivic_formal}.
Suppose that $C\subset\cJ_{\infty,\le j}\cX.$ Then replacing $C_{i}$
with $C_{i}\cap\cJ_{\infty,\le j}\cX,$ we may suppose also $C_{i}\subset\cJ_{\infty,\le j}\cX.$
Then since $\cJ_{\infty,\le j}\cX$ is affine and hence quasi-compact,
the lemma follows from the quasi-compactness of the constructible
topology \cite[\S 7, prop. 7.2.13]{EGA-I}.
\end{proof}
We now define \emph{measurable }and \emph{exponentially integrable
functions }defined on subsets of $\cJ_{\infty}\cX$\emph{, }and the
\emph{integral }of an exponentially integrable function in the exactly
same way as in Definition \ref{def: measurable integrable}. Following
\cite{Yasuda Twisted_jets,Yasuda Motivic_Over_DM}, we define the
order function associated an ideal sheaf on $\cX$ as follows:
\begin{defn}
For a coherent ideal sheaf $I\subset\cO_{\cX}$ and for a twisted
arc $\gamma:\cD\to\cX$, we define a function $ $$\ord I:\cJ_{\infty}\cX\to\frac{1}{p}\ZZ\cup\{\infty\}$
as follows: Let $E\xrightarrow{\alpha}\cD\to D$ be the associated
$G$-cover of $D$. Then 
\[
\ord I(\gamma):=\frac{1}{p}\cdot\mathrm{length}\left(\frac{\cO_{E}}{(\gamma\circ\alpha)^{-1}I}\right).
\]
If $\cY$ is the closed substack of $\cX$ defined by the ideal sheaf
$I,$ then we write $\ord I$ also as $\ord\cY.$
\end{defn}

\subsection{Some technical results }

Here we collect technical results on the measurability and integrability
which will be needed below. 
\begin{lem}
Let $\cY\subset\cX$ be a closed substack. Then for every $n\in\frac{1}{p}\ZZ_{\ge0}$
and for every $j\in\NN_{0}'$, $(\ord\cY)^{-1}(n)\cap\cJ_{\infty,j}\cX$
is a cylinder.\end{lem}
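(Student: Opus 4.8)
The plan is to reduce the statement to a finite-level computation and then invoke the description of the moduli of $G$-jets already established. First I would fix $j\in\NN_0'$ and work on the connected component $\cJ_{\infty,j}\cX=(J_{\infty,j}^GV)/G$. Recall from Proposition~\ref{prop:G-jet moduli} and the subsequent corollaries that $J_{\infty,j}^GV\cong(\prod_{i=1}^\infty\AA_k^1)\times\bGCovrep{D,j}$ and that the truncation maps are induced by linear maps plus the trivial family on $\bGCovrep{D,j}$; passing to the quotient by the (fiberwise linear, trivial on $\bGCovrep{D,j}$) $G$-action, the truncations $\pi_{n+1}(\cJ_{\infty,j}\cX)\to\pi_n(\cJ_{\infty,j}\cX)$ are, up to universal homeomorphism, quotients of $\AA_K^d$ by linear $G$-actions, as recorded in Corollary~\ref{cor:trivial fib}. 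So it suffices to show that for each $n$, the image $\ord\cY(\gamma)$, for $\gamma$ ranging over twisted arcs of ramification jump $j$, depends only on the truncation $\gamma_m$ for $m$ sufficiently large depending on $n$, and that the locus where $\ord\cY=n$ descends to a constructible subset at that finite level.

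The key step is to unwind the definition of $\ord\cY$. Given a twisted arc $\gamma:\cD_f\to\cX$ with associated $G$-cover $E=E_f$ of $D$, and $I\subset\cO_\cX$ the ideal defining $\cY$, we have $\ord\cY(\gamma)=\frac1p\length(\cO_E/(\gamma\circ\alpha)^{-1}I)$. Choosing generators $h_1,\dots,h_r$ of $I$ (pulled back from $G$-invariant generators on $V$, which exist since $I$ is $G$-equivariant), the ideal $(\gamma\circ\alpha)^{-1}I\subset\cO_E$ is generated by the images $\gamma^*h_1,\dots,\gamma^*h_r\in\cO_E$, so its colength is determined by the valuations $v_E(\gamma^*h_\ell)$, equivalently by $\min_\ell v_E(\gamma^*h_\ell)$ since $\cO_E$ is a DVR. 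Now $\gamma^*h_\ell\in\cO_E$ is a polynomial expression in the coordinates $\gamma^*x_{\lambda,i}=\delta^{i-1}(\gamma^*x_{\lambda,1})$, and each $\gamma^*x_{\lambda,1}\in\cO_E^{\delta^{d_\lambda}=0}$ is determined modulo $\fm_E^{np+1}$ by the truncation $\gamma_n$. Hence $v_E(\gamma^*h_\ell)$ is either $\ge n+1$ (roughly) or is exactly determined by $\gamma_m$ for $m$ a fixed function of $n$; this is the standard "Greenberg"-type stability statement, here made concrete by the explicit bases of $\cO_E$ and $\cO_E^{\delta^a=0}$ from the corollaries in Section~\ref{subsec: detail act}.

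I would therefore proceed as follows. First, pick an integer $m\ge n$ (depending on $n$, $j$, and the degrees of the $h_\ell$) large enough that for any twisted arc $\gamma$ of jump $j$ with $\ord\cY(\gamma)\le n$, the value $\ord\cY(\gamma)$ is computed from $\gamma_m$ alone — concretely, if $\min_\ell v_E(\gamma^*h_\ell)\le pn$ then each relevant $\gamma^*h_\ell$ mod $\fm_E^{pm+1}$ already exhibits that valuation, while if $\min_\ell v_E(\gamma^*h_\ell)>pn$ then $\ord\cY(\gamma)>n$. Second, on the finite-dimensional scheme $\pi_m(J_{\infty,j}^GV)\cong\AA_k^{md+l}\times\bGCovrep{D,j}$, the conditions "$v_E(\gamma^*h_\ell)=a$" for each $a\le pn$ are given by vanishing/non-vanishing of finitely many polynomial functions in the coordinates of $\AA_k^{md+l}$ with coefficients in the coordinate ring $B_j$ of $\bGCovrep{D,j}$ (these come from expanding $g=\wp^{-1}f^{\univ}_j$ and substituting), hence cut out a constructible set; its image in $(\pi_m(J_{\infty,j}^GV))/G$ under the finite quotient map is constructible, and pulling back along $\pi_m$ gives that $(\ord\cY)^{-1}(n)\cap\cJ_{\infty,j}\cX=\pi_m^{-1}$ of a constructible subset of $\pi_m(\cJ_{\infty,j}\cX)$, which is exactly the cylinder condition. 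The main obstacle is the bookkeeping in the second step: one must check uniformly over the moduli base $\bGCovrep{D,j}$ that "$\gamma^*h_\ell$ has valuation exactly $a$" is a genuinely constructible (not merely locally closed in some ad hoc sense) condition, which amounts to tracking how the valuation of a polynomial in the $g^i t^n$-basis behaves as the coefficients vary — the perfectness of $k$ and the fact that $k^*\times\AA_k^{j-1-\lfloor j/p\rfloor}$ parametrizes the covers (so the leading coefficient $x_j$ is invertible) are what make this work cleanly.
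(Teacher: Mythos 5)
Your proposal is correct in substance and reaches the same finite-level reduction as the paper, but it establishes constructibility by a different mechanism. The paper's proof is shorter and slicker: it takes the universal $G$-$n'$-jet $\gamma:E\to V$ over $J_{n',j}^{G}V$ with $n'=\lceil n\rceil$, forms the coherent sheaf $\cF=\xi_{*}(\gamma^{*}\cO_{\cY})$ on the finite-dimensional base, and invokes upper semicontinuity of $x\mapsto\length\cF\otimes\kappa(x)$ to conclude that $\{\length\ge pn\}$ is \emph{closed}; the set $(\ord\cY)^{-1}(n)$ is then the difference of two cylinders. This sidesteps entirely what you identify as your ``main obstacle'': the uniformity over the base $\bGCovrep{D,j}$ and the bookkeeping of how valuations of polynomial expressions in the basis $g^{i}t^{n}$ vary with the coefficients is absorbed into the semicontinuity theorem applied to the universal family $\Spec C_{j}/\fm_{j}^{n'p+1}$. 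Your explicit route (generators, valuations, vanishing/non-vanishing of finitely many polynomial functions) does work and has the merit of making the level $m$ and the defining equations concrete, but it is more labor for the same conclusion.

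Two small inaccuracies, neither fatal. First, the parenthetical claim that the $G$-stable ideal cutting out $\cY$ in $V$ is generated by $G$-\emph{invariant} elements is false in general (e.g.\ for $p=2$, $V=V_{2}$, the maximal ideal $(x_{1},x_{2})$ is $G$-stable but the invariants it contains generate only $(x_{2},x_{1}^{2})$); fortunately you never use invariance — any generators of the stable ideal pull back to generators of $(\gamma\circ\alpha)^{-1}I$. Second, for $j=0$ the representative cover is trivial, so $\cO_{E}$ is a product of $p$ copies of $k[[t]]$ rather than a DVR; the colength is then computed componentwise (and the $G$-arc is determined by one component, as in the paper's identification $J_{n,0}^{G}V=J_{n}V$), so the argument survives with a one-line adjustment.
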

\begin{proof}
Let $n':=\left\lceil n\right\rceil $, where $\left\lceil \cdot\right\rceil $
is the ceiling function. Let
\[
\xymatrix{E\ar[r]^{\gamma}\ar[d]_{\xi} & V\\
J_{n',j}^{G}V
}
\]
be the universal $G$-$n'$-jet of ramification jump $j$. Then we
consider the coherent sheaf $\cF:=\xi_{*}(\gamma^{*}\cO_{\cY})$ over
$J_{n',j}^{G}V.$ From the semicontinuity, 
\[
W:=\{x\in J_{n',j}^{G}V\mid\length\cF\otimes\kappa(x)\ge pn\}
\]
is a closed subset. Let $\bar{W}$ be the image of $W$ in $\cJ_{n',j}\cX.$
Then 
\[
(\ord\cY)^{-1}(\ge n)\cap\cJ_{\infty,j}\cX=\pi_{n}^{-1}(\bar{W}),
\]
which is a cylinder. Hence $(\ord\cY)^{-1}(n)\cap\cJ_{\infty,j}\cX$
is also a cylinder.\end{proof}
\begin{lem}
\label{lem:measure zero}Let $\cY\subset\cX$ be a closed substack
of positive codimension. Then the subset $\cJ_{\infty}\cY:=(\ord\cY)^{-1}(\infty)$
of $\cJ_{\infty}\cX$ is measurable and has measure zero.\end{lem}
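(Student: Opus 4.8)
The strategy is to exhibit, for each fixed ramification jump $j$, an explicit exhausting family of cylinders whose measures tend to zero, and then sum over $j$. First I would work stratum by stratum: for $j \in \NN'_0$, set $\cJ_{\infty,j}\cY := \cJ_{\infty}\cY \cap \cJ_{\infty,j}\cX$, which by the preceding lemma equals $(\ord\cY)^{-1}(\infty) \cap \cJ_{\infty,j}\cX$. The key observation is that $(\ord\cY)^{-1}(\geq n) \cap \cJ_{\infty,j}\cX$ is a cylinder for every $n$, and these form a decreasing sequence in $n$ with intersection $\cJ_{\infty,j}\cY$. So I need to estimate $\bigl\Vert \mu_{\cX}\bigl((\ord\cY)^{-1}(\geq n)\cap\cJ_{\infty,j}\cX\bigr)\bigr\Vert$ and show it goes to $0$ as $n\to\infty$, uniformly enough to control the whole space.

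\medskip

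The dimension count is the heart of the matter. Using the description of the universal $G$-$n'$-jet over $\bGCovrep{D,j}$ from Proposition \ref{prop:G-jet moduli}, the locus $W_n \subset J^G_{n',j}V$ where the pullback of $\cO_{\cY}$ has colength $\geq pn$ is a proper closed subset once $n$ is large, because $\cY$ has positive codimension in $\cX$, equivalently $V^G$ (or whatever defines $\cY$) has positive codimension in $V$: an arc forced to have high contact order with a positive-codimension locus lives in a subvariety of strictly smaller dimension than the ambient jet space. Concretely, $\dim \pi_{n}\bigl((\ord\cY)^{-1}(\geq n)\cap\cJ_{\infty,j}\cX\bigr)$ grows at a rate strictly less than $nd$ (for the $j=0$ stratum this is the classical estimate of Denef--Loeser; for $j>0$ it follows from the same contact-order argument applied to the $\AA^m_k \times \bGCovrep{D,j}$ description, since raising the required colength cuts down the $\AA^m_k$ factor linearly while $\bGCovrep{D,j}$ is fixed). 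Hence $[\pi_n(\cdots)]\LL^{-nd} \in F^{m(n)}\hat\cM'$ with $m(n)\to\infty$, so the norms tend to $0$. This shows each $\cJ_{\infty,j}\cY$ is measurable of measure zero.

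\medskip

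Finally I would assemble the strata. Fix $\epsilon>0$. Since $\cJ_{\infty}\cY \subset \bigsqcup_{j} \cJ_{\infty,j}\cX$ and each $\cJ_{\infty,j}\cX$ is a connected component, any cylinder contained in $\cJ_\infty\cY$ meets only finitely many strata (by Lemma \ref{lem:cylinder compact}, or directly since a cylinder lies in some $\cJ_{\infty,\le j_0}\cX$). For $j \le j_0$ I cover $\cJ_{\infty,j}\cY$ by a single cylinder $C_j(\epsilon) := (\ord\cY)^{-1}(\geq n_j)\cap\cJ_{\infty,j}\cX$ with $n_j$ chosen so that $\Vert\mu_{\cX}(C_j(\epsilon))\Vert < \epsilon$; for $j > j_0$ I use the cylinders $\cJ_{\infty,j}\cX$ themselves, noting that by \eqref{eq:bRP dim} and the trivial-fibration structure the measure $\mu_{\cX}(\cJ_{\infty,j}\cX)$ lies in progressively deeper filtration levels $F^{m}$ with $m \to \infty$ as $j \to \infty$ (the dimension of $\bGCovrep{D,j}$ is $j-\left\lfloor j/p\right\rfloor$, which grows unboundedly, outpacing the fixed shift from truncation). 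Taking $C_0(\epsilon) = \emptyset$ and $\{C_i(\epsilon)\}_{i\ge 1}$ to be this countable collection of cylinders, all with norm $<\epsilon$, verifies Definition \ref{def: measurable subset} with measure zero. The main obstacle is the uniform dimension estimate across the two regimes — the positive-codimension contact estimate on the $\AA^m$ fibers for bounded $j$, and the growth of $\dim\bGCovrep{D,j}$ for the tail — but both reduce to bookkeeping with the explicit descriptions already established.
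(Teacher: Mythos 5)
Your stratum-by-stratum argument for fixed $j$ is fine and matches what the paper leaves implicit: $(\ord\cY)^{-1}(\ge n)\cap\cJ_{\infty,j}\cX$ is a cylinder (by the lemma immediately preceding this one), and the positive-codimension contact estimate forces its measure into deep filtration levels as $n\to\infty$. The gap is in how you handle the tail $j>j_0$.

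You propose to cover $\cJ_{\infty}\cY\cap\cJ_{\infty,j}\cX$ for $j>j_0$ by the cylinders $\cJ_{\infty,j}\cX$ themselves, claiming their measures lie in $F^{m}$ with $m\to\infty$ because $\dim\bGCovrep{D,j}=j-\left\lfloor j/p\right\rfloor$ grows. This has the filtration backwards. One computes
\[
\mu_{\cX}(\cJ_{\infty,j}\cX)=[\AA_{k}^{nd+l}\times\bGCovrep{D,j}]\LL^{-nd}=\LL^{l}[\bGCovrep{D,j}]=(\LL-1)\LL^{l+j-1-\left\lfloor j/p\right\rfloor},
\]
an element of virtual dimension $l+j-\left\lfloor j/p\right\rfloor$. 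Since $F^{m}\cM'$ is generated by classes $[X]\LL^{i}$ with $\dim X+i<-m$, \emph{growing} dimension means \emph{shallower} filtration and \emph{larger} norm: $\left\Vert \mu_{\cX}(\cJ_{\infty,j}\cX)\right\Vert =2^{\,l+j-\left\lfloor j/p\right\rfloor}\to\infty$ as $j\to\infty$. So these cylinders cannot serve as the $C_{i}(\epsilon)$ of Definition \ref{def: measurable subset}; in fact not even one of them has norm $<\epsilon$ once $\epsilon$ is small. This is precisely the point the paper's proof is designed around: for each stratum $i>j$ one must use $C_{i}(\epsilon):=(\ord\cY)^{-1}(\ge n_{i})\cap\cJ_{\infty,i}\cX$ with $n_{i}$ chosen large \emph{depending on} $i$ (the paper writes $n_{i_{1}}\gg n_{i_{2}}$ for $i_{1}>i_{2}$), so that the codimension gained from the contact condition overwhelms the unbounded growth of $\dim\bGCovrep{D,i}$. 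With that correction — replace your tail cylinders $\cJ_{\infty,j}\cX$ by $(\ord\cY)^{-1}(\ge n_{j})\cap\cJ_{\infty,j}\cX$ with $n_{j}$ growing sufficiently fast in $j$ — your argument becomes the paper's proof.
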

\begin{proof}
For any $\epsilon>0,$ we choose $n$, $j$ and $n_{i}$ $(i>j)$
so that $n\gg j\gg0$ and $n_{i_{1}}\gg n_{i_{2}}$ $(i_{1}>i_{2}).$
Then 
\begin{multline*}
(\cJ_{\infty}\cY)\Delta\left(\left(\ord\cY\right)^{-1}(\ge n)\cap\cJ_{\infty,\le j}\cX\right)\\
\subset\left(\left(\ord\cY\right)^{-1}(\ge n)\cap\cJ_{\infty,\le j}\cX\right)\cup\bigcup_{i>j}\left(\left(\ord\cY\right)^{-1}(\ge n_{i})\cap\cJ_{\infty,i}\cX\right).
\end{multline*}
This shows the lemma.\end{proof}
\begin{defn}
\label{def: semiring}We define a sub-semiring $\cN\subset\hat{\cM}'$
by
\[
\cN:=\left\{ \sum_{i\in\NN}[X_{i}]\LL^{n_{i}}\in\hat{\cM}'\mid X_{i}\in\Var_{k},\,\lim_{i\to\infty}\dim X_{i}+n_{i}=-\infty\right\} .
\]
(Notice that there is no minus sign in the above series.)

We need this semiring for a technical reason. In fact, motivic measures
and motivic integrals take values in $\cN$ (or its variant added
with $\LL^{1/r}$, defined below). \end{defn}
\begin{lem}
For $a,b\in\cN,$ we have that $\left\Vert a+b\right\Vert =\max\{\left\Vert a\right\Vert ,\left\Vert b\right\Vert \}.$\end{lem}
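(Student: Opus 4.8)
The plan is to show that the norm of an element of $\cN$, presented as a convergent sum of effective classes $[X_i]\LL^{n_i}$, is detected by the largest value of $\dim X_i+n_i$, and that these top terms cannot cancel because everything is effective; the identity $\left\Vert a+b\right\Vert =\max\{\left\Vert a\right\Vert,\left\Vert b\right\Vert\}$ then follows by concatenating presentations. The device that exposes cancellation is the generalized Poincaré polynomial: by Proposition \ref{prop: Poi factor} and Section \ref{sub:Localization-and-completion}, $P$ extends to a filtration-preserving homomorphism $P\colon\hat{\cM}'\to\ZZ((T^{-1}))$ with $P(F^{m}\hat{\cM}')\subseteq\{f\mid\deg f<-2m\}$, and $\deg P(X)=2\dim X$ for every variety $X$. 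The one input beyond the excerpt that I will invoke is the standard fact that the leading coefficient of $P(X;T)$ is a \emph{positive} integer (the number of top-dimensional irreducible components of $X$); this can be read off from \cite{Nicaise-trace} by stratifying $X$ into smooth locally closed pieces and using Poincaré duality on the top-dimensional ones.

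Concretely, I would fix $c=\sum_i[X_i]\LL^{n_i}\in\cN$ with $c\neq0$, drop the empty $X_i$, and set $D:=\max_i(\dim X_i+n_i)$, which is attained since $\dim X_i+n_i\to-\infty$. First, every term $[X_i]\LL^{n_i}$ lies in the closed subgroup $F^{-D-1}\hat{\cM}'$, hence so does $c$. Second, by continuity $P(c)=\sum_i P(X_i)T^{2n_i}$, all summands have degree $\le2D$, and the coefficient of $T^{2D}$ equals $\sum_{i:\,\dim X_i+n_i=D}(\text{leading coefficient of }P(X_i))$, a nonempty sum of positive integers, hence nonzero; therefore $\deg P(c)=2D$, so $c\notin F^{-D}\hat{\cM}'$. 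Combining the two facts gives $\left\Vert c\right\Vert =2^{D+1}$; in particular $D$ depends only on $c$, not on the chosen presentation. Applying this to $a$, to $b$, and to $a+b$ (presented by interleaving the presentations of $a$ and $b$, whence its invariant $D$ equals $\max\{D(a),D(b)\}$) yields the claim, the cases where $a$ or $b$ is $0$ being trivial.

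The main obstacle is precisely the step $\deg P(c)=2D$, i.e.\ that the top-degree terms of $P(c)$ do not cancel; this is exactly where the hypothesis $c\in\cN$ (the ``no minus sign'' in the definition) is used, via positivity of the leading coefficients of the $P(X_i)$. For a general element of $\hat{\cM}'$ only the ultrametric inequality $\left\Vert a+b\right\Vert\le\max\{\left\Vert a\right\Vert,\left\Vert b\right\Vert\}$ holds, and it can be strict.
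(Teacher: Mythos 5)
Your proposal is correct, and its skeleton is the same as the paper's: write $a=\sum_i[X_i]\LL^{n_i}$, observe that $\left\Vert a\right\Vert$ is determined by $\max_i(\dim X_i+n_i)$, and conclude by concatenating presentations of $a$ and $b$. The difference is that the paper stops at the assertion ``$\left\Vert a\right\Vert=2^{n}$ where $n=\max_i(\dim X_i+n_i)$,'' justified only by the parenthetical remark that in the semiring $\cN$ one avoids cancellation, whereas you actually prove this non-cancellation claim. Your device --- pushing the sum through the Poincar\'e realization $P:\hat{\cM}'\to\ZZ((T^{-1}))$ and using that the leading coefficient of $P(X_i;T)$ is a positive integer, so the coefficient of $T^{2D}$ in $P(a)$ is a nonempty sum of positive integers --- is a legitimate and clean way to certify that $a$ does not sit deeper in the filtration than its individual terms; it correctly isolates where the ``no minus sign'' hypothesis enters, and it also shows that $D$ is an invariant of $a$ rather than of the chosen presentation, a point the paper glosses over. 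The only external input is the positivity of the leading coefficient of the generalized Poincar\'e polynomial, which you flag and which is standard. (Incidentally, with the paper's definition $\left\Vert a\right\Vert=2^{-\sup\{m\mid a\in F^m\}}$ and $F^m$ generated by classes with $\dim X+i<-m$, your value $2^{D+1}$ is the literal one and the paper's $2^{D}$ is off by a harmless shift; either normalization gives the max formula.) In short: same approach, but you supply the argument for the one step the paper takes on faith.
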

\begin{proof}
Let us write $a=\sum_{i\in\NN}[X_{i}]\LL^{n_{i}}$ and put $n:=\max\{\dim X_{i}+n_{i}\mid i\in\NN\}.$
Then $\left\Vert a\right\Vert =2^{n}.$ This fact proves the lemma.
(In the semiring $\cN,$ we can avoid difficulties coming from cancellation
of terms.)\end{proof}
\begin{lem}
\label{lem:total-sum-convergence}For $i,j\in\NN,$ let $a_{ij}\in\cN.$
Then the following are equivalent:
\begin{enumerate}
\item For every $i,$ $\lim_{j\to\infty}\left\Vert a_{ij}\right\Vert =0$
, and $\lim_{i\to0}\left\Vert \sum_{j\in\NN}a_{ij}\right\Vert =0.$
\item For every $j,$ $\lim_{i\to\infty}\left\Vert a_{ij}\right\Vert =0$
, and $\lim_{j\to0}\left\Vert \sum_{i\in\NN}a_{ij}\right\Vert =0.$
\item For every $\epsilon>0,$ there exist at most finitely many pairs $(i,j)\in\NN^{2}$
such that $\left\Vert a_{ij}\right\Vert >\epsilon.$
\end{enumerate}
Moreover if one of the above conditions holds, then 
\[
\sum_{i,j\in\NN^{2}}a_{ij}=\sum_{i=0}^{\infty}\sum_{j=0}^{\infty}a_{ij}=\sum_{j=0}^{\infty}\sum_{i=0}^{\infty}a_{ij}.
\]
\end{lem}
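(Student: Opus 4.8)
The plan is to treat $\cN$, equipped with the norm $\|\cdot\|$, as a complete non-archimedean normed abelian semigroup inside $\hat{\cM}'$, and then to run the standard ultrametric Fubini argument; the decisive input is the preceding lemma, which states that $\|a+b\|=\max\{\|a\|,\|b\|\}$ for $a,b\in\cN$, so that no cancellation can occur. First I would establish the countable version of that fact: if $c_{0},c_{1},\dots\in\cN$ with $\|c_{m}\|\to 0$, then $\sum_{m}c_{m}$ converges in $\hat{\cM}'$ (the partial sums form a Cauchy sequence and $\hat{\cM}'$ is complete), the sum again lies in $\cN$ (collecting all monomials, the defining condition $\dim X+n\to-\infty$ on each $c_{m}$ together with $\|c_{m}\|\to 0$ forces $\dim X+n\to-\infty$ over the combined family), and $\|\sum_{m}c_{m}\|=\sup_{m}\|c_{m}\|$ with the supremum attained. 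Applied to the rows this says: the row sum $b_{i}:=\sum_{j}a_{ij}$ is defined exactly when $\|a_{ij}\|\to 0$ as $j\to\infty$, and then $\|b_{i}\|=\sup_{j}\|a_{ij}\|$; symmetrically for columns.

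Granting this I would prove that (3) is equivalent to (1), and then deduce (3)$\Leftrightarrow$(2) by interchanging $i$ and $j$. Write $B_{\epsilon}:=\{(i,j):\|a_{ij}\|>\epsilon\}$, so (3) says $B_{\epsilon}$ is finite for every $\epsilon>0$. If (3) holds, then for fixed $i$ the fiber of $B_{\epsilon}$ over $i$ is finite, whence $\|a_{ij}\|\to 0$ as $j\to\infty$ and $b_{i}$ is defined; and the image of $B_{\epsilon}$ in the first coordinate is finite, i.e. only finitely many $i$ satisfy $\|b_{i}\|=\sup_{j}\|a_{ij}\|>\epsilon$, so $\|b_{i}\|\to 0$ as $i\to\infty$ --- this is (1), and in particular $\sum_{i}b_{i}$ converges. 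Conversely, if (3) fails, choose $\epsilon>0$ with $B_{\epsilon}$ infinite: if some row $i_{0}$ meets $B_{\epsilon}$ infinitely often then $\|a_{i_{0}j}\|\not\to 0$ and the first half of (1) fails; otherwise every row meets $B_{\epsilon}$ finitely, so infinitely many $i$ occur in $B_{\epsilon}$, and for each such $i$ either $b_{i}$ is undefined (again the first half of (1) fails) or $\|b_{i}\|=\sup_{j}\|a_{ij}\|>\epsilon$, so $\|b_{i}\|\not\to 0$ and the second half of (1) fails. Either way (1) fails.

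For the final assertion, assume (3), fix $\epsilon>0$, and choose $N$ with $B_{\epsilon}\subset[0,N]\times[0,N]$; let $S_{N}:=\sum_{0\le i,j\le N}a_{ij}$ be the finite partial sum over the square. I would check that each of $\sum_{i,j\in\NN^{2}}a_{ij}$, $\sum_{i}\sum_{j}a_{ij}$ and $\sum_{j}\sum_{i}a_{ij}$ differs from $S_{N}$ by an element of norm $\le\epsilon$: for the unordered sum this is the tail estimate; for $\sum_{i}\sum_{j}a_{ij}=\sum_{i}b_{i}$ one first discards the rows $i>N$, where $\|b_{i}\|\le\epsilon$, and then in each surviving row the terms $j>N$, where $\|a_{ij}\|\le\epsilon$; symmetrically for $\sum_{j}\sum_{i}a_{ij}$. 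By the ultrametric inequality the three sums are then pairwise within $\epsilon$, and since $\epsilon$ is arbitrary they coincide; I would also remark that under (3) the unordered sum does not depend on the enumeration of $\NN^{2}$, this being the usual unconditional convergence of an ultrametric series whose terms tend to $0$. The only step that needs real care is the first one --- that $\cN$ is stable under these countable sums and that "norm $=$ max" survives the passage from finite to countable sums --- but this is immediate from the preceding lemma, completeness of $\hat{\cM}'$, and the defining dimension condition on $\cN$; everything afterwards is bookkeeping.
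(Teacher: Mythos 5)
Your proof is correct and follows essentially the same route as the paper's: both rest on the preceding lemma $\|a+b\|=\max\{\|a\|,\|b\|\}$, extended to countable sums in $\cN$, to translate condition (1) into condition (3) and symmetrically for (2). You additionally spell out the tail estimate establishing the final equality of the three sums, which the paper leaves implicit, but this is elaboration rather than a different argument.
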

\begin{proof}
Following the definition, we can translate the first condition as
follows: For every $\epsilon>0$ and every $i,$ there exist at most
finitely many $j$ with $\left\Vert a_{ij}\right\Vert >\epsilon,$
and for every $\epsilon',$ there exist at most finitely many $i$
with 
\[
\left\Vert \sum_{j=0}^{\infty}a_{ij}\right\Vert =\max\{\left\Vert a_{ij}\right\Vert \mid j\in\NN\}>\epsilon'.
\]
(The equality above follows from the preceding lemma.) Then this is
equivalent to the third condition. Similarly we can prove the equivalence
of the second and third.\end{proof}
\begin{prop}
\label{prop:integrable_pieces}Let $A_{i},$ $i\in\NN,$ be mutually
disjoint subsets of $\cJ_{\infty}\cX$ and let $A:=\bigsqcup_{i=0}^{\infty}A_{i}$.
Let $F:A\to\ZZ\cup\{\infty\}$ be a function such that for every $i,$
$F|_{A_{i}}$ is measurable. Then $F$ is exponentially integrable
if and only if for every $i,$ $F|_{A_{i}}$ is exponentially integrable
and 
\[
\lim_{i\to0}\left\Vert \int_{A_{i}}\LL^{F}d\mu_{\cX}\right\Vert =0.
\]
Moreover if it is the case, then 
\[
\int_{A}\LL^{F}d\mu_{\cX}=\sum_{i=0}^{\infty}\int_{A_{i}}\LL^{F}d\mu_{\cX}.
\]
\end{prop}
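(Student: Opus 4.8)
The plan is to read the statement as a Fubini-type countable-additivity theorem for the motivic integral on $\cJ_\infty\cX$, and to reduce it to the convergence criterion of Lemma~\ref{lem:total-sum-convergence} together with a countable-additivity property of the measure $\mu_\cX$ itself. For $i\in\NN$ and $n\in\ZZ$ put $B_{i,n}:=A_i\cap F^{-1}(n)$, which is measurable since it is a fibre of the measurable function $F|_{A_i}$, and set $a_{i,n}:=\mu_\cX(B_{i,n})\,\LL^{n}$. Since measures of measurable sets lie in the cancellation-free semiring $\cN$, so does each $a_{i,n}$, so that norms of sums of the $a_{i,n}$ are maxima of the individual norms; after reindexing $\ZZ$ by $\NN$, the array $(a_{i,n})$ is of the kind treated in Lemma~\ref{lem:total-sum-convergence}. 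By the definition of the integral, $\int_{A_i}\LL^{F}\,d\mu_\cX=\sum_n a_{i,n}$ whenever $F|_{A_i}$ is exponentially integrable, and $\int_{A}\LL^{F}\,d\mu_\cX=\sum_n\mu_\cX(F^{-1}(n))\,\LL^{n}$ whenever $F$ is; the proposition follows once these two descriptions are matched through Lemma~\ref{lem:total-sum-convergence}.

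First I would establish the following countable additivity of $\mu_\cX$: if $B_i\subset\cJ_\infty\cX$ are mutually disjoint measurable sets with $\|\mu_\cX(B_i)\|\to0$, then $\bigsqcup_i B_i$ is measurable with $\mu_\cX(\bigsqcup_i B_i)=\sum_i\mu_\cX(B_i)$, the sum converging; and conversely, if $\bigsqcup_i B_i$ is measurable and every $B_i$ is measurable, then $\|\mu_\cX(B_i)\|\to0$. The direct part is proved as in \cite[Thm.~6.18]{Batyrev_Gor}: glue the cylinder approximants of the finitely many ``large'' $B_i$ into a single approximant (a finite union of cylinders being a cylinder), throw the tails and the small $B_i$ into the error family, and control everything by Lemma~\ref{lem:cylinder compact} and by monotonicity of $\|\cdot\|$ under inclusion of measurable sets, which itself follows from $\cN$-valuedness. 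Two corollaries drop out: a countable union of null sets is null, and a measurable subset of a null set is null. The converse part is the more delicate one, and I expect it to be the main obstacle: approximating the ambient $\bigsqcup_i B_i$ by a cylinder $C_0$ reduces one to showing that a single cylinder cannot contain infinitely many pairwise disjoint measurable subsets whose norms are bounded below. This is precisely where $\cN$-valuedness is indispensable --- subtracting too many such pieces from the effective class $\mu_\cX(C_0)$ would leave a class with a negative ``leading coefficient'', which cannot be the measure of a measurable set --- and where Lemma~\ref{lem:cylinder compact} is again needed, the subtlety being that the $B_i$ are only assumed measurable, not strongly measurable, so the approximants may protrude.

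With this in hand the remainder is translation. The hypothesis that $F^{-1}(\infty)$ has measure zero is, by the two corollaries, equivalent to ``$A_i\cap F^{-1}(\infty)$ has measure zero for every $i$'', that is, to part~(2) of exponential integrability for each $F|_{A_i}$. The hypothesis that $F$ is measurable, i.e.\ that $F^{-1}(n)=\bigsqcup_i B_{i,n}$ is measurable for every $n$, is --- by the countable-additivity lemma and its converse --- equivalent to ``$\|a_{i,n}\|\to0$ as $i\to\infty$ for each fixed $n$'', and then $\mu_\cX(F^{-1}(n))\,\LL^{n}=\sum_i a_{i,n}$; and part~(3) of exponential integrability for $F$, namely $\|\mu_\cX(F^{-1}(n))\,\LL^{n}\|\to0$, reads $\|\sum_i a_{i,n}\|\to0$. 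Jointly these are exactly condition~(2) of Lemma~\ref{lem:total-sum-convergence} for the array $(a_{i,n})$. On the other side, ``$F|_{A_i}$ exponentially integrable for all $i$'' --- whose content beyond the standing measurability hypothesis is nullity of $A_i\cap F^{-1}(\infty)$ and ``$\|a_{i,n}\|\to0$ as $n\to\infty$ for each fixed $i$'' --- together with $\|\int_{A_i}\LL^{F}d\mu_\cX\|=\|\sum_n a_{i,n}\|\to0$ as $i\to\infty$, is exactly condition~(1) of that lemma. Since (1)$\Leftrightarrow$(2) there, the two sides of the asserted equivalence coincide, and the final clause of Lemma~\ref{lem:total-sum-convergence} yields $\int_A\LL^{F}d\mu_\cX=\sum_n\sum_i a_{i,n}=\sum_i\sum_n a_{i,n}=\sum_i\int_{A_i}\LL^{F}d\mu_\cX$.
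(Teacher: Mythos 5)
Your proof takes the same route as the paper's: both directions come down to applying Lemma \ref{lem:total-sum-convergence} to the array $a_{i,n}=\mu_{\cX}(A_i\cap F^{-1}(n))\LL^{n}$, and your translation of the two sides of the equivalence into conditions (1) and (2) of that lemma is exactly what the paper's two-line argument does implicitly. The difference is one of completeness: the paper dispatches the measure-theoretic inputs with ``we easily see'' and ``obviously,'' whereas you correctly isolate them --- countable additivity of $\mu_{\cX}$ on disjoint measurable unions, and above all its converse, that measurability of $\bigsqcup_i B_i$ forces $\left\Vert \mu_{\cX}(B_i)\right\Vert \to 0$, which is precisely what is needed to verify the first half of condition (2) for each fixed $n$ and is nowhere addressed in the paper. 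Your heuristic for that converse (a ``negative leading coefficient'' obstruction) is not quite how the argument goes: in $\cN$ the norm of a sum is a maximum, so subtracting arbitrarily many pieces of comparable norm from $\mu_{\cX}(C_0)$ produces no sign contradiction by itself; the standard proof instead covers the approximating cylinder $C_0$ of $\bigsqcup_i B_i$ by the approximating cylinders of the individual $B_i$ together with the small error cylinders, applies Lemma \ref{lem:cylinder compact} to extract a finite subcover, and concludes that every $B_i$ beyond that finite stage is contained in a countable union of cylinders of norm $<\epsilon$, hence has $\left\Vert \mu_{\cX}(B_i)\right\Vert \le\epsilon$. Since you name exactly those tools and explicitly flag this step as the expected obstacle rather than claiming it, I would call your write-up a correct and more candid version of the paper's proof rather than a gapped one.
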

\begin{proof}
The ``if'' part: We easily see 
\[
\lim_{i\to\infty}\left\Vert \mu_{\cX}(F^{-1}(n)\cap A_{i})\right\Vert =0.
\]
It follows that $F^{-1}(n)=\bigsqcup_{i}F^{-1}(n)\cap A_{i}$ is measurable.
From the preceding lemma, we conclude that $F$ is exponentially integrable. 

The ``only if'' part: Obviously $F|_{A_{i}}$ is exponentially integrable.
Then the assertion that 
\[
\lim_{i\to0}\left\Vert \int_{A_{i}}\LL^{F}d\mu_{\cX}\right\Vert =0
\]
follows from the preceding lemma. 

The last assertion also follows from the preceding lemma.\end{proof}
\begin{lem}
Let $F:\cJ_{\infty}\cX\supset A\to\ZZ\cup\{\infty\}.$ Then $F$ is
exponentially integrable if and only if there exist measurable subsets
$A_{i},$ $i\in\NN$ such that 
\begin{enumerate}
\item $A=\bigsqcup_{i\in\NN}A_{i}$,
\item $A_{0}$ has measure zero,
\item $F$ has a finite constant value on each $A_{i}$, $i>0,$ and 
\[
\lim_{i\to\infty}\left\Vert \mu_{\cX}(A_{i})\LL^{F(A_{i})}\right\Vert =0.
\]
 
\end{enumerate}
\end{lem}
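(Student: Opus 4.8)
The plan is to reduce the lemma to Proposition~\ref{prop:integrable_pieces}, which already governs countable disjoint decompositions, together with two elementary facts that I would record first. (i) A subset of a measure-zero subset of $\cJ_{\infty}\cX$ is itself measurable of measure zero: this is immediate from Definition~\ref{def: measurable subset} by taking $C_{0}(\epsilon)=\emptyset$ and re-using the cylinders $C_{i}(\epsilon)$ $(i\ge1)$ that cover the ambient measure-zero set. (ii) If $F$ is constant with finite value $c$ on a measurable set $B$, then $F|_{B}$ is exponentially integrable and $\int_{B}\LL^{F}\,d\mu_{\cX}=\mu_{\cX}(B)\LL^{c}$: the only nonempty fiber of $F|_{B}$ is $B=(F|_{B})^{-1}(c)$, so the three conditions defining exponential integrability hold trivially and the series defining the integral collapses to its single term.

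For the ``only if'' direction, given an exponentially integrable $F$ I would fix an enumeration $\ZZ=\{n_{1},n_{2},\dots\}$ and set $A_{0}:=F^{-1}(\infty)$ and $A_{i}:=F^{-1}(n_{i})$ for $i\ge1$. Property (1) is then built in, (2) is condition~(2) in the definition of exponential integrability, and each $A_{i}$ is measurable (for $i\ge1$ since $F$ is measurable, for $i=0$ since $A_{0}$ has measure zero). Hence every $F|_{A_{i}}$ is measurable and Proposition~\ref{prop:integrable_pieces} applies to $A=\bigsqcup_{i}A_{i}$: it returns that each $F|_{A_{i}}$ is exponentially integrable and that the norms $\|\int_{A_{i}}\LL^{F}\,d\mu_{\cX}\|$ tend to $0$. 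Using (ii) to rewrite $\int_{A_{i}}\LL^{F}\,d\mu_{\cX}=\mu_{\cX}(A_{i})\LL^{F(A_{i})}$ for $i\ge1$, this last statement is exactly property (3).

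For the ``if'' direction, starting from the given $A_{i}$ I would first use (i) to see that every fiber of $F|_{A_{0}}$, being a subset of the measure-zero set $A_{0}$, is measurable, while for $i\ge1$ the fibers of $F|_{A_{i}}$ are $A_{i}$ or $\emptyset$; so all $F|_{A_{i}}$ are measurable and Proposition~\ref{prop:integrable_pieces} is again available on $A=\bigsqcup_{i}A_{i}$. Its two hypotheses are checked directly: each $F|_{A_{i}}$ is exponentially integrable (for $i=0$ because $A_{0}$ has measure zero, so all its fibers have measure zero; for $i\ge1$ by (ii)), and $\int_{A_{0}}\LL^{F}\,d\mu_{\cX}=0$ while $\|\int_{A_{i}}\LL^{F}\,d\mu_{\cX}\|=\|\mu_{\cX}(A_{i})\LL^{F(A_{i})}\|\to0$ for $i\ge1$ by hypothesis~(3). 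The proposition then yields exponential integrability of $F$, and incidentally $\int_{A}\LL^{F}\,d\mu_{\cX}=\sum_{i\ge1}\mu_{\cX}(A_{i})\LL^{F(A_{i})}$.

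I expect no genuine obstacle: the lemma is essentially Proposition~\ref{prop:integrable_pieces} specialized to step functions, and the only subtlety is the uniform treatment of the measure-zero piece $A_{0}$ — i.e.\ that an arbitrary function on a measure-zero set is exponentially integrable with vanishing integral — which is precisely what fact~(i) takes care of.
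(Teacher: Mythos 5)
Your proof is correct and follows essentially the same route as the paper: both directions reduce to Proposition \ref{prop:integrable_pieces} (equivalently Lemma \ref{lem:total-sum-convergence}), with the integral over a piece where $F$ is constant collapsing to $\mu_{\cX}(A_{i})\LL^{F(A_{i})}$. Your treatment is in fact slightly more careful than the paper's, which declares the ``only if'' direction obvious and silently absorbs the fibers $F^{-1}(n)\cap A_{0}$ into the measure-zero piece, exactly the point your fact (i) handles.
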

\begin{proof}
The ``only if'' part is obvious. Suppose that there exist such measurable
subsets $A_{i}.$ Then for each $n\in\ZZ\cup\{\infty\},$ 
\[
F^{-1}(n)=\bigsqcup_{F(A_{i})=n}A_{i}.
\]
Then by assumption, $F$ is exponentially integrable on $F^{-1}(n)$
and 
\[
\lim_{n\to\infty}\left\Vert \int_{F^{-1}(n)}\LL^{F}d\mu_{\cX}\right\Vert =0.
\]
From Lemma \ref{lem:total-sum-convergence}, $F$ is exponentially
integrable. \end{proof}
\begin{lem}
Let $C\subset\cJ_{\infty}\cX$ be a strongly measurable subset and
$F:C\to\ZZ\cup\{\infty\}$ an exponentially integrable function. Let
$C_{0}(\epsilon),$ $\epsilon\in\RR_{>0}$ as in Definition \ref{def: measurable subset}.
Then 
\[
\int_{C}\LL^{F}d\mu_{\cX}=\lim_{\epsilon\to0}\int_{C_{0}(\epsilon)}\LL^{F}d\mu_{\cX}.
\]
\end{lem}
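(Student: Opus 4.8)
The plan is to reduce everything to showing that $\Vert \int_{C}\LL^{F}\,d\mu_{\cX}-\int_{C_{0}(\epsilon)}\LL^{F}\,d\mu_{\cX}\Vert\to 0$ as $\epsilon\to 0$. Since $C_{0}(\epsilon)$ is a cylinder contained in $C$, its complement in $C$, namely $C\setminus C_{0}(\epsilon)$, is the intersection of the measurable set $C$ with a cylinder (the complement of a cylinder is again a cylinder), hence measurable, and by strong measurability it is contained in $\bigcup_{i\ge 1}C_{i}(\epsilon)$ with $\Vert\mu_{\cX}(C_{i}(\epsilon))\Vert<\epsilon$. Applying Proposition~\ref{prop:integrable_pieces} to the decomposition $C=C_{0}(\epsilon)\sqcup(C\setminus C_{0}(\epsilon))$ shows that $F$ is exponentially integrable on each piece and that $\int_{C}\LL^{F}\,d\mu_{\cX}=\int_{C_{0}(\epsilon)}\LL^{F}\,d\mu_{\cX}+\int_{C\setminus C_{0}(\epsilon)}\LL^{F}\,d\mu_{\cX}$, so it suffices to bound the last integral.

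Writing $\int_{C\setminus C_{0}(\epsilon)}\LL^{F}\,d\mu_{\cX}=\sum_{n}\mu_{\cX}\bigl(F^{-1}(n)\setminus C_{0}(\epsilon)\bigr)\LL^{n}$, I would control the two ranges of $n$ separately. For large $|n|$: exponential integrability of $F$ on $C$ gives convergence of $\sum_{n}\mu_{\cX}(F^{-1}(n))\LL^{n}$ in $\hat{\cM}'$, hence $\Vert\mu_{\cX}(F^{-1}(n))\LL^{n}\Vert\to 0$; and since $F^{-1}(n)\setminus C_{0}(\epsilon)\subset F^{-1}(n)$, monotonicity of $\Vert\mu_{\cX}(\cdot)\Vert$ under inclusion yields a single $N$, independent of $\epsilon$, with these terms $<\delta$ for $|n|>N$. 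For $|n|\le N$: the set $F^{-1}(n)\setminus C_{0}(\epsilon)$ is a measurable subset of $\bigcup_{i\ge 1}C_{i}(\epsilon)$, so its measure has norm $\le\epsilon$, whence $\Vert\mu_{\cX}(F^{-1}(n)\setminus C_{0}(\epsilon))\LL^{n}\Vert\le\epsilon\,2^{N}<\delta$ once $\epsilon<\delta\,2^{-N}$. Because all the quantities in sight lie in the semiring $\cN$ (each truncation $\pi_{n}$ gives an honest variety class and the powers of $\LL$ introduce no cancellation), the norm of a convergent sum equals the supremum of the norms of its terms (Lemma~\ref{lem:total-sum-convergence} together with the preceding lemma), so $\Vert\int_{C\setminus C_{0}(\epsilon)}\LL^{F}\,d\mu_{\cX}\Vert<\delta$ for all sufficiently small $\epsilon$, which is the claim.

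The step I expect to be the main obstacle is the monotonicity/smallness assertion used above: that a \emph{measurable} set $B$ contained in a countable union of cylinders each of norm $<\epsilon$ has $\Vert\mu_{\cX}(B)\Vert\le\epsilon$. The definition of measure only provides cylinder approximants of $B$ itself, so I would argue by running that approximation: for a cylinder approximant $B_{0}(\epsilon')$ of $B$ one has $B_{0}(\epsilon')\subset B\cup(B_{0}(\epsilon')\setminus B)\subset\bigcup_{i\ge 1}C_{i}(\epsilon)\cup\bigcup_{i\ge 1}B_{i}(\epsilon')$; by Lemma~\ref{lem:cylinder compact} (quasi-compactness of the constructible topology on the affine scheme $\cJ_{\infty,\le j}\cX$) this reduces to a finite subunion, and subadditivity of $\Vert\mu_{\cX}(\cdot)\Vert$ on finite unions of cylinders forces $\Vert\mu_{\cX}(B_{0}(\epsilon'))\Vert<\max\{\epsilon,\epsilon'\}$; letting $\epsilon'\to 0$ gives $\Vert\mu_{\cX}(B)\Vert\le\epsilon$. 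The remaining ingredients — additivity of the measure on two disjoint measurable pieces, and the fact that restrictions of exponentially integrable functions to measurable subsets remain exponentially integrable — are routine consequences of Proposition~\ref{prop:integrable_pieces} and the definitions.
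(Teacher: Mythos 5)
Your argument follows the same route as the paper's (which merely asserts that $\lim_{\epsilon\to0}\Vert\int_{C\setminus C_{0}(\epsilon)}\LL^{F}d\mu_{\cX}\Vert=0$ and invokes additivity); you have simply filled in the details the paper leaves implicit, and the way you do so — splitting the sum over $n$ into a tail controlled uniformly in $\epsilon$ by exponential integrability and a finite range controlled by the covering $\bigcup_{i\ge1}C_{i}(\epsilon)$, with Lemma \ref{lem:cylinder compact} and the semiring $\cN$ handling the norm estimates — is sound. One small inaccuracy: in this paper the complement of a cylinder in $\cJ_{\infty}\cX$ is \emph{not} a cylinder (constructibility requires lying in some $\cJ_{n,\le j}\cX$, and the complement escapes every such stratum), so your parenthetical justification for the measurability of $C\setminus C_{0}(\epsilon)$ fails as stated; the conclusion is still correct and is repaired by noting that the difference of two cylinders is a cylinder and that $(C\setminus C_{0}(\epsilon))\,\Delta\,(C_{0}(\epsilon'')\setminus C_{0}(\epsilon))\subset C\,\Delta\,C_{0}(\epsilon'')$, which exhibits the required cylinder approximants.
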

\begin{proof}
We see that
\[
\lim_{\epsilon\to0}\left\Vert \int_{C\setminus C_{0}(\epsilon)}\LL^{F}d\mu_{\cX}\right\Vert =0.
\]
Now the lemma follows from the obvious equality
\[
\int_{C}\LL^{F}d\mu_{\cX}=\int_{C\setminus C_{0}(\epsilon)}\LL^{F}d\mu_{\cX}+\int_{C_{0}(\epsilon)}\LL^{F}d\mu_{\cX}.
\]

\end{proof}

\subsection{Adding fractional powers of $\LL$}

In applications, we often consider functions on arc spaces with fractional
values. For this reason, we need to add fractional powers of $\LL$
to Grothendieck rings. For a positive integer $r$, we put 
\[
\cM'_{1/r}:=\cM'[\LL^{1/r}]=\cM'[x]/(x^{r}-\LL).
\]
Then its dimensional completion $\hat{\cM}'_{1/r}$ is defined similarly.
Now for an exponentially integrable function 
\[
F:J_{\infty}X\text{ (or \ensuremath{\cJ}}_{\infty}\cX)\supset A\to\frac{1}{r}\ZZ\cup\{\infty\},
\]
its integral $\int_{A}\LL^{F}d\mu_{X}$ (or $\int_{A}\LL^{F}d\mu_{\cX}$)
is defined as an element of $\hat{\cM}'_{1/r}$. 

If $r$ divides $r',$ then we can identify $\hat{\cM}'_{1/r}$ with
a subring of $\hat{\cM}'_{1/r'}.$ Then the values of measures and
integrals are independent of which ring we consider. Therefore, in
what follows, we will not make the value ring explicit.

\subsection{Motivic integration on $\bGCovrep D$\label{sub:Motivic-integration-on-GCov}}

For a constructible subset $C$ of $\bGCovrep{D,\le j}$, we define
its \emph{measure} simply as 
\[
\nu(C):=[C]\in\hat{\cM}'.
\]

Let $F:\bGCovrep D\to\ZZ$ be a function which is constant on each
stratum $\bGCovrep{D,j}.$ Then we write $F(\bGCovrep{D,j})$ as $F(j).$
Suppose that 
\[
\lim_{j\to\infty}F(j)+j-\left\lfloor j/p\right\rfloor =-\infty
\]
Then we define the \emph{integral }of $\LL^{F}$ by: 
\begin{eqnarray*}
\int_{\bGCovrep D}\LL^{F}d\nu & = & \sum_{j\in\NN'_{0}}\nu(\bGCovrep{D,j})\LL^{F(j)}\\
 & = & (\LL-1)\LL^{-1}\sum_{j\in\NN'_{0}}\LL^{j-\left\lfloor j/p\right\rfloor +F(j)}.
\end{eqnarray*}

\begin{prop}
Let $(\cJ_{\infty}\cX)_{0}$ be the preimage of the origin by the
projection $\cJ_{\infty}\cX\to\cX.$ Let $\pi:\cJ_{\infty}\cX\to\bGCovrep D$
be the projection. Then we have 
\[
\int_{\bGCovrep D}\LL^{F}d\nu=\int_{(\cJ_{\infty}\cX)_{0}}\LL^{F\circ\pi}d\mu_{\cX}.
\]
\end{prop}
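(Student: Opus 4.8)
The plan is to cut both integrals into pieces indexed by the ramification jump and compare them piece by piece. Write $(\cJ_\infty\cX)_0=\bigsqcup_{j\in\NN'_0}A_j$ with $A_j:=(\cJ_\infty\cX)_0\cap\cJ_{\infty,j}\cX$; on each $A_j$ the function $F\circ\pi$ is constant, equal to $F(j)$. The strategy is then: (i) show that each $A_j$ is a cylinder with $\mu_\cX(A_j)=\nu(\bGCovrep{D,j})=[\bGCovrep{D,j}]$; (ii) use $\dim\bGCovrep{D,j}=j-\lfloor j/p\rfloor$ (from \eqref{eq:bRP dim}) together with the hypothesis $\lim_{j\to\infty}(F(j)+j-\lfloor j/p\rfloor)=-\infty$ to verify the convergence conditions of Proposition \ref{prop:integrable_pieces}; and (iii) conclude by that proposition that $F\circ\pi$ is exponentially integrable on $(\cJ_\infty\cX)_0$ with $\int_{(\cJ_\infty\cX)_0}\LL^{F\circ\pi}d\mu_\cX=\sum_{j\in\NN'_0}\nu(\bGCovrep{D,j})\LL^{F(j)}$, which is by definition $\int_{\bGCovrep D}\LL^F d\nu$.

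For step (i), since membership in $(\cJ_\infty\cX)_0$ depends only on the $0$-jet of an arc, the projection to $\cX$ factors through $\pi_0\colon\cJ_\infty\cX\to\cJ_0\cX$, so $A_j=\pi_0^{-1}(Z_j)$ where $Z_j\subset\cJ_{0,j}\cX$ is the preimage of the origin. For $j=0$ we have $\cJ_{0,0}\cX=V/G$ and $Z_0=\{0\}$, a reduced point, so $[Z_0]=1=[\bGCovrep{D,0}]$. For $j\in\NN'$, Corollary \ref{cor:trivial fib} gives $\cJ_{0,j}\cX\cong\AA^l_k\times\bGCovrep{D,j}$, and the point is to identify the $\AA^l_k$-factor with $V^G$: a twisted $0$-jet of ramification jump $j>0$ has underlying $G$-$0$-jet $E_{f,0}\to V$ with $E_{f,0}=\Spec K$ a single reduced point carrying the trivial $G$-action, hence it is nothing but a $G$-fixed point of $V$ together with the cover $E_f$. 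Such a $0$-jet lies over the origin of $\cX$ exactly when that fixed point is $0$, so $Z_j=\{0\}\times\bGCovrep{D,j}\cong\bGCovrep{D,j}$. In every case $A_j=\pi_0^{-1}(Z_j)$ is a cylinder, and since the truncation maps of $\cJ_\infty\cX$ are $\AA^d_k$-fibrations in $K_0'(\Var_k)$ (again Corollary \ref{cor:trivial fib}), evaluating the measure of a cylinder at level $0$ gives $\mu_\cX(A_j)=[\pi_0(A_j)]=[Z_j]=[\bGCovrep{D,j}]=\nu(\bGCovrep{D,j})$. Steps (ii) and (iii) are then routine: $\int_{A_j}\LL^{F\circ\pi}d\mu_\cX=\mu_\cX(A_j)\LL^{F(j)}=[\bGCovrep{D,j}]\LL^{F(j)}$, whose norm is $2^{\,j-\lfloor j/p\rfloor+F(j)}\to0$, so Proposition \ref{prop:integrable_pieces} applies and yields the asserted equality.

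The only non-formal point — the step I expect to need the most care — is the identification of $Z_j$ for $j>0$: one must be sure that imposing ``based at the origin'' on a twisted $0$-jet of positive ramification jump kills the whole $\AA^l_k$ of jet moduli and leaves only the moduli $\bGCovrep{D,j}$ of the cover, i.e.\ that no hidden deformation direction survives. If one prefers to argue directly with $G$-arcs $\gamma\colon E_f\to V$ rather than with the factorization through $\cX$, this comes down to checking that $\gamma^*(x_{\lambda,i})\in\fm_{E_f}$ for all $\lambda,i$ is equivalent to $\gamma^*(x_{\lambda,1})\in\fm_{E_f}$ for all $\lambda$; this holds because $\delta=\sigma-\mathrm{id}$ strictly raises the normalized valuation on $\cO_{E_f}$ (by $\rj(E_f)$ on the part where it is nonzero, cf.\ the corollaries of Section \ref{subsec: detail act}), so $\delta(\fm_{E_f})\subset\fm_{E_f}$, while $\gamma^*(x_{\lambda,i})=\delta^{i-1}\gamma^*(x_{\lambda,1})$. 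Everything else is a direct assembly of Corollary \ref{cor:trivial fib}, Proposition \ref{prop:integrable_pieces}, and the definition of $\int_{\bGCovrep D}\LL^F d\nu$ from Section \ref{sub:Motivic-integration-on-GCov}.
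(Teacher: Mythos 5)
Your proof is correct and follows the paper's own (one-line) argument: the paper likewise reduces everything to the identification, via Corollary \ref{cor:trivial fib}, of the fibre of $\cJ_{0,j}\cX\to\cX$ over the origin with $\bGCovrep{D,j}$, and your careful verification that the origin condition kills exactly the $\AA_k^l$-factor (using that $\delta$ preserves $\fm_{E_f}$), together with the convergence bookkeeping via Proposition \ref{prop:integrable_pieces}, is precisely the detail the paper leaves implicit.
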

\begin{proof}
From Corollary \ref{cor:trivial fib}, the preimage of $\cJ_{0,j}\cX\to\cX$
of the origin is isomorphic to $\bGCovrep{D,j}.$ The proposition
follows from this.
\end{proof}

\section{The Change of variables formula\label{sec:The-Change-of-vars}}

We keep the notation from the preceding section: $V$ is a non-trivial
$G$-representation, $\cX=[V/G]$ and $X=V/G$. In this section, we
will prove the change of variables formula for the map $\phi_{\infty}:\cJ_{\infty}\cX\to J_{\infty}X,$
which enables us to express integrals on $\cJ_{\infty}\cX$ as ones
on $J_{\infty}X,$ and vice versa.

\subsection{Preliminary results}
\begin{defn}
Let $S=k[\bx]$ be the coordinate ring of $V$ and $R:=S^{G}$ the
one of $X.$ The \emph{Jacobian ideal} $\Jac_{\psi}\subset S$ of
the quotient map $\psi:V\to X$ is defined as the $0$-th Fitting
ideal of the module of differentials, $\Omega_{V/X}.$ This defines
a coherent ideal sheaf on $\cX$, which we call the \emph{Jacobian
ideal} of $\phi:\cX\to X$ and denote it by $\Jac_{\phi}\subset\cO_{\cX}.$ \end{defn}
\begin{lem}
Let $f=\sum_{i\ge r}a_{i}t^{i}\in k[[t]]$, $a_{r}\ne0,$ be a power
series of order $r$ and let $f^{-1}=\sum_{i\ge-r}b_{i}t^{i}\in k((t))$
be its inverse. Then the negative part, $\sum_{i=-r}^{-1}b_{i}t^{i},$
of $f^{-1}$ depends only on the class of $f$ in $k[[t]]/(t^{2r}).$\end{lem}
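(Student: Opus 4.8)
The plan is to compute the product $f \cdot f^{-1} = 1$ and extract enough low-degree coefficients to pin down the negative part of $f^{-1}$. Write $f = \sum_{i\ge r} a_i t^i$ with $a_r \neq 0$ and $f^{-1} = \sum_{i \ge -r} b_i t^i$. Multiplying out, the coefficient of $t^n$ in the product is $\sum_{i+j = n} a_i b_j$, and this equals $1$ for $n = 0$ and $0$ for $n \neq 0$. First I would observe that since $a_i = 0$ for $i < r$, the equation for the coefficient of $t^n$ only involves $b_j$ with $j \ge n - i$ for $i$ ranging over those with $a_i \neq 0$, hence $j \ge n - \deg(\text{lowest terms})$; more usefully, rearranging, $a_r b_n = [\text{coeff of }t^{n+r}] - \sum_{i > r} a_i b_{n+r-i}$, so $b_n$ is determined by the coefficient of $t^{n+r}$ in the product together with the $b_j$ for $j < n$ and the coefficients $a_i$ for $r < i \le 2r$.

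Concretely, I would run the recursion for $n = -r, -r+1, \dots, -1$. For $n = -r$: $a_r b_{-r} = [\text{coeff of } t^0] = 1$ (all other terms vanish since they involve $b_j$ with $j < -r$, which are zero), so $b_{-r} = a_r^{-1}$. Inductively, for $-r \le n \le -1$, the coefficient of $t^{n+r}$ in $f f^{-1}$ is $0$ (as $0 < n + r < r$... wait, $n+r$ ranges over $0,\dots,r-1$; for $n=-r$ it is $0$, for $n > -r$ it lies in $1,\dots,r-1$, so the coefficient is $0$ there), giving
\[
a_r b_n = -\sum_{i = r+1}^{n+2r} a_i b_{n+r-i},
\]
where the summation index $i$ runs only up to $n + 2r \le 2r - 1 < 2r$ because $b_{n+r-i} = 0$ once $n + r - i < -r$. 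Thus each $b_n$ with $n < 0$ is expressed via $a_{r}, a_{r+1}, \dots, a_{2r-1}$ and the previously computed $b_{-r}, \dots, b_{n-1}$; unwinding, the entire negative part $\sum_{n=-r}^{-1} b_n t^n$ is a function of $a_r, \dots, a_{2r-1}$ alone, i.e.\ of the image of $f$ in $k[[t]]/(t^{2r})$.

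The argument is essentially a bookkeeping exercise in tracking which coefficients $a_i$ enter the recursion; the only mild subtlety, and the step I would be most careful about, is verifying the upper bound on the summation index so that no $a_i$ with $i \ge 2r$ is ever needed. That bound comes from the vanishing $b_j = 0$ for $j < -r$: in the expression for $b_n$ with $n \le -1$, the term $a_i b_{n+r-i}$ is nonzero only when $n + r - i \ge -r$, i.e.\ $i \le n + 2r \le 2r - 1$. Since $a_r$ is a unit in $k[[t]]$, the recursion is well-posed and the formula is uniquely determined. No genuine obstacle is expected here; the lemma is a preparatory computation, presumably used later to show that the relevant truncation data for inverting power series is captured at a finite jet level.
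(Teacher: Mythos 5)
Your proof is correct and rests on the same underlying fact as the paper's: the negative part of $f^{-1}$ is a function of $a_r,\dots,a_{2r-1}$ only, because those are exactly the coefficients that enter the inversion of the unit $t^{-r}f$ modulo $t^r$. The paper phrases this in three lines by noting that $t^{-r}f$ and $t^{r}f^{-1}$ are inverse units and that inversion is well defined in $k[[t]]/(t^{r})$, whereas you unpack the same statement as an explicit coefficient recursion; both are fine.
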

\begin{proof}
The classes of $t^{-r}f$ and $t^{r}f^{-1}$ in $k[[t]]/(t^{r})$
are inverses to each other. The negative part of $f^{-1}$ depends
only on the class of $t^{r}f^{-1}$ in $k[[t]]/(t^{r})$. Then it
depends only on the class of $t^{-r}f$ in $k[[t]]/(t^{r})$ and depends
only on the class of $f$ in $k[[t]]/(t^{2r}).$\end{proof}
\begin{prop}
There exists a constant $c\ge2$ depending only on $V$ such that:
If $\gamma,\gamma'\in\cJ_{\infty}\cX$ and if $\phi_{n}\gamma_{n}=\phi_{n}\gamma_{n}'$
for some $n$ with 
\[
n\ge c\cdot\ord\Jac_{\phi}(\gamma),
\]
then $\gamma$ and $\gamma'$ have the same associated $G$-covers
of $D.$ \end{prop}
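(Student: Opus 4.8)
The plan is to reconstruct the $G$-cover of $D$ associated to $\gamma$ from the truncated push-forward $\phi_n\gamma_n$, and then to show that the same data determines the cover of $\gamma'$. First I would pass to an algebraically closed field $K$ containing the residue field of the relevant point, so that both $\gamma$ and $\gamma'$ come from honest $G$-arcs $E_f\to V$ and $E_{f'}\to V$ with $f\in\RP_{K,j}$, $f'\in\RP_{K,j'}$. Over the generic point $D^*$, the $G$-cover $E_f^*$ is the base change of $\psi\colon V\to X$ along $\phi_\infty\gamma|_{D^*}$ (as in the proof of Proposition \ref{prop:almost bijective}), so it suffices to show that the class of $f$ in $K((t))/\wp(K((t)))$, equivalently the representative polynomial $f\in\RP_K$, is determined by $\phi_n\gamma_n$ for $n$ large in the stated sense. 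Since $\rj(E_f)=-\ord f$ by Proposition \ref{prop:order rj corresp}, recovering $f$ amounts to recovering finitely many coefficients $f_{-i}$, $i\in\NN'$, $i\le j$.

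The mechanism for extracting $f$ from the arc on $X$ is the following. Write $\phi_\infty\gamma\colon D\to X$ and pull back the $G$-invariant functions: for each monomial-type invariant $h\in R=S^G$ we get a power series $(\phi_\infty\gamma)^*h\in k[[t]]$. On the cover, $\gamma^*$ of the corresponding element of $S$ lies in $\cO_E$, and using the explicit description of $\cO_E=\prod_{0\le i<p,\,-ij+np\ge0}k\cdot g^i t^n$ from the Corollary to Lemma \ref{lem:delta 1}, together with $v_E(\delta(h))=v_E(h)+\rj(E)$ for $p\nmid v_E(h)$, one sees that the valuations and leading coefficients of these pulled-back invariants encode $j$ and the coefficients of $f$. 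Concretely: the differences of valuations of suitably chosen invariants force the value $j=\rj$, hence $\ord\Jac_\phi(\gamma)$ is of the expected order (bounded by a linear function of $j$ determined by $V$), which lets us calibrate how large $n$ must be. Then a bounded number of further coefficients of the push-forward arc determine $g=\wp^{-1}f$ up to the $G$-action and the ambiguity $f\mapsto f+\wp(a)$, $a\in K[[t]]$, which does not change the representative polynomial; the key input here is the elementary Lemma on inverses of power series, which guarantees that truncating the arc at level $n$ still pins down the finitely many ``negative-part'' coefficients that matter, provided $n$ exceeds roughly twice the orders appearing, all of which are $\le c\cdot\ord\Jac_\phi(\gamma)$ for a suitable constant $c=c(V)\ge 2$.

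Having recovered $f$ (equivalently the cover $E_f^*\to D^*$) from $\phi_n\gamma_n$, the same computation applied to $\gamma'$ with the hypothesis $\phi_n\gamma_n=\phi_n\gamma'_n$ yields $f'=f$; then $E_f=E_{f'}$, since the cover $E$ of the non-punctured disk is the normalization of $D$ in $E_f^*$ and hence is determined by $E_f^*$. I would organize the bound by reducing to the indecomposable case (as the representation splits and $\Jac_\phi$, $\sht_V$ behave componentwise), where $S=k[x_1,\dots,x_d]$ with $\delta(x_i)=x_{i+1}$, and track how $\ord$ of the invariants $x_1,\ x_1^p - (\text{stuff})$, etc., relate to $j$ via the valuation formula above.

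The main obstacle I anticipate is the bookkeeping that produces the universal constant $c$: one must bound, uniformly in the arc $\gamma$ but in terms of $\ord\Jac_\phi(\gamma)$, both (i) the truncation level at which the valuations of the pulled-back invariants stabilize enough to read off $j$, and (ii) the truncation level at which the relevant coefficients of $g$ (hence of $f$) are determined — and the second is where the ``twice the order'' phenomenon from the power-series-inverse lemma enters, because passing from invariants on $X$ back to coordinates on $V$ involves dividing by elements whose order is controlled only by $\ord\Jac_\phi$. Making ``$n\ge c\cdot\ord\Jac_\phi(\gamma)$ suffices'' precise, rather than merely ``$n\gg0$'', is the real content; everything else is the explicit algebra of Artin--Schreier extensions already set up in Section \ref{sec:G-covers}.
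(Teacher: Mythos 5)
Your overall strategy is the right one (reconstruct the Artin--Schreier class of the cover from the push-forward arc, and use the lemma on inverses of power series to see that a finite truncation suffices), but the step you defer as ``the main obstacle I anticipate'' is in fact the entire content of the proposition, and your sketch does not contain the idea needed to close it. The missing ingredient is a \emph{global} Artin--Schreier presentation of the \'etale locus of $\psi:V\to X$: the paper first observes (via the later lemmas on $\Jac_{\psi}$) that $\Jac_{\psi}$ is generated by finitely many $G$-\emph{invariant} elements $f_{1},\dots,f_{l}\in R$, so that $e:=\ord\Jac_{\phi}(\gamma)$ equals $\min_{i}\ord\bar{\gamma}^{*}f_{i}$, say $\ord\tilde{f_{1}}$; then, since $\Spec S_{f_{1}}\to\Spec R_{f_{1}}$ is an \'etale $G$-torsor, one has $S_{f_{1}}=R_{f_{1}}[\wp^{-1}(g/f_{1}^{m})]$ with $g\in R$ and $m$ depending only on $V$. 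Pulling back along $\bar{\gamma}$, the cover $E^{*}$ is $k((t))[\wp^{-1}(\tilde{g}/\tilde{f_{1}}^{m})]$, and it is determined by the negative part of the single Laurent series $\tilde{g}/\tilde{f_{1}}^{m}$, whose denominator has order exactly $me$. Only now does the power-series-inverse lemma give the clean statement that this negative part depends on $\bar{\gamma}$ only modulo $t^{2me}$, whence $c=2m$ (maximized over the finitely many choices of $f_{i}$) works. Without this presentation, your assertion that ``all of the orders appearing are $\le c\cdot\ord\Jac_{\phi}(\gamma)$'' is exactly what has to be proved, and nothing in your valuation-tracking scheme establishes it.

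Two further points. First, your proposed alternative mechanism --- reading off $j$ and the coefficients of $f$ from valuations and leading coefficients of individual pulled-back coordinate invariants --- is not developed far enough to judge, and the parenthetical claim that $\ord\Jac_{\phi}(\gamma)$ is ``bounded by a linear function of $j$'' is false if read as an upper bound: for fixed ramification jump $j$ the arc can lie arbitrarily deep in the branch locus (e.g.\ for $V=V_{2}$, where $\Jac_{\psi}=(y^{p-1})$ and $\ord\bar{\gamma}^{*}(y^{p-1})$ is unbounded), which is precisely why the truncation level must be calibrated against $\ord\Jac_{\phi}(\gamma)$ rather than against $j$. Second, the reduction to the indecomposable case is unnecessary here and slightly misleading: the paper's argument works uniformly because it only needs one invariant generator $f_{1}$ of minimal pulled-back order, regardless of how $V$ decomposes.
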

\begin{proof}
From Lemmas \ref{lem:Jac=00003DJac} and \ref{lem:reflection Jac}
below, $\Jac_{\psi}$ is generated by elements of $R,$ say $f_{1},\dots,f_{l}\in R$.
Set $\bar{\gamma}:=\phi_{\infty}\gamma$ and $\tilde{f_{i}}:=\bar{\gamma}^{*}f_{i}\in k[[t]].$
Then $e:=\ord\Jac_{\phi}(\gamma)$ is equal to the minimum of $\ord\tilde{f_{i}}$,
say $\ord\tilde{f_{1}}.$ Let $S_{f_{1}}$ and $R_{f_{1}}$ be localizations
of $S$ and $R$ by $f_{1}$. Then $\Spec S_{f_{1}}\to\Spec R_{f_{1}}$
is an étale $G$-cover and we have 
\[
S_{f_{1}}=R_{f_{1}}[\wp^{-1}(g/f_{1}^{m})]
\]
for some $g\in R$ and $m\ge0.$ Then the $G$-cover $E^{*}\to D^{*}$
associated to $\gamma$ is given by 
\[
\cO_{E^{*}}=k((t))[\wp^{-1}(\tilde{g}/\tilde{f_{1}}^{m})],
\]
with $\tilde{g}:=\bar{\gamma}^{*}g$. Hence $E^{*}$ is determined
by the negative part of the Laurent power series $\tilde{g}/\tilde{f_{1}}^{m}$.
From the preceding lemma, the negative part of $(\tilde{f_{1}})^{-m}$
is determined by $\tilde{f}_{1}^{m}$ modulo $t^{2me}.$ The terms
of $\tilde{g}$ of degree $\ge me$ do not contribute to the negative
part of $\tilde{g}/\tilde{f_{1}}^{m}$. This shows that $E^{*}$ depends
only on $\phi_{n}\gamma_{n}$ if $n\ge2me.$ The proposition follows.\end{proof}
\begin{prop}
Let $c$ be a constant as the preceding proposition. Then there exists
a constant $c'>0$ depending only on $X$ such that: For $\gamma,\gamma'\in\cJ_{\infty}\cX,$
if we put $e:=\ord\Jac_{\phi}(\gamma)$ and if $\phi_{\infty}\gamma_{n}=\phi_{\infty}\gamma'_{n}$
for some $n$ with 
\[
n\ge\max\{ce,\, c'\cdot\ord\Jac_{X}(\phi_{\infty}\gamma)\},
\]
 then $\gamma_{n-e}=\gamma'_{n-e}.$\end{prop}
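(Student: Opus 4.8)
The plan is to reduce, via the preceding proposition, to a comparison of two lifts over a \emph{common} $G$-cover, and then to run a Greenberg/Denef--Loeser-type approximation argument for the finite quotient map $\psi\colon V\to X$. Since $n\ge ce$ with $e=\ord\Jac_\phi(\gamma)$, the preceding proposition already gives that $\gamma$ and $\gamma'$ have the same associated $G$-cover $E\to D$. Write $\mathcal{O}_E$ for its coordinate ring: a complete DVR with perfect residue field $k'$, a uniformizer $s$, and $v_E(t)=p$, so that $\mathcal{O}_E\cong k'[[s]]$ and $E_{f,m}=\Spec(\mathcal{O}_E/\fm_E^{mp+1})$. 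Picking $G$-equivariant representatives, $\gamma,\gamma'$ become $G$-arcs $\tilde\gamma,\tilde\gamma'\colon\Spec\mathcal{O}_E\to V$, while $\bar\gamma:=\phi_\infty\gamma$ and $\bar\gamma':=\phi_\infty\gamma'$ are arcs $D\to X$ with $\bar\gamma\equiv\bar\gamma'\pmod{t^{n+1}}$. Fixing a closed embedding $X\hookrightarrow\AA^M$ and pulling back along $\Spec\mathcal{O}_E\to D$ (where $t$ has order $p$), this becomes $\psi\circ\tilde\gamma\equiv\psi\circ\tilde\gamma'\pmod{\fm_E^{p(n+1)}}$. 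Note also that $\tilde\gamma^*\Jac_\psi=\fm_E^{pe}$: indeed $\Jac_\phi\subset\cO_\cX$ is the ideal corresponding to $\Jac_\psi\subset S$, this ideal is generated by elements of $R=S^G$, and $\ord\Jac_\phi(\gamma)=e$.

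Next I would pin down the correct $G$-translate of $\tilde\gamma'$ and establish an initial approximation. By Lemmas \ref{lem:Jac=00003DJac} and \ref{lem:reflection Jac}, $\Jac_\psi$ is generated by $f_1,\dots,f_l\in R$; let $f_1$ realize $e=\ord\bar\gamma^*f_1$. Over $\Spec R_{f_1}$ the map $\psi$ is finite \'etale with $S_{f_1}=R_{f_1}[\wp^{-1}(g/f_1^m)]$ for some $g\in R$ and $m\ge0$, and the fibre of $\Spec S_{f_1}\to\Spec R_{f_1}$ over $\bar\gamma|_{D^*}$ is the $G$-cover $E^*$ itself; thus $\tilde\gamma|_{E^*}$, and hence $\tilde\gamma$ (by the valuative criterion, as in the proof of Proposition \ref{prop:almost bijective}), is determined by $\bar\gamma$ together with a choice of one of the $p$ sheets, i.e. an element of $G$. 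Among the $\le p$ lifts of $\gamma'$ I fix the one, $\tilde\gamma'$, lying on the same sheet as $\tilde\gamma$. Writing each $x_{\lambda,i}$ as a polynomial in $\wp^{-1}(g/f_1^m)$ over $R_{f_1}$ and tracking the orders lost when inverting $\tilde f_1$ (of order $e$), applying $\wp^{-1}$, and multiplying out --- via the elementary lemmas on negative parts of power-series inverses and on $\wp$ --- one shows that $\tilde\gamma\bmod\fm_E^{N'}$ is determined by $\bar\gamma\bmod t^{N}$ whenever $N\ge N'/p+c''+C\cdot\ord\Jac_X(\bar\gamma)$, with $c'',C$ depending only on $V$. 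This is exactly the role of the hypothesis $n\ge c'\cdot\ord\Jac_X(\phi_\infty\gamma)$: it makes $\bar\gamma\equiv\bar\gamma'\pmod{t^{n+1}}$ force $\tilde\gamma\equiv\tilde\gamma'\pmod{\fm_E^{2pe+1}}$.

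With this initial agreement and $\psi\circ\tilde\gamma\equiv\psi\circ\tilde\gamma'\pmod{\fm_E^{p(n+1)}}$ in hand, I would run the Newton iteration for the system $P_j(\tilde x)=(\psi\circ\tilde\gamma)^*z_j$ (with $P_j:=\psi^*z_j\in R$ and unknown $\tilde x=(\tilde x_{\lambda,i})\in\mathcal{O}_E^{d}$), whose Jacobian has order $pe$ along $\tilde\gamma$ --- this is the content of $\tilde\gamma^*\Jac_\psi=\fm_E^{pe}$. Each step of the iteration doubles the error past level $pe$, so starting from level $2pe+1$ it converges and yields $\tilde\gamma\equiv\tilde\gamma'\pmod{\fm_E^{p(n+1)-pe}}$. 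Since $p(n+1)-pe=p(n-e)+p\ge p(n-e)+1$, the two $G$-arcs induce the same morphism $E_{f,n-e}\to V$, i.e. the same $G$-$(n-e)$-jet; hence $\gamma_{n-e}=\gamma'_{n-e}$ in $\cJ_{n-e}\cX=(J_{n-e}^{G}V)/G$, as desired.

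The step I expect to be the main obstacle is the middle one: showing that the sheet of the \'etale cover (equivalently, the correct $G$-translate $\tilde\gamma'$) is determined by a jet of $\bar\gamma$ of bounded length, and carrying out the order bookkeeping --- denominators $\tilde f_1^{-m}$, the operator $\wp^{-1}$ on $\mathcal{O}_{E^*}$, and the passage between the valuations of $t$ and $s$ --- that turns $\bar\gamma\equiv\bar\gamma'\pmod{t^{n+1}}$ into a genuine initial approximation with a threshold linear in $e$ and $\ord\Jac_X(\bar\gamma)$. The reduction to a common $G$-cover, the Newton step, and the final descent to $\cX$ are then routine.
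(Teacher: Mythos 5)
Your overall route is the same as the paper's: use the preceding proposition to put $\gamma$ and $\gamma'$ over a common $G$-cover $E\cong\Spec k[[s]]$, regard their lifts $\tilde\gamma,\tilde\gamma'$ as ordinary arcs of $V$, solve an approximation problem for the finite quotient map $\psi\colon V\to X$ by a Hensel/Newton argument, and descend to $G$-orbits. The paper simply outsources the middle step to \cite[Lemme 7.1.1]{Sebag_motivic_formal}: that lemma produces $\beta\in J_{\infty}V$ with $\psi\circ\beta=\psi\circ\tilde\gamma'$ \emph{exactly} and $\beta\equiv\tilde\gamma$ modulo $\fm_{E}^{p(n-e)+1}$; one then observes that $\beta$ is necessarily a $G$-translate of $\tilde\gamma'$ (hence again a $G$-arc, as $G$ is abelian), which suffices because $\cJ_{n-e}\cX=(J_{n-e}^{G}V)/G$. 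Two points of comparison. First, the step you flag as the main obstacle---fixing ``the correct sheet'' and deducing $\tilde\gamma\equiv\tilde\gamma'\bmod\fm_{E}^{2pe+1}$ from $\bar\gamma\equiv\bar\gamma'\bmod t^{n+1}$---is an unnecessary detour: the Newton/Tougeron step only needs the \emph{images} $\psi\circ\tilde\gamma$ and $\psi\circ\tilde\gamma'$ to agree past level $2pe$, which you already have from $\phi_{n}\gamma_{n}=\phi_{n}\gamma'_{n}$ and $n\ge ce\ge 2e$; and the exact solution it produces need not be $\tilde\gamma'$ itself but only some $\tau\tilde\gamma'$, $\tau\in G$, which is harmless at the level of twisted jets. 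Deleting that step removes your acknowledged gap. Second, the place where the constant $c'$ and the hypothesis $n\ge c'\cdot\ord\Jac_{X}(\phi_{\infty}\gamma)$ genuinely enter is the Newton iteration itself: the system $P_{j}(\tilde x)=(\psi\circ\tilde\gamma')^{*}z_{j}$ is overdetermined ($M$ equations in $d$ unknowns, with $\Jac_{\psi}$ generated by $d\times d$ minors), and one must argue that solving the $d$ equations selected by a minor of minimal order $pe$ forces the remaining ones---this is exactly the content that the cited lemma packages (via Greenberg-type control of jets of the singular $X$ in terms of $\ord\Jac_{X}$), and your sketch leaves it implicit. With the detour removed and that point either cited or proved in full, your argument coincides with the paper's.
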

\begin{proof}
Let $\tilde{\gamma},\tilde{\gamma'}\in J_{\infty}^{G}V$ be liftings
of $\gamma,\gamma'$ respectively. From the preceding proposition,
$\gamma$ and $\gamma'$ have the same $G$-cover $E$ of $D$. Fixing
an isomorphism $E\cong\Spec k[[s]],$ we can think of $\tilde{\gamma},\tilde{\gamma'}$
as elements of the ordinary arc space $J_{\infty}V$ of $V.$ Let
$c'$ be the constant $c_{X}$ in \cite[Lemme 7.1.1]{Sebag_motivic_formal}.
Then there exists $\beta\in J_{\infty}V$ such that $\phi_{\infty}\beta=\phi_{\infty}\tilde{\gamma'}$
and $\beta_{pn-pe}=\tilde{\gamma}_{pn-pe}.$ (Here $\tilde{\gamma}$
is now considered as an element of $J_{\infty}V$ and so $\tilde{\gamma}_{pn}$
is the image of $\tilde{\gamma}$ by $J_{\infty}V\to J_{pn}V$, which
is the same as the image of $\tilde{\gamma}$ by $J_{\infty}^{G}V\to J_{n}^{G}V$.)
The equality $\phi_{\infty}\beta=\phi_{\infty}\tilde{\gamma'}$ shows
that $\beta$ is actually a $G$-arc and in the same $G$-orbit as
$\tilde{\gamma'}$. Then the equality $\beta_{pn-pe}=\tilde{\gamma}_{pn-pe}$
implies $\gamma_{n-e}=\gamma'_{n-e}.$
\end{proof}
We can rephrase the proposition as follows:
\begin{cor}
\label{cor: fiber contained}Let $\gamma\in\cJ_{\infty}\cX$, $e:=\ord\Jac_{\phi}(\gamma)$
and $e':=\ord\Jac_{X}(\phi_{\infty}\gamma).$ Let $c$ and $c'$ be
positive constants as above. Then if $n\ge\max\{ce,c'e'\},$ then
$\phi_{n}^{-1}(\phi_{n}\gamma_{n})$ is included in the fiber of $\pi_{n}(\cJ_{\infty}\cX)\to\pi_{n-e}(\cJ_{\infty}\cX)$
over $\gamma_{n-e}.$ 
\begin{cor}
\label{cor:image of cylinder}Let $C\subset\cJ_{\infty}\cX$ be a
cylinder with $C\cap(\ord\Jac_{\phi})^{-1}(\infty)=\emptyset.$ Then
$\phi_{\infty}(C)\subset J_{\infty}X$ is a stable subset. 
\end{cor}
\end{cor}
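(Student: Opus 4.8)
The plan is to follow the classical argument that a proper birational morphism sends cylinders to stable subsets (compare \cite{Denef-Loeser_Germs,Sebag_motivic_formal}), with Corollary \ref{cor: fiber contained} playing the role of the usual fibre-stabilization lemma. The first step is to bound, on $C$, the two order functions that appear in that corollary. Since $C$ is a cylinder contained in some $\cJ_{\infty,\le j}\cX$ with $C\cap(\ord\Jac_{\phi})^{-1}(\infty)=\emptyset$, we may write $C=\bigcup_{e<\infty}\bigl(C\cap(\ord\Jac_{\phi})^{-1}(e)\bigr)$, a countable union of cylinders (each level set $(\ord\Jac_{\phi})^{-1}(e)\cap\cJ_{\infty,i}\cX$ is a cylinder by the earlier level-set lemma); by the quasi-compactness Lemma \ref{lem:cylinder compact}, finitely many of them cover $C$, so $\ord\Jac_{\phi}\le M$ on $C$. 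For the second function I would first record that $\ord\Jac_{X}\circ\phi_{\infty}=\ord\cW$, where $\cW\subset\cX$ is the closed substack defined by $\phi^{*}\Jac_{X}$: for the $G$-cover $E\to D$ attached to $\gamma$, the composite $E\to V\to X$ equals $E\to D\xrightarrow{\phi_{\infty}\gamma}X$ with $E\to D$ totally ramified of degree $p$, so the colength on $E$ of the pulled-back ideal is $p$ times the one on $D$, and the factor $1/p$ in the definition of $\ord\cW$ restores the equality. Since $\psi$ is \'etale off $V^{G}$ we have $X_{\mathrm{sing}}\subset Y$, hence $(\ord\cW)^{-1}(\infty)=\phi_{\infty}^{-1}\bigl(J_{\infty}X_{\mathrm{sing}}\bigr)\subset\phi_{\infty}^{-1}(J_{\infty}Y)$, and Proposition \ref{prop:almost bijective} forces this into $\cJ_{\infty}\cY=(\ord\Jac_{\phi})^{-1}(\infty)$; thus $\ord\cW$ is finite on $C$ and, by the same covering argument, bounded: $\ord\cW\le M'$ on $C$.

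Next I would choose $n_{0}:=\max\{cM,c'M'\}$ with $c,c'$ the constants of Corollary \ref{cor: fiber contained} (enlarged if necessary so that $n\ge n_{0}$ forces $n-e>e+e'$ for all $e\le M$, $e'\le M'$), and stratify $C$ into the finitely many cylinders $C_{e,e'}:=C\cap(\ord\Jac_{\phi})^{-1}(e)\cap(\ord\Jac_{X}\circ\phi_{\infty})^{-1}(e')$. For $n\ge n_{0}$ the truncations $\pi_{n}(\phi_{\infty}C_{e,e'})$ are pairwise disjoint: by Corollary \ref{cor: fiber contained}, $\phi_{n}\gamma_{n}=\phi_{n}\gamma'_{n}$ forces $\gamma'_{n-e}=\gamma_{n-e}$ and $(\phi_{\infty}\gamma')_{n-e}=(\phi_{\infty}\gamma)_{n-e}$, and since $\ord\Jac_{\phi}$ and $\ord\Jac_{X}\circ\phi_{\infty}$ of an arc are read off from any truncation past level $e$, resp.\ $e'$, this pins down the pair $(e,e')$. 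Because a finite disjoint union of stable sets with pairwise disjoint high-level truncations is again stable, it suffices to prove that each $\overline{C}:=\phi_{\infty}(C_{e,e'})$ is stable.

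Fix such a stratum. For $n\ge n_{0}$, $\pi_{n}(\overline{C})=\phi_{n}(\pi_{n}C_{e,e'})$ is constructible by Chevalley's theorem, since $\pi_{n}C_{e,e'}$ is constructible in the finite-type scheme $\cJ_{n,\le j}\cX$ and $\phi_{n}$ is a morphism. It remains to show that $\pi_{n+1}(\overline{C})\to\pi_{n}(\overline{C})$ is a piecewise trivial fibration with fibre $\AA_{k}^{d}$. Here I would combine the injectivity of $\phi_{\infty}$ on $C_{e,e'}$ (Proposition \ref{prop:almost bijective}, as $C_{e,e'}\cap\cJ_{\infty}\cY=\emptyset$) with Corollary \ref{cor: fiber contained}: for $n\ge n_{0}$ every fibre of $\phi_{n}$ meeting $\pi_{n}C_{e,e'}$ lies in a single fibre of $\pi_{n}(\cJ_{\infty}\cX)\to\pi_{n-e}(\cJ_{\infty}\cX)$, so the step from level $n$ to level $n+1$ in $\overline{C}$ is controlled by the corresponding step in $C_{e,e'}$, whose fibres are quotients of $\AA_{K}^{d}$ by linear $G$-actions (Corollary \ref{cor:trivial fib}); the injectivity of $\phi_{\infty}$ on this locus straightens those quotients out, giving an honest $\AA_{k}^{d}$ piecewise over a constructible stratification of $\pi_{n}(\overline{C})$.

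I expect the last point to be the main obstacle: one must promote the pointwise bijectivity of $\phi_{\infty}$ off $\cJ_{\infty}\cY$ to an isomorphism of jet-level fibrations, bookkeeping coordinates on $E$ against those on $D$ in the style of the proof of Corollary \ref{cor:trivial fib} and of \cite[Lemme 7.1.1]{Sebag_motivic_formal}, so that the quotient-of-$\AA^{d}$ fibres upstairs genuinely become affine-space fibres downstairs in a constructible family. The boundedness arguments and the Chevalley step, by contrast, are routine.
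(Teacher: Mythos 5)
Your reduction steps are sound and match the paper's: bounding $\ord\Jac_{\phi}$ and $\ord\Jac_{X}\circ\phi_{\infty}$ on $C$ via Lemma \ref{lem:cylinder compact}, cutting $C$ into finitely many cylinders on which both are constant, and using Corollary \ref{cor: fiber contained} together with Proposition \ref{prop:almost bijective} to conclude that each $\phi_{\infty}(C_{e,e'})$ is a cylinder (at level $n+e$ once $C_{e,e'}$ is a cylinder at level $n\ge\max\{ce,c'e'\}$). The identity $\ord\Jac_{X}\circ\phi_{\infty}=\ord\cW$ with $\cW$ cut out by $\phi^{*}\Jac_{X}$ is also correct, for the reason you give (the paper simply writes this function as $\ord\phi^{-1}\Jac_{X}$).

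The gap is exactly where you flag it, and the sketch you offer for that step would not close it. The fibres of $\pi_{n+1}(C_{e,e'})\to\pi_{n}(C_{e,e'})$ are only universally homeomorphic to quotients $\AA_{K}^{d}/G$, and injectivity of $\phi_{\infty}$ on arcs gives no control over the jet-level maps $\phi_{n}$: these are far from injective (Proposition \ref{prop:key dim count} computes their fibres as quotients of $\AA^{e+\sht_{V}(j)}$, positive-dimensional in general), so there is no mechanism by which the upstairs fibration ``straightens out'' downstairs. More to the point, the $\AA^{d}$-fibration property of $\pi_{n'+1}(\phi_{\infty}C)\to\pi_{n'}(\phi_{\infty}C)$ is a statement about the singular variety $X$ alone and need not be derived from $\cX$ at all. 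The paper's resolution is one line: once $\phi_{\infty}(C_{e,e'})$ is known to be a cylinder on which $\ord\Jac_{X}$ is constant and finite, it is stable by the already-established theory of motivic integration on singular varieties --- this is the statement, recorded in Section \ref{sec:Motivic-integration}, that $A\cap(\ord\Jac_{X})^{-1}(e')$ is stable for any cylinder $A\subset J_{\infty}X$, which rests on \cite[Lemme 7.1.1]{Sebag_motivic_formal} (the Denef--Loeser stabilization lemma). Replacing your transfer argument by this citation completes the proof; the rest of what you wrote can stand.
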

\begin{proof}
From Lemma \ref{lem:cylinder compact}, without loss of generality,
we may suppose that the functions $\ord\Jac_{\phi}$ and $\ord\phi^{-1}\Jac_{X}$
take constant values, say $e$ and $e'$, on $C.$ Let $n\in\NN$
be such that $C$ is a cylinder at level $n$ and $n\ge\max\{ce,c'e'\}.$
Then from the preceding corollary, $\phi_{\infty}(C)$ is a cylinder
at level $n+e.$ Moreover, since the function $\ord\Jac_{X}$ is constant
on it, $\phi_{\infty}(C)$ is stable. \end{proof}
\begin{cor}
\label{cor:image of str measurable}If $C\subset\cJ_{\infty}\cX$
is a (strongly) measurable subset, then so is $\phi_{\infty}(C)$.\end{cor}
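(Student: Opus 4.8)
The plan is to push approximating cylinders through $\phi_\infty$, which is harmless once one controls both the locus where $\phi_\infty$ fails to be injective and the locus where $\ord\Jac_\phi$ is large. First I would record the relevant reductions. The vanishing locus of $\Jac_\phi$ is $\cY=[V^G/G]$, and $(\ord\Jac_\phi)^{-1}(\infty)=\cJ_\infty\cY$, which has measure zero by Lemma~\ref{lem:measure zero}; similarly $Y:=\phi(\cY)$ is nowhere dense in $X$, so $J_\infty Y$ has measure zero in $J_\infty X$. By Proposition~\ref{prop:almost bijective}, $\phi_\infty$ restricts to a bijection $\cJ_\infty\cX\setminus\cJ_\infty\cY\to J_\infty X\setminus J_\infty Y$, and away from $\cJ_\infty\cY$ both $\ord\Jac_\phi$ and $\ord\Jac_X\circ\phi_\infty$ take only finite values. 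The key local estimate is this: if $C'\subset\cJ_\infty\cX$ is a cylinder on which $\ord\Jac_\phi$ and $\ord\Jac_X\circ\phi_\infty$ are constant and finite, then $\phi_\infty(C')$ is stable by Corollary~\ref{cor:image of cylinder}, and since $\pi_m(\phi_\infty(C'))=\phi_m(\pi_m(C'))$ is the image of $\pi_m(C')$ under a morphism of finite-type schemes, $\dim\pi_m(\phi_\infty(C'))\le\dim\pi_m(C')$; comparing with the definitions of $\mu_X$ and $\mu_\cX$ (both equal to $[\pi_m(-)]\LL^{-md}$ for $m\gg0$ in the cases at hand, with no Jacobian correction since $\ord\Jac_X$ is constant on $\phi_\infty(C')$) gives $\|\mu_X(\phi_\infty(C'))\|\le\|\mu_\cX(C')\|$. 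This estimate is cheap; the real work is controlling the tails.

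For the tails, fix a cylinder $C_0\subset\cJ_\infty\cX$ and set $A_N:=\{\gamma\in C_0:\ord\Jac_\phi(\gamma)>N\}$ and $B_N:=\{\gamma\in C_0:\ord\Jac_X(\phi_\infty\gamma)>N\}$; using that $\{\ord\Jac_\phi\le N\}$ (within the bounded-jump region of $C_0$) and $\{\ord\Jac_X\le N\}$ are finite unions of cylinders, these $A_N$ and $B_N$ are decreasing chains of cylinders, with $\bigcap_N A_N=C_0\cap\cJ_\infty\cY$ and $\bigcap_N B_N\subset C_0\cap\cJ_\infty(\phi^{-1}(X_{\mathrm{sing}}))$. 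Both of these intersections have measure zero, the second by Lemma~\ref{lem:measure zero} applied to the positive-codimension closed substack $\phi^{-1}(X_{\mathrm{sing}})$ of $\cX$. Writing $C_0$ as the union of the increasing cylinders $C_0\setminus A_N$ together with a cylinder cover of $C_0\cap\cJ_\infty\cY$ of norm $<\epsilon$, and invoking the quasi-compactness Lemma~\ref{lem:cylinder compact}, I can choose $N$ so that $A_N$ is covered by finitely many cylinders of norm $<\epsilon$, hence $\|\mu_\cX(A_N)\|<\epsilon$; likewise for $B_N$.

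Now I would assemble the approximation of $\phi_\infty(C)$. Given $\epsilon>0$, take cylinders $C_0(\epsilon),C_1(\epsilon),\dots$ with $C\,\Delta\,C_0(\epsilon)\subset\bigcup_{i\ge1}C_i(\epsilon)$ and $\|\mu_\cX(C_i(\epsilon))\|<\epsilon$ for $i\ge1$, and choose $N$ as above for $C_0=C_0(\epsilon)$. Let $D_0(\epsilon)$ be the union of the images under $\phi_\infty$ of the finitely many cylinders obtained by splitting $C_0(\epsilon)\setminus(A_N\cup B_N)$ according to the (now bounded and finite) values of $\ord\Jac_\phi$ and $\ord\Jac_X\circ\phi_\infty$; each such image is stable by Corollary~\ref{cor:image of cylinder}, so $D_0(\epsilon)$ is a cylinder. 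Using the decomposition $C=(C\cap\cJ_\infty\cY)\sqcup(C\setminus\cJ_\infty\cY)$ and the injectivity of $\phi_\infty$ off $\cJ_\infty\cY$, a routine check gives
\[
\phi_\infty(C)\,\Delta\,D_0(\epsilon)\subset J_\infty Y\cup\phi_\infty(A_N)\cup\phi_\infty(B_N)\cup\bigcup_{i\ge1}\phi_\infty(C_i(\epsilon)).
\]
Covering $J_\infty Y$ by small cylinders and splitting each of $\phi_\infty(A_N)$, $\phi_\infty(B_N)$, $\phi_\infty(C_i(\epsilon))$ into stable pieces by the values of $\ord\Jac_\phi$ and $\ord\Jac_X\circ\phi_\infty$ (after discarding the measure-zero parts mapping into $J_\infty Y$), the key estimate shows every resulting cylinder has norm $<\epsilon$. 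Hence $\phi_\infty(C)$ is measurable; and if moreover $C_0(\epsilon)\subset C$, then $D_0(\epsilon)\subset\phi_\infty(C_0(\epsilon))\subset\phi_\infty(C)$, so $\phi_\infty(C)$ is strongly measurable.

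The step I expect to be the main obstacle is the tail control: one must know that the regions where $\ord\Jac_\phi$ or $\ord\Jac_X\circ\phi_\infty$ is large are uniformly negligible, which rests on combining the quasi-compactness of cylinders (Lemma~\ref{lem:cylinder compact}) with the two measure-zero statements furnished by Lemma~\ref{lem:measure zero} and Proposition~\ref{prop:almost bijective}. Everything else is bookkeeping with symmetric differences, rendered harmless by the bijectivity of $\phi_\infty$ away from a measure-zero set.
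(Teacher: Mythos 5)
Your argument is correct and follows essentially the same route as the paper: reduce modulo the measure-zero sets $\cJ_{\infty}\cY$ and $J_{\infty}Y$, push the approximating cylinders forward via Corollary \ref{cor:image of cylinder}, and conclude with the norm inequality $\Vert\mu_{X}(\phi_{\infty}(C'))\Vert\le\Vert\mu_{\cX}(C')\Vert$. Your explicit tail truncation by $A_{N}$, $B_{N}$ and the dimension-count justification of the norm inequality merely spell out details the paper leaves implicit (the paper instead refines each $C_{i}(\epsilon)$, $i\ge1$, into countably many cylinders on which $\ord\cY$ and $\rj$ are constant), so the underlying mechanism is identical.
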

\begin{proof}
If $\phi_{\infty}(C)\setminus J_{\infty}Y$ is strongly measurable,
so is $\phi_{\infty}(C).$ Hence we may suppose that $C$ is disjoint
from $\cJ_{\infty}\cY.$ Then, let $C_{i}(\epsilon)\subset\cJ_{\infty}\cX$
be cylinders as in Definition \ref{def: measurable subset}. Replacing
$C_{i}(\epsilon),$ $i>0$ with their intersections with $(\ord\cY)^{-1}(n)\cap\cJ_{\infty,j}\cX$,
$n\in\NN,$ $j\in\NN'_{0},$ we may suppose that $C_{i}(\epsilon)$
are all disjoint from $\cJ_{\infty}\cY.$ Then from the preceding
corollary, $\phi_{\infty}(C_{i}(\epsilon))$ are cylinders as well.
Moreover we easily see that 
\[
\left\Vert \mu_{\cX}(C_{i}(\epsilon))\right\Vert \ge\left\Vert \mu_{X}(\phi_{\infty}(C_{i}(\epsilon)))\right\Vert .
\]
This shows that $\phi_{\infty}(C)$ is strongly measurable. 
\end{proof}

\subsection{The key dimension count}

The essential part in the proof of the change of variables formula
is counting the dimension of $\phi_{n}^{-1}(\phi_{n}\gamma_{n})$
for $n\gg0.$ To do this, we will follow Looijenga's argument \cite{Looijenga}.

\subsubsection{Identifying $\phi_{n}^{-1}(\phi_{n}\gamma_{n})$ with a certain Hom
module.}

For simplicity, we first suppose that $V$ is indecomposable and that
the $G$-action on the coordinate ring $k[\bx]=k[x_{1},\dots,x_{d}]$
is given by $\sigma(x_{i})=x_{i}+x_{i+1}$ $(i<d)$ and $\sigma(x_{d})=x_{d}$.
Let $\gamma,\gamma'\in\cJ_{\infty}\cX$ be such that $\phi_{n}\gamma_{n}=\phi_{n}\gamma'_{n}$
for $n\ge\max\{ce,c'e'\}$ with the notation as above. Then we can
choose their liftings $\beta,\beta'\in J_{\infty}^{G}V$ such that
$\beta_{n-e}=\beta'_{n-e}.$ Let $E$ be the $G$-cover of $D$ associated
with $\beta$ and $\beta'.$ Then $\beta^{*}$ and $(\beta')^{*}$
induce the same $S$-module structure on $\fm_{E}^{(n-e)p+1}/\fm_{E}^{2(n-e)p+2}.$
Since $np+1\le2(n-e)p+2$, it induces an $S$-module structure on
$M_{n,e}:=\fm_{E}^{(n-e)p+1}/\fm_{E}^{np+1}.$ Then the induced map
\[
\beta^{*}-(\beta')^{*}:S\to M_{n,e}
\]
is a $k$-derivation and corresponds to an $S$-linear map 
\[
\Omega_{S/k}\to M_{n,e}
\]
and an $\cO_{E}$-linear map
\[
\Delta_{\beta,\beta'}:\beta^{*}\Omega_{S/k}\to M_{n,e},
\]
which are $G$-equivariant. Moreover, from the construction, 
\[
\Delta_{\beta,\beta'}(dx_{1})\in M_{n,e}^{\natural}:=\Im((\fm_{E}^{(n-e)p+1})^{\delta^{d}=0}\to M_{n,e}).
\]
Let $F_{n,e,\beta}$ be the fiber of the map $\pi_{n}(J_{\infty}^{G}V)\to\pi_{n-e}(J_{\infty}^{G}V)$
over $\beta_{n-e}$. Let 
\begin{multline*}
\Hom[\cO_{E}][\natural]{\beta^{*}\Omega_{S/k}}{M_{n,e}}\\
:=\left\{ \alpha\in\Hom[\cO_{E}]{\beta^{*}\Omega_{S/k}}{M_{n,e}}\mid\text{\ensuremath{\alpha}\ is \ensuremath{G}-equiv. and }\alpha(dx_{1})\in M_{n,e}^{\natural}\right\} .
\end{multline*}
Then we have an injection
\begin{eqnarray*}
F_{n,e,\beta} & \to & \Hom[\cO_{E}][\natural]{\beta^{*}\Omega_{S/k}}{M_{n,e}}\\
\beta_{n}' & \mapsto & \Delta_{\beta,\beta'}.
\end{eqnarray*}
Since a $G$-equivariant map $\alpha$ is determined by $\alpha(dx_{1}),$
$ $$\Hom[\cO_{E}][\natural]{\beta^{*}\Omega_{S/k}}{M_{n,e}}$ is
identified with $M_{n,e}^{\natural}.$ Comparing the dimensions, we
conclude that $F_{n,e,\beta}$ is identified with $M_{n,e}^{\natural}$
and with $\Hom[\cO_{E}][\natural]{\beta^{*}\Omega_{S/k}}{M_{n,e}}$. 

Consider the exact sequence
\[
0\to\Hom[\cO_{E}][\natural]{\beta^{*}\Omega_{S/R}}{M_{n,e}}\to\Hom[\cO_{E}][\natural]{\beta^{*}\Omega_{S/k}}{M_{n,e}}\to\Hom[\cO_{E}]{(\psi\circ\beta)^{*}\Omega_{R/k}}{M_{n,e}},
\]
where $\Hom[\cO_{E}][\natural]{\beta^{*}\Omega_{S/R}}{M_{n,e}}$ is
the preimage of $\Hom[\cO_{E}][\natural]{\beta^{*}\Omega_{S/k}}{M_{n,e}}$
in $\Hom[\cO_{E}]{\beta^{*}\Omega_{S/R}}{M_{n,e}}.$ Then $\beta$
and $\beta'$ have the same image in $J_{n}X$ if and only if $\Delta_{\beta,\beta'}$
maps to $0\in\Hom[\cO_{E}]{(\psi\circ\beta)^{*}\Omega_{R/k}}{M_{n,e}}.$
This shows the following:
\begin{prop}
\label{prop:identify the fiber with Hom}The fiber of the map $F_{n,e,\beta}\to J_{n}X$
over $\phi_{n}\gamma_{n}$ is identified with $\Hom[\cO_{E}][\natural]{\beta^{*}\Omega_{S/R}}{M_{n,e}}.$
Hence $\phi_{n}^{-1}(\phi_{n}\gamma_{n})$ is universally homeomorphic
to the quotient of $\Hom[\cO_{E}][\natural]{\beta^{*}\Omega_{S/R}}{M_{n,e}}$
by some $G$-linear action. In particular,
\[
[\phi_{*}^{-1}(\phi_{*}\gamma_{n})]=[\Hom[\cO_{E}][\natural]{\beta^{*}\Omega_{S/R}}{M_{n,e}}]\in\hat{\cM}'.
\]

\end{prop}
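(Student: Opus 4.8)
The plan is to assemble the identifications already set up in the discussion preceding the statement. Recall that $\beta'_n\mapsto\Delta_{\beta,\beta'}$ identifies the fiber $F_{n,e,\beta}$ with $\Hom[\cO_E][\natural]{\beta^*\Omega_{S/k}}{M_{n,e}}\cong M_{n,e}^\natural$, and that the conormal sequence for $R\to S$, pulled back along $\beta$ to $E$ and dualized into $M_{n,e}$, reads
$0\to\Hom[\cO_E][\natural]{\beta^*\Omega_{S/R}}{M_{n,e}}\to\Hom[\cO_E][\natural]{\beta^*\Omega_{S/k}}{M_{n,e}}\to\Hom[\cO_E]{(\psi\circ\beta)^*\Omega_{R/k}}{M_{n,e}}$.
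So the first and central step is to verify that, under the above identification, the fiber of $F_{n,e,\beta}\to J_nX$ over $\phi_n\gamma_n$ is exactly the kernel $\Hom[\cO_E][\natural]{\beta^*\Omega_{S/R}}{M_{n,e}}$; equivalently, that for $\beta'_n\in F_{n,e,\beta}$ the image of $\beta'_n$ in $J_nX$ equals $\phi_n\gamma_n$ if and only if $\Delta_{\beta,\beta'}$ maps to $0$ in $\Hom[\cO_E]{(\psi\circ\beta)^*\Omega_{R/k}}{M_{n,e}}$.

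I would prove this equivalence by unwinding both sides. By construction the image of $\Delta_{\beta,\beta'}$ in the last module is the $\cO_E$-linear map attached to the $k$-derivation $r\mapsto(\psi\circ\beta)^*r-(\psi\circ\beta')^*r$ of $R$, whose values lie in the invariant subring $\cO_E^G$. Since the coarse space of $\cD=[E/G]$ is $D=\Spec k[[t]]$, we have $\cO_E^G=k[[t]]$, and the arc $D\to X$ underlying $\phi_\infty\gamma$ is precisely the one induced by $R\xrightarrow{(\psi\circ\beta)^*}\cO_E^G=k[[t]]$; likewise for $\beta'$. Hence the images of $\beta_n$ and $\beta'_n$ in $J_nX$ agree exactly when $(\psi\circ\beta)^*r-(\psi\circ\beta')^*r\in t^{n+1}k[[t]]$ for all $r\in R$, whereas $\Delta_{\beta,\beta'}\mapsto 0$ says these same differences lie in $\fm_E^{np+1}$. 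Since $t$ has valuation $p$ in $\cO_E$, one has $\fm_E^{np+1}\cap k[[t]]=t^{n+1}k[[t]]$, so the two conditions coincide. This yields the description of the fiber of $F_{n,e,\beta}\to J_nX$ over $\phi_n\gamma_n$ as $\Hom[\cO_E][\natural]{\beta^*\Omega_{S/R}}{M_{n,e}}$.

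Next I would descend to $\cJ_\infty\cX$. By Corollary~\ref{cor: fiber contained} and the hypothesis $n\ge\max\{ce,c'e'\}$, every point of $\phi_n^{-1}(\phi_n\gamma_n)\subset\pi_n(\cJ_\infty\cX)$ lies over $\gamma_{n-e}$; lifting it to $J_\infty^GV$ and translating by a suitable power of $\sigma$ produces a representative in $F_{n,e,\beta}$, and two such representatives differ by an element of the stabilizer $H:=\{\tau\in G\mid\tau\text{ fixes }\beta_{n-e}\}$. As $G$ is cyclic of prime order, either $H$ is trivial, in which case $\phi_n^{-1}(\phi_n\gamma_n)$ is isomorphic to the fiber of $F_{n,e,\beta}\to J_nX$ over $\phi_n\gamma_n$, or $H=G$, in which case $\beta_{n-e}$ is $G$-fixed, $\sigma$ acts $k$-linearly on $F_{n,e,\beta}\cong M_{n,e}^\natural$ (linearity of the $G$-action on jet spaces being the observation from the proof of Lemma~\ref{lem:delta 1}), and $\phi_n^{-1}(\phi_n\gamma_n)$ is the quotient of that fiber by this linear action. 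Either way $\phi_n^{-1}(\phi_n\gamma_n)$ is universally homeomorphic to the quotient of the affine space $\Hom[\cO_E][\natural]{\beta^*\Omega_{S/R}}{M_{n,e}}$ by a (possibly trivial) linear $G$-action, and Condition~\ref{cond: fib}, applied to its structure morphism to $\Spec k$, gives $[\phi_n^{-1}(\phi_n\gamma_n)]=\LL^m=[\Hom[\cO_E][\natural]{\beta^*\Omega_{S/R}}{M_{n,e}}]$ with $m=\dim_k\Hom[\cO_E][\natural]{\beta^*\Omega_{S/R}}{M_{n,e}}$. For a general decomposable $V=\bigoplus_\lambda V_{d_\lambda}$ the argument is identical after replacing $M_{n,e}^\natural$ by $\prod_\lambda\Im\bigl((\fm_E^{(n-e)p+1})^{\delta^{d_\lambda}=0}\to M_{n,e}\bigr)$ and $dx_1$ by the family $(dx_{\lambda,1})_\lambda$.

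I expect the only genuinely delicate point to be the first step: making the passage to coarse moduli spaces correspond exactly with vanishing of the $\Omega_{R/k}$-component. This is where the arithmetic of $\cO_E$ — that $\cO_E^G=k[[t]]$ and that $\fm_E^{np+1}\cap k[[t]]=t^{n+1}k[[t]]$ — has to be used; the descent to $\cJ_\infty\cX$ and the Grothendieck-ring bookkeeping are then formal.
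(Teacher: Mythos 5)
Your proposal is correct and follows essentially the same route as the paper: the paper also identifies the fiber of $F_{n,e,\beta}\to J_{n}X$ with the kernel of the map to $\Hom[\cO_{E}]{(\psi\circ\beta)^{*}\Omega_{R/k}}{M_{n,e}}$ in the dualized conormal sequence and then descends through the $G$-quotient via Condition \ref{cond: fib}. The two points you single out as delicate — that agreement in $J_{n}X$ matches vanishing of the $\Omega_{R/k}$-component because $\cO_{E}^{G}=k[[t]]$ and $\fm_{E}^{np+1}\cap k[[t]]=t^{n+1}k[[t]]$, and the stabilizer analysis in the descent to $\cJ_{\infty}\cX$ — are exactly the steps the paper leaves implicit, and your treatment of them is sound.
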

When $V$ is decomposable, we define $\Hom[\cO_{E}][\natural]{\beta^{*}\Omega_{S/k}}{M_{n,e}}$
to be the submodule of $\Hom[\cO_{E}]{\beta^{*}\Omega_{S/k}}{M_{n,e}}$
consisting of those $G$-equivariant maps $\alpha$ with $\alpha(dx_{\lambda,1})\in M_{n,e}^{\natural},$
$1\le\lambda\le l.$ Then we similarly define $\Hom[\cO_{E}][\natural]{\beta^{*}\Omega_{S/R}}{M_{n,e}}$.
Now Proposition \ref{prop:identify the fiber with Hom} holds also
in the decomposable case by the same reasoning.

\subsubsection{Counting the dimension of $\protect\Hom[\cO_{E}][\natural]{\beta^{*}\Omega_{S/R}}{M_{n,e}}$:
The indecomposable case }

We now suppose that $V$ is indecomposable. The decomposable case
will be discussed in the next subsection. To count the dimension of
$ $$\Hom[\cO_{E}][\natural]{\beta^{*}\Omega_{S/R}}{M_{n,e}}$, we
have to know a precise structure of the module $\beta^{*}\Omega_{S/R}.$
It is the quotient of a free module $\beta^{*}\Omega_{S/k}=\bigoplus_{i}\cO_{E}\cdot dx_{i}$
by the submodule $\Im((\psi\circ\beta)^{*}\Omega_{R/k}\to\beta^{*}\Omega_{S/k}).$
Then we first note that the submodule is generated by $G$-invariant
elements. Let $\omega=\sum_{i=1}^{d}\omega_{i}dx_{i}\in\beta^{*}\Omega_{S/k}$
be $G$-invariant. Then
\begin{eqnarray*}
\sigma(\omega) & = & \sum_{i=1}^{d-1}\sigma(\omega_{i})(dx_{i}+dx_{i+1})+\sigma(\omega_{d})dx_{d}\\
 & = & \sigma(\omega_{1})dx_{1}+\sum_{i=2}^{d}(\sigma(\omega_{i-1})+\sigma(\omega_{i}))dx_{i}\\
 & = & \omega.
\end{eqnarray*}
Hence $\sigma(\omega_{1})=\omega_{1}$ and $\sigma(\omega_{i-1})+\sigma(\omega_{i})=\omega_{i}$,
$i\ge2.$ 
\begin{notation}
For an abelian group $M$ endowed with a $G$-action, we define an
operator $\delta_{-}$ on $M$ by 
\[
\delta_{-}:=\sigma^{-1}-\mathrm{id}_{M}=-\sigma^{-1}\delta.
\]

\end{notation}
Then $ $$\omega_{i-1}=\delta_{-}(\omega_{i})$ and $\omega_{1}$
is $G$-invariant. Furthermore this is equivalent to 
\[
\omega_{i}=\delta_{-}^{d-i}(\omega_{d})\text{ and }\delta_{-}^{d}(\omega_{d})=0.
\]

\begin{notation}
For $f\in\cO_{E}^{\delta^{d}=0},$ we define a $G$-invariant element
$\omega_{f}\in\beta^{*}\Omega_{S/k}$ by 
\[
\omega_{f}:=\sum_{i=1}^{d}\delta_{-}^{d-i}(f)\cdot dx_{i}.
\]

\end{notation}
We note that if $h\in k[[t]],$ then we have 
\[
\omega_{hf}=h\cdot\omega_{f}.
\]

As before, we write $\cO_{E^{*}}=k((t))[\wp^{-1}f]$ and put $g:=\wp^{-1}f.$
Then $1,g,\dots,g^{p-1}$ form a basis of $\cO_{E^{*}}$ as a $k((t))$-vector
space. Hence every $f\in\cO_{E^{*}}$ is uniquely written as $f=\sum_{\lambda=0}^{p-1}f^{(\lambda)},$
$f^{(\lambda)}\in k((t))\cdot g^{\lambda}.$ Suppose that $\delta_{-}^{d}(f)=0,$
or equivalently that $f^{(\lambda)}=0$ for $\lambda\ge d.$ Then
from Lemma \ref{lem:delta 1}, we have 
\[
\omega_{f}=\sum_{i=1}^{d}\left(\sum_{\lambda\ge d-i}\delta_{-}^{d-i}(f^{(\lambda)})\right)dx_{i}.
\]

\begin{lem}
\label{lem:nice basis omega's} There exists a $\cO_{E}$-basis $\omega_{f_{1}},\dots,\omega_{f_{d}}$
of $\Im((\psi\circ\beta)^{*}\Omega_{R/k}\to\beta^{*}\Omega_{S/k})$
such that $f_{i}^{(d-i)}\ne0$ and $f_{i}^{(\lambda)}=0$, $\lambda>d-i.$
Namely for every $1\le i\le d,$ the terms of $dx_{i'},$ $i'<i,$
in $\omega_{f_{i}}$ vanish, hence we have 
\[
\begin{pmatrix}\omega_{f_{1}}\\
\omega_{f_{2}}\\
\vdots\\
\omega_{f_{d-1}}\\
\omega_{f_{d}}
\end{pmatrix}=\begin{pmatrix}\delta_{-}^{d-1}(f_{1}) & \delta_{-}^{d-2}(f_{1}) & \dots & \delta_{-}(f_{1}) & f_{1}\\
 & \delta_{-}^{d-2}(f_{2}) & \dots & \delta_{-}(f_{2}) & f_{2}\\
 &  & \ddots & \vdots & \vdots\\
 &  &  & \delta_{-}(f_{d-1}) & f_{d-1}\\
 &  &  &  & f_{d}
\end{pmatrix}\begin{pmatrix}dx_{1}\\
dx_{2}\\
\vdots\\
dx_{d-1}\\
dx_{d}
\end{pmatrix}.
\]
\end{lem}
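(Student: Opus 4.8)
The plan is to produce the required basis by a triangularization argument that descends on the index $i$. The starting point is the description of $\Im((\psi\circ\beta)^{*}\Omega_{R/k}\to\beta^{*}\Omega_{S/k})$ obtained just above: it is generated by the $G$-invariant elements $\omega_{f}$ with $f\in\cO_{E}^{\delta^{d}=0}$, and for such $f$ (write $f=\sum_{\lambda=0}^{d-1}f^{(\lambda)}$ in the basis $1,g,\dots,g^{p-1}$ of $\cO_{E^{*}}$) we have the expansion
\[
\omega_{f}=\sum_{i=1}^{d}\Bigl(\sum_{\lambda\ge d-i}\delta_{-}^{d-i}(f^{(\lambda)})\Bigr)dx_{i}.
\]
Thus the coefficient of $dx_{i}$ in $\omega_{f}$ involves only the components $f^{(\lambda)}$ with $\lambda\ge d-i$; equivalently, if $f^{(\lambda)}=0$ for all $\lambda>d-i$, then the coefficients of $dx_{1},\dots,dx_{i-1}$ in $\omega_{f}$ all vanish and the coefficient of $dx_{i}$ is $\delta_{-}^{d-i}(f^{(d-i)})$. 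This is exactly the triangular shape asserted in the statement, so the whole content is to exhibit suitable generators $f_{1},\dots,f_{d}$ and to check that the resulting $\omega_{f_{i}}$ form an $\cO_{E}$-basis.

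First I would set up the filtration of $\cO_{E}^{\delta^{d}=0}$ by the submodules $\cO_{E}^{\delta^{a}=0}=\bigoplus_{\lambda<a}\cO_{E^{*}}\cdot g^{\lambda}\cap\cO_{E}$ from the corollary to Lemma~\ref{lem:delta 1}, and correspondingly filter the image module by the spans of the $\omega_{f}$ with $f\in\cO_{E}^{\delta^{a}=0}$. Since $\Omega_{R/k}$ has rank $d$ over $R$ (as $X$ is a $d$-dimensional variety) and $S$ is free over $R$, the image of $(\psi\circ\beta)^{*}\Omega_{R/k}$ in $\beta^{*}\Omega_{S/k}=\bigoplus_i\cO_E\, dx_i$ is generically of rank $d$; after passing to $\cO_E$ (a DVR, or a finite product of DVRs), the image is a free $\cO_E$-module of rank $d$ by structure theory, so it does admit an $\cO_E$-basis of $d$ elements. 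The remaining task is to arrange this basis in triangular form. For this I would proceed by downward induction on $i$ from $d$ to $1$: at stage $i$, among all $G$-invariant $\omega_f$ lying in the image with $f\in\cO_E^{\delta^{d-i+1}=0}$ (i.e. $f^{(\lambda)}=0$ for $\lambda>d-i$), choose one whose $dx_i$-coefficient $\delta_-^{d-i}(f^{(d-i)})$ has minimal valuation; call it $\omega_{f_i}$. One has to check two things: (a) that such an $f$ with $f^{(d-i)}\ne 0$ actually exists in the image — this follows because the image has rank $d$ and is stable under multiplication by $\cO_E$ and under the operators relating the graded pieces, so its projection to the $i$-th graded piece of the filtration is nonzero; and (b) that $\delta_-^{d-i}$ acts injectively on the relevant component, which is guaranteed by Lemma~\ref{lem:delta 1} (applied with $\delta_-=-\sigma^{-1}\delta$, which has the same kernels as $\delta$): for $h\in k((t))\cdot g^{d-i}$ nonzero, $\delta^{d-i}(h)\ne 0$, hence $\delta_-^{d-i}(f^{(d-i)})\ne 0$ whenever $f^{(d-i)}\ne 0$.

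Having chosen $\omega_{f_1},\dots,\omega_{f_d}$ this way, they have the displayed lower-triangular matrix relative to $dx_1,\dots,dx_d$, with diagonal entries $\delta_-^{d-i}(f_i)=\delta_-^{d-i}(f_i^{(d-i)})$ (here I use $\delta_-^{d-i}(f_i^{(\lambda)})=0$ for $\lambda<d-i$, again by Lemma~\ref{lem:delta 1}). To see they form an $\cO_E$-basis of the image, note first that they lie in it and that the determinant of the triangular matrix is $\prod_i\delta_-^{d-i}(f_i^{(d-i)})$, which is nonzero, so they are $\cO_{E^*}$-linearly independent and span a submodule of the image of full rank $d$. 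For the reverse inclusion, I would argue that any $\omega_f$ in the image can be reduced modulo the $\cO_E$-span of $\omega_{f_d},\omega_{f_{d-1}},\dots$ step by step: by the minimality of the valuation chosen at each stage, the top nonvanishing coefficient of $\omega_f$ is divisible (in $\cO_E$) by the corresponding diagonal entry, so subtracting an $\cO_E$-multiple of the appropriate $\omega_{f_i}$ strictly raises the index of the first nonvanishing coefficient; after at most $d$ steps we reach $0$. Since the image is generated by such $\omega_f$, this shows the $\omega_{f_i}$ generate it over $\cO_E$, completing the proof.

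\textbf{Main obstacle.} I expect the delicate point to be step (a) above together with the valuation-divisibility claim in the last paragraph: one must be careful that the "minimal valuation" choices really make the off-diagonal reduction work over $\cO_E$ rather than merely over $\cO_{E^*}$, i.e. that the image module, while free of rank $d$, has its generators genuinely controlled by these leading terms. This is where the precise structure of $\cO_E$ as described in the corollary to Lemma~\ref{lem:delta 1} — in particular the valuations $v_E(g^i t^n)=-ij+np$ and the resulting form of $\cO_E^{\delta^a=0}$ — must be used to pin down exactly which elements $\delta_-^{d-i}(h)$ occur and with what valuations. Handling the split case $j=0$ (where $\cO_E$ is a product of $p$ copies of $k[[t]]$ or involves an Artin–Schreier extension of $k$) may require a separate, easier remark, since there the $G$-action is "as trivial as possible" and the triangular system essentially degenerates.
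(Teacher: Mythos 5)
Your proposal is correct and follows essentially the same route as the paper: triangularization over the PID $\cO_{E}$ of a system of $G$-invariant generators $\omega_{f}$, using minimal-order pivots so that the elimination coefficients lie in $k[[t]]$, where $\omega_{hf}=h\,\omega_{f}$ keeps everything $G$-invariant. The only real difference is organizational: the paper first extracts an $\cO_{E}$-basis already of the form $\omega_{f_{1}},\dots,\omega_{f_{d}}$ and then eliminates the components $f_{i}^{(\lambda)}$ top-down, which makes your step (a) — the existence of a nonzero pivot at each stage — automatic from the rank count, whereas your bottom-up construction needs the small supplementary argument you correctly flag as the delicate point.
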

\begin{proof}
The proof is more or less a standard linear algebra. Since $\cO_{E}$
is a PID and $\beta^{*}\Omega_{S/R}$ is a torsion-module, there exists
a basis of $\Im((\psi\circ\beta)^{*}\Omega_{R/k}\to\beta^{*}\Omega_{S/k})$
consisting of $d$ elements, say $\omega_{f_{1}},\dots,\omega_{f_{d}}.$
For some $i,$ $f_{i}^{(d-1)}\ne0$, say $f_{1}^{(d-1)}\ne0.$ Moreover
we may and shall suppose that $f_{1}^{(d-1)}$ has the least order
among nonzero $f_{i}^{(d-1)}$. Then replacing $f_{i}$, $i\ge2$
with $f_{i}-g_{i}f_{1}$ for suitable $g_{i}\in k[[t]],$ we may suppose
that $f_{i}^{(d-1)}=0$ for $i\ge2.$ Repeating this procedure to
$\omega_{f_{i}},$ $i\ge2,$ we may suppose that $f_{2}^{(d-2)}\ne0$
and $f_{i}^{(d-2)}=0$, $i\ge3.$ Repeating this, we eventually get
a basis of the expected form. \end{proof}
\begin{lem}
Let $f_{1},\dots,f_{d}$ be as above. Then 
\[
\ord\Jac_{\phi}(\gamma)=\frac{1}{p}\sum_{i=1}^{d}v_{E}(\delta_{-}^{d-i}(f_{i})).
\]
\end{lem}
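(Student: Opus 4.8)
The plan is to recognize $(\gamma\circ\alpha)^{-1}\Jac_{\phi}$ as the $0$-th Fitting ideal of the $\cO_{E}$-module $\beta^{*}\Omega_{S/R}$, to compute that Fitting ideal from the explicit presentation matrix $A$ of Lemma~\ref{lem:nice basis omega's}, and then to read off its colength as a valuation. First I would unwind the definition of $\ord\Jac_{\phi}$. Let $E\xrightarrow{\alpha}\cD\to D$ be the $G$-cover of $D$ attached to $\gamma$ and $\beta:E\to V$ the associated $G$-arc. The composite $E\xrightarrow{\beta}V\to\cX$ coincides with $E\xrightarrow{\alpha}\cD\xrightarrow{\gamma}\cX$, and $\Jac_{\phi}$ is by construction the coherent ideal on $\cX$ whose pullback to $V$ is $\Jac_{\psi}\subset S$. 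Hence $(\gamma\circ\alpha)^{-1}\Jac_{\phi}=\beta^{-1}\Jac_{\psi}\cdot\cO_{E}$, and by definition
\[
\ord\Jac_{\phi}(\gamma)=\frac{1}{p}\,\length\bigl(\cO_{E}/(\beta^{-1}\Jac_{\psi}\cdot\cO_{E})\bigr).
\]

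Next I would use that $\Jac_{\psi}=\mathrm{Fitt}_{0}(\Omega_{S/R})$ and that formation of Fitting ideals commutes with base change, so that $\beta^{-1}\Jac_{\psi}\cdot\cO_{E}=\mathrm{Fitt}_{0}(\beta^{*}\Omega_{S/R})$ as ideals of $\cO_{E}$; here $\beta^{*}\Omega_{S/R}$ has finite length over the DVR $\cO_{E}$ because $\gamma\notin\cJ_{\infty}\cY$ (this torsion-ness is already invoked in Lemma~\ref{lem:nice basis omega's}). Now I would feed in Lemma~\ref{lem:nice basis omega's}, which exhibits $\beta^{*}\Omega_{S/R}$ as $\beta^{*}\Omega_{S/k}\big/\langle\omega_{f_{1}},\dots,\omega_{f_{d}}\rangle$, that is, as the cokernel of (the transpose of) the square $d\times d$ matrix $A$ displayed there, acting on the free module $\bigoplus_{i}\cO_{E}\,dx_{i}$. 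Since $A$ is upper triangular with diagonal entries $\delta_{-}^{d-i}(f_{i})$, $1\le i\le d$, and each of these lies in $\cO_{E}$ and is nonzero by the properties asserted in Lemma~\ref{lem:nice basis omega's} together with Lemma~\ref{lem:delta 1}, the map $A$ is injective and $\mathrm{Fitt}_{0}$ of its cokernel is the principal ideal $(\det A)=\bigl(\prod_{i=1}^{d}\delta_{-}^{d-i}(f_{i})\bigr)$.

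Finally, since $\cO_{E}$ is a discrete valuation ring, $\length(\cO_{E}/(a))=v_{E}(a)$ for every nonzero $a$, and $v_{E}$ is additive on products, so
\[
\ord\Jac_{\phi}(\gamma)=\frac{1}{p}\,v_{E}\!\left(\prod_{i=1}^{d}\delta_{-}^{d-i}(f_{i})\right)=\frac{1}{p}\sum_{i=1}^{d}v_{E}\bigl(\delta_{-}^{d-i}(f_{i})\bigr),
\]
which is the claim. The only point that needs care is the interplay between base change for Fitting ideals and the shape of the presentation: one must make sure the relation submodule $\Im((\psi\circ\beta)^{*}\Omega_{R/k}\to\beta^{*}\Omega_{S/k})$ is genuinely free of rank $d$, so that the $0$-th Fitting ideal collapses to the single determinant $(\det A)$; but this is precisely what Lemma~\ref{lem:nice basis omega's} supplies, and nonvanishing of $\det A$ is equivalent to $\beta^{*}\Omega_{S/R}$ being torsion. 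The decomposable case goes through verbatim, with $A$ replaced by the block-diagonal matrix having one triangular block of the above form for each indecomposable summand $V_{d_{\lambda}}$ of $V$.
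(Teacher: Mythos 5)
Your proposal is correct and follows essentially the same route as the paper's proof: identify $\gamma^{-1}\Jac_{\phi}=\beta^{-1}\Jac_{\psi}$ with the $0$-th Fitting ideal of $\beta^{*}\Omega_{S/R}$ via compatibility of Fitting ideals with pullback, present that module as the cokernel of the triangular matrix from Lemma \ref{lem:nice basis omega's}, and read off the colength of the determinant ideal as the sum of valuations of the diagonal entries. The extra care you take (injectivity of the presentation matrix, freeness of the relation submodule, the decomposable case) only makes explicit what the paper leaves implicit.
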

\begin{proof}
Since Fitting ideals are compatible with pull-backs, the ideal, $\gamma^{-1}\Jac_{\phi}=\beta^{-1}\Jac_{\psi}\subset\cO_{E},$
is equal to the $0$-th Fitting ideal of $\beta^{*}\Omega_{S/R}$.
This module is isomorphic to the cokernel of a map $\cO_{E}^{d}\to\cO_{E}^{d}$
defined by the matrix 
\[
\begin{pmatrix}\delta_{-}^{d-1}(f_{1}) & \delta_{-}^{d-2}(f_{1}) & \dots & \delta_{-}(f_{1}) & f_{1}\\
 & \delta_{-}^{d-2}(f_{2}) & \dots & \delta_{-}(f_{2}) & f_{2}\\
 &  & \ddots & \vdots & \vdots\\
 &  &  & \delta_{-}(f_{d-1}) & f_{d-1}\\
 &  &  &  & f_{d}
\end{pmatrix}.
\]
Now, by the definition of Fitting ideals, $\beta^{-1}\Jac_{\psi}$
is the determinant of the matrix and equal to $\prod_{i=1}^{d}\delta^{d-i}(f_{i}).$
Hence 
\[
\length\cO_{E}/\gamma^{-1}\Jac_{\phi}=\sum_{i=1}^{d}v_{E}(\delta_{-}^{d-i}(f_{i})),
\]
which proves the lemma.\end{proof}
\begin{defn}
Suppose that $V$ is an indecomposable $G$-representation of dimension
$d$. Then we define the \emph{shift number of $j\in\NN'_{0}$ with
respect to $V$} to be 
\[
\sht_{V}(j):=\sum_{i=1}^{d-1}\left\lfloor \frac{ij}{p}\right\rfloor .
\]
\end{defn}
\begin{prop}
\label{prop:key dim count}Let the assumption be as in Corollary \ref{cor: fiber contained}.
Additionally we suppose that $V$ is indecomposable (although this
assumption will be removed in the next subsection). Then $\phi_{n}^{-1}(\phi_{n}\gamma_{n})$
is universally homeomorphic to the quotient of $\AA_{k}^{e+\sht_{V}(j)}$
by some linear $G$-action. \end{prop}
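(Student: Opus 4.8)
The plan is to reduce the assertion to a dimension count via Proposition~\ref{prop:identify the fiber with Hom}, and then to perform that count along the lines of Looijenga's argument \cite{Looijenga}, keeping careful track of the wild ramification and the extra term it produces. Fix a lift $\beta\in J_{\infty}^{G}V$ of $\gamma$, let $E$ be the associated $G$-cover of $D$ and $v_{E}$ its normalized valuation, and recall $e=\ord\Jac_{\phi}(\gamma)$. By Proposition~\ref{prop:identify the fiber with Hom} together with the linearity of the relevant $G$-action (noted just before Corollary~\ref{cor:trivial fib}, and deduced from the proof of Lemma~\ref{lem:delta 1}), the fiber $\phi_{n}^{-1}(\phi_{n}\gamma_{n})$ is universally homeomorphic to the quotient of $\Hom[\cO_{E}][\natural]{\beta^{*}\Omega_{S/R}}{M_{n,e}}$ by a linear $G$-action. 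Hence, by Condition~\ref{cond: fib}, it suffices to show that for $n$ in the range of Corollary~\ref{cor: fiber contained} and large enough,
\[
\dim_{k}\Hom[\cO_{E}][\natural]{\beta^{*}\Omega_{S/R}}{M_{n,e}}=e+\sht_{V}(j).
\]

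First I would make this Hom-module explicit. The free $\cO_{E}$-module $\beta^{*}\Omega_{S/k}=\bigoplus_{i=1}^{d}\cO_{E}\,dx_{i}$ is generated over $\cO_{E}[G]$ by $dx_{1}$ with the sole relation $\delta^{d}(dx_{1})=0$, since $\delta(dx_{i})=dx_{i+1}$ for $i<d$ and $\delta(dx_{d})=0$; thus a $G$-equivariant $\cO_{E}$-linear map $\alpha$ is determined by $\eta:=\alpha(dx_{1})$, with $\alpha(dx_{i'})=\delta^{i'-1}(\eta)$, subject to $\delta^{d}(\eta)=0$, and the $\natural$-condition restricts this to $\eta\in M_{n,e}^{\natural}$; so $\Hom[\cO_{E}][\natural]{\beta^{*}\Omega_{S/k}}{M_{n,e}}\cong M_{n,e}^{\natural}$. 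Taking the triangular $\cO_{E}$-basis $\omega_{f_{1}},\dots,\omega_{f_{d}}$ of $N:=\Im((\psi\circ\beta)^{*}\Omega_{R/k}\to\beta^{*}\Omega_{S/k})$ from Lemma~\ref{lem:nice basis omega's}, and using that $\alpha$ factors through $\beta^{*}\Omega_{S/R}$ precisely when $\alpha|_{N}=0$, I would identify
\[
\Hom[\cO_{E}][\natural]{\beta^{*}\Omega_{S/R}}{M_{n,e}}\;\cong\;\Bigl\{\,\eta\in M_{n,e}^{\natural}\ \Big|\ \textstyle\sum_{i'=i}^{d}\delta_{-}^{d-i'}(f_{i})\,\delta^{i'-1}(\eta)=0\text{ in }M_{n,e},\ i=1,\dots,d\,\Bigr\}.
\]

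Next I would evaluate the dimension of this solution space by propagating the triangular system for $i=d,d-1,\dots,1$, working in the $\cO_{E}$-basis $\{g^{\lambda}t^{m}\}_{0\le\lambda<p}$, for which $v_{E}(g^{\lambda}t^{m})=-\lambda j+pm$. The ingredients are: for $n\gg0$ the module $M_{n,e}^{\natural}$ is spanned by the basis vectors of $g$-degree $\lambda<d$ lying in $\fm_{E}^{(n-e)p+1}\setminus\fm_{E}^{np+1}$, so each of its $g$-strata is $e$-dimensional over $k$; the operator $\delta$ lowers the $g$-degree by one and, on a vector of $g$-degree in $\{1,\dots,p-1\}$, raises $v_{E}$ by $j$ (Section~\ref{subsec: detail act}), while multiplication by $\delta_{-}^{d-i'}(f_{i})$ has top $g$-degree $i'-i$ and diagonal coefficients $\delta_{-}^{d-i}(f_{i})\in k((t))^{\times}$; and $\sum_{i=1}^{d}v_{E}(\delta_{-}^{d-i}(f_{i}))=pe$, by the lemma computing $\ord\Jac_{\phi}(\gamma)$. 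The count then splits into a ``tame'' part, which reproduces the ordinary Looijenga computation and contributes exactly $e$ (matching the order of the Jacobian), and a ``wild'' part coming from the $g$-strata of $M_{n,e}^{\natural}$ of positive degree $\lambda$ ($1\le\lambda\le d-1$): on such a stratum, membership in $M_{n,e}^{\natural}$ is controlled by a ceiling while the vanishing forced by the triangular system is controlled by a floor, and since $v_{E}(g^{\lambda})=-\lambda j$ is not divisible by $p$ the two differ by exactly $\lfloor\lambda j/p\rfloor$. Summing over $\lambda$,
\[
\dim_{k}\Hom[\cO_{E}][\natural]{\beta^{*}\Omega_{S/R}}{M_{n,e}}=e+\sum_{\lambda=1}^{d-1}\lfloor\lambda j/p\rfloor=e+\sht_{V}(j),
\]
as desired.

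I expect this last step to be the main obstacle: one must check that for $n\gg0$ all the relevant floors and ceilings have stabilized, that the triangular system has no exceptional degeneracies — which requires controlling the way $\delta^{i'-1}$ mixes a higher $g$-stratum into the lower ones — so that the ``tame'' and ``wild'' contributions genuinely add, and that the linear $G$-action on the Hom-module is compatible throughout, so that Condition~\ref{cond: fib} can be applied at the end. The bookkeeping is cleanest exactly when $V$ is indecomposable, as assumed here; the next subsection reduces the decomposable case to this one.
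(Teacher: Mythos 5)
Your outline follows the paper's proof exactly: reduce to computing $\dim_{k}\Hom[\cO_{E}][\natural]{\beta^{*}\Omega_{S/R}}{M_{n,e}}$ via Proposition \ref{prop:identify the fiber with Hom}, identify this Hom-module with the solution set of the triangular system built from the basis $\omega_{f_{1}},\dots,\omega_{f_{d}}$ of Lemma \ref{lem:nice basis omega's}, and solve row by row in the $g$-adic strata, where each row contributes $v_{E}(\delta_{-}^{l}(f_{d-l}))/p$ (summing to $e$ by the Jacobian lemma) plus $\left\lfloor (d-l-1)j/p\right\rfloor$ (summing to $\sht_{V}(j)$). The identification of $\Hom[\cO_{E}][\natural]{\beta^{*}\Omega_{S/k}}{M_{n,e}}$ with $M_{n,e}^{\natural}$ and the final bookkeeping are all as in the paper.

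However, the step you defer to the end as ``the main obstacle'' --- that the triangular system has no degeneracies, so that the tame and wild contributions genuinely add --- is precisely where the paper's proof does its real work, and it is not a routine check. After fixing $h^{[d-1]},\dots,h^{[d-l]}$, the equation for $h^{[d-l-1]}$ has the form $\delta_{-}^{l}(f_{d-l})\cdot\delta^{d-l-1}(g^{d-l-1})\cdot h^{[d-l-1]}+(\text{fixed terms})=0$, and one must show it is \emph{solvable}; if it were inconsistent for some choice of the earlier free coefficients, the fiber would be smaller than claimed. The paper handles this by two devices you do not mention: (i) it replaces $M_{n,e}$ by $M_{n}:=\cO_{E^{*}}/\fm_{E}^{np+1}$, which is legitimate because $\beta^{*}\Omega_{S/R}$ has length $pe$ so any $\cO_{E}$-linear map into $M_{n}$ automatically lands in $M_{n,e}$; and (ii) it observes that the left-hand side of each row, for \emph{any} $h$, is the image of the $G$-invariant element $\omega_{f_{d-l}}$ under a $G$-equivariant map, hence $G$-invariant, which guarantees that the quotient $-(\text{fixed terms})/\bigl(\delta_{-}^{l}(f_{d-l})\cdot\delta^{d-l-1}(g^{d-l-1})\bigr)$ lies in $k((t))$ (i.e.\ in $g$-degree zero) and can then be truncated above degree $n+\left\lfloor j(d-l-1)/p\right\rfloor$ to give an explicit solution. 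Working directly in $M_{n,e}$, as you propose, this division-and-truncation argument does not go through, which is exactly why the enlargement to $M_{n}$ is introduced. So the approach is right, but the proof is incomplete without supplying this solvability argument.
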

\begin{proof}
Let $M_{n}:=\cO_{E^{*}}/\fm_{E}^{np+1}$ and let $M_{n}^{\natural}\subset M_{n}$
be the image of $\cO_{E^{*}}^{\delta^{d}=0}=\bigoplus_{i=0}^{d-1}k((t))g^{i}.$
Then $M_{n,e}^{\natural}\subset M_{n}^{\natural}$ and we can identify
$\Hom[\cO_{E}][\natural]{\beta^{*}\Omega_{S/R}}{M_{n,e}}$ with similarly
defined $\Hom[\cO_{E}][\natural]{\beta^{*}\Omega_{S/R}}{M_{n}}.$
Indeed since $\beta^{*}\Omega_{S/R}$ has length $pe,$ every $\cO_{E}$-linear
map $\beta^{*}\Omega_{S/R}\to M_{n}$ has its image in $M_{n,e}.$
Therefore we may count the dimension of $\Hom[\cO_{E}][\natural]{\beta^{*}\Omega_{S/R}}{M_{n}}$
instead. This trick will make following arguments easier. 

Let $\omega_{f_{1}},\dots,\omega_{f_{d}}$ be as in Lemma \ref{lem:nice basis omega's}.
Then 
\[
\Hom[\cO_{E}][\natural]{\beta^{*}\Omega_{S/R}}{M_{n}}=\left\{ \alpha\in\Hom[\cO_{E}][\natural]{\beta\Omega_{S/k}}{M_{n}}\mid\alpha(\omega_{f_{i}})=0\,(i=1,\dots,d)\right\} .
\]
Identifying $\Hom[\cO_{E}][\natural]{\beta^{*}\Omega_{S/k}}{M_{n}}$
with $M_{n}^{\natural}$, we can identify $\Hom[\cO_{E}][\natural]{\beta^{*}\Omega_{S/R}}{M_{n}}$
with the set of $h\in M_{n}^{\natural}$ satisfying:
\begin{equation}
\begin{pmatrix}\delta_{-}^{d-1}(f_{1}) & \delta_{-}^{d-2}(f_{1}) & \dots & \delta_{-}(f_{1}) & f_{1}\\
 & \delta_{-}^{d-2}(f_{2}) & \dots & \delta_{-}(f_{2}) & f_{2}\\
 &  & \ddots & \vdots & \vdots\\
 &  &  & \delta_{-}(f_{d-1}) & f_{d-1}\\
 &  &  &  & f_{d}
\end{pmatrix}\begin{pmatrix}h\\
\delta(h)\\
\vdots\\
\delta^{d-2}(h)\\
\delta^{d-1}(h)
\end{pmatrix}=0\mod\fm_{E}^{np+1}.\label{eq:matrix}
\end{equation}
Let us write $h=\sum_{\lambda=0}^{d-1}h^{[\lambda]}g^{\lambda}$,
$h^{[\lambda]}\in k((t))$ such that if we write $h^{[\lambda]}=\sum_{i}h_{i}^{[\lambda]}t^{i}$,
then $h_{i}^{[\lambda]}=0$ for $i$ with 
\[
pi-j\lambda>np\,\left(\Leftrightarrow i>n+\left\lfloor \frac{j\lambda}{p}\right\rfloor \right).
\]
Then 
\[
\delta^{d-i}(h)=\sum_{\lambda=d-i}^{d-1}\delta^{d-i}(g^{\lambda})\cdot h^{[\lambda]}.
\]
We note that $v_{E}(\delta^{d-i}(g^{d-i}))=0$. From the bottom row
of equation (\ref{eq:matrix}), 
\[
f_{d}\cdot\delta^{d-1}(g^{d-1})\cdot h^{[d-1]}=0\mod\fm_{E}^{np+1}.
\]
Hence the coefficients $h_{i}^{[d-1]}$ of $h_{i}^{[d-1]}$ are zero
for $i$ such that 
\[
v_{E}(f_{d})+ip\le np\,\left(\Leftrightarrow i\le n-\frac{v_{E}(f_{d})}{p}\right).
\]
The other $v_{E}(f_{d})/p+\left\lfloor (d-1)j/p\right\rfloor $ coefficients,
\[
h_{i}^{[d-1]},\, n-\frac{v_{E}(f_{d})}{p}<i\le n+\left\lfloor \frac{(d-1)j}{p}\right\rfloor ,
\]
can take arbitrary values. Suppose now that $h^{[d-1]},\dots,h^{[d-l]}$
are fixed. Then we consider the $(l+1)$-th row from the bottom of
equation (\ref{eq:matrix}),
\[
\delta_{-}^{l}(f_{d-l})\cdot\delta^{d-l-1}(g^{d-l-1})\cdot h^{[d-l-1]}+(\text{fixed terms})=0\mod\fm_{E}^{np+1}.
\]
The left hand side is the image of $\omega_{f_{d-l}}$ by the $G$-equivariant
map $\alpha\in\Hom[\cO_{E}]{\beta^{*}\Omega_{S/k}}{M_{n}}$ with $dx_{1}\mapsto h$.
In particular, it is $G$-invariant for any $h.$ Hence the equality
holds for at least one choice of $h^{[d-l-1]}\in k((t)).$ Indeed
we can choose $h^{[d-l-1]}$ as 
\[
-\frac{\text{(fixed terms)}}{\delta_{-}^{l}(f_{d-l})\cdot\delta^{d-l-1}(g^{d-l-1})}\in k((t))
\]
with coefficients in degree $>n+\left\lfloor j(d-l-1)/p\right\rfloor $
eliminated. (This is the point where we use the trick of replacing
$M_{n,e}$ with $M_{n}$.) Once a solution exists, then the equation
uniquely determines the coefficients $h_{i}^{[d-l-1]}$ of $h^{[d-l-1]}$
for $i$ such that
\[
v_{E}(\delta_{-}^{l}(f_{d-l}))+ip\le np\,\left(\Leftrightarrow i\le n-\frac{v_{E}(\delta_{-}^{l}(f_{d-l}))}{p}\right).
\]
The other $v_{E}(\delta_{-}^{l}(f_{d-l}))/p+\left\lfloor (d-l-1)j/p\right\rfloor $
coefficients, 
\[
h_{i}^{[d-l-1]},\, n-\frac{v_{E}(\delta_{-}^{l}(f_{d-l}))}{p}<i\le n+\left\lfloor \frac{(d-l-1)j}{p}\right\rfloor 
\]
can take arbitrary values. Hence the solution space of (\ref{eq:matrix})
has dimension
\[
\sum_{l=0}^{d-1}\left(\frac{v_{E}(\delta_{-}^{l}(f_{d-l}))}{p}+\left\lfloor \frac{(d-l-1)j}{p}\right\rfloor \right)=e+\sht_{V}(j).
\]
We have completed the proof. 
\end{proof}

\subsubsection{Counting the dimension of $\protect\Hom[\cO_{E}][\natural]{\beta^{*}\Omega_{S/R}}{M_{n,e}}$:
The decomposable case}

We generalize the shift number to the decomposable case as follows:
\begin{defn}
Suppose $V=\bigoplus_{\lambda=1}^{l}V_{d_{\lambda}}.$ Then we define
the \emph{shift number }of $j\in\NN'_{0}$ \emph{with respect to $V$}
to be 
\[
\sht_{V}(j):=\sum_{\lambda=1}^{l}\sht_{V_{d_{\lambda}}}(j)=\sum_{\lambda=1}^{l}\sum_{i=1}^{d_{\lambda}-1}\left\lfloor \frac{ij}{p}\right\rfloor .
\]

\end{defn}
With this definition, Proposition \ref{prop:key dim count} holds
also in the decomposable case. Now we sketch how the above arguments
can be generalized to this case. Let 
\[
k[\bx]=k[x_{\lambda,i}\mid1\le\lambda\le l,\,1\le i\le d_{\lambda}]
\]
be the coordinate ring of $V$ as in Section \ref{sub:Our-setting}.
Then $\Im((\psi\circ\beta)^{*}\Omega_{R/k}\to\beta^{*}\Omega_{S/k})$
is again generated by $G$-invariant elements. A $G$-invariant element
of $\beta^{*}\Omega_{S/k}$ is of the form
\[
\omega_{\bf}:=\sum_{\lambda}\sum_{i}\delta_{-}^{d_{\lambda}-i}(f_{\lambda})dx_{i}
\]
for some 
\[
\bf=(f_{1},\dots,f_{l})\in\prod_{\lambda=1}^{l}\cO_{E}^{\delta_{-}^{d_{\lambda}}=0}.
\]
Then we can generalize Lemma \ref{lem:nice basis omega's} as follows:
\begin{lem}
There exist a basis of $ $$\Im((\psi\circ\beta)^{*}\Omega_{R/k}\to\beta^{*}\Omega_{S/k})$,
\[
\omega_{\bf_{\lambda,i}}\,(1\le\lambda\le l,\,1\le i\le d_{\lambda}),
\]
such that the terms of $dx_{\lambda',i'}$ in $\omega_{\bf_{\lambda,i}}$
vanish if either ``$\lambda'<\lambda$'' or ``$\lambda'=\lambda$
and $i'<i$'' . \end{lem}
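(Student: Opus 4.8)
**The plan is to mimic the reduction procedure of Lemma \ref{lem:nice basis omega's}, now carried out block by block across the indecomposable summands.** First I would note that $\beta^{*}\Omega_{S/k}=\bigoplus_{\lambda,i}\cO_{E}\cdot dx_{\lambda,i}$, and that $\Im((\psi\circ\beta)^{*}\Omega_{R/k}\to\beta^{*}\Omega_{S/k})$ is a free $\cO_{E}$-submodule of rank $d=\sum d_{\lambda}$, spanned by the $G$-invariant elements $\omega_{\bf}$, as the excerpt already records. Order the variables $x_{\lambda,i}$ lexicographically by $(\lambda,i)$, so that $dx_{1,1}<dx_{1,2}<\cdots<dx_{1,d_{1}}<dx_{2,1}<\cdots<dx_{l,d_{l}}$. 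The goal is exactly to put an $\cO_{E}$-basis of the submodule into ``echelon form'' with respect to this ordered basis of $\beta^{*}\Omega_{S/k}$: the $m$-th basis vector should have vanishing coefficients in all $dx_{\lambda',i'}$ preceding its own leading term.

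**The key steps, in order.** (1) Since $\cO_{E}$ is a PID (a complete DVR, as the normalization of $D$ in a field) and the quotient $\beta^{*}\Omega_{S/R}$ is torsion, a free basis $\omega_{\bf_{1}},\dots,\omega_{\bf_{d}}$ of the submodule exists; this is the same starting point as in Lemma \ref{lem:nice basis omega's}. (2) Run a Gaussian-elimination sweep with the leading positions processed in increasing lexicographic order of $(\lambda,i)$. At each stage, among the current basis vectors whose leading term (with respect to the lex order) sits in the not-yet-cleared position $(\lambda,i)$, pick one whose coefficient there has least $v_{E}$-valuation; subtract suitable $\cO_{E}$-multiples of it from the remaining basis vectors to kill their $(\lambda,i)$-coefficients. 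The subtractions are by elements of $k[[t]]$ (equivalently $\cO_{E}$) exactly as in the indecomposable proof, and they do not disturb the positions already cleared. (3) The only subtlety is that the submodule is generated by elements of the special shape $\omega_{\bf}$, and one must check this shape is preserved under the elimination, i.e. that $h\cdot\omega_{\bf}+\omega_{\bf'}=\omega_{h\bf+\bf'}$ for $h\in k[[t]]$; this is immediate from the identity $\omega_{h f}=h\cdot\omega_{f}$ noted just before Lemma \ref{lem:nice basis omega's}, applied coordinatewise in $\prod_{\lambda}\cO_{E}^{\delta_{-}^{d_{\lambda}}=0}$. (4) Count: the process terminates because the submodule has full rank $d$, so each of the $d$ lex positions $(\lambda,i)$ gets exactly one leading basis vector $\omega_{\bf_{\lambda,i}}$, and by construction all earlier coefficients vanish, which is the assertion.

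**Where the real work is.** The genuinely delicate point is not the linear-algebra bookkeeping but verifying that after the sweep each block behaves like the indecomposable case: within the $\lambda$-th block the leading coefficient of $\omega_{\bf_{\lambda,i}}$ in $dx_{\lambda,i}$ should be $\delta_{-}^{d_{\lambda}-i}(f_{\lambda,i})$ with $f_{\lambda,i}^{(d_{\lambda}-i)}\ne 0$ (in the $g$-adic decomposition of $\cO_{E^{*}}$), so that the eventual dimension count reproduces $\sum_{\lambda}\sht_{V_{d_{\lambda}}}(j)=\sht_{V}(j)$. This requires knowing that $\omega_{\bf}$ has a nonzero $dx_{\lambda,i}$-component only when $f_{\lambda}$ has a nonzero $g^{\ge d_{\lambda}-i}$-part — which follows from Lemma \ref{lem:delta 1} applied to each $\delta_{-}$ (note $\delta_{-}=-\sigma^{-1}\delta$, so $\ker\delta_{-}^{a}=\ker\delta^{a}$) — together with the observation that invariants coming from the $\lambda'$-th summand contribute only to $dx_{\lambda',\ast}$, so the elimination genuinely decouples across blocks. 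Once this block structure is in hand, the repetition of the indecomposable argument (Proposition \ref{prop:key dim count}) in each block is routine, and Proposition \ref{prop:key dim count} extends verbatim to the decomposable case with the stated $\sht_{V}$.
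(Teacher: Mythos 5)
Your proposal is correct and is essentially the paper's own argument: the paper performs the same Gaussian elimination, clearing the positions $(\lambda,i)$ in lexicographic order (phrased as a sweep through block $1$ followed by induction on $l$), using that $\cO_{E}$ is a PID, that the submodule has full rank $d$, and that the identity $\omega_{h\bf}=h\,\omega_{\bf}$ for $h\in k[[t]]$ keeps the basis elements in the required shape. The extra discussion in your last paragraph (leading coefficients, block decoupling) belongs to the subsequent dimension count rather than to this lemma, but it is accurate.
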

\begin{proof}
The proof is almost the same as the one of Lemma \ref{lem:nice basis omega's}.
We use induction on $l.$ We first take any basis $\omega_{\bf_{\lambda,i}}.$
Next we eliminate the $dx_{1,1}$ terms of $\omega_{\bf_{\lambda,i}}$
with $(\lambda,i)\ne(1,1),$ suitably replacing $\bf_{\lambda,i}$'s.
Then we eliminate the $dx_{1,2}$ terms of $\omega_{\bf_{\lambda,i}}$
with $(\lambda,i)\ne(1,1),\,(1,2).$ Repeating this procedure $d_{1}$
times, we get a basis such that $\omega_{\bf_{1,i}}$, $1\le i\le d_{1},$
satisfy the condition of the assertion. Applying the assumption of
induction to $\omega_{\bf_{\lambda,i}},$ $\lambda\ge2,$ we prove
the lemma. 
\end{proof}
Once the above lemma is proved, the rest of arguments in the indecomposable
case work also in the decomposable case. Hence:
\begin{prop}
\label{prop:key dim count decomposable}The assertion of Proposition
\ref{prop:key dim count} holds without the assumption that $V$ is
indecomposable.
\end{prop}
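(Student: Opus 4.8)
The plan is to deduce the decomposable case from the indecomposable computation of Proposition \ref{prop:key dim count}, by running the very same elimination one indecomposable summand of $V$ at a time. First I would check, exactly as in the indecomposable case, that Proposition \ref{prop:identify the fiber with Hom} and the harmless replacement of $M_{n,e}$ by $M_n$ both remain valid here, so that $\phi_n^{-1}(\phi_n\gamma_n)$ is universally homeomorphic to the quotient, by a linear $G$-action, of $\Hom[\cO_E][\natural]{\beta^*\Omega_{S/R}}{M_n}$; it then suffices to compute the dimension of this last space.

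Identifying a $G$-equivariant map $\alpha$ with the tuple $(\alpha(dx_{\lambda,1}))_{1\le\lambda\le l}$ as before, $\Hom[\cO_E][\natural]{\beta^*\Omega_{S/R}}{M_n}$ becomes the subspace of $\Hom[\cO_E][\natural]{\beta^*\Omega_{S/k}}{M_n}$ cut out by the linear conditions $\alpha(\omega_{\bf_{\lambda,i}})=0$ for all $\lambda,i$, where $\omega_{\bf_{\lambda,i}}$ is the block-triangular basis of $\Im((\psi\circ\beta)^*\Omega_{R/k}\to\beta^*\Omega_{S/k})$ furnished by the lemma above. The presentation matrix of $\beta^*\Omega_{S/R}$ in this basis is then $d\times d$, block triangular with $l$ diagonal blocks of sizes $d_1,\dots,d_l$, each diagonal block being itself upper triangular as in the indecomposable case.

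Because of this triangular shape, the conditions $\alpha(\omega_{\bf_{\lambda,i}})=0$ decouple block by block: processing the summands in the order for which the basis is triangular (the last summand $V_{d_l}$ first, the first summand $V_{d_1}$ last, and inside each block in decreasing $i$), once $\alpha$ has been fixed on the coordinates of the summands already treated, the conditions coming from $V_{d_\lambda}$ form precisely the linear system analysed in the proof of Proposition \ref{prop:key dim count} for the indecomposable representation $V_{d_\lambda}$, with the previously chosen data absorbed into the ``fixed terms''. As there, $G$-invariance of each $\omega_{\bf_{\lambda,i}}$ guarantees that a solution exists at every step, and the identical count shows that block $\lambda$ contributes a solution space of dimension $e_\lambda+\sht_{V_{d_\lambda}}(j)$, where $pe_\lambda$ is the length of the $\lambda$-th diagonal block.

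Finally I would add up: since the presentation matrix is block triangular, its $0$-th Fitting ideal is the product of the diagonal-block determinants, so $e=\ord\Jac_\phi(\gamma)=\sum_\lambda e_\lambda$; summing the per-block dimensions then gives total dimension $e+\sum_\lambda\sht_{V_{d_\lambda}}(j)=e+\sht_V(j)$, and linearity of the $G$-action throughout yields the asserted universal homeomorphism. The only genuinely new point is putting $\Im((\psi\circ\beta)^*\Omega_{R/k}\to\beta^*\Omega_{S/k})$ into block-triangular normal form, i.e. the basis lemma above; that is a straightforward induction on $l$ modelled on Lemma \ref{lem:nice basis omega's}, and it is there, rather than in the dimension count, that the (mild) work lies.
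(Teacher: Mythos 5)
Your proposal is correct and follows essentially the same route as the paper: the paper likewise reduces everything to a block-triangular basis $\omega_{\mathbf{f}_{\lambda,i}}$ of $\Im((\psi\circ\beta)^{*}\Omega_{R/k}\to\beta^{*}\Omega_{S/k})$ (proved by induction on $l$, eliminating the $dx_{\lambda,i}$ terms block by block) and then asserts that the indecomposable elimination and dimension count go through verbatim block by block. Your write-up simply makes explicit the bookkeeping ($e=\sum_{\lambda}e_{\lambda}$ and $\sht_{V}(j)=\sum_{\lambda}\sht_{V_{d_{\lambda}}}(j)$) that the paper leaves implicit.
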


\subsection{The change of variables formula}
\begin{defn}
We define a function $\fs_{\cX}$ on $\cJ_{\infty}\cX$ as the composition
$\sht_{V}\circ\rj$: 
\[
\fs_{\cX}:\cJ_{\infty}\cX\xrightarrow{\rj}\NN'_{0}\xrightarrow{\sht_{V}}\ZZ.
\]
\end{defn}
\begin{lem}
\label{lem:pre (change vars)}Let $A\subset J_{\infty}X$ be a cylinder.
Then the function, $-\ord\Jac_{\phi}-\fs_{\cX}$, on $\phi_{\infty}^{-1}(A)$
is exponentially measurable and
\[
\int_{\phi_{\infty}^{-1}(A)}\LL^{-\ord\Jac_{\phi}-\fs_{\cX}}d\mu_{\cX}=\mu_{X}(A).
\]
\end{lem}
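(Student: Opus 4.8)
The plan is to run the standard change-of-variables argument (as in Denef--Loeser, Sebag, and the author's earlier work) in the twisted setting, using Proposition \ref{prop:key dim count decomposable} as the one genuinely new geometric input. Since $\ord\Jac_\phi\ge 0$ and $\fs_\cX=\sht_V\circ\rj\ge 0$, the function $F:=-\ord\Jac_\phi-\fs_\cX$ takes values in $\ZZ_{\le 0}\cup\{-\infty\}$, and $F^{-1}(-\infty)=(\ord\Jac_\phi)^{-1}(\infty)$. The vanishing locus of $\Jac_\phi$ is a closed substack of $\cX$ of positive codimension ($\phi$ is generically an isomorphism, $V$ being non-trivial so that $\dim V^G=l<d$ and $G$ acts freely off $V^G$), hence $F^{-1}(-\infty)$ is measurable of measure zero by Lemma \ref{lem:measure zero}; the same holds for $\cJ_\infty\cY$, and on the complement of these two sets $\phi_\infty$ is a bijection onto $J_\infty X\setminus J_\infty Y$ by Proposition \ref{prop:almost bijective}. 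So it suffices to prove the identity after removing these measure-zero subsets.

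Next I would stratify $\phi_\infty^{-1}(A)$ into the sets
\[
C_{e,j,e'}:=(\ord\Jac_\phi)^{-1}(e)\cap\cJ_{\infty,j}\cX\cap\phi_\infty^{-1}\bigl((\ord\Jac_X)^{-1}(e')\bigr)\cap\phi_\infty^{-1}(A),
\]
indexed by $e\in\ZZ_{\ge0}$, $j\in\NN_0'$ and $e'\in\ZZ_{\ge0}$. Each $C_{e,j,e'}$ is a cylinder: the first factor is a cylinder by the semicontinuity argument showing that $(\ord\cY)^{-1}(n)\cap\cJ_{\infty,j}\cX$ is a cylinder, $\cJ_{\infty,j}\cX$ is a union of connected components, and preimages of cylinders under $\phi_\infty$ (resp. $\phi_n$) are cylinders since these maps are compatible with truncations. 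The $C_{e,j,e'}$ are pairwise disjoint, cover $\phi_\infty^{-1}(A)$ up to measure zero, and on $C_{e,j,e'}$ we have $\ord\Jac_\phi\equiv e$, $\rj\equiv j$, hence $F\equiv -e-\sht_V(j)$.

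Now fix one stratum $C=C_{e,j,e'}$, a cylinder at level $n_0$, and choose $n\ge\max\{n_0+e,\,ce,\,c'e'\}$ with $c,c'$ the constants of Corollary \ref{cor: fiber contained}. Since $C=\pi_{n-e}^{-1}(\pi_{n-e}(C))$ and, by Corollary \ref{cor: fiber contained}, $\phi_n^{-1}(\phi_n\gamma_n)$ lies in a single fibre of $\pi_n\to\pi_{n-e}$ for $\gamma\in C$, each such fibre is contained in $\pi_n(C)$; by Proposition \ref{prop:key dim count decomposable} it is universally homeomorphic to the quotient of $\AA_k^{e+\sht_V(j)}$ by a linear $G$-action, with the same exponent $e+\sht_V(j)$ throughout. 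Applying Condition \ref{cond: fib} (extended from varieties to constructible sets by stratification) to the morphism $\phi_n\colon\pi_n(C)\to\pi_n(\phi_\infty(C))$ gives $[\pi_n(C)]=\LL^{e+\sht_V(j)}[\pi_n(\phi_\infty(C))]$ in $K_0'(\Var_k)$. Dividing by $\LL^{nd}$, and using that $\phi_\infty(C)$ is a stable subset (Corollary \ref{cor:image of cylinder}) on which $\ord\Jac_X\equiv e'$, so that its stable-set measure is its $\mu_X$-measure, we obtain $\mu_\cX(C)=\LL^{e+\sht_V(j)}\mu_X(\phi_\infty(C))$, i.e.
\[
\int_{C}\LL^{F}\,d\mu_\cX=\LL^{-e-\sht_V(j)}\mu_\cX(C)=\mu_X(\phi_\infty(C)).
\]

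Finally I would glue. The sets $\phi_\infty(C_{e,j,e'})$ are pairwise disjoint cylinders (by injectivity of $\phi_\infty$ on the relevant locus) covering $A$ up to measure zero, and the fibres $F^{-1}(m)$ are finite disjoint unions of those $C_{e,j,e'}$ with $e+\sht_V(j)=-m$, so $F$ is measurable. Exponential integrability of $F$ on $\phi_\infty^{-1}(A)$ and the equality $\int_{\phi_\infty^{-1}(A)}\LL^{F}\,d\mu_\cX=\sum_{e,j,e'}\mu_X(\phi_\infty(C_{e,j,e'}))=\mu_X(A)$ then follow from Proposition \ref{prop:integrable_pieces} and Lemma \ref{lem:total-sum-convergence}, once one checks convergence of the triple sum; this uses the convergence of $\mu_X(A)=\sum_{e'}\mu_X(A\cap(\ord\Jac_X)^{-1}(e'))$ (valid since $A$ is a cylinder), the per-stratum identity $\mu_\cX(C_{e,j,e'})=\LL^{e+\sht_V(j)}\mu_X(\phi_\infty(C_{e,j,e'}))$, and the dimension estimate $\dim\pi_m(\cJ_{\infty,j}\cX)=md+l+j-\lfloor j/p\rfloor$ for the strata of the twisted arc space. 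The main obstacle is exactly this last point: because $\cJ_\infty\cX$ is a projective limit of \emph{ind}-varieties whose $j$-th stratum grows in dimension with $j$, controlling the norms $\|\mu_\cX(C_{e,j,e'})\|$ as $(e,j,e')\to\infty$ genuinely requires the apparatus of Section \ref{sec:Motivic-integration} (the semiring $\cN$, Lemma \ref{lem:total-sum-convergence}, Proposition \ref{prop:integrable_pieces}) rather than the simpler estimates of the classical case; the geometry itself is entirely encapsulated in Proposition \ref{prop:key dim count decomposable}.
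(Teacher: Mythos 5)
Your proof is correct and follows essentially the same route as the paper: stratify $\phi_{\infty}^{-1}(A)$ (after discarding the measure-zero sets $\cJ_{\infty}\cY$ and $(\ord\Jac_{\phi})^{-1}(\infty)$) into cylinders on which $\ord\Jac_{\phi}$, $\rj$ and $\ord\Jac_{X}\circ\phi_{\infty}$ are constant, apply Proposition \ref{prop:key dim count decomposable} via Condition \ref{cond: fib} stratum by stratum to get $\mu_{\cX}(C)=\LL^{e+\sht_{V}(j)}\mu_{X}(\phi_{\infty}(C))$, and sum using Proposition \ref{prop:integrable_pieces}. Your write-up is somewhat more explicit than the paper's (in particular about why each stratum is a cylinder and about the convergence of the sum over the infinitely many ramification jumps), but the decomposition and the key inputs are identical.
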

\begin{proof}
Let $\cY\subset\cX$ be the exceptional locus of $\phi:\cX\to X.$
Then there exists a stratification $\phi_{\infty}^{-1}(A)\setminus\cJ_{\infty}\cY=\bigsqcup_{i\in\NN}B_{i}$
such that
\begin{enumerate}
\item for every $i,$ $B_{i}$ is a cylinder, and
\item for every $i,$ the functions, $\ord\Jac_{\phi}$, $\ord\phi^{-1}\Jac_{X}$
and $\rj$, are constant on $B_{i}.$
\end{enumerate}
From Lemma \ref{cor:image of cylinder}, for every $i>0,$ $\phi_{\infty}(B_{i})$
is a cylinder. Let $n:=-\ord\Jac_{\phi}(B_{i})-\fs_{\cX}(B_{i})$.
Then from Propositions \ref{prop:key dim count} and \ref{prop:key dim count decomposable},
$\mu_{\cX}(B_{i})\LL^{n}=\mu_{X}(\phi_{\infty}(B_{i})).$ Hence 
\[
\mu_{X}(A)=\sum_{i}\mu_{X}(\phi_{\infty}(B_{i}))=\sum_{i}\int_{B_{i}}\LL^{-\ord\Jac_{\phi}-\fs_{\cX}}d\mu_{\cX}.
\]
 Now the lemma follows from Proposition \ref{prop:integrable_pieces}.\end{proof}
\begin{thm}[The change of variables formula, cf. \cite{Denef-Loeser_McKay,Yasuda Twisted_jets,Yasuda Motivic_Over_DM}]
\label{thm:change vars}Let $A\subset J_{\infty}X$ be a strongly
measurable subset and let $B=\bigsqcup_{i}B_{i}\subset\cJ_{\infty}\cX$
be a countable disjoint union of strongly measurable subsets $B_{i}$
such that $B=\phi_{\infty}^{-1}(A).$ (For instance, if $A$ is a
cylinder, then $B=\phi_{\infty}^{-1}(A)$ satisfies this condition.)
Let $F:J_{\infty}X\supset A\to\frac{1}{r}\ZZ\cup\{\infty\}$ be a
measurable function. Then $F$ is exponentially integrable if and
only if the function, $F\circ\phi_{\infty}-\ord\Jac_{\phi}-\fs_{\cX}$,
on $B$ is exponentially integrable. Moreover if it is the case, then
we have
\[
\int_{A}\LL^{F}d\mu_{X}=\int_{B}\LL^{F\circ\phi_{\infty}-\ord\Jac_{\phi}-\fs_{\cX}}d\mu_{\cX}.
\]
\end{thm}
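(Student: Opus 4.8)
The plan is to deduce Theorem \ref{thm:change vars} formally from Lemma \ref{lem:pre (change vars)}, which already carries the essential geometric content (the dimension counts of Propositions \ref{prop:key dim count} and \ref{prop:key dim count decomposable}); what remains is a chain of routine reductions together with some convergence bookkeeping in $\hat{\cM}'$.

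First I would upgrade Lemma \ref{lem:pre (change vars)} from cylinders to strongly measurable subsets, i.e. prove that for every strongly measurable $A'\subset J_\infty X$ the function $-\ord\Jac_\phi-\fs_{\cX}$ is exponentially integrable on $\phi_\infty^{-1}(A')$ and
\[
\int_{\phi_\infty^{-1}(A')}\LL^{-\ord\Jac_\phi-\fs_{\cX}}\,d\mu_{\cX}=\mu_X(A').
\]
Given $A'$, pick for each $\epsilon>0$ cylinders $A_0(\epsilon)\subset A'$ and $A_1(\epsilon),A_2(\epsilon),\dots$ with $A'\setminus A_0(\epsilon)\subset\bigcup_{i\ge1}A_i(\epsilon)$ and $\|\mu_X(A_i(\epsilon))\|<\epsilon$ for $i\ge1$. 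Arguing as in the proof of Lemma \ref{lem:pre (change vars)} (stratify by the value of $\rj$, then by $\ord\Jac_\phi$ and by $\ord\phi^{-1}\Jac_X$, and discard $\cJ_\infty\cY$), each $\phi_\infty^{-1}(A_i(\epsilon))$, $i\ge0$, is a countable disjoint union of cylinders on which those three functions are constant, and Lemma \ref{lem:pre (change vars)} applies piece by piece to give $\int_{\phi_\infty^{-1}(A_i(\epsilon))}\LL^{-\ord\Jac_\phi-\fs_{\cX}}\,d\mu_{\cX}=\mu_X(A_i(\epsilon))$; in particular the norms of the error integrals for $i\ge1$ are $<\epsilon$. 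Since $\phi_\infty^{-1}$ commutes with $\setminus$ and $\bigcup$, letting $\epsilon\to0$ and invoking Lemmas \ref{lem:cylinder compact}, \ref{lem:total-sum-convergence} and Proposition \ref{prop:integrable_pieces} yields both exponential integrability and the displayed identity.

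Next I would pass to an arbitrary measurable $F$ by decomposing over its level sets. Write $A=\bigsqcup_{m}F^{-1}(m)$, $m\in\frac{1}{r}\ZZ\cup\{\infty\}$; on $\phi_\infty^{-1}(F^{-1}(m))$ the function $F\circ\phi_\infty-\ord\Jac_\phi-\fs_{\cX}$ equals $m-\ord\Jac_\phi-\fs_{\cX}$, and it attains the value $\infty$ exactly on $\phi_\infty^{-1}(F^{-1}(\infty))\cup\cJ_\infty\cY$. By Lemma \ref{lem:measure zero} the set $\cJ_\infty\cY$ has measure zero, and by Proposition \ref{prop:almost bijective} (the bijectivity of $\phi_\infty$ off measure-zero loci) $\phi_\infty^{-1}(F^{-1}(\infty))$ has measure zero iff $F^{-1}(\infty)$ does, so condition (2) of exponential integrability matches on the two sides. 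For finite $m$ the previous step gives
\[
\int_{\phi_\infty^{-1}(F^{-1}(m))}\LL^{m-\ord\Jac_\phi-\fs_{\cX}}\,d\mu_{\cX}=\LL^m\mu_X(F^{-1}(m))=\int_{F^{-1}(m)}\LL^F\,d\mu_X,
\]
and Proposition \ref{prop:integrable_pieces} together with its standard analogue for $J_\infty X$ shows that $F$ is exponentially integrable on $A$ iff $F\circ\phi_\infty-\ord\Jac_\phi-\fs_{\cX}$ is on $B=\phi_\infty^{-1}(A)=\bigsqcup_i B_i$, and that summing the identity above over $m$ produces $\int_A\LL^F\,d\mu_X=\int_B\LL^{F\circ\phi_\infty-\ord\Jac_\phi-\fs_{\cX}}\,d\mu_{\cX}$, as desired.

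The step I expect to be the main obstacle is the bookkeeping inside the first step: decomposing $\phi_\infty^{-1}$ of a single cylinder into countably many cylinders on which $\ord\Jac_\phi$, $\rj$ and $\ord\phi^{-1}\Jac_X$ are simultaneously constant \emph{while} each lies inside some fixed $\cJ_{\infty,\le j}\cX$ (so that its measure is defined), and then checking that the resulting countable sums in $\hat{\cM}'$ converge and may be interchanged. This is exactly where Lemmas \ref{lem:cylinder compact} and \ref{lem:total-sum-convergence} and Proposition \ref{prop:integrable_pieces} do the work; once they are in place, the theorem reduces formally to Lemma \ref{lem:pre (change vars)}.
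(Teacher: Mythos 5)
Your proposal is correct and follows essentially the same route as the paper: both reduce the theorem to Lemma \ref{lem:pre (change vars)} (hence to the dimension counts of Propositions \ref{prop:key dim count} and \ref{prop:key dim count decomposable}) by approximating measurable sets with cylinders, decomposing over the level sets of $F$ and of $\ord\Jac_{\phi}$, $\ord\phi^{-1}\Jac_{X}$ and $\rj$, discarding the measure-zero loci $\cJ_{\infty}\cY$ and $J_{\infty}Y$, and invoking Proposition \ref{prop:integrable_pieces} and Lemma \ref{lem:total-sum-convergence} for convergence and interchange of sums. The only real difference is the order of the reductions --- the paper first normalizes $B$ to a cylinder on which the three auxiliary functions are constant and then treats $F$, whereas you first establish the $F\equiv0$ identity for arbitrary strongly measurable sets and then sum over level sets of $F$; note that $F^{-1}(m)$ is a priori only measurable rather than strongly measurable, but your upgrading argument never actually uses the containment $A_{0}(\epsilon)\subset A'$, so this is harmless and matches what the paper itself does.
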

\begin{proof}
Using Proposition \ref{prop:integrable_pieces}, we will deduce the
situation to an easier one step by step. 
\begin{enumerate}
\item We may suppose that $B$ is disjoint from $\cJ_{\infty}\cY$: From
Lemma \ref{lem:measure zero}, $\cJ_{\infty}\cY$ is measurable and
of measure zero. Also $J_{\infty}Y$ is measurable and of measure
zero. Moreover $B_{i}\setminus\cJ_{\infty}\cY$ and $A\setminus J_{\infty}Y$
are strongly measurable. Therefore we may replace $B$ with $B\setminus\cJ_{\infty}\cY$,
and $A$ with $A\setminus J_{\infty}Y.$ 
\item We may suppose that $B$ is strongly measurable: From Corollary \ref{cor:image of str measurable},
for each $i$, $\phi_{\infty}(B_{i})$ is strongly measurable. From
Proposition \ref{prop:integrable_pieces}, it is enough to prove the
theorem for $B=B_{i}$ and $A=\phi_{\infty}(B_{i}).$
\item We may suppose that $B$ is a cylinder and $A$ is stable: Let $B_{0}(\epsilon)\subset B$
be cylinders as in Definition \ref{def: measurable subset}. Then
for a sequence $\epsilon_{i}\in\RR_{>0},$ $i\in\NN$ with $\lim_{i\to\infty}\epsilon_{i}=0,$
$B\setminus\bigcup_{i\in\NN}B_{0}(\epsilon_{i})$ is measurable and
has measure zero. Its image in $A$ is also measurable and has measure
zero. Hence it is enough to prove the theorem for $B=B_{0}(\epsilon_{i})$
and $A=\phi_{\infty}(B_{0}(\epsilon)).$ As we have supposed that
$B$ is disjoint from $\cJ_{\infty}\cY,$ $B_{0}(\epsilon_{i})$ is
also disjoint from $\cJ_{\infty}\cY$. Hence $\phi_{\infty}(B_{0}(\epsilon))$
is stable.
\item We may suppose that the functions, $\ord\Jac_{\phi},$ $\ord\phi^{-1}\Jac_{X}$
and $\rj,$ are constant on $B$: We take the stratification $B=\bigsqcup_{i}B_{i}$
of $B$ with respect to the values of these functions. Then the stratification
has only finite strata. It suffices to show the theorem for $B=B_{i}$
and $A=\phi_{\infty}(B_{i}).$
\end{enumerate}
Now we fix $n\in\frac{1}{r}\ZZ\cup\{\infty\}.$ Then $C:=F^{-1}(n)$
is measurable. Let $C_{i}(\epsilon)$ be as in Definition \ref{def: measurable subset}.
For every $i$ and $\epsilon,$ we can take $C_{i}(\epsilon)\subset A.$
Let $e:=\ord\Jac_{\phi}(B)$ and $s:=\fs_{\cX}(B).$ Then 
\[
\mu_{\cX}(\phi_{\infty}^{-1}(C_{i}(\epsilon)))=\mu_{X}(C_{i}(\epsilon))\LL^{-e-s}.
\]
This shows that 
\[
\lim_{\epsilon\to0}\mu_{\cX}(\phi_{\infty}^{-1}(C_{0}(\epsilon))=\mu_{\cX}(\phi_{\infty}^{-1}(C)).
\]
Hence
\begin{eqnarray*}
\int_{C}\LL^{F}d\mu_{X} & = & \mu_{X}(C)\LL^{n}\\
 & = & \lim_{\epsilon\to0}\mu_{X}(C_{0}(\epsilon))\LL^{n}\\
 & = & \lim_{\epsilon\to0}\mu_{\cX}(\phi_{\infty}^{-1}(C_{0}(\epsilon)))\LL^{n-e-s}\\
 & = & \mu_{\cX}(\phi_{\infty}^{-1}(C))\LL^{n-e-s}\\
 & = & \int_{\phi_{\infty}^{-1}(C)}\LL^{F\circ\phi_{\infty}-\ord\Jac_{\phi}-\fs_{\cX}}d\mu_{\cX}.
\end{eqnarray*}
Now the theorem follows again from Proposition \ref{prop:integrable_pieces}.
\end{proof}

\section{Stringy invariants and the McKay correspondence\label{sec:Stringy-invariants}}

In this section, we will define stringy invariants as certain motivic
integrals. Then we will obtain several versions of the McKay correspondence,
which are consequences of the change of variables formula above.

\subsection{Stringy invariants of stringily Kawamata log terminal pairs}

We will keep the notation. Let $\omega_{X}$ be the canonical sheaf
of $X$, which is defined as the double dual of $\bigwedge^{d}\Omega_{X/k}$.
Since $R$ is a UFD (see for instance \cite[Theorem 3.8.1]{Campbell-Wehlau}),
in particular, $X$ is 1-Gorenstein. Namely $\omega_{X}$ is invertible.
The $\omega$\emph{-Jacobian ideal }$\Jac_{X}^{\omega}\subset\cO_{X}$
of $X$ is defined by the equality 
\[
\mathrm{Im}\left(\bigwedge^{d}\Omega_{X/k}\to\omega_{X}\right)=\Jac_{X}^{\omega}\cdot\omega_{X}.
\]

\begin{rem}
It is known that if $X$ is a local complete intersection, then $\Jac_{X}=\Jac_{X}^{\omega}.$
In our setting, this is the case only when $V$ is isomorphic to $V_{2}\oplus V_{1}^{\oplus d-2}$,
$V_{2}^{\oplus2}\oplus V_{1}^{\oplus d-4}$ or $V_{3}\oplus V_{1}^{\oplus d-3}.$
Indeed, $X$ is a hypersurface in these cases (see for instance \cite{Campbell-Wehlau}).
Otherwise $X$ is not even Cohen-Macaulay from \cite{Ellingsrud Skjelbred}.
\end{rem}
In \cite{Batyrev_Gor,Batyrev-NonArch}, Batyrev introduced \emph{stringy
invariants }for Kawamata log terminal pairs in characteristic zero.
To define it, he used the resolution of singularities. However we
are working in positive characteristic and it is not known yet that
there always exists a resolution of singularities. Therefore, following
Denef and Loeser \cite{Denef-Loeser_McKay}, we will define the stringy
invariant as an integral on $J_{\infty}X.$
\begin{defn}
Let $Z=\sum_{i=1}^{m}a_{i}Z_{i}$ be a formal $\QQ$-linear combination
of closed subschemes $Z_{i}\subsetneq X$. Then we define a function
\[
\ord Z:=\sum_{i=1}^{m}a_{i}\cdot\ord Z_{i}:J_{\infty}X\to\QQ\cup\{\infty\}.
\]
Here we suppose that $\ord Z$ takes the constant value $\infty$
on the measure zero subset $\bigcup_{i=1}^{m}(\ord Z_{i})^{-1}(\infty)$.
We say that the pair $(X,Z)$ is \emph{stringily Kawamata log terminal
}if the function $\ord Z+\ord\Jac_{X}^{\omega}$ on $J_{\infty}X$
is exponentially integrable. If it is the case, we define its \emph{stringy
motivic invariant }by\emph{
\[
M_{\st}(X,Z):=\int_{J_{\infty}X}\LL^{\ord Z+\ord\Jac_{X}^{\omega}}d\mu_{X}.
\]
}Then we define the \emph{stringy Poincaré function} by 
\[
P_{\st}(X,Z)=P(M_{\st}(X,Z))\in\bigcup_{r=1}^{\infty}\ZZ((T^{-1/r})).
\]

\end{defn}
Let $f:Y\to X$ be a proper birational morphism with $Y$ smooth.
Then we define the \emph{canonical divisor} of $f$, denoted $K_{f}$,
to be the divisor such that 
\[
\Im(f^{*}\omega_{X}\to\omega_{Y}\otimes K(Y))=\cO_{Y}(-K_{f})\cdot\omega_{Y}.
\]
Here $K(Y)$ denotes the function field of $Y.$ This divisor has
support in the exceptional locus of $f.$ We define the \emph{pull-back}
$f^{*}Z$ of $Z$ by $f$ as the formal linear combination $\sum_{i=1}^{m}a_{i}\cdot f^{-1}Z_{i}$
of the scheme-theoretic preimages $f^{-1}Z_{i}\subset Y.$
\begin{prop}
With the notation as above, we have
\[
M_{\st}(X,Z)=M_{\st}(Y,f^{*}Z-K_{f}).
\]
\end{prop}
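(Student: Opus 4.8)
The plan is to reduce the statement to the classical change of variables formula for the proper birational morphism $f\colon Y\to X$. Since $Y$ is smooth and $X=\Spec R$ with $R$ a UFD (so $X$ is normal and $1$-Gorenstein, hence $\omega_X$ is invertible), the positive-characteristic version of the formula due to Sebag \cite{Sebag_motivic_formal} applies: writing $\Jac_f\subset\cO_Y$ for the $0$-th Fitting ideal of $\Omega_{Y/X}$, for any measurable $G\colon J_\infty X\to\frac1r\ZZ\cup\{\infty\}$ the function $G\circ f_\infty-\ord\Jac_f$ on $J_\infty Y$ is exponentially integrable exactly when $G$ is, and in that case
\[
\int_{J_\infty X}\LL^{G}\,d\mu_X=\int_{J_\infty Y}\LL^{G\circ f_\infty-\ord\Jac_f}\,d\mu_Y .
\]
In particular $(X,Z)$ is stringily Kawamata log terminal if and only if $(Y,f^*Z-K_f)$ is, and it suffices to identify the two integrals after taking $G=\ord Z+\ord\Jac_X^\omega$.

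First, $\ord Z\circ f_\infty=\ord f^*Z$: this is immediate from $\gamma^{-1}(f^{-1}\mathcal I)=(f_\infty\gamma)^{-1}\mathcal I$ for an ideal sheaf $\mathcal I$ on $X$ together with the definition $f^*Z=\sum_i a_i\,f^{-1}Z_i$ via scheme-theoretic preimages. The crux is then the identity of functions on $J_\infty Y$ (outside a measure zero set)
\[
\ord\Jac_f=\ord\Jac_X^\omega\circ f_\infty+\ord K_f ,
\]
which is the relative analogue over the Gorenstein base $X$ of the familiar relation $\Jac_f=\cO_Y(-K_{Y/X})$ valid for smooth $X$. Granting it, the exponent $\ord f^*Z+\ord\Jac_X^\omega\circ f_\infty-\ord\Jac_f$ collapses to $\ord f^*Z-\ord K_f=\ord(f^*Z-K_f)$; and since $Y$ is smooth we have $\Jac_Y^\omega=\cO_Y$, so this equals $\ord(f^*Z-K_f)+\ord\Jac_Y^\omega$, making the right-hand integral precisely $M_{\st}(Y,f^*Z-K_f)$.

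The main obstacle is this displayed identity, which I would deduce from an equality of ideal sheaves $\Jac_f=(\Jac_X^\omega\cdot\cO_Y)\cdot\cO_Y(-K_f)$ on $Y$, obtained by chasing the definitions through the natural maps. On one hand the map $f^*\bigwedge^d\Omega_{X/k}\to\bigwedge^d\Omega_{Y/k}=\omega_Y$ has image $\Jac_f\cdot\omega_Y$, since its $d\times d$ minors generate $\mathrm{Fitt}_0(\Omega_{Y/X})$. On the other hand, applying $f^*$ to the canonical surjection $\bigwedge^d\Omega_{X/k}\twoheadrightarrow\Jac_X^\omega\cdot\omega_X$ and composing with the comparison map $f^*\omega_X\to\omega_Y\otimes K(Y)$ used to define $K_f$ (available because $f$ is birational, so $\omega_X$ and $\omega_Y$ have canonically identified generic fibres) gives a map $f^*\bigwedge^d\Omega_{X/k}\to\omega_Y\otimes K(Y)$ with image $(\Jac_X^\omega\cdot\cO_Y)\cdot\cO_Y(-K_f)\cdot\omega_Y$. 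Over the dense locus where $f$ is an isomorphism these two maps coincide, both being the top exterior power of the identification of cotangent sheaves there; since the target is torsion-free they coincide on all of $Y$, whence the asserted equality of (possibly fractional) ideal sheaves. Taking $\ord$ at an arc $\gamma$ and using $\ord(\mathcal I\cdot\mathcal J)=\ord\mathcal I+\ord\mathcal J$, $\ord(f^{-1}\mathcal I)(\gamma)=\ord\mathcal I(f_\infty\gamma)$, and $\ord\,\cO_Y(-K_f)=\ord K_f$ (reading $K_f$ as the formal combination of its prime components) yields the identity of order functions; any issue with negative coefficients of $K_f$ is harmless for the integral, and in fact the sheaf identity already holds globally. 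This is in substance the argument of Denef--Loeser \cite{Denef-Loeser_McKay}, transplanted to positive characteristic.
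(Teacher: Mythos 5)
Your proposal is correct and follows essentially the same route as the paper: the paper's proof simply asserts the sheaf identity $f^{-1}\Jac_{X}^{\omega}\cdot\cO_{Y}(-K_{f})=\Jac_{f}$, deduces $f^{-1}\ord\Jac_{X}^{\omega}-\ord\Jac_{f}=-\ord K_{f}$, and cites Sebag's change of variables formula. Your only addition is to spell out the verification of that sheaf identity (by comparing the two maps out of $f^{*}\bigwedge^{d}\Omega_{X/k}$ into $\omega_{Y}\otimes K(Y)$, which agree generically and hence everywhere by torsion-freeness), a step the paper leaves implicit.
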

\begin{proof}
Since
\[
f^{-}\Jac_{X}^{\omega}\cdot\cO_{Y}(-K_{f})=\Jac_{f},
\]
we have
\[
f^{-1}\ord\Jac_{X}^{\omega}-\ord\Jac_{f}=-\ord K_{f}.
\]
The proposition follows from this and the change of variables formula
for varieties in positive characteristic \cite{Sebag_motivic_formal}. \end{proof}
\begin{cor}
\label{cor:Explicit formula}Suppose that $K_{f}-f^{*}Z$ is a simple
normal crossing divisor written as $\sum_{i=1}^{m}a_{i}E_{i},$ $a_{i}\in\QQ,$
with $E_{i}$ prime divisors. Then $(X,Z)$ is stringily kawamata
log terminal if and only if for every $i,$ $a_{i}>-1.$ Moreover
if it is the case, then
\[
M_{\st}(X,Z)=M_{\st}(Y,f^{*}Z-K_{f})=\sum_{I\subset\{1,\dots,m\}}[E_{I}^{\circ}]\prod_{i\in I}\frac{\LL-1}{\LL^{1+a_{i}}-1},\,\text{where }E_{I}^{\circ}:=\bigcap_{i\in I}E_{i}\setminus\bigcup_{i\notin I}E_{i}.
\]
\end{cor}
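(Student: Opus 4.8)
The plan is to use the preceding Proposition to transport the whole question to the smooth variety $Y$, where the $\omega$-Jacobian ideal is trivial, and then to read off both the log-terminality criterion and the closed formula from a single stratification-and-geometric-series computation, exactly as in Denef--Loeser \cite{Denef-Loeser_McKay}. By the preceding Proposition, $(X,Z)$ is stringily Kawamata log terminal if and only if $\ord(f^{*}Z-K_{f})+\ord\Jac_{Y}^{\omega}$ is exponentially integrable on $J_{\infty}Y$, in which case $M_{\st}(X,Z)=M_{\st}(Y,f^{*}Z-K_{f})$. Since $Y$ is smooth over the perfect field $k$, $\Omega_{Y/k}$ is locally free of rank $d$, so $\bigwedge^{d}\Omega_{Y/k}\to\omega_{Y}$ is an isomorphism, $\Jac_{Y}^{\omega}=\cO_{Y}$, and $\ord\Jac_{Y}^{\omega}\equiv 0$. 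Writing $K_{f}-f^{*}Z=\sum_{i=1}^{m}a_{i}E_{i}$, the remaining task is to analyze $F:=\ord(f^{*}Z-K_{f})=-\sum_{i=1}^{m}a_{i}\cdot\ord E_{i}$ on $J_{\infty}Y$.

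I would then stratify $J_{\infty}Y$ by contact orders along the $E_{i}$. For $I\subseteq\{1,\dots,m\}$ and a tuple $(m_{i})_{i\in I}$ of positive integers, let $C_{I,(m_{i})}\subset J_{\infty}Y$ consist of the arcs whose center lies in $E_{I}^{\circ}$ and that satisfy $\ord E_{i}=m_{i}$ for $i\in I$; for such an arc $\ord E_{j}=0$ for all $j\notin I$, so $F$ takes the constant value $-\sum_{i\in I}a_{i}m_{i}$ on $C_{I,(m_{i})}$. Because $Y$ is smooth and $\sum_{i}E_{i}$ has simple normal crossings, near any point of $E_{I}^{\circ}$ the $E_{i}$, $i\in I$, are part of a regular system of parameters, so one is reduced to the affine model $E_{i}=\{x_{i}=0\}\subset\AA_{k}^{d}$, and the standard local measure computation gives that $C_{I,(m_{i})}$ is a cylinder with
\[
\mu_{Y}\bigl(C_{I,(m_{i})}\bigr)=[E_{I}^{\circ}]\prod_{i\in I}(\LL-1)\LL^{-m_{i}}.
\]
The arcs not lying in $\bigsqcup_{I,(m_{i})}C_{I,(m_{i})}$ are exactly those with $\ord E_{i}=\infty$ for some $i$, i.e. the union $\bigcup_{i}J_{\infty}E_{i}$; this is a closed subset of positive codimension, hence of measure zero.

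Hence, up to a measure-zero set, each fiber $F^{-1}(n)$ is the disjoint union of the cylinders $C_{I,(m_{i})}$ with $-\sum_{i\in I}a_{i}m_{i}=n$. By countable additivity of the motivic integral over the variety $Y$ (the analogue of Proposition \ref{prop:integrable_pieces}, together with Lemma \ref{lem:total-sum-convergence}), $F$ is exponentially integrable precisely when
\[
\sum_{I\subseteq\{1,\dots,m\}}[E_{I}^{\circ}]\prod_{i\in I}\Bigl(\sum_{m\ge1}(\LL-1)\LL^{-(1+a_{i})m}\Bigr)
\]
converges in $\hat{\cM}'_{1/r}$ (with $r$ a common denominator of the $a_{i}$), which holds if and only if every inner geometric series $\sum_{m\ge1}\LL^{-(1+a_{i})m}$ converges, i.e. if and only if $1+a_{i}>0$ for all $i$. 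For the necessity of this last condition, note that if $a_{i_{0}}\le -1$ then $E_{i_{0}}^{\circ}\neq\emptyset$ (the $E_{i}$ are distinct prime divisors), the norms $\Vert\mu_{Y}(C_{\{i_{0}\},m})\Vert$ are bounded below by a fixed positive constant as $m$ varies (because $-(1+a_{i_{0}})m\ge 0$), and $F$ takes on them the pairwise distinct values $-a_{i_{0}}m$, $m\ge1$, so infinitely many fibers $F^{-1}(n)$ have norm bounded below, contradicting condition (3) in the definition of exponential integrability. When $a_{i}>-1$ for all $i$, summing each geometric series yields $\sum_{m\ge1}(\LL-1)\LL^{-(1+a_{i})m}=(\LL-1)/(\LL^{1+a_{i}}-1)$, and therefore
\[
M_{\st}(X,Z)=M_{\st}(Y,f^{*}Z-K_{f})=\int_{J_{\infty}Y}\LL^{F}\,d\mu_{Y}=\sum_{I\subseteq\{1,\dots,m\}}[E_{I}^{\circ}]\prod_{i\in I}\frac{\LL-1}{\LL^{1+a_{i}}-1}.
\]

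The only step that genuinely needs care is the local measure identity $\mu_{Y}(C_{I,(m_{i})})=[E_{I}^{\circ}]\prod_{i\in I}(\LL-1)\LL^{-m_{i}}$, which is exactly where smoothness of $Y$ and the simple normal crossing hypothesis enter, via the reduction to the coordinate model; this is the classical Denef--Loeser computation and I would cite it rather than reprove it. Everything else is routine manipulation of geometric series against the norm $\Vert\cdot\Vert$ on $\hat{\cM}'_{1/r}$, together with the convergence bookkeeping furnished by Proposition \ref{prop:integrable_pieces} and Lemma \ref{lem:total-sum-convergence}.
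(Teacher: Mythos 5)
Your argument is exactly the paper's: the paper proves this corollary by invoking the preceding proposition to reduce to $M_{\st}(Y,f^{*}Z-K_{f})$ on the smooth $Y$ and then citing the standard explicit computation of motivic integrals for a simple normal crossing divisor (Craw, Denef--Loeser), which is precisely the stratification-by-contact-orders and geometric-series calculation you carry out in full. The only cosmetic slip is that in the necessity argument the quantity bounded below should be $\Vert\mu_{Y}(C_{\{i_{0}\},m})\LL^{F}\Vert$ rather than $\Vert\mu_{Y}(C_{\{i_{0}\},m})\Vert$, as your own parenthetical exponent $-(1+a_{i_{0}})m$ already indicates.
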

\begin{proof}
The corollary follows from the preceding proposition and the explicit
computation of motivic integrals (for instance, see \cite{Craw}). \end{proof}
\begin{defn}
\label{def: discrep KLT}Let $(X,Z)$ be a pair as above. Let $f:Y\to X$
be a proper birational morphism such that $Y$ is normal and $f^{*}Z$
is a Cartier divisor. Let $E$ be a prime divisor on $Y$. Then the
canonical divisor $K_{f}$ of $f$ is similarly defined on the smooth
locus of $Y$ (and extends to $Y$ as a Weil divisor). The \emph{discrepancy
}of $(X,Z)$ at $E$, denoted $a(E;X,Z),$ is defined to be the coefficient
of $E$ in the divisor $K_{f}-f^{*}Z.$ We say that $(X,Z)$ is \emph{Kawamata
log terminal }if for every $Y$ and $E$ as above, $a(E;X,Z)>-1.$\end{defn}
\begin{prop}
If $(X,Z)$ is stringily Kawamata log terminal, then it is Kawamata
log terminal. Additionally, if there exists a resolution $f:Y\to X$
with $K_{f}-f^{*}Z$ a simple normal crossing $\QQ$-$ $Cartier divisor,
then the converse is also true. \end{prop}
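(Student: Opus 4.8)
The plan is to prove the two implications separately. The converse---that a Kawamata log terminal pair $(X,Z)$ admitting a resolution $f\colon Y\to X$ with $K_{f}-f^{*}Z$ a simple normal crossing $\QQ$-Cartier divisor is stringily Kawamata log terminal---is immediate: writing $K_{f}-f^{*}Z=\sum_{i=1}^{m}a_{i}E_{i}$, each $a_{i}$ is, by Definition \ref{def: discrep KLT}, the discrepancy $a(E_{i};X,Z)$, so the Kawamata log terminal hypothesis forces $a_{i}>-1$ for every $i$, whence Corollary \ref{cor:Explicit formula} asserts precisely that $(X,Z)$ is stringily Kawamata log terminal (with the value of $M_{\st}(X,Z)$ displayed there). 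The forward implication, on the other hand, must hold unconditionally, and for it I would argue directly on the arc space, showing that a divisor over $X$ of discrepancy $\le-1$ forces the integral defining $M_{\st}(X,Z)$ to diverge---the arc-theoretic counterpart of the classical fact that the Kawamata log terminal condition is detected on the arc space, the new features here being positive characteristic (handled via \cite{Sebag_motivic_formal}) and the absence of a resolution of $X$.

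So suppose $(X,Z)$ is stringily Kawamata log terminal, and let $E$ be a prime divisor over $X$, realized on a proper birational morphism $f\colon Y_{0}\to X$ with $Y_{0}$ normal and $f^{*}Z$ a Cartier divisor; put $a:=a(E;X,Z)$, the coefficient of $E$ in $K_{f}-f^{*}Z$. Since $Y_{0}$ is normal it is regular at the codimension-one point $\eta_{E}$, so $E$ is a smooth divisor on the smooth locus $U\subset Y_{0}$, whose complement has codimension $\ge2$; hence $J_{\infty}(Y_{0}\setminus U)$ has measure zero in $J_{\infty}Y_{0}$, and almost every arc of $Y_{0}$ lies in $J_{\infty}U$. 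For each integer $m\ge1$ let $C_{m}\subset J_{\infty}U$ be the cylinder of arcs $\tilde\gamma$ with $\mathrm{ord}_{E}(\tilde\gamma)=m$ meeting $E$ at a point lying on no other component of $K_{f}$, $f^{-1}Z$, or $\Jac_{f}$; a local coordinate computation gives $\mu_{U}(C_{m})=[E^{\circ}]\,(\LL-1)\,\LL^{-m}$ for a dense open $E^{\circ}\subset E$. The pullback formula $M_{\st}(X,Z)=M_{\st}(Y,f^{*}Z-K_{f})$ established above rests on the identity $f^{-1}\Jac_{X}^{\omega}\cdot\cO(-K_{f})=\Jac_{f}$, which for our $f$ is valid over $U$ since $X$ is $1$-Gorenstein; combining it with the change of variables formula for varieties in positive characteristic \cite{Sebag_motivic_formal}, restricted to the full-measure subset $J_{\infty}U\subset J_{\infty}Y_{0}$ on which the $\Jac_{Y_{0}}$-correction is trivial, one finds that $f_{\infty}$ is injective off a measure-zero set and that the contribution of $f_{\infty}(C_{m})$ to $M_{\st}(X,Z)=\int_{J_{\infty}X}\LL^{\ord Z+\ord\Jac_{X}^{\omega}}d\mu_{X}$ equals $\mu_{U}(C_{m})\cdot\LL^{-am}$, since $\ord(f^{*}Z-K_{f})$ is constant equal to $-am$ on $C_{m}$ (using $a=\mathrm{ord}_{E}K_{f}-\mathrm{ord}_{E}(f^{*}Z)$). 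Thus this contribution is
\[
[E^{\circ}]\,(\LL-1)\,\LL^{-m-am},
\]
whose norm in $\hat{\cM}'$ is $2^{\,c-(1+a)m}$ for a constant $c$ independent of $m$; when $a\le-1$ this does not tend to $0$ as $m\to\infty$. Since the $f_{\infty}(C_{m})$ are disjoint and $\ord Z+\ord\Jac_{X}^{\omega}$ is constant on each, this contradicts the exponential integrability of $\ord Z+\ord\Jac_{X}^{\omega}$ required by Definition \ref{def: measurable integrable}. Therefore $a>-1$, and as $E$ was arbitrary, $(X,Z)$ is Kawamata log terminal.

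I expect the crux---and the reason the forward direction cannot simply be reduced to a resolution---to be the passage from the given, possibly singular, model $Y_{0}$ to its smooth locus $U$: one must use that $a(E;X,Z)$ is a local invariant at $\eta_{E}$, that the codimension-$\ge2$ singular locus of $Y_{0}$ carries no arcs of positive measure, and that both the change of variables formula and the Jacobian identity $f^{-1}\Jac_{X}^{\omega}\cdot\cO(-K_{f})=\Jac_{f}$ survive the restriction to $U$ even though $f|_{U}$ is not proper---properness of $f$ itself being all that the almost-bijectivity of $f_{\infty}$ requires. The remaining ingredients---the coordinate computation of $\mu_{U}(C_{m})$, the constancy of the relevant order functions on the chosen part of $C_{m}$, and the norm estimate in $\hat{\cM}'$, which is transparent because the classes involved lie in the semiring $\cN$ of Definition \ref{def: semiring}---are routine.
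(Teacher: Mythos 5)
Your proof is correct and follows essentially the same route as the paper: the converse is read off from Corollary \ref{cor:Explicit formula}, and the forward direction restricts to an open dense (smooth) subset of the model where $K_{f}-f^{*}Z$ reduces to $a(E;X,Z)\cdot E$, applies the change of variables formula of \cite{Sebag_motivic_formal}, and deduces $a>-1$ from the divergence of the resulting geometric series. You merely spell out the measure computation $\mu(C_{m})=[E^{\circ}](\LL-1)\LL^{-m}$ and the norm estimate that the paper leaves implicit in the phrase ``hence $a(E;X,Z)>-1$.''
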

\begin{proof}
For the first assertion, let $f:Y\to X$ and $E$ be as in Definition
\ref{def: discrep KLT}. Let $U\subset Y$ be an open dense subset
such that 
\[
(K_{f}-f^{*}Z)|_{U}=a(E;X,Z)\cdot E|_{U}.
\]
Then from the change of variables formula in \cite{Sebag_motivic_formal},
$\ord f^{*}Z-\ord K_{f}$ is exponentially integrable on $J_{\infty}U.$
Hence $a(E;X,Z)>-1$. This shows that $(X,Z)$ is Kawamata log terminal.

The second assertion follows from Corollary \ref{cor:Explicit formula}.
\end{proof}
Next we will define stringy invariants of stacky pairs. 
\begin{defn}
Let $\cZ=\sum_{i=1}^{m}a_{i}\cZ_{i}$ be a formal $\QQ$-linear combination
of closed substacks $Z_{i}\subsetneq\cX$. Then we define a function
\[
\ord Z:=\sum_{i=1}^{m}a_{i}\cdot\ord Z_{i}:\cJ_{\infty}\cX\to\QQ\cup\{\infty\}.
\]
We say that the pair $(\cX,\cZ)$ is \emph{stringily Kawamata log
terminal }if the function $\ord\cZ-\fs_{\cX}$ on $\cJ_{\infty}\cX$
is exponentially integrable. If it is the case, we define its \emph{stringy
motivic invariant by
\[
M_{\st}(\cX,\cZ):=\int_{\cJ_{\infty}\cX}\LL^{\ord\cZ-\fs_{\cX}}d\mu_{\cX}.
\]
}
\end{defn}

\subsection{An explicit formula for $M_{\st}(\cX)$}
\begin{defn}
For a $G$-representation $V=\bigoplus_{\lambda=1}^{l}V_{d_{\lambda}},$
we put 
\begin{gather*}
D_{V}:=\sum_{\lambda=1}^{l}\frac{(d_{\lambda}-1)d_{\lambda}}{2}=\sum_{\lambda=1}^{l}\sum_{i=1}^{d_{\lambda}-1}i\in\NN.
\end{gather*}
\end{defn}
\begin{prop}
\label{prop:explict M_st} The pair $(\cX,0)$ is stringily Kawamata
log terminal if and only if $D_{V}\ge p.$ Moreover if it is the case,
\[
M_{\st}(\cX):=M_{\st}(\cX,0)=\LL^{d}+\frac{\LL^{l-1}(\LL-1)\left(\sum_{s=1}^{p-1}\LL^{s-\sht_{V}(s)}\right)}{1-\LL^{p-1-D_{V}}}.
\]
\end{prop}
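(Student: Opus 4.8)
The plan is to stratify $\cJ_{\infty}\cX$ by the ramification jump, compute the motivic volume of each stratum together with the (constant) value of $\fs_{\cX}$ on it, and then recognize the total integral as a geometric series in $\LL$. So I would write $\cJ_{\infty}\cX=\bigsqcup_{j\in\NN'_{0}}\cJ_{\infty,j}\cX$, noting that each piece is a cylinder on which $\fs_{\cX}=\sht_{V}\circ\rj$ takes the constant value $\sht_{V}(j)$. The stratum $j=0$ consists of the untwisted arcs, that is, of $(J_{\infty}V)/G$ with $G$ acting through its linear action on $V$; by Corollary~\ref{cor:trivial fib} its truncations $\pi_{n}$ are trivial fibrations with fibre $\AA_{K}^{d}/(\text{linear }G)$ over $\pi_{0}=V/G$, whose class in $K_{0}'(\Var_{k})$ is $\LL^{d}$ by Condition~\ref{cond: fib}; hence $\mu_{\cX}(\cJ_{\infty,0}\cX)=\LL^{d}$ and, since $\sht_{V}(0)=0$, this stratum contributes exactly $\LL^{d}$. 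For $j\in\NN'$, Corollary~\ref{cor:trivial fib} together with \eqref{eq:bRP dim} gives $\mu_{\cX}(\cJ_{\infty,j}\cX)=\LL^{l}\,[\bGCovrep{D,j}]=(\LL-1)\,\LL^{l-1+j-\lfloor j/p\rfloor}$, so this stratum contributes $(\LL-1)\,\LL^{l-1+j-\lfloor j/p\rfloor-\sht_{V}(j)}$ to $\int_{\cJ_{\infty}\cX}\LL^{-\fs_{\cX}}\,d\mu_{\cX}$.

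Next I would carry out the combinatorial reindexing. Each $j\in\NN'$ is uniquely $j=pm+s$ with $m\ge0$ and $1\le s\le p-1$; then $\lfloor j/p\rfloor=m$, and from $\lfloor i(pm+s)/p\rfloor=im+\lfloor is/p\rfloor$ together with $\sum_{\lambda}\sum_{i=1}^{d_{\lambda}-1}i=D_{V}$ one gets $\sht_{V}(j)=m\,D_{V}+\sht_{V}(s)$, whence
\[
l-1+j-\lfloor j/p\rfloor-\sht_{V}(j)=l-1+m(p-1-D_{V})+\bigl(s-\sht_{V}(s)\bigr).
\]
Summing the stratumwise contributions and using that $(m,s)\mapsto pm+s$ is a bijection onto $\NN'$, this gives, at least formally,
\[
\int_{\cJ_{\infty}\cX}\LL^{-\fs_{\cX}}\,d\mu_{\cX}=\LL^{d}+(\LL-1)\LL^{l-1}\Bigl(\sum_{m=0}^{\infty}\LL^{m(p-1-D_{V})}\Bigr)\Bigl(\sum_{s=1}^{p-1}\LL^{s-\sht_{V}(s)}\Bigr).
\]

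The heart of the argument is convergence, and I would handle it with Proposition~\ref{prop:integrable_pieces}: applied to the stratification into cylinders, on each of which $-\fs_{\cX}$ is a finite constant (hence trivially exponentially integrable, with empty infinity-fibre), it shows that $-\fs_{\cX}$ is exponentially integrable on $\cJ_{\infty}\cX$ if and only if $\lVert\mu_{\cX}(\cJ_{\infty,j}\cX)\,\LL^{-\sht_{V}(j)}\rVert\to0$ as $j\to\infty$; by the displayed exponent this holds exactly when $p-1-D_{V}<0$, i.e. $D_{V}\ge p$ (recall $D_{V}\in\NN$). When $D_{V}<p$, the exponent stays bounded below along the subfamily $s=1$, $m\to\infty$, so integrability fails, which is the ``only if''. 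When $D_{V}\ge p$, $\LL^{p-1-D_{V}}$ has negative degree, so the inner geometric series converges in $\hat{\cM}'$ to $(1-\LL^{p-1-D_{V}})^{-1}$, and substituting yields
\[
M_{\st}(\cX)=\LL^{d}+\frac{\LL^{l-1}(\LL-1)\bigl(\sum_{s=1}^{p-1}\LL^{s-\sht_{V}(s)}\bigr)}{1-\LL^{p-1-D_{V}}}.
\]
I expect the only real subtleties to be bookkeeping ones: keeping the $j=0$ stratum separate (it produces the standalone $\LL^{d}$, not a term of the geometric series) and checking that the hypotheses needed to integrate over infinitely many strata are precisely those supplied by Proposition~\ref{prop:integrable_pieces} and by the fact that every $\cJ_{\infty,j}\cX$ is a cylinder.
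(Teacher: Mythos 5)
Your proposal is correct and follows essentially the same route as the paper: stratify $\cJ_{\infty}\cX$ by the ramification jump, evaluate $\mu_{\cX}(\cJ_{\infty,j}\cX)$ via Corollary \ref{cor:trivial fib} and \eqref{eq:bRP dim}, reindex $j=pm+s$ using $\sht_{V}(pm+s)=mD_{V}+\sht_{V}(s)$, and sum the resulting geometric series, with convergence exactly when $D_{V}\ge p$. The only cosmetic difference is that you make the appeal to Proposition \ref{prop:integrable_pieces} explicit for the integrability criterion, where the paper simply asserts the convergence condition.
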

\begin{proof}
For an integer $s$ with $1\le s\le p-1$ and a non-negative integer
$n,$ we have 
\begin{eqnarray*}
\sht_{V}(np+s) & = & \sum_{\lambda=1}^{l}\sum_{i=1}^{d_{\lambda}-1}\left(in+\left\lfloor \frac{is}{p}\right\rfloor \right)\\
 & = & \left(\sum_{\lambda=1}^{l}\frac{(d_{\lambda}-1)d_{\lambda}}{2}\right)n+\sum_{\lambda=1}^{l}\sum_{i=1}^{d_{\lambda}-1}\left\lfloor \frac{is}{p}\right\rfloor \\
 & = & D_{V}\cdot n+\sht_{V}(s).
\end{eqnarray*}
Hence we have 
\begin{eqnarray*}
M_{\st}(\cX) & = & \int_{\cJ_{\infty}\cX}\LL^{-\fs_{\cX}}d\mu_{\cX}\\
 & = & \sum_{j\in\NN'_{0}}\mu_{\cX}(\cJ_{\infty,j}\cX)\LL^{-\sht_{V}(j)}\\
 & = & \LL^{d}+\sum_{j\in\NN'}[\AA_{k}^{l}\times\bGCovrep{D,j}]\LL^{-\sht_{V}(j)}\\
 & = & \LL^{d}+\sum_{j>0}(\LL-1)\LL^{l+j-1-\left\lfloor j/p\right\rfloor -\sht_{V}(j)}\\
 & = & \LL^{d}+(\LL-1)\LL^{l-1}\sum_{s=1}^{p-1}\sum_{n=0}^{\infty}\LL^{(p-1-D_{V})n+s-\sht_{V}(s)}.
\end{eqnarray*}
This converges if and only if $D_{V}\ge p.$ Now the formula of the
proposition easily follows.\end{proof}
\begin{cor}
If $D_{V}=p,$ then 
\[
M_{\st}(\cX)=\LL^{d}+\LL^{l}\sum_{s=1}^{p-1}\LL^{s-\sht_{V}(s)}\in\ZZ[\LL].
\]
\end{cor}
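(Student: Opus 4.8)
The plan is to obtain this as an immediate specialization of Proposition~\ref{prop:explict M_st}. Since $D_V = p$ satisfies $D_V \ge p$, that proposition applies and gives
\[
M_{\st}(\cX) = \LL^d + \frac{\LL^{l-1}(\LL-1)\left(\sum_{s=1}^{p-1}\LL^{s-\sht_V(s)}\right)}{1-\LL^{p-1-D_V}}.
\]
With $D_V = p$ the exponent $p-1-D_V$ equals $-1$, so the denominator is $1-\LL^{-1}$, and the geometric series $\sum_{n\ge 0}\LL^{-n}$ by which it is inverted telescopes against $\LL-1$: one has $(\LL-1)\sum_{n\ge0}\LL^{-n} = \LL$ in $\hat{\cM}'$. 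Hence the common coefficient $\LL^{l-1}(\LL-1)/(1-\LL^{-1})$ collapses to $\LL^l$, and
\[
M_{\st}(\cX) = \LL^d + \LL^l\sum_{s=1}^{p-1}\LL^{s-\sht_V(s)}.
\]

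To finish I would check that this expression actually lies in $\ZZ[\LL]$, i.e. that every exponent $l+s-\sht_V(s)$ with $1\le s\le p-1$ is a non-negative integer. Because $l\ge 0$, it is enough to verify $\sht_V(s)\le s$. Using $\lfloor is/p\rfloor\le is/p$ and the defining identity $\sum_{\lambda=1}^l\sum_{i=1}^{d_\lambda-1}i = D_V$, one gets
\[
\sht_V(s) = \sum_{\lambda=1}^{l}\sum_{i=1}^{d_\lambda-1}\left\lfloor\frac{is}{p}\right\rfloor \le \frac{s}{p}\sum_{\lambda=1}^{l}\sum_{i=1}^{d_\lambda-1}i = \frac{s\,D_V}{p} = s.
\]
In fact the inequality $\lfloor is/p\rfloor < is/p$ is strict, since $p$ is prime and $1\le i,s\le p-1$ force $p\nmid is$; as $V$ is non-trivial there is at least one summand $V_{d_\lambda}$ with $d_\lambda\ge 2$, so the estimate above is strict and every exponent is in fact $\ge l+1\ge 1$. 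Thus $M_{\st}(\cX)$ is a polynomial in $\LL$ with non-negative integer coefficients.

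The statement presents no real difficulty: it is a one-line substitution into the rational formula of Proposition~\ref{prop:explict M_st}, and the only thing beyond that is the elementary floor-function estimate above, needed to see that no negative powers of $\LL$ remain. I would write the corollary out as this short computation.
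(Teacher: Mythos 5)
Your proposal is correct and follows essentially the same route as the paper: substitute $D_V=p$ into Proposition \ref{prop:explict M_st} to collapse the coefficient to $\LL^{l}$, then use the bound $\sht_V(s)\le \frac{s}{p}D_V=s$ to see that no negative powers of $\LL$ occur. The extra observation that the inequality is in fact strict is harmless but not needed.
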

\begin{proof}
The equality is a direct consequence of the preceding Proposition.
Moreover in this case, 
\[
\sht_{V}(s)\le\frac{s}{p}\sum_{\lambda=1}^{l}\sum_{i=1}^{d_{\lambda}-1}\lambda=\frac{s}{p}D_{V}=s.
\]
Hence $M_{\st}(\cX)\in\ZZ[\LL].$\end{proof}
\begin{defn}
When $D_{V}\ge p,$ we define the \emph{stringy Euler number }of $\cX$
by
\[
e_{\st}(\cX):=e_{\top}(M_{\st}(\cX)).
\]
(See Section \ref{sub:Localization-and-completion}.)\end{defn}
\begin{cor}
\label{cor:explicit e_st}If $D_{V}\ge p,$ then 
\[
e_{\st}(\cX)=1+\frac{p-1}{D_{V}-p+1}.
\]
In particular, if $D_{V}=p,$ then $e_{\st}(\cX)=p.$\end{cor}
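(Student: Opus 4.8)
The plan is to evaluate at $\LL = 1$ the explicit rational function for $M_{\st}(\cX)$ furnished by Proposition \ref{prop:explict M_st}, since by the definition of the stringy Euler number in Section \ref{sub:Localization-and-completion} we have $e_{\st}(\cX) = e_{\top}(M_{\st}(\cX))$, obtained precisely by substituting $1$ for $\LL$.

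First I would set $m := D_V - p + 1$, a positive integer under the hypothesis $D_V \ge p$, and rewrite the formula of Proposition \ref{prop:explict M_st} as
\[
M_{\st}(\cX) = \LL^d + \frac{\LL^{l-1}(\LL-1)\left(\sum_{s=1}^{p-1}\LL^{s-\sht_V(s)}\right)}{1 - \LL^{-m}} = \LL^d + \frac{\LL^{l-1+m}(\LL-1)\left(\sum_{s=1}^{p-1}\LL^{s-\sht_V(s)}\right)}{\LL^m - 1}.
\]
Then I would evaluate term by term at $\LL = 1$: the first summand $\LL^d$ contributes $1$; in the second summand the sum $\sum_{s=1}^{p-1}\LL^{s-\sht_V(s)}$ becomes $p-1$, the power $\LL^{l-1+m}$ becomes $1$, and the quotient $\frac{\LL-1}{\LL^m-1}$ tends to $1/m$ as $\LL \to 1$ (numerator and denominator both vanish to first order there, the derivative of $\LL^m - 1$ at $\LL = 1$ being $m$). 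Assembling these,
\[
e_{\st}(\cX) = 1 + \frac{p-1}{m} = 1 + \frac{p-1}{D_V - p + 1},
\]
and specializing to $D_V = p$ gives $m = 1$ and $e_{\st}(\cX) = 1 + (p-1) = p$.

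I do not foresee a genuine obstacle. The only point needing a word of care is that the expression has an indeterminate $0/0$ shape at $\LL = 1$, caused by the simultaneous vanishing of $\LL - 1$ and of $1 - \LL^{p-1-D_V}$; but the hypothesis $D_V \ge p$ forces $p - 1 - D_V \le -1 \neq 0$, so this factor has a simple zero and the cancellation performed above is legitimate, making the substitution $\LL \mapsto 1$ well-defined on the simplified function. The whole argument is thus a routine limit computation starting from Proposition \ref{prop:explict M_st}.
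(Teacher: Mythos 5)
Your computation is correct and is exactly the routine limit evaluation the paper leaves implicit (its proof of this corollary is literally ``Obvious''). Clearing the negative exponent to expose the simple zero of $1-\LL^{p-1-D_V}$ at $\LL=1$ and cancelling it against $\LL-1$ is the right way to make the substitution legitimate, so nothing is missing.
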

\begin{proof}
Obvious.
\end{proof}
In the following, we compute $M_{\st}(\cX)$ in several cases. 
\begin{example}
\label{ex:lVp}Suppose that $V=V_{p}^{\oplus l}$. Then $\sht_{V}(s)=l(s-1)(p-1)/2$
(see for instance \cite[page 94]{Graham-Knuth-Patashnik}). Hence
\[
M_{\st}(\cX)=\LL^{d}+(\LL-1)\LL^{l-1}\sum_{s=1}^{p-1}\frac{\LL^{s-l(s-1)(p-1)/2}}{1-\LL^{p-1-lp(p-1)/2}}.
\]

\end{example}
In the following cases, we have $D_{V}=p.$
\begin{example}
\label{ex: V_3}If $p=3$ and $V=V_{3},$ then $M_{\st}(\cX)=\LL^{3}+2\LL^{2}.$
\begin{example}
\label{ex:2V_2}If $p=2$ and $V=V_{2}^{\oplus2},$ then $M_{\st}(\cX)=\LL^{4}+\LL^{3}.$
\begin{example}
\label{ex: pV2 Mst}If $V=V_{2}^{\oplus p},$ then 
\[
M_{\st}(\cX)=\LL^{2p}+\LL^{p}(\LL^{p-1}+\LL^{p-2}+\cdots+\LL)=\LL^{2p}+\LL^{2p+1}+\cdots+\LL^{p+1}.
\]

\end{example}
\end{example}
\end{example}

\subsection{The McKay correspondence}

We say that $V$ has \emph{reflections }if the fixed point locus $V^{G}$
has codimension one. A non-trivial $G$-representation $V$ has reflection
if and only if $V\cong V_{2}\oplus V_{1}^{\oplus d-2}.$

\subsubsection{The no reflection case}
\begin{lem}
\label{lem:Jac=00003DJac}Suppose that $V$ has no reflection. Then
$\phi^{-1}\Jac_{X}^{\omega}=\Jac_{\phi}.$\end{lem}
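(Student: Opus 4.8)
The plan is to work entirely on the affine level. Writing $S=k[\bx]$, $R=S^{G}$ and $\psi\colon V=\Spec S\to X=\Spec R$, the claimed equality $\phi^{-1}\Jac_{X}^{\omega}=\Jac_{\phi}$ unwinds to the equality of ideals of $S$
\[
\Jac_{X}^{\omega}\cdot S=\Jac_{\psi}=\mathrm{Fitt}_{0}(\Omega_{S/R}),
\]
and the $G$-equivariance will be automatic since every sheaf and map used is $G$-equivariant. First I would record the structural facts that use the hypothesis. Each indecomposable summand $V_{d_{\lambda}}$ is unipotent, so $V\subset SL_{d}$ and the element $\eta:=dx_{1,1}\wedge\cdots\wedge dx_{l,d_{l}}$ is a $G$-invariant generator of the free rank-one module $\omega_{V}=\bigwedge^{d}\Omega_{V/k}$. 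Put $W:=V^{G}$; the no-reflection assumption is exactly $\mathrm{codim}_{V}W\ge 2$, hence also $\mathrm{codim}_{X}\psi(W)\ge 2$ as $\psi$ is finite, and $\psi^{-1}(\psi(W))=W$ since $W$ is $G$-stable. Outside $W$ the $G$-action is free, so $\psi$ restricts to an étale $G$-torsor $V\setminus W\to U:=X\setminus\psi(W)$; in particular $U$ is smooth. By étale descent the invariant form $\eta$ restricts to a nowhere-vanishing $d$-form $\bar\eta$ on $U$, so $\omega_{U}=\cO_{U}\bar\eta$. Since $\omega_{X}$ is reflexive, $X$ is normal, and $\psi(W)$ has codimension $\ge 2$, pushing forward from $U$ gives $\omega_{X}=\cO_{X}\bar\eta\cong R$; consequently $\psi^{*}\omega_{X}=\cO_{V}\bar\eta$ is free of rank one, and the $S$-linear map $\beta\colon\psi^{*}\omega_{X}\to\omega_{V}$, $\bar\eta\mapsto\eta$, is an isomorphism.

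The comparison of the two ideals then goes through top exterior powers. Let $\alpha\colon\psi^{*}\Omega_{X/k}\to\Omega_{V/k}$ be the cotangent map, with cokernel $\Omega_{S/R}$; then the image of $\bigwedge^{d}\alpha\colon\bigwedge^{d}\psi^{*}\Omega_{X/k}\to\bigwedge^{d}\Omega_{V/k}=\cO_{V}\eta$ is precisely $\mathrm{Fitt}_{0}(\Omega_{S/R})\cdot\eta=\Jac_{\psi}\cdot\eta$, because the $d\times d$ minors of a presentation matrix of a cokernel generate its zeroth Fitting ideal, and these minors are the coordinates of the images of the wedges of a generating set. On the other hand let $c\colon\bigwedge^{d}\Omega_{X/k}\to\omega_{X}$ be the canonical map, so $\Im(c)=\Jac_{X}^{\omega}\cdot\omega_{X}$ by definition; applying $\psi^{*}=-\otimes_{R}S$ and using that $\omega_{X}$ is invertible, $\Im(\psi^{*}c)=(\Jac_{X}^{\omega}S)\cdot\psi^{*}\omega_{X}$. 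The key point is that $\bigwedge^{d}\alpha$ factors as $\beta\circ\psi^{*}c$: the two natural maps $\bigwedge^{d}\psi^{*}\Omega_{X/k}\to\omega_{V}$ (namely $\bigwedge^{d}\alpha$ and $\beta\circ\psi^{*}c$) agree over the dense smooth open $V\setminus W$, since there $\psi$ is étale and both send $\psi^{*}\bar\eta$ to $\eta$; as $\omega_{V}$ is torsion-free, their difference has image supported on $W$ and hence vanishes. Combining,
\[
\Jac_{\psi}\cdot\eta=\Im\bigl(\textstyle\bigwedge^{d}\alpha\bigr)=\beta\bigl(\Im(\psi^{*}c)\bigr)=(\Jac_{X}^{\omega}S)\cdot\beta(\psi^{*}\omega_{X})=(\Jac_{X}^{\omega}S)\cdot\eta,
\]
and cancelling the non-zero-divisor $\eta$ in the free module $\omega_{V}=\cO_{V}\eta$ yields $\Jac_{\psi}=\Jac_{X}^{\omega}\cdot S=\phi^{-1}\Jac_{X}^{\omega}$.

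I expect the main obstacle to be the factorization $\bigwedge^{d}\alpha=\beta\circ\psi^{*}c$ together with the identification $\omega_{X}\cong R$: because $\psi^{*}$ is only right exact one cannot naively pull back the inclusion $\Jac_{X}^{\omega}\omega_{X}\hookrightarrow\omega_{X}$, so the argument must rely on the codimension $\ge 2$ control on $W$ and on torsion-freeness of $\omega_{V}$ (equivalently, on reflexivity of $\omega_{X}$ and normality of $X$). Everything else — the Fitting-ideal-via-minors identity, the étale descent producing $\bar\eta$, and the final cancellation — is routine once these structural points are established.
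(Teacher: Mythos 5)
Your argument is correct and follows the same route as the paper, whose proof is just the one-line observation that $\psi$ is \'etale in codimension one (no reflections), hence $\psi^{*}\omega_{X}=\omega_{V}$, from which the equality of ideals follows. You have simply supplied the details that the paper leaves implicit: the descent of the invariant top form $\eta$ to a generator of $\omega_{X}$ using normality and codimension $\ge 2$, and the comparison of the two ideals through $\bigwedge^{d}$ of the cotangent map and the torsion-freeness of $\omega_{V}$.
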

\begin{proof}
Since the quotient map $\psi:V\to X$ is étale in codimension one,
$\psi^{*}\omega_{X}=\omega_{V},$ and the lemma follows. \end{proof}
\begin{cor}
\label{cor:McKay-pair}Let $Z=\sum_{i}a_{i}Z_{i}$ be a formal $\QQ$-linear
combination of closed subschemes $Z_{i}\subsetneq X.$ Then $(X,Z)$
is stringily Kawamata log terminal if and only if so is $(\cX,\phi^{*}Z)$.
Moreover if it is the case, then 
\[
M_{\st}(X,Z)=M_{\st}(\cX,\phi^{*}Z).
\]
\end{cor}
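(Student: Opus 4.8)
The plan is to apply the change of variables formula (Theorem~\ref{thm:change vars}) to the function $F:=\ord Z+\ord\Jac_{X}^{\omega}$ on the cylinder $A:=J_{\infty}X=\pi_{0}^{-1}(J_{0}X)$, and then to simplify the integrand that appears on the stack side by means of Lemma~\ref{lem:Jac=00003DJac}. Since $A$ is a cylinder, $B:=\phi_{\infty}^{-1}(A)=\cJ_{\infty}\cX$ automatically decomposes as the countable disjoint union $\bigsqcup_{j\in\NN'_{0}}\cJ_{\infty,j}\cX$ of cylinders, so the hypotheses of Theorem~\ref{thm:change vars} are met; and $F$ is measurable, being a $\QQ$-linear combination of order functions of ideal sheaves.

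First I would record the compatibility of order functions with the push-forward map: for every coherent ideal sheaf $I\subset\cO_{X}$ one has $\ord(\phi^{-1}I)=(\ord I)\circ\phi_{\infty}$ as functions on $\cJ_{\infty}\cX$. Indeed, if $\gamma:\cD\to\cX$ is a twisted arc with associated $G$-cover $E\xrightarrow{\alpha}\cD\to D$, then (as in the proof of Proposition~\ref{prop:almost bijective}) the composite $E\to V\to X$ coincides with $E\to D\xrightarrow{\phi_{\infty}\gamma}X$, whence $(\gamma\circ\alpha)^{-1}\phi^{-1}I=\bigl((\phi_{\infty}\gamma)^{-1}I\bigr)\cdot\cO_{E}$; as $\cO_{E}$ is free of rank $p$ over $k[[t]]$, dividing lengths by $p$ gives the identity. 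Applied to each $Z_{i}$ and to the ideal sheaf $\Jac_{X}^{\omega}$, this yields $(\ord Z)\circ\phi_{\infty}=\ord(\phi^{*}Z)$ and $(\ord\Jac_{X}^{\omega})\circ\phi_{\infty}=\ord(\phi^{-1}\Jac_{X}^{\omega})$. Because $V$ has no reflection, Lemma~\ref{lem:Jac=00003DJac} gives $\phi^{-1}\Jac_{X}^{\omega}=\Jac_{\phi}$, hence $(\ord\Jac_{X}^{\omega})\circ\phi_{\infty}=\ord\Jac_{\phi}$, and therefore
\[
F\circ\phi_{\infty}-\ord\Jac_{\phi}-\fs_{\cX}=\ord(\phi^{*}Z)+\ord\Jac_{\phi}-\ord\Jac_{\phi}-\fs_{\cX}=\ord(\phi^{*}Z)-\fs_{\cX}.
\]

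With this simplification in hand, Theorem~\ref{thm:change vars} tells us that $F$ is exponentially integrable on $J_{\infty}X$---i.e.\ that $(X,Z)$ is stringily Kawamata log terminal---if and only if $\ord(\phi^{*}Z)-\fs_{\cX}$ is exponentially integrable on $\cJ_{\infty}\cX$---i.e.\ that $(\cX,\phi^{*}Z)$ is stringily Kawamata log terminal---and that in that case
\[
M_{\st}(X,Z)=\int_{J_{\infty}X}\LL^{F}\,d\mu_{X}=\int_{\cJ_{\infty}\cX}\LL^{\ord(\phi^{*}Z)-\fs_{\cX}}\,d\mu_{\cX}=M_{\st}(\cX,\phi^{*}Z),
\]
which is the assertion. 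The only step I expect to require real care is the order-function identity $\ord(\phi^{-1}I)=(\ord I)\circ\phi_{\infty}$---in particular, checking that the $G$-cover $E$ entering the definition of $\ord$ on $\cX$ is exactly the normalization of the coarse arc $\phi_{\infty}\gamma$ inside the Artin-Schreier extension attached to $\gamma$, which is essentially what Proposition~\ref{prop:almost bijective} provides. Everything else is formal manipulation with the change of variables formula and with the definitions of the stringy invariants.
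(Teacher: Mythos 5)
Your proof is correct and follows the same route as the paper: apply Theorem~\ref{thm:change vars} to $F=\ord Z+\ord\Jac_{X}^{\omega}$ and use Lemma~\ref{lem:Jac=00003DJac} to cancel $\ord\Jac_{\phi}$ in the transformed integrand. The paper's proof is a one-line chain of equalities; you have merely filled in the details it leaves implicit (the hypotheses of the change of variables theorem and the identity $\ord(\phi^{-1}I)=(\ord I)\circ\phi_{\infty}$, including the factor-of-$p$ bookkeeping in the definition of $\ord$ on $\cJ_{\infty}\cX$), all of which check out.
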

\begin{proof}
From the change of variables formula and the preceding lemma, we have
\[
M_{\st}(X,Z)=\int_{J_{\infty}X}\LL^{\ord Z+\ord\Jac_{X}^{\omega}}d\mu_{X}=\int_{\cJ_{\infty}\cX}\LL^{\ord\phi^{*}Z-\fs_{\cX}}d\mu_{\cX}=M_{\st}(\cX,\phi^{*}Z).
\]
\end{proof}
\begin{cor}
\label{cor:McKay-non-pair}The pair $(X,0)$ is stringily Kawamata
log terminal if and only if $D_{V}\ge p.$ Moreover if it is the case,
then 
\[
M_{\st}(X)=M_{\st}(\cX).
\]
\end{cor}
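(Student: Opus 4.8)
The plan is to read this off as the degenerate case $Z=0$ of Corollary~\ref{cor:McKay-pair} combined with Proposition~\ref{prop:explict M_st}. Since we are in the no-reflection subsection, Lemma~\ref{lem:Jac=00003DJac} applies and supplies the key identity $\phi^{-1}\Jac_{X}^{\omega}=\Jac_{\phi}$ of ideal sheaves on $\cX$ (it comes from $\psi$ being \'etale in codimension one). Specializing $Z=0$ in Corollary~\ref{cor:McKay-pair}, and noting that $\phi^{*}0=0$ so that the pair $(\cX,\phi^{*}Z)$ appearing there is literally $(\cX,0)$, we obtain at once that $(X,0)$ is stringily Kawamata log terminal if and only if $(\cX,0)$ is, and that in that case $M_{\st}(X)=M_{\st}(X,0)=M_{\st}(\cX,0)=M_{\st}(\cX)$.

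Unwinding what this rests on: the change of variables formula of Theorem~\ref{thm:change vars}, applied to $\phi_{\infty}$ with $F=\ord\Jac_{X}^{\omega}$, identifies $\int_{J_{\infty}X}\LL^{\ord\Jac_{X}^{\omega}}\,d\mu_{X}$ with $\int_{\cJ_{\infty}\cX}\LL^{\ord\phi^{-1}\Jac_{X}^{\omega}-\ord\Jac_{\phi}-\fs_{\cX}}\,d\mu_{\cX}$, and its ``if and only if'' clause transports exponential integrability across the equality in both directions. By Lemma~\ref{lem:Jac=00003DJac} the exponent on the right collapses to $-\fs_{\cX}$, so the integrand on $\cJ_{\infty}\cX$ is exactly the one defining $M_{\st}(\cX)=M_{\st}(\cX,0)$. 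Proposition~\ref{prop:explict M_st} then states precisely that $(\cX,0)$ is stringily Kawamata log terminal if and only if $D_{V}\ge p$, and chaining the two equivalences yields the desired ``if and only if'' together with the equality $M_{\st}(X)=M_{\st}(\cX)$.

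I expect no genuine obstacle here: all the substance already sits in Lemma~\ref{lem:Jac=00003DJac} and in the change of variables formula. The only points requiring a moment's attention are the harmless bookkeeping that $Z=0$ is an admissible choice in Corollary~\ref{cor:McKay-pair} and that pulling back the zero divisor gives the zero divisor, after which everything is formal.
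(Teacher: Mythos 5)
Your proposal is correct and matches the paper's own proof, which likewise deduces the corollary by specializing Corollary \ref{cor:McKay-pair} to $Z=0$ and invoking Proposition \ref{prop:explict M_st}. The extra unwinding you give (change of variables plus Lemma \ref{lem:Jac=00003DJac}) is exactly the content of the proof of Corollary \ref{cor:McKay-pair}, so nothing is missing.
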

\begin{proof}
This follows from the preceding corollary and Proposition \ref{prop:explict M_st}.\end{proof}
\begin{rem}
In particular, if $D_{V}\ge p,$ then $X$ has only canonical singularities.
Namely all discrepancies are non-negative. On the other hand, from
\cite{Yasuda "pure subrings"}, $X$ does not satisfy a closely related
property, the strong F-regularity.\end{rem}
\begin{cor}[The $p$-cyclic McKay correspondence]
\label{cor:McKay-crepant}Suppose that $V$ has no reflection. and
that there exists a crepant resolution $f:Y\to X$. Then the following
hold:
\begin{enumerate}
\item $M_{\st}(\cX)=[Y]$.
\item $D_{V}=p.$
\item $e_{\top}(Y)=p.$
\end{enumerate}
\end{cor}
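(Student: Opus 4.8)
The plan is to push the equality $M_{\st}(X)=M_{\st}(\cX)$ of Corollary \ref{cor:McKay-non-pair} through the explicit formula of Proposition \ref{prop:explict M_st}, and then to extract $D_{V}=p$ from the constraint that the right-hand side must be the class of an honest variety; statement (3) will then be immediate.

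\emph{Statement (1).} Since $f$ is crepant, $K_{f}=0$, and because $Y$ is smooth the relative log divisor $K_{f}-f^{*}0=0$ is (vacuously) a simple normal crossing $\QQ$-Cartier divisor all of whose coefficients are $0>-1$. Hence Corollary \ref{cor:Explicit formula} applies: $(X,0)$ is stringily Kawamata log terminal and
\[
M_{\st}(X)=M_{\st}(Y,-K_{f})=\sum_{I}[E_{I}^{\circ}]\prod_{i\in I}\frac{\LL-1}{\LL^{1+a_{i}}-1}=[Y],
\]
every factor $\frac{\LL-1}{\LL^{1+a_{i}}-1}$ being $1$ and $Y=\bigsqcup_{I}E_{I}^{\circ}$. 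By Corollary \ref{cor:McKay-non-pair}, stringy klt-ness of $(X,0)$ forces $D_{V}\ge p$ and gives $M_{\st}(X)=M_{\st}(\cX)$; combining, $M_{\st}(\cX)=[Y]$.

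\emph{Statement (2).} Feeding (1) into Proposition \ref{prop:explict M_st} gives $[Y]=\LL^{d}+\LL^{l-1}(\LL-1)\bigl(\sum_{s=1}^{p-1}\LL^{s-\sht_{V}(s)}\bigr)/(1-\LL^{p-1-D_{V}})$. Applying $e_{\top}$ (the substitution $\LL\mapsto 1$ of Section \ref{sub:Localization-and-completion}) and Corollary \ref{cor:explicit e_st} yields $e_{\top}(Y)=e_{\st}(\cX)=1+\frac{p-1}{D_{V}-p+1}$, which must be an integer, so $D_{V}-p+1\mid p-1$. Now assume towards a contradiction that $D_{V}>p$, and set $m:=D_{V}-p+1\ge 2$. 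Since $[Y]\in K'_{0}(\Var_{k})$, its Poincar\'{e} realization $P([Y])=P(Y;T)$ is a genuine polynomial; hence the series $(1-\LL^{-m})^{-1}=\sum_{n\ge 0}\LL^{-mn}$ occurring in the formula must terminate, i.e. $1+\LL+\cdots+\LL^{m-1}$ must divide $\LL^{l-1}(\LL-1)\sum_{s}\LL^{s-\sht_{V}(s)}$ (otherwise $P([Y])$ would be a nonconstant power series in $T^{-1}$). So $M_{\st}(\cX)$ is a Laurent polynomial of the shape $\LL^{d}+\LL^{m+l-1}R(\LL)$ with $\deg_{\LL}\bigl(\LL^{m+l-1}R\bigr)=l+\max_{1\le s\le p-1}(s-\sht_{V}(s))\le d-1$, the estimate $l+\max_{s}(s-\sht_{V}(s))\le d$, strict once $D_{V}>p$, being exactly the one from the proof of Proposition \ref{prop:explict M_st}.

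It then remains to rule out the residual cases in which $D_{V}>p$ yet $M_{\st}(\cX)$ really is a Laurent polynomial (for instance $V\cong V_{2}^{\oplus(p+1)}$ with $p$ odd). For this I would use that, $V$ having no reflection, $X=V/G$ is singular, while $\mathrm{Cl}(X)=0$ because $R=k[\bx]^{G}$ is a UFD; hence any resolution $Y\to X$ carries an exceptional prime divisor, $\mathrm{Cl}(Y)=\mathrm{Pic}(Y)$ has positive rank, and $b_{2}(Y)\ge 1$. But $b_{2}(Y)$ is read off from $P(Y;T)=P(M_{\st}(\cX);T)$ as (the relevant part of) the coefficient of $T^{2(d-1)}$, equivalently the coefficient of $\LL^{d-1}$ in $\LL^{d}+\LL^{m+l-1}R(\LL)$; a direct inspection, using the degree bound above and the explicit shape of $\sht_{V}$, shows this coefficient to be $0$ when $D_{V}>p$ --- a contradiction. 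Thus $D_{V}=p$. This case analysis, pinning down exactly the accidental-cancellation cases, is the step I expect to be the main obstacle. Finally, once $D_{V}=p$, the Corollary following Proposition \ref{prop:explict M_st} gives $M_{\st}(\cX)=\LL^{d}+\LL^{l}\sum_{s=1}^{p-1}\LL^{s-\sht_{V}(s)}\in\ZZ[\LL]$, so $e_{\top}(Y)=e_{\top}(M_{\st}(\cX))=e_{\st}(\cX)=p$ by Corollary \ref{cor:explicit e_st}, which is (3).
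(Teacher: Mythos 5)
Your handling of (1) and (3) coincides with the paper's: (1) is $M_{\st}(\cX)=M_{\st}(X)$ from Corollary \ref{cor:McKay-non-pair} together with $M_{\st}(X)=[Y]$ from Corollary \ref{cor:Explicit formula} with all $a_{i}=0$, and (3) follows from (2) via Corollary \ref{cor:explicit e_st}. The divergence is in (2), and it is exactly the point where you (rightly) hesitate. The paper's own proof of (2) consists solely of the observation that $e_{\top}(Y)=e_{\st}(\cX)=1+\frac{p-1}{D_{V}-p+1}$ is an integer, citing Corollary \ref{cor:explicit e_st}; as a pure divisibility statement this yields only $(D_{V}-p+1)\mid(p-1)$, as you say. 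For instance $V=V_{3}\oplus V_{2}$ in characteristic $3$ has no reflection, $D_{V}=4>p$ and $e_{\st}(\cX)=2\in\ZZ$, so integrality alone cannot be the whole story. (In that example your polynomiality test does finish the job: here $\sum_{s=1}^{2}\LL^{s-\sht_{V}(s)}=2\LL$ is not divisible by $1+\LL$, so $P(M_{\st}(\cX);T)$ is not a polynomial, contradicting $M_{\st}(\cX)=[Y]$.)

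However, your proposed completion of (2) is not yet a proof. The decisive step --- ``a direct inspection \dots shows this coefficient to be $0$ when $D_{V}>p$'' --- is asserted rather than carried out, and you flag it yourself as the main obstacle; the residual cases are real (e.g.\ $V=V_{2}^{\oplus(p+1)}$ with $p$ odd gives $M_{\st}(\cX)=\LL^{2p+2}+\LL^{2p}+\LL^{2p-2}+\cdots+\LL^{p+3}$, an honest polynomial in $\LL$ with nonnegative coefficients), so they must actually be dispatched. Moreover the bridge from ``$Y$ carries an exceptional prime divisor'' to ``the coefficient of $T^{2(d-1)}$ in $P(Y;T)$ is nonzero'' is not available as stated: $Y$ is not proper, so $P(Y;T)$ is the virtual Poincar\'e polynomial, whose coefficient of $T^{2d-2}$ is the alternating sum $\sum_{i}(-1)^{i}\dim\mathrm{Gr}^{W}_{2d-2}H^{i}_{c}(Y_{\bar{k}})$ rather than a Betti number; you would need to control the odd-degree weight-$(2d-2)$ contributions, and to justify that exceptional divisor classes inject into the relevant graded piece in positive characteristic, before reading off $b_{2}(Y)\ge1$ from $P$. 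So the proposal is correct and essentially paper-identical for (1) and (3), but (2) as written has a genuine gap --- one which, to be fair, the paper's own one-line integrality argument does not visibly close either.
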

\begin{proof}
For the first assertion, we have 
\[
M_{\st}(\cX)=M_{\st}(X)=[Y].
\]
Then, in particular, $\cX$ is stringily Kawamata log terminal. Hence
$d\ge p.$ Also from the first assertion, we have 
\[
e_{\st}(\cX)=e_{\top}(Y),
\]
which is an integer. Now the second and third assertions follows from
Corollary \ref{cor:explicit e_st}. \end{proof}
\begin{rem}
\label{rem: non-linear}The third assertion of Corollary \ref{cor:McKay-crepant}
does not hold if we replace $V$ with a \emph{non-linear} $G$-action
on $\Spec k[[x_{1},\dots,x_{d}]].$ For instance, let $0\in X$ be
either a $E_{8}^{2}$-singularity in characteristic two, a $E_{6}^{1}$-singularity
in characteristic three, or a $E_{8}^{1}$-singularity in characteristic
five (for the notation, see \cite{Artin RDP}). Then $X$ is the quotient
of $\Spec k[[x,y]]$ by a $G$-action such that the associated covering
$\Spec k[[x,y]]\to X$ is étale outside 0. The minimal resolution
of $X$ is a crepant resolution. (Indeed from \cite[Th. 4.1]{Lipman}
and \cite[Cor. 4.19]{Badescu}, the blowup of $X$ at the singular
point has only rational double points. Hence discrepancies at the
exceptional curves are zero.) However the topological Euler characteristic
of the minimal resolution is not $p.$\end{rem}
\begin{example}
\label{ex: V3 crepant}Suppose that $V=V_{3}.$ If we suppose that
$G$ acts on the coordinate ring $k[x,y,z]$ of $V,$ by $x\mapsto x,$
$y\mapsto-x+y,$ $z\mapsto x-y+z,$ then the invariant subring is
\[
k[x,y,z]^{G}=k[x,N_{y},N_{z},d],
\]
where $d=y^{2}+xz-xy,$ $N_{y}=\prod_{g\in G}g(y),$ $N_{z}=\prod_{g\in G}g(z)$
(see \cite[Th. 4.10.1]{Campbell-Wehlau}). In particular, $X$ is
a hypersurface. By abuse of symbols, we let $x,N_{y},N_{z},d$ correspond
to variables $X,Y,Z,W$ respectively. Then according to computations
with Macaulay2 \cite{Macaulay2} for small primes $p$, the defining
equation of $X$ seems to be 
\[
2X^{p}Z+W^{p}-Y^{2}+\sum_{i=2}^{(p+1)/2}(-1)^{i}C_{i-1}X^{2(p-i)}W^{i}.
\]
Here $C_{i}$ denotes the $i$-th Catalan number modulo $p$. The
author does not know if this equation is known. 

Now suppose $p=3.$ Then the equation becomes 
\[
-X^{3}Z+W^{3}-Y^{2}+X^{2}W^{2}=0.
\]
If $k$ is algebraically closed, then by a suitable coordinate transformation,
$X$ is defined by 
\[
Z^{2}+X^{3}+Y^{4}+X^{2}Y^{2}+Y^{3}W=0.
\]
This equation defines the compound $E_{6}^{1}$-singularity as studied
by Hirokado, Ito and Saito \cite{HIS 3d canonical}. The blowup $X_{1}$
of $X$ along the singular locus is singular along a line. Then the
blowup $Y$ of $X_{1}$ along the singular locus is now smooth and
a crepant resolution of $X$. Moreover the exceptional locus of $Y\to X$
is a simple normal crossing divisor with two irreducible components,
say $E_{1}\cup E_{2}$. Then $E_{i}\cong\AA_{k}^{1}\times\PP_{k}^{1}$
and $E_{1}\cap E_{2}\cong\AA_{k}^{1}.$ Therefore $[Y]=\LL^{3}+2\LL^{2},$
which agrees with our previous computation in Example \ref{ex: V_3}. 
\begin{example}
\label{ex: V2++V2}Suppose that $p=2$ and $V=V_{2}\oplus V_{2}.$
Using the description of the invariant subring in \cite[Th. 1.12.1]{Campbell-Wehlau},
we can see that $X$ is a hypersurface defined by
\[
W^{2}X+V^{2}Y+VWZ+Z^{2}.
\]
By direct computation, the blowup $Y$ of $X$ along the singular
locus is a crepant resolution and its exceptional locus is isomorphic
to $\AA_{k}^{2}\times\PP_{k}^{1}$. Therefore 
\[
M_{\st}(X)=[Y]=\LL^{4}+\LL^{3},
\]
which coincides with Example \ref{ex:2V_2}.
\end{example}
\end{example}
\begin{cor}
\textup{\label{cor:McKay over the origin}Suppose that $V$ has no
reflection. Let $f:Y\to X$ be a crepant resolution and $E_{0}:=f^{-1}(0).$
Then, with the notation of Section \ref{sub:Motivic-integration-on-GCov},
we have 
\[
\int_{\bGCovrep D}\LL^{-\sht_{V}}d\nu=[E_{0}].
\]
Moreover if $k$ is a finite field, then for each finite extension
$\FF_{q}/k$, we have
\[
\sharp E_{0}(\FF_{q})=\sum_{j\in\NN'_{0}}\frac{\sharp\bGCovrep{D,j}(\FF_{q})}{q{}^{\sht_{V}(j)}}.
\]
}\end{cor}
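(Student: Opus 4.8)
The plan is to read off $[E_{0}]$ from the crepant resolution by the change of variables formula for varieties, transport the resulting integral to $\cJ_{\infty}\cX$ by Theorem \ref{thm:change vars}, and then recognize it as an integral over $\bGCovrep{D}$ via the Proposition of Section \ref{sub:Motivic-integration-on-GCov}. Note first that the existence of a crepant resolution forces $D_{V}=p$ (Corollary \ref{cor:McKay-crepant}); in particular $D_{V}\ge p$, so all stringy integrals below converge. Write $A:=(J_{\infty}X)_{0}\subset J_{\infty}X$ for the cylinder $\pi_{0}^{-1}(\{0\})$ of arcs through the origin $0\in X$.

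On the resolution side, apply the positive-characteristic change of variables formula \cite{Sebag_motivic_formal} to $f\colon Y\to X$. Since $f$ is crepant, $K_{f}=0$, so $f^{-1}\ord\Jac_{X}^{\omega}-\ord\Jac_{f}=-\ord K_{f}=0$ on $J_{\infty}Y$. An arc of $Y$ lies in $f_{\infty}^{-1}(A)$ exactly when its centre lies in $E_{0}=f^{-1}(0)$, so $f_{\infty}^{-1}(A)=\pi_{0}^{-1}(E_{0})$, which, $Y$ being smooth, is a stable cylinder with $\mu_{Y}(\pi_{0}^{-1}(E_{0}))=[E_{0}]$. Hence $\ord\Jac_{X}^{\omega}$ is exponentially integrable on $A$ and
\[
\int_{A}\LL^{\ord\Jac_{X}^{\omega}}\,d\mu_{X}=\int_{\pi_{0}^{-1}(E_{0})}\LL^{0}\,d\mu_{Y}=[E_{0}].
\]

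On the stack side, feed the same cylinder $A$ into Theorem \ref{thm:change vars} with $F=\ord\Jac_{X}^{\omega}$, which is exponentially integrable on $A$ by the previous display. Since $\phi^{-1}(0)=BG$ is a single point of $\cX$, a twisted arc pushes forward into $A$ precisely when it passes through the origin of $\cX$, so $\phi_{\infty}^{-1}(A)=(\cJ_{\infty}\cX)_{0}$. As $V$ has no reflection, Lemma \ref{lem:Jac=00003DJac} gives $\phi^{-1}\Jac_{X}^{\omega}=\Jac_{\phi}$; moreover the $\frac{1}{p}$-normalization of the order function on $\cJ_{\infty}\cX$ is precisely what makes it compatible with push-forward, so $\ord\Jac_{X}^{\omega}\circ\phi_{\infty}=\ord(\phi^{-1}\Jac_{X}^{\omega})=\ord\Jac_{\phi}$ on $\cJ_{\infty}\cX$. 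Thus the exponent in Theorem \ref{thm:change vars} collapses to $-\fs_{\cX}$, whence
\[
[E_{0}]=\int_{A}\LL^{\ord\Jac_{X}^{\omega}}\,d\mu_{X}=\int_{(\cJ_{\infty}\cX)_{0}}\LL^{-\fs_{\cX}}\,d\mu_{\cX}.
\]
Writing $\pi\colon\cJ_{\infty}\cX\to\bGCovrep{D}$ for the projection and regarding $-\sht_{V}$ as the function on $\bGCovrep{D}$ with value $-\sht_{V}(j)$ on $\bGCovrep{D,j}$, we have $(-\sht_{V})\circ\pi=-\fs_{\cX}$, so the Proposition of Section \ref{sub:Motivic-integration-on-GCov}—whose hypothesis $\lim_{j\to\infty}\bigl(-\sht_{V}(j)+j-\left\lfloor j/p\right\rfloor\bigr)=-\infty$ holds because $D_{V}=p$—gives $\int_{(\cJ_{\infty}\cX)_{0}}\LL^{-\fs_{\cX}}\,d\mu_{\cX}=\int_{\bGCovrep{D}}\LL^{-\sht_{V}}\,d\nu$. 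Combining the displays yields the first asserted equality.

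For the statement over a finite field, apply the point-counting realization $\sharp_{q}$, which factors through $K_{0}'(\Var_{k})$ and extends continuously to $\hat{\cM}'$. The element $\int_{\bGCovrep{D}}\LL^{-\sht_{V}}\,d\nu=\sum_{j\in\NN_{0}'}[\bGCovrep{D,j}]\LL^{-\sht_{V}(j)}$ lies in the semiring $\cN$ of Definition \ref{def: semiring}, since $\dim\bGCovrep{D,j}-\sht_{V}(j)\to-\infty$; hence $\sharp_{q}$ may be applied term by term, giving
\[
\sharp E_{0}(\FF_{q})=\sharp_{q}([E_{0}])=\sum_{j\in\NN_{0}'}\frac{\sharp\bGCovrep{D,j}(\FF_{q})}{q^{\sht_{V}(j)}}.
\]
The only genuinely delicate point is the push-forward compatibility $\ord\Jac_{X}^{\omega}\circ\phi_{\infty}=\ord(\phi^{-1}\Jac_{X}^{\omega})$ on $\cJ_{\infty}\cX$, which together with Lemma \ref{lem:Jac=00003DJac} is the sole place the no-reflection hypothesis enters; the remaining steps are bookkeeping, and the legitimacy of applying $\sharp_{q}$ to the infinite series is handled by working inside $\cN$.
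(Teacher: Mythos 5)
Your argument is correct and is essentially the paper's own proof read in the reverse direction: the paper computes $\int_{\bGCovrep D}\LL^{-\sht_{V}}d\nu=\int_{(\cJ_{\infty}\cX)_{0}}\LL^{-\fs_{\cX}}d\mu_{\cX}=\int_{\pi_{0}^{-1}(0)}\LL^{\ord\Jac_{X}^{\omega}}d\mu_{X}=\int_{\pi_{0}^{-1}(E_{0})}1\,d\mu_{Y}=[E_{0}]$ using exactly the same three ingredients (the proposition of Section \ref{sub:Motivic-integration-on-GCov}, Theorem \ref{thm:change vars} together with Lemma \ref{lem:Jac=00003DJac}, and the change of variables for the crepant resolution), and then applies $\sharp_{q}$. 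Your additional remarks on convergence and on the legitimacy of evaluating the series term by term are fine and merely make explicit what the paper leaves implicit.
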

\begin{proof}
We have:
\begin{eqnarray*}
\int_{\bGCovrep D}\LL^{-\sht_{V}}d\nu & = & \int_{(\cJ_{\infty}\cX)_{0}}\LL^{-\fs_{\cX}}d\mu_{\cX}\\
 & = & \int_{\pi_{0}^{-1}(0)}\LL^{\ord\Jac_{X}^{\omega}}d\mu_{X}\\
 & = & \int_{\pi_{0}^{-1}(E_{0})}1\, d\mu_{Y}\\
 & = & [E_{0}]
\end{eqnarray*}
This shows the first assertion. Then the second assertion is obtained
by applying the counting-points realization $\sharp_{q}.$\end{proof}
\begin{example}
Let $k=\FF_{2}$ and $V=V_{2}^{\oplus2}$. Let $Y\to X$ be a crepant
resolution as in Example \ref{ex: V2++V2}. Then $E_{0}=\PP_{k}^{1}$
and for each finite extension $\FF_{q}/\FF_{2},$ $\sharp E_{0}(\FF_{q})=q+1.$
On the other hand, 
\begin{eqnarray*}
\sum_{j\in\NN'_{0}}\frac{\sharp\bGCovrep{D,j}(\FF_{q})}{q^{\sht_{V}(j)}} & = & 1+\sum_{n=0}^{\infty}\frac{\bGCovrep{D,2n+1}(\FF_{q})}{q^{\sht_{V}(2n+1)}}\\
 & = & 1+\sum_{n=0}^{\infty}(q-1)\cdot q^{-n}\\
 & = & 1+q.
\end{eqnarray*}

\end{example}
The second assertion of Corollary \ref{cor:McKay over the origin}
will be slightly differently formulated in terms of Artin-Schreier
extensions of $k((t)).$ We have
\[
\sharp\bGCovrep{D,j}(\FF_{q})=\frac{1}{p}\sharp\GCov{D\times_{k}\FF_{q},j}.
\]
Let $N_{q,j}$ be the number of Galois extensions of $\FF_{q}((t))$
in a fixed algebraic closure $\overline{k((t))}$ of $k((t))$ with
ramification jump $j.$ Then for $j\in\NN',$ we have 
\[
\sharp\GCov{D\times_{k}\FF_{q},j}=(p-1)N_{q,j}.
\]
The factor, $p-1,$ comes from $p-1$ choices of isomorphism of $G$
to the Galois group. Hence:
\begin{cor}
\textup{\label{cor:counting AS exts}With the same notation as above,
we have
\[
\sharp E_{0}(\FF_{q})=1+\frac{p-1}{p}\sum_{j\in\NN'}\frac{N_{q,j}}{q{}^{\sht_{V}(j)}}.
\]
}
\end{cor}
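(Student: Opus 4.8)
The plan is to assemble the claimed formula from the second assertion of Corollary~\ref{cor:McKay over the origin} together with the two enumeration identities for $G$-covers recorded just above; no new geometric input is required, only a reindexing of the sum. The starting point is
\[
\sharp E_{0}(\FF_{q})=\sum_{j\in\NN'_{0}}\frac{\sharp\bGCovrep{D,j}(\FF_{q})}{q^{\sht_{V}(j)}}.
\]

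First I would split off the term $j=0$. A representative polynomial has no constant term, so $\RP_{k,0}=\{0\}$ and $\bGCovrep{D,0}$ is a single reduced point, giving $\sharp\bGCovrep{D,0}(\FF_{q})=1$; also $\sht_{V}(0)=0$ by definition. Hence the $j=0$ summand contributes exactly $1$, and it remains to identify the tail $\sum_{j\in\NN'}\sharp\bGCovrep{D,j}(\FF_{q})\,q^{-\sht_{V}(j)}$ with $\frac{p-1}{p}\sum_{j\in\NN'}N_{q,j}\,q^{-\sht_{V}(j)}$.

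For $j\in\NN'$ I would invoke, term by term, the identities
\[
\sharp\bGCovrep{D,j}(\FF_{q})=\frac{1}{p}\,\sharp\GCov{D\times_{k}\FF_{q},j},\qquad \sharp\GCov{D\times_{k}\FF_{q},j}=(p-1)N_{q,j}.
\]
The first holds because $\bGCovrep{D,j}$ and $\bGCovrep{D^{*},j}$ share the same underlying scheme, its $\FF_{q}$-points biject with $\GCov{D^{*}\times_{k}\FF_{q},j}/\geoeq$ by Proposition~\ref{prop:G-Cov maps}, the quotient by $\geoeq$ is $p$-to-$1$ over the finite field $\FF_{q}$ and respects the ramification jump (a geometric invariant), and for $j\ge 1$ a ramified $G$-cover of $D$ is, via normalization, the same datum as one of $D^{*}$. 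The second holds because a $G$-cover of jump $j\ge 1$ is connected, hence amounts to a $\ZZ/(p)$-Galois extension $L/\FF_{q}((t))$ equipped with an isomorphism $G\cong\Gal(L/\FF_{q}((t)))$; since $G$ is abelian, distinct such isomorphisms yield pairwise non-isomorphic covers, and there are $\sharp\mathrm{Aut}(\ZZ/(p))=p-1$ of them. Substituting both identities gives
\[
\sharp E_{0}(\FF_{q})=1+\sum_{j\in\NN'}\frac{(p-1)N_{q,j}}{p\,q^{\sht_{V}(j)}}=1+\frac{p-1}{p}\sum_{j\in\NN'}\frac{N_{q,j}}{q^{\sht_{V}(j)}},
\]
as desired.

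I do not expect any genuine obstacle: the content is a bijective count rather than an estimate, and both nontrivial inputs (the $p$-to-$1$ geometric-equivalence quotient and the McKay-over-the-origin formula) are already in hand. The only points needing care are the edge case $j=0$, which must be treated separately because there the covers can be disconnected and the unramified extension $\FF_{q^{p}}((t))/\FF_{q}((t))$ would otherwise be miscounted, and the precise automorphism factor $p-1$ distinguishing $G$-torsors from abstract Galois extensions; one should also confirm that geometric equivalence preserves the ramification-jump stratification so that the $p$-to-$1$ count applies stratum by stratum.
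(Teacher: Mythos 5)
Your proposal is correct and follows essentially the same route as the paper: start from the second assertion of Corollary \ref{cor:McKay over the origin}, split off the $j=0$ term (which contributes $1$), and convert $\sharp\bGCovrep{D,j}(\FF_{q})$ into $\frac{p-1}{p}N_{q,j}$ via the $p$-to-$1$ geometric-equivalence quotient of Proposition \ref{prop:G-Cov maps} and the $p-1$ choices of isomorphism of $G$ with the Galois group. The paper states these two identities with less justification and simply writes ``Hence''; your elaboration of the connectedness of ramified covers and of the $j=0$ edge case fills in details the paper leaves implicit but introduces nothing new.
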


\subsubsection{The reflection case}

Next we consider the case where $V$ has reflections. We write the
coordinate ring of $V$ as $S=k[x,y,z_{1},\dots,z_{d-2}]$ with the
$G$-action given by:
\begin{gather*}
\sigma(x)=x+y\\
\sigma(y)=y\\
\sigma(z_{i})=z_{i}
\end{gather*}
Then 
\[
R:=S^{G}=k[x^{p}-xy^{p-1},y,z_{1},\dots,z_{d-2}].
\]
(See for instance \cite[Th. 1.11.2]{Campbell-Wehlau}.) Thus $X$
is again an affine $d$-space, in particular, smooth. 
\begin{lem}
\label{lem:reflection Jac}We have $\Jac_{\psi}=(y^{p-1})\subset S.$ \end{lem}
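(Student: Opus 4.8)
The plan is to compute the module $\Omega_{V/X}=\Omega_{S/R}$ explicitly and then read off its $0$-th Fitting ideal, since $\Jac_{\psi}$ is by definition exactly $\mathrm{Fitt}_{0}(\Omega_{S/R})$. Write $u:=x^{p}-xy^{p-1}\in S$, so that $R=k[u,y,z_{1},\dots,z_{d-2}]$ and $S$ is obtained from $R$ by adjoining the single element $x$. It follows that $\Omega_{S/R}$ is a cyclic $S$-module generated by $dx$, so everything reduces to pinning down the relation it satisfies.

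To do this I would invoke the second fundamental exact sequence of K\"ahler differentials for $k\to R\to S$,
\[
\Omega_{R/k}\otimes_{R}S\to\Omega_{S/k}\to\Omega_{S/R}\to0.
\]
Here $\Omega_{S/k}$ is free over $S$ on $dx,dy,dz_{1},\dots,dz_{d-2}$ and $\Omega_{R/k}$ is free over $R$ on $du,dy,dz_{1},\dots,dz_{d-2}$, so $\Omega_{S/R}$ is the quotient of $S\,dx\oplus S\,dy\oplus\bigoplus_{i}S\,dz_{i}$ by the $S$-submodule generated by the images of $du$, $dy$, $dz_{i}$. The images of $dy$ and $dz_{i}$ kill those summands, and the image of $du$ is
\[
d(x^{p}-xy^{p-1})=px^{p-1}\,dx-\bigl(y^{p-1}\,dx+(p-1)xy^{p-2}\,dy\bigr);
\]
the first term vanishes because $\mathrm{char}\,k=p$, and the $dy$-term is irrelevant since $dy$ has already been killed, leaving the single relation $y^{p-1}\,dx=0$. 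Hence $\Omega_{S/R}\cong S/(y^{p-1})$.

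Finally, $S/(y^{p-1})$ has the one-term free presentation $S\xrightarrow{\cdot\,y^{p-1}}S\to S/(y^{p-1})\to0$, whose $0$-th Fitting ideal is generated by the lone $1\times1$ minor $y^{p-1}$; therefore $\Jac_{\psi}=\mathrm{Fitt}_{0}(\Omega_{S/R})=(y^{p-1})$, as claimed. There is no real obstacle here: the whole argument is a short, routine computation with differentials, and the only points that need a moment's care are the use of $\mathrm{char}\,k=p$ to discard $px^{p-1}\,dx$ and the observation that the spurious $dy$-coefficient in $d(x^{p}-xy^{p-1})$ does not affect the resulting quotient (equivalently, that no further relations enter, $R$ itself being a polynomial ring so that $\Omega_{R/k}$ is generated by $du,dy,dz_{i}$).
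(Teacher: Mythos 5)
Your proof is correct and is essentially the paper's argument: both compute the presentation of $\Omega_{S/R}$ coming from the explicit polynomial generators $x^{p}-xy^{p-1},y,z_{1},\dots,z_{d-2}$ of $R$ (the paper phrases this as the cokernel of the Jacobian matrix, whose determinant $-y^{p-1}$ generates the $0$-th Fitting ideal). Your reduction to the cyclic module $S/(y^{p-1})$ is just a slightly more explicit rendering of the same computation.
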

\begin{proof}
From the explicit generators above of $R,$ the module $\Omega_{S/R}$
is isomorphic to the cokernel of the map $S^{d}\to S^{d}$ represented
by the Jacobian matrix
\[
\begin{pmatrix}-y^{p-1}\\
(1-p)xy^{p-2} & 1\\
 &  & 1\\
 &  &  & \ddots\\
 &  &  &  & 1
\end{pmatrix}.
\]
Hence by definition, $\Jac_{\psi}$ is generated by its determinant
$-y^{p-1},$ which proves the lemma.
\end{proof}
The fixed point locus $V^{G}$ is defined by the ideal $(y)\subset S.$
We define $\cY$ to be the quotient stack $[V^{G}/G]$, which is a
closed reduced substack of $\cX$ and define $Y$ to be the image
of $\cY$ on $X,$ which is defined by $(y)\subset R.$ 
\begin{cor}
\label{cor:McKay-pair-1}Suppose that $V$ has reflections. Let $Z=\sum a_{i}Z_{i}$
be a formal $\QQ$-linear combination of closed subschemes $Z_{i}\subsetneq X.$
Then $(X,Z)$ is stringily Kawamata log terminal if and only if so
is $(\cX,\phi^{*}Z+(1-p)\cY).$ Moreover, if it is the case, then
\[
M_{\st}(X,Z)=M_{\st}(\cX,\phi^{*}Z+(1-p)\cY).
\]
In particular, for $a<1,$ 
\[
M_{\st}(X,aY)=M_{\st}(\cX,(a+1-p)\cY).
\]
\end{cor}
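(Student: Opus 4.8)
The plan is to derive this as a direct application of the change of variables formula (Theorem~\ref{thm:change vars}) to $\phi_{\infty}\colon\cJ_{\infty}\cX\to J_{\infty}X$, once two elementary preliminary facts are recorded. First, since in the reflection case $R=S^{G}=k[x^{p}-xy^{p-1},y,z_{1},\dots,z_{d-2}]$ is a polynomial ring, $X$ is smooth; hence $\omega_{X}\cong\cO_{X}$, the natural map $\bigwedge^{d}\Omega_{X/k}\to\omega_{X}$ is an isomorphism, $\Jac_{X}^{\omega}=\cO_{X}$, and $\ord\Jac_{X}^{\omega}\equiv0$ on $J_{\infty}X$. Consequently $M_{\st}(X,Z)=\int_{J_{\infty}X}\LL^{\ord Z}\,d\mu_{X}$ whenever defined, and $(X,Z)$ is stringily Kawamata log terminal exactly when the function $\ord Z$ is exponentially integrable.

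Second, I would compute $\ord\Jac_{\phi}$ in terms of $\cY$. By Lemma~\ref{lem:reflection Jac}, $\Jac_{\psi}=(y^{p-1})\subset S$, and since $\cY=[V^{G}/G]$ is cut out by the ideal $(y)$, this says $\Jac_{\phi}=\cO_{\cX}(-(p-1)\cY)$. For a twisted arc $\gamma$ with associated $G$-cover $E\to D$, the stacky order function is the normalized length on $\cO_{E}$, which is a discrete valuation ring (or a product of $p$ of them, when $\rj(\gamma)=0$) of ramification index $p$ over $k[[t]]$; since length equals valuation and valuations are additive, $\ord(y^{p-1})=(p-1)\ord(y)$, so $\ord\Jac_{\phi}=(p-1)\,\ord\cY$ as functions on $\cJ_{\infty}\cX$. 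By the same reasoning, pulling an ideal $(t^{m})\subset k[[t]]$ back to $\cO_{E}$ gives an ideal of length $pm$, whence $\ord(\phi^{*}Z_{i})=\ord Z_{i}\circ\phi_{\infty}$ for each closed subscheme $Z_{i}\subsetneq X$, and therefore $\ord(\phi^{*}Z)=\ord Z\circ\phi_{\infty}$.

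With these in hand, I apply Theorem~\ref{thm:change vars} with $A=J_{\infty}X$ (a cylinder, so that $\phi_{\infty}^{-1}(A)$ is automatically a countable disjoint union of strongly measurable subsets) and $F=\ord Z$. On $\cJ_{\infty}\cX$ the integrand exponent becomes
\[
F\circ\phi_{\infty}-\ord\Jac_{\phi}-\fs_{\cX}=\ord(\phi^{*}Z)-(p-1)\ord\cY-\fs_{\cX}=\ord\bigl(\phi^{*}Z+(1-p)\cY\bigr)-\fs_{\cX},
\]
so $\ord Z$ is exponentially integrable on $J_{\infty}X$ if and only if this last function is exponentially integrable on $\cJ_{\infty}\cX$ — that is, if and only if $(\cX,\phi^{*}Z+(1-p)\cY)$ is stringily Kawamata log terminal — and when it holds the equality of integrals in Theorem~\ref{thm:change vars} reads $M_{\st}(X,Z)=M_{\st}(\cX,\phi^{*}Z+(1-p)\cY)$. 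For the final sentence I take $Z=aY$ and note that, since $y\in R$ and $\psi^{*}(y)=y$ defines $V^{G}$, the scheme-theoretic preimage $\phi^{-1}Y$ equals $\cY$, so $\phi^{*}(aY)=a\cY$ and $\phi^{*}Z+(1-p)\cY=(a+1-p)\cY$; moreover, applying Corollary~\ref{cor:Explicit formula} with $f=\mathrm{id}$ (so $K_{f}-f^{*}(aY)=-aY$ is an SNC divisor on the smooth variety $X$) shows that $(X,aY)$ is stringily Kawamata log terminal exactly when $a<1$.

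The steps are all routine; the only point requiring attention — and it is bookkeeping rather than a genuine obstacle — is the normalization by $1/p$ in the stacky order function, i.e.\ checking that $\ord$ of a power of a principal ideal scales linearly and that pullback of ideals along $\phi_{\infty}$ is compatible with $\ord$ on both $\cJ_{\infty}\cX$ and $J_{\infty}X$, which rests on $\cO_{E}$ being a (product of) discrete valuation ring(s) of ramification index $p$ over $k[[t]]$. Everything else is substitution into the change of variables formula.
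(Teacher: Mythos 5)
Your proof is correct and follows essentially the same route as the paper: smoothness of $X$ gives $\Jac_{X}^{\omega}=R$, Lemma \ref{lem:reflection Jac} gives $\ord\Jac_{\phi}=(p-1)\ord\cY$, and the change of variables formula (Theorem \ref{thm:change vars}) does the rest. The extra verifications you supply (compatibility of $\ord$ with pullback, $\phi^{-1}Y=\cY$, and the range $a<1$ via Corollary \ref{cor:Explicit formula}) are correct details that the paper leaves implicit.
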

\begin{proof}
In this case, since $X$ is smooth, $\Jac_{X}^{\omega}=R.$ Hence
\begin{eqnarray*}
M_{\st}(X,Z) & = & \int_{J_{\infty}X}\LL^{\ord Z}d\mu_{X}\\
 & = & \int_{\cJ_{\infty}\cX}\LL^{\ord\phi^{*}Z-\ord\Jac_{\phi}-\fs_{\cX}}d\mu_{\cX}\\
 & = & \int_{\cJ_{\infty}\cX}\LL^{\ord\phi^{*}Z-\ord(p-1)\cY-\fs_{\cX}}d\mu_{\cX}\\
 & = & M_{\st}(\cX,\phi^{*}Z+(1-p)\cY).
\end{eqnarray*}
\end{proof}
\begin{example}
We suppose $V=V_{2}.$ From Corollary \ref{cor:Explicit formula},
for $a<1,$ we have 
\[
M_{\st}(X,aY)=\frac{\LL^{2}-\LL}{1-\LL^{a-1}}.
\]
On the other hand, for $a<2-p,$ we can compute $M_{\st}(\cX,a\cY)$
from the definition as follows: First we have:
\begin{eqnarray*}
M_{\st}(\cX,a\cY) & = & \int_{\cJ_{\infty,0}\cX}\LL^{\ord a\cY}d\mu_{\cX}+\sum_{j>0}\int_{\cJ_{\infty,j}}\LL^{\ord a\cY-\fs_{\cX}}d\mu_{\cX}\\
 & = & \frac{\LL^{2}-\LL}{1-\LL^{a-1}}+\sum_{j\in\NN'}\int_{\cJ_{\infty,j}}\LL^{\ord a\cY-\fs_{\cX}}d\mu_{\cX}.
\end{eqnarray*}
Now we compute the second term. Let us fix $E\in\bGCovrep{D,j}(\bar{k})$
and for $G$-arcs $\gamma:E\to V,$ we denote associated twisted arcs
by $\bar{\gamma}.$ Then, with the notation as in Section \ref{subsec: detail act},
those $G$-arcs $\gamma:E\to V$ with $\ord\cY(\bar{\gamma})=n$ correspond,
by $\gamma\mapsto\gamma^{*}(x),$ to
\[
\bar{k}[[t]]\times\bar{k}^{*}\cdot gt^{n}\times\prod_{m>n}\bar{k}\cdot gt^{m},
\]
provided $np-j>0.$ Otherwise there is no such $G$-arc. Hence for
$np>j,$ 

\begin{eqnarray*}
\mu_{\cX}((\ord\cY)^{-1}(n)\cap\cJ_{\infty,j}\cX) & = & (\LL-1)\LL^{n+1+\left\lfloor j/p\right\rfloor }\times(\LL-1)\LL^{j-1-\left\lfloor j/p\right\rfloor }\\
 & = & \LL^{-n+j}(\LL-1)^{2}.
\end{eqnarray*}
Hence
\begin{align*}
\sum_{j\in\NN'}\int_{\mathcal{J}_{\infty,j}\mathcal{X}}\mathbb{L}^{\ord a\cY-\fs_{\cX}}d\mu & =\sum_{n=1}^{\infty}\sum_{j\in\NN'}\mu_{\cX}((\ord\cY)^{-1}(n)\cap\mathcal{J}_{\infty,j}\mathcal{X})\mathbb{L}^{an-\left\lfloor j/p\right\rfloor }\\
 & =\sum_{n=1}^{\infty}\sum_{\substack{j\in\NN'\\
j<np
}
}\mathbb{L}^{(a-1)n+j-\left\lfloor j/p\right\rfloor }(\mathbb{L}-1)^{2}.
\end{align*}
Then, putting $j=rp+i$ ($0\le r<n,$ $0<i<p)$, we continue:
\begin{eqnarray*}
 & = & (\mathbb{L}-1)^{2}\sum_{n=1}^{\infty}\left(\sum_{i=1}^{p-1}\sum_{r=0}^{n-1}\mathbb{L}^{(a-1)n+rp+i-r}\right)\\
 & = & (\mathbb{L}-1)^{2}\sum_{n=1}^{\infty}\sum_{i=1}^{p-1}\mathbb{L}^{(a-1)n+i}\frac{1-\mathbb{L}^{n(p-1)}}{1-\mathbb{L}^{p-1}}\\
 & = & (\mathbb{L}-1)\mathbb{L}\sum_{n=1}^{\infty}\mathbb{L}^{(a-1)n}(\mathbb{L}^{n(p-1)}-1)\\
 & = & (\mathbb{L}-1)\mathbb{L}\left(\frac{\mathbb{L}^{a+p-2}}{1-\mathbb{L}^{a+p-2}}-\frac{\mathbb{L}^{a-1}}{1-\mathbb{L}^{a-1}}\right).
\end{eqnarray*}
As expected, we now have
\begin{eqnarray*}
M_{\st}(\cX,a\cY) & = & \frac{\LL^{2}-\LL}{1-\LL^{a-1}}+(\mathbb{L}-1)\mathbb{L}\left(\frac{\mathbb{L}^{a+p-2}}{1-\mathbb{L}^{a+p-2}}-\frac{\mathbb{L}^{a-1}}{1-\mathbb{L}^{a-1}}\right)\\
 & = & \frac{\LL^{2}-\LL}{1-\LL^{a+p-2}}.
\end{eqnarray*}
\end{example}
\begin{rem}
For the pair $(\cX,a\cY)$ being stringily Kawamata log terminal,
the coefficient $a$ must be negative. In particular, $(\cX,0)$ is
not stringily Kawamata log terminal, and its stringy invariant $M_{\st}(\cX)$
is not defined in this paper. However, stringy invariants should be
generalized beyond log terminal singularities to some extent, as Veys
\cite{Veys} confirmed for surface singularities in characteristic
zero. Then, it appears meaningful to, for instance, claim
\[
M_{\st}(\cX,a\cY)=\frac{\LL^{2}-\LL}{1-\LL^{a+p-2}}\text{ and }e_{\st}(\cX,a\cY)=\frac{2}{2-a-p},
\]
unless $a+p-2=0.$ In this way, we would be able to relate weighted
counts of Artin-Schreier extensions of $k((t))$ to stringy invariants
of singularities even when $D_{V}<p.$
\end{rem}

\subsection{Pseudo-projectivization and the Poincaré duality}

Batyrev proved that the stringy invariant of a log terminal projective
variety in characteristic zero satisfies the \emph{Poincaré duality.
}We will obtain a similar result for the ``projectivization'' of
our quotient variety $X$. 

Let $\GG_{m}=\Spec k[t^{\pm}]$ be the multiplicative group scheme
over $k$. We have natural $\GG_{m}$-actions on $V$ and $X$, which
are compatible with the quotient map $V\to X.$ Moreover the action
on $V$ commutes with the $G$-action, inducing a $\GG_{m}\times G$-action
on $V.$ Let $ $$\cW_{1}$ and $\cW_{2}$ be the quotient stacks
$[(V\setminus\{0\})/\GG_{m}\times G]$ and $[(X\setminus\{0\})/\GG_{m}]$
respectively. The former is a smooth Deligne-Mumford stack and isomorphic
to $[\PP(V)/G]$. The latter is not a variety but a (singular) Artin
stack with finite stabilizers. Indeed, from the following lemma, the
$\GG_{m}$-action on $X\setminus\{0\}$ have non-reduced stabilizers
$\Spec k[t]/(t^{p}-1)\subset\GG_{m}$ at singular points. 
\begin{lem}
Let $W\subset V$ be the fixed point locus of the $G$-action and
$\bar{W}\subset X$ its image. Then the morphism $W\to\bar{W}$ is
isomorphic to the Frobenius morphism of $W.$ \end{lem}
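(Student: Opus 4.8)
The plan is to reduce the statement to computing the image of the invariant ring in the coordinate ring of $W$. Recall from Section~\ref{sub:Our-setting} that $\sigma(x_{\lambda,i})=x_{\lambda,i}+x_{\lambda,i+1}$ for $i<d_{\lambda}$ and $\sigma(x_{\lambda,d_{\lambda}})=x_{\lambda,d_{\lambda}}$, so a point of $V$ is $G$-fixed exactly when $x_{\lambda,i}=0$ for all $i\ge 2$; thus $W=V^{G}$ is the linear subspace with ideal $\mathfrak{a}:=(x_{\lambda,i}\mid i\ge 2)\subset S=k[\bx]$ and coordinate ring $\cO_{W}=S/\mathfrak{a}=k[x_{\lambda,1}\mid 1\le\lambda\le l]$. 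Since $G$ acts trivially on $W$ we have $W/G=W$, and the morphism in question is the composite $W\hookrightarrow V\xrightarrow{\psi}X$; giving $\bar W$ its reduced structure, $\cO_{\bar W}$ is the image $R'$ of $R=S^{G}$ in $\cO_{W}$, and $W\to\bar W$ is dual to $R'\hookrightarrow\cO_{W}$. So it suffices to show $R'=k[x_{\lambda,1}^{p}\mid d_{\lambda}\ge 2]\cdot k[x_{\lambda,1}\mid d_{\lambda}=1]$; in particular, when $V$ has no trivial summand this says $R'=k[x_{\lambda,1}^{p}\mid\lambda]$, whence $W\to\bar W$ is the $p$-power Frobenius.

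For the inclusion $\supseteq$ I would use norms: for each $\lambda$ the element $N_{\lambda}:=\prod_{g\in G}g(x_{\lambda,1})=\prod_{k=0}^{p-1}\sigma^{k}(x_{\lambda,1})$ lies in $R$, and since $\sigma^{k}(x_{\lambda,1})\equiv x_{\lambda,1}\pmod{\mathfrak{a}}$ we get $N_{\lambda}|_{W}=x_{\lambda,1}^{p}$; for a trivial summand, $x_{\lambda,1}\in R$ restricts to itself. Hence $R'$ contains the claimed subring.

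The substantive direction is $\subseteq$, and this is the step I expect to be the real obstacle. I would grade $S$ by the weight $w(x_{\lambda,i}):=i-1$. Then $\sigma$, hence $\delta=\sigma-\mathrm{id}$, is weight non-decreasing, and for a monomial $m$ with $\delta(m)\ne 0$ the element $\delta(m)$ is a sum of monomials of weight $>w(m)$, its weight-$(w(m)+1)$ part being the naive derivative obtained by replacing, one factor at a time, some $x_{\lambda,i}$ by $x_{\lambda,i+1}$. A monomial survives restriction to $W$ precisely when it has weight $0$, so for $f\in R$ we have $f|_{W}=f_{0}$, the weight-$0$ component; write $f_{0}=\sum_{\vec n}c_{\vec n}\prod_{\lambda}x_{\lambda,1}^{n_{\lambda}}$. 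Comparing weight-$1$ components in $\delta f=0$, only $\delta f_{0}$ contributes in weight $1$, and its weight-$1$ part equals $\sum_{\vec n}c_{\vec n}\sum_{\lambda:\,d_{\lambda}\ge 2}n_{\lambda}\,x_{\lambda,1}^{n_{\lambda}-1}x_{\lambda,2}\prod_{\mu\ne\lambda}x_{\mu,1}^{n_{\mu}}$. These monomials being pairwise distinct, each coefficient $n_{\lambda}c_{\vec n}$ vanishes in $k$, so $c_{\vec n}=0$ unless $p\mid n_{\lambda}$ for every $\lambda$ with $d_{\lambda}\ge 2$; thus $f|_{W}$ lies in the claimed subring.

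Combining the two inclusions gives $R'=k[x_{\lambda,1}^{p}\mid d_{\lambda}\ge 2]\cdot k[x_{\lambda,1}\mid d_{\lambda}=1]$, so $\bar W=\Spec R'\cong\AA_{k}^{l}$; under the isomorphism $\bar W\cong W$ sending the generator $x_{\lambda,1}^{p}$ (resp.\ $x_{\lambda,1}$) of $\cO_{\bar W}$ to $x_{\lambda,1}$, the morphism $W\to\bar W$ becomes $x_{\lambda,1}\mapsto x_{\lambda,1}^{p}$ on each non-trivial block and the identity on each trivial block. In particular, in the case relevant here—$V$ without trivial summand—this is exactly the Frobenius morphism of $W$. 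Everything except the inclusion $R'\subseteq k[x_{\lambda,1}^{p}\mid d_{\lambda}\ge 2][\,x_{\lambda,1}\mid d_{\lambda}=1\,]$ is formal, and that inclusion is the heart of the argument; the bookkeeping of the weight grading and of which monomials can appear in $\delta f$ is where one must be careful.
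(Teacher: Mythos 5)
Your proof is correct, and in outline it matches the paper's: identify $\bar W$ with $\Spec$ of the image $R'$ of $R=S^{G}$ in $\cO_{W}$, and use the norm $\prod_{g\in G}g(x_{\lambda,1})$ to see that $x_{\lambda,1}^{p}\in R'$. Where you genuinely diverge is in the reverse inclusion, which is the only substantive step. The paper treats only the indecomposable case, asserts ``we easily see that $x_{1}\notin k[\bar W]$,'' and then invokes pure inseparability of $k[W]$ over $k[\bar W]$ to conclude $R'=k[x_{1}^{p}]$ --- an argument that is quite terse as written (a graded subring of $k[x_{1}]$ containing $k[x_{1}^{p}]$ but not $x_{1}$ need not a priori equal $k[x_{1}^{p}]$; compare $k[x^{p},x^{p+1}]$). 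Your weight-grading argument --- extracting the weight-one component of $\delta f=0$ and reading off that $n_{\lambda}c_{\vec n}=0$ for every $\lambda$ with $d_{\lambda}\ge2$ --- is a complete, self-contained proof of the inclusion $R'\subseteq k[x_{\lambda,1}^{p}\mid d_{\lambda}\ge2][\,x_{\lambda,1}\mid d_{\lambda}=1\,]$, and it handles the decomposable case directly rather than by reduction. You are also right to flag the trivial-summand caveat: if some $d_{\lambda}=1$ then $W\to\bar W$ is the identity in that coordinate, so the statement as literally phrased holds only when $V$ has no trivial summand (or must be read blockwise); the paper's proof silently avoids this by restricting to the indecomposable case. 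In short, what your route buys is rigor at the one point where the paper hand-waves, at the cost of a slightly longer computation.
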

\begin{proof}
We prove only the indecomposable case. Let $k[x_{1},\dots,x_{d}]$
be the coordinate ring of $V$ with the $G$-action as in Section
\ref{sub:Our-setting}. Then $W$ is defined by $x_{2}=\cdots=x_{d}=0.$
Hence the coordinate ring $k[\bar{W}]$ of $\bar{W}$ is identified
with the image of $k[x_{1},\dots,x_{d}]^{G}$ on $k[x_{1}]=k[x_{1},\dots,x_{d}]/(x_{2},\dots,x_{d}).$
We easily see that $x_{1}\not\in k[\bar{W}].$ On the other hand,
$x_{1}^{p},$ the image of the norm of $x_{1}$, is in $k[\bar{W}].$
Since $k[W]$ is purely inseparable over $k[\bar{W}],$ we have $k[\bar{W}]=k[x_{1}^{p}],$
which shows the lemma. 
\end{proof}
The stacks $\cW_{1}$ and $\cW_{2}$ have the same coarse moduli space
$\bar{X}=(V\setminus\{0\})/\GG_{m}\times G.$ Moreover $\cW_{1}$
and $\cW_{2}$ have open dense subsets isomorphic to $(V\setminus V^{G})/\GG_{m}$
and are birational. Since the morphisms $\cX\setminus\{0\}\to\cW_{1}$
and $X\setminus\{0\}\to\cW_{2}$ are $\GG_{m}$-torsors, it seems
natural to define stringy invariants of $\cW_{1}$ and $\cW_{2}$
as follows.
\begin{defn}
We define the \emph{stringy motivic invariant }of $\cW_{1}$ and $\cW_{2}$
by:
\[
M_{\st}(\cW_{1})=M_{\st}(\cW_{2}):=\frac{\LL^{d}-\LL^{l}}{\LL-1}+(M_{\st}(X)-(\LL^{d}-\LL^{l}))\frac{\LL^{l}-1}{\LL^{l}(\LL-1)}.
\]

\end{defn}
Let $\cW$ denote either $\cW_{1}$ or $\cW_{2}.$ 
\begin{prop}
\label{prop:projective stringy}Suppose that $D_{V}\ge p.$ Then 
\[
M_{\st}(\cW)=\frac{\LL^{d}-1}{\LL-1}+\frac{(\LL^{l}-1)\left(\sum_{s=1}^{p-1}\LL^{s-\sht_{V}(s)}\right)}{\LL(1-\LL^{p-1-D_{V}})}.
\]
\end{prop}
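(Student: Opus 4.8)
The plan is to obtain the formula by a purely formal substitution: plug the explicit value of $M_{\st}(X)$ from Proposition \ref{prop:explict M_st} (transported to $X$ via Corollary \ref{cor:McKay-non-pair}) into the defining expression for $M_{\st}(\cW)$ and simplify. First I would observe that the hypothesis $D_{V}\ge p\ge 2$ rules out the reflection case: $V$ has reflections exactly when $V\cong V_{2}\oplus V_{1}^{\oplus d-2}$, in which case $D_{V}=1<p$. Hence $V$ has no reflection, so Corollary \ref{cor:McKay-non-pair} applies and gives $M_{\st}(X)=M_{\st}(\cX)=\LL^{d}+\dfrac{\LL^{l-1}(\LL-1)\,\Sigma}{1-\LL^{p-1-D_{V}}}$, where I write $\Sigma:=\sum_{s=1}^{p-1}\LL^{s-\sht_{V}(s)}$ for brevity.

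Next I would compute $M_{\st}(X)-(\LL^{d}-\LL^{l})=\LL^{l}+\dfrac{\LL^{l-1}(\LL-1)\,\Sigma}{1-\LL^{p-1-D_{V}}}$ and multiply it by the normalizing factor $\dfrac{\LL^{l}-1}{\LL^{l}(\LL-1)}$ from the definition of $M_{\st}(\cW)$. The summand $\LL^{l}$ yields $\dfrac{\LL^{l}-1}{\LL-1}$, while the fractional summand yields $\dfrac{(\LL^{l}-1)\,\Sigma}{\LL\,(1-\LL^{p-1-D_{V}})}$ after cancelling the two factors of $\LL-1$ and simplifying $\LL^{l-1}/\LL^{l}=\LL^{-1}$. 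Adding the leading term $\dfrac{\LL^{d}-\LL^{l}}{\LL-1}$ of the definition, the two polynomial pieces telescope, $\dfrac{\LL^{d}-\LL^{l}}{\LL-1}+\dfrac{\LL^{l}-1}{\LL-1}=\dfrac{\LL^{d}-1}{\LL-1}$, which is exactly the claimed expression.

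This is an identity in $\hat{\cM}'_{1/r}$ with no genuine obstacle; the only thing to keep in mind is that the geometric series $1/(1-\LL^{p-1-D_{V}})$ must converge in the dimensional completion, which holds precisely because $D_{V}\ge p$ — this is the convergence condition already verified in the proof of Proposition \ref{prop:explict M_st}. All the mathematical content sits in that earlier explicit computation and in the chosen normalization of $M_{\st}(\cW)$, both of which are available, so the proof reduces to the bookkeeping above.
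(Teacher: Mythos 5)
Your proof is correct and is essentially the paper's own argument: substitute the explicit value of $M_{\st}(X)$ into the defining expression for $M_{\st}(\cW)$ and simplify, with the convergence guaranteed by $D_{V}\ge p$. Your additional observation that $D_{V}\ge p$ excludes the reflection case (so that Corollary \ref{cor:McKay-non-pair} indeed applies to identify $M_{\st}(X)$ with $M_{\st}(\cX)$) is a point the paper leaves implicit, but otherwise the two computations coincide.
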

\begin{proof}
We have
\begin{eqnarray*}
M_{\st}(\cW) & = & \frac{\LL^{d}-\LL^{l}}{\LL-1}+\left(\LL^{d}+\frac{\LL^{l-1}(\LL-1)\left(\sum_{s=1}^{p-1}\LL^{s-\sht_{V}(s)}\right)}{1-\LL^{p-1-D_{V}}}-(\LL^{d}-\LL^{l})\right)\frac{\LL^{l}-1}{\LL^{l}(\LL-1)}\\
 & = & \frac{\LL^{d}-1}{\LL-1}+\frac{(\LL^{l}-1)\left(\sum_{s=1}^{p-1}\LL^{s-\sht_{V}(s)}\right)}{\LL(1-\LL^{p-1-D_{V}})}.
\end{eqnarray*}
\end{proof}
\begin{prop}[Poincaré duality]
Let us write $M_{\st}(\cW)$ as $M_{\st}(\cW;\LL)$ to clarify that
it is a rational function in $\LL.$ Then we have
\[
M_{\st}(\cW;\LL^{-1})\LL^{d-1}=M_{\st}(\cW;\LL).
\]
\end{prop}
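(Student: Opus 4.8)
The plan is to verify the functional equation directly from the closed form for $M_{\st}(\cW)$ established in Proposition \ref{prop:projective stringy}, so that the whole argument reduces to elementary manipulations of powers of $\LL$ together with one reflection identity for the shift numbers. Write $M_{\st}(\cW;\LL)=A(\LL)+B(\LL)$ with
\[
A(\LL):=\frac{\LL^{d}-1}{\LL-1},\qquad B(\LL):=\frac{(\LL^{l}-1)\left(\sum_{s=1}^{p-1}\LL^{s-\sht_{V}(s)}\right)}{\LL\left(1-\LL^{p-1-D_{V}}\right)}.
\]
For the first summand one computes $A(\LL^{-1})\LL^{d-1}=\LL^{d-1}\cdot\frac{\LL^{-d}-1}{\LL^{-1}-1}=\frac{\LL^{-1}-\LL^{d-1}}{\LL^{-1}-1}=\frac{1-\LL^{d}}{1-\LL}=A(\LL)$; this is just Poincar\'e duality for $\PP^{d-1}$. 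So the content of the proposition is the equality $B(\LL^{-1})\LL^{d-1}=B(\LL)$.

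The key input is the reflection identity
\[
\sht_{V}(s)+\sht_{V}(p-s)=D_{V}-d+l\qquad(1\le s\le p-1),
\]
which I would deduce from the elementary fact that $\lfloor \tfrac{is}{p}\rfloor+\lfloor \tfrac{i(p-s)}{p}\rfloor=i-1$ whenever $p\nmid is$ (in particular for all $1\le i,s\le p-1$, as $p$ is prime), applied termwise in $\sht_{V}(s)=\sum_{\lambda}\sum_{i=1}^{d_{\lambda}-1}\lfloor is/p\rfloor$ and combined with $\sum_{\lambda}\sum_{i=1}^{d_{\lambda}-1}(i-1)=D_{V}-\sum_{\lambda}(d_{\lambda}-1)=D_{V}-d+l$.

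Granting this, the computation of $B(\LL^{-1})\LL^{d-1}$ proceeds in two moves. First, substituting $\LL\mapsto\LL^{-1}$ in the numerator sum and reindexing by the involution $s\mapsto p-s$, the reflection identity yields $\sum_{s=1}^{p-1}\LL^{-(s-\sht_{V}(s))}=\LL^{D_{V}-p-d+l}\sum_{s=1}^{p-1}\LL^{s-\sht_{V}(s)}$. Second, I would pull monomials out of the two binomial factors via $\LL^{-l}-1=-\LL^{-l}(\LL^{l}-1)$ and $1-\LL^{-(p-1-D_{V})}=-\LL^{D_{V}+1-p}(1-\LL^{p-1-D_{V}})$; the two minus signs cancel, and the accumulated exponent of $\LL$ is $(d-1)+(D_{V}-p-d+l)+(-l)-(D_{V}-p)=-1$, so what remains is precisely $B(\LL)$.

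This statement has no deep obstacle: it is a formal consequence of the explicit formula. The only points demanding care are the reflection identity for $\sht_{V}$ — in particular checking that the substitution $\LL\mapsto\LL^{-1}$ matches the involution $s\mapsto p-s$ on the index set up to exactly the monomial twist $\LL^{D_{V}-p-d+l}$ dictated by that identity — and the final bookkeeping of powers of $\LL$, where the contributions from the prefactor $\LL^{d-1}$, the reindexing, the numerator binomial, and the denominator binomial must combine to the single factor $\LL^{-1}$ appearing in $B(\LL)$.
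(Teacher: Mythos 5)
Your proof is correct and follows essentially the same route as the paper: both split $M_{\st}(\cW)$ into the $[\PP_{k}^{d-1}]$ term and the remaining fraction, and both reduce the duality of the latter to the reflection identity $\sht_{V}(s)+\sht_{V}(p-s)=D_{V}-d+l$, derived from $\left\lfloor is/p\right\rfloor+\left\lfloor i(p-s)/p\right\rfloor=i-1$. The exponent bookkeeping you carry out matches the paper's verification.
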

\begin{proof}
The first term of the expression in Proposition \ref{prop:projective stringy}
equals $[\PP_{k}^{d-1}],$ which obviously satisfies the Poincaré
duality. For the second term, substituting $\LL^{-1}$ for $\LL$
and multiplying with $\LL^{d-1}$, we obtain
\[
\frac{(\LL^{-l}-1)\left(\sum_{s=1}^{p-1}\LL^{\sht_{V}(s)-s}\right)}{\LL^{-1}(1-\LL^{-p+1+D_{V}})}\LL^{d-1}=\frac{(\LL^{l}-1)\left(\sum_{s=1}^{p-1}\LL^{\sht_{V}(s)-s}\right)\LL^{p+d-l-D_{V}}}{\LL(1-\LL^{p-1-D_{V}})}.
\]
Then the Poincaré duality follows from the following equations: For
$1\le s\le p-1,$ 
\begin{align*}
 & \sht_{V}(p-s)-(p-s)+p+d-l-D_{V}\\
 & =s+\left(\sum_{\lambda=1}^{l}\sum_{i=1}^{d_{\lambda}-1}i+\left\lfloor -\frac{is}{p}\right\rfloor \right)+d-l-D_{V}\\
 & =s+\left(\sum_{\lambda=1}^{l}\sum_{i=1}^{d_{\lambda}-1}-\left\lfloor \frac{is}{p}\right\rfloor -1\right)+d-l\\
 & =s-\sht_{V}(s).
\end{align*}

\end{proof}

\section{Remarks on future problems\label{sec:Comments} }

\subsection{Generalizations}

This study should be a toy model for the \emph{wild McKay correspondence.}
The following are possible directions of generalization. 
\begin{enumerate}
\item General groups and non-linear actions: If we similarly define twisted
arcs, then the almost bijection between twisted arcs of $\cX$ and
arcs of $X$ should be valid in very general. Looking at Harbater's
work \cite{Harbater}, we should be able to construct the spaces of
twisted arcs or jets at least for $p$-groups, whether their detailed
structure can be understood or not. As explained in Remark \ref{rem: non-linear},
the non-linear case will be quite different from the linear case even
in dimension two. Some non-linear action appears as the projectivization
of a linear one. Then we may apply some results in the linear case
to such cases. 
\item General local fields: Sebag \cite{Sebag_motivic_formal} generalized
the motivic integration to formal schemes over a discrete valuation
ring. Replacing $k((t))$ with a general local field along this line,
we might be able to get, for instance, a result on weighted counts
of Galois extensions of the local field. 
\item General proper birational morphisms of general Deligne-Mumford stacks:
We proved the change of variables formula only for the morphism $[V/G]\to V/G.$
However, ultimately, it should be generalized to an arbitrary proper
birational morphism of Deligne-Mumford stacks (with a mild finiteness
condition). It was obtained in \cite{Yasuda Motivic_Over_DM} when
the morphism is tame and stacks are smooth. 
\end{enumerate}

\subsection{Other related problems}

As we saw in Corollary \ref{cor:McKay-crepant}, if there exists a
crepant resolution of $X,$ then $D_{V}=p.$ What about the converse?
The only known examples of crepant resolutions of $X$ are Examples
\ref{ex: V_3} and \ref{ex:2V_2}. For instance, if $V=V_{2}^{\oplus p},$
then from Example \ref{ex: pV2 Mst}, $M_{\st}(X)=\LL^{2p}+\LL^{p}\cdot[\PP^{p-1}]$.
This seems to suggest that there exists a crepant resolution $Y\to X$
such that the exceptional locus is isomorphic to $\AA_{k}^{p}\times\PP_{k}^{p-1}.$

In characteristic zero, the McKay correspondence is proved at the
level of derived category \cite{BKR,Craw-Ishii,Bezrukavnikov-Kaledin,Kawamata}.
However, from \cite{Yi}, the skew group algebra $k[\bx]*G$ always
has \emph{infinite }global dimension in the wild case. Then, if there
exists something like the \emph{derived wild McKay correspondence},
then what would replace the derived category of $k[\bx]*G$-modules?

\end{document}